\crefname{equation}{}{} 
\let\oldsqrt\sqrt
\def\sqrt{\mathpalette\DHLhksqrt}
\def\DHLhksqrt#1#2{%
\setbox0=\hbox{$#1\oldsqrt{#2\,}$}\dimen0=\ht0
\advance\dimen0-0.2\ht0
\setbox2=\hbox{\vrule height\ht0 depth -\dimen0}
{\box0\lower0.4pt\box2}}
\newcommand{\R}{\mathbb{R}} 
\newcommand{\N}{\mathbb{N}} 
\newcommand{\Z}{\mathbb{Z}} 
\newcommand{\C}{\mathbb{C}} 
\newcommand{\diam}{\textnormal{diam}} 
\newcommand{\supp}{\textnormal{supp}} 
\renewcommand{\phi}{\varphi}
\newcommand{\bS}{\mathbb{S}}
\newcommand{\cB}{{\mathcal B}}
\newcommand{\cE}{{\mathcal E}}
\newcommand{\cH}{{\mathcal H}}
\newcommand{\cL}{{\mathcal L}}
\newcommand{\cX}{{\mathcal X}}
\newcommand{\weakto}{\rightharpoonup}
\newcommand{\eps}{\varepsilon}
\newtheorem{thm}{Theorem}[section]
\newtheorem{cor}[thm]{Corollary}
\newtheorem{prop}[thm]{Proposition}
\newtheorem{lemma}[thm]{Lemma}
\newtheorem*{theorem*}{Theorem}
\theoremstyle{definition}
\newtheorem{defi}[thm]{Definition}
\newtheorem{remark}[thm]{Remark}
\newtheorem*{remark*}{Remark}
\numberwithin{equation}{section}
\title{On a class of (non)local superposition operators of arbitrary order}
\author{
Serena Dipierro\footnote{Department of Mathematics and Statistics,
The University of Western Australia, 35 Stirling Highway, Crawley, Perth, WA 6009, Australia, serena.dipierro@uwa.edu.au}, 
Sven Jarohs\footnote{Institut f\"ur Mathematik, Goethe-Universit\"at, Frankfurt, Robert-Mayer-Stra\ss e 10, D-60629 Frankfurt, jarohs@math.uni-frankfurt.de.},  and 
Enrico Valdinoci\footnote{Department of Mathematics and Statistics, The University of Western Australia, 35 Stirling Highway, Crawley, Perth, WA 6009, Australia,
enrico.valdinoci@uwa.edu.au}
}
\date{\today}
\begin{document}

\maketitle

\begin{abstract}
In this paper we introduce a very general setting dealing with the superposition of
operators of any positive order and provide a systematic study of them.

We also provide examples and counterexamples, as well as characterizing properties of the
measures and the functional spaces under consideration.

Moreover, we present some applications regarding the existence theory for a class of nonlinear problems involving superposition operators of arbitrary (possibly fractional) order. 
\end{abstract}

{\footnotesize
\begin{center}
\textit{Keywords.} Mixed-order operators $\cdot$ nonlinear analysis $\cdot$ hypersingular kernels $\cdot$ higher-order operators $\cdot$ L\'evy-type operators\\[2ex]

\textit{Mathematics Subject Classification:} 
35A15, 
35R09, 
35J40. 
\end{center}
}

\section{Introduction}

\subsection{Some motivations and goals}
Classical models in mathematical biology often account for population dynamics involving random dispersal. Traditionally, these models assume Brownian motion as the underlying mechanism, but the recent literature has expanded this framework to include anomalous diffusion, which is governed by ``fat-tailed'' L\'evy-type distributions.
This line of theoretical research is motivated by the strong experimental evidence
confirming the presence of nonstandard diffusion in several situations of biological
interest, including animal foraging and cell motility.

In this context, a natural development is to consider biological populations in which different individuals are subject to different types of dispersal mechanisms (see, e.g.,~\cite{MR4249816, MR4651677}). The corresponding mathematical models capture diffusion via the superposition of spatial operators of different (potentially fractional) orders. A general theoretical setting for such a model was introduced in~\cite{DPSV23}.

The literature concerning the superposition of differential operators of varying orders is extensive and rapidly growing, reflecting the versatility of these models across different theoretical frameworks and their relevance to a broad spectrum of applications, see e.g.~\cite{MR2095633, MR2542727, MR2653895, MR2911421, MR3485125, MR4313576, MR4469224, MR4477810, MR4391102, MR4821750, MR4882759, MR4942302, DPLSV-MN}.\medskip

While the above examples focus on the superposition of  pseudo-differential operators of order at most~$2$, the goal of this paper is to introduce a general setting,
capable of dealing with the superposition of operators of any
positive order. In particular, the present seemingly purely nonlocal approach includes \textit{classical} local problems such as the clamped plate problem and also allows to study problems involving arbitrary integer powers of the Laplacian, see e.g.~\cite{zbMATH00223815,zbMATH06715608,zbMATH06756312,zbMATH07109934} and the monograph~\cite{zbMATH05712793} on the theory of polyLaplacians. In our framework, we give some applications related to functional analysis, nonlinear analysis, and topological methods.

In the next sections we will provide the details of the mathematical setting that we work in and present our main results.

\subsection{A class of homogeneous integrodifferential operators of arbitrary order}
\label{operator-simple}

Let~$N\in \N$. For any~$s>0$ we consider measures~$\nu_s$ which are given in polar coordinates by 
\begin{equation} 
\label{definition measure}
\nu_s(U):= \int_{0}^{+\infty} \int_{\bS^{N-1}} \chi_U(r\theta) r^{-1-2s}\,\sigma_s(d\theta) \,dr\qquad\text{ for all~$U\in \cB(\R^N)$.}
\end{equation}
Here, as customary, $\chi_U$ denotes the indicator function
of a set~$U$ (equal to~$1$ in~$U$ and to~$0$ in the complement of~$U$) and~$\cB$ stands for the family of Borel sets. Also,
\begin{equation} 
\label{assumption basis}
\text{$\sigma_s$ is a probability measure on~$\bS^{N-1}$,}
\end{equation}
that is,~$\sigma_s(\bS^{N-1})=1$. We note that the above assumptions imply that~$\nu_s$ is finite on all compact subsets of~$\R^N\setminus\{0\}$.

Moreover, as a consequence of~\eqref{assumption basis},
\begin{equation}\label{simple ellipticitybasic}
\max_{e\in\bS^{N-1}}\int_{\bS^{N-1}}|e\cdot \theta|^{\alpha}\sigma_s(d\theta)
>0\quad\text{for all~$\alpha\geq0$,}
\end{equation} see Lemma~\ref{lemma:nuovo} for the proof of this implication.

The particular choice~$\alpha:=2s$ will be important for our analysis. For this, we introduce the function~$M_{s,\sigma_s}:\bS^{N-1}\to [0,1]$ defined, for every~$e\in\bS^{N-1}$, as
\begin{equation}\label{def ms}
M_{s,\sigma_s}(e):=\int_{\bS^{N-1}}|e\cdot\theta|^{2s}\sigma_s(d\theta).
\end{equation}
Since~$\sigma_s$ is a probability measure, 
we have that
\begin{equation}\label{usoduevoltefprse769043-0876}
M_{s,\sigma_s}(e)\le \int_{\bS^{N-1}} \sigma_s(d\theta)=1.
\end{equation}

We consider~$e_s\in\bS^{N-1}$ such that
\begin{equation}\label{bvncmx9e76954uy95}
M_{s,\sigma_s}(e_s)=\max_{e\in\bS^{N-1}}M_{s,\sigma_s}(e).\end{equation}

For~$m\in \N$ such that~$m>s$ and\footnote{The space~$C^{2m}_b(\R^N)$ consists of bounded functions whose derivatives up to order~$2m$ are continuous.} $u\in C^{2m}_b(\R^N)$ define, for all~$ x\in \R^N$,
\begin{equation}\label{defop}
L_{m,s}u(x):= \frac{c_{m,s}}{2} \int_{\R^N}\delta_mu(x,y) \,\nu_s(dy),
\end{equation}
where~$\delta_mu(x,y)$ denotes the difference operator of order~$2m$, that is,
\begin{equation}\label{defdelta}
\delta_mu(x,y):=\sum_{k=-m}^m(-1)^k\binom{2m}{m-k}u(x+ky).
\end{equation}
Here, $c_{m,s}$ is a constant depending on~$N$, $m$, and~$s$ (but we omit the explicit dependence on~$N$ as it does not play any role in our analysis)
satisfying
\begin{equation}\label{constcms00}
\frac{2}{c_{m,s}}=2^m\int_{\R^N}\big(1-\cos(e_s\cdot y)\big)^{m}\,\nu_s(dy),
\end{equation}
where~$e_s\in \bS^{N-1}$ is given as in~\eqref{bvncmx9e76954uy95}.

In place of assumption \cref{assumption basis} it makes also sense to assume instead that~$\sigma_s$ is a nontrivial, bounded measure on~$\bS^{N-1}$
(in this sense, the probability renormalization in~\eqref{assumption basis}
does not constitute a significant restriction, up to a change of notation
accounting for the total mass of~$\sigma_s$).
Here, let us emphasize that {\em if~$\sigma_s$ were not bounded, then there would exist~$u\in C^{\infty}_c(\R^N)$ such that~$L_{m,s}u(x)$ does not exist for some~$x\in \R^N$}, see \cref{lemma remark xyz} below, therefore assumption \cref{assumption basis}
is somewhat unavoidable.

A classical choice for~$\nu_s$ in~\eqref{defop}
is~$\nu_s(dz):=|z|^{-N-2s}\, dz$ (possibly renormalized by a factor~$\cH^{N-1}(\bS^{N-1})$). With this choice and for some~$m\in\N$, with~$m>s$, it follows that~$L_{m,s}u=(-\Delta)^su$
for~$u\in C^{\infty}_c(\R^N)$, see~\cite{SKM93,AJS18a}. We emphasize that this equality also holds for~$s\in \N$.
\medskip

We now introduce two further assumptions (namely~\eqref{simple ellipticityBIS} and~\eqref{ellipticity} below)
that are uniform in the range of parameters~$s$ that we take into account. These new assumptions
will be relevant in the forthcoming Section~\ref{vfcbieuwr678265r8ot30987654}, where we deal with superposition operators.

In this setting, a useful condition requires an {\em ellipticity} assumption which is uniform for large~$s$ and reads as follows:
\begin{equation}\label{simple ellipticityBIS}
\liminf_{s\to+\infty} \int_{\bS^{N-1}}|e_s\cdot\theta|^{2s}\sigma_s(d\theta)>0.
\end{equation}
We stress that~\eqref{simple ellipticityBIS} is equivalent to
\begin{equation}
\label{simple ellipticity}
\lambda:=
\inf_{s>0}\int_{\bS^{N-1}}|e_s\cdot\theta|^{2s}\sigma_s(d\theta)>0,
\end{equation}
see Lemma~\ref{lem:equi00} for a proof of this fact.

We emphasize, moreover, that~\cref{simple ellipticity} implies a kind of ellipticity in the direction~$e_s$. This property is sometimes
strengthened with the following \textit{strong ellipticity} assumption
\begin{equation}\label{ellipticity}
\lambda_0:=\inf_{{\eta\in\bS^{N-1}}\atop{s>0}} \int_{\bS^{N-1}}|\eta\cdot \theta|^{2s}\,\sigma_s(d\theta)>0.
\end{equation}
In this work, we do not rely on this strong ellipticity assumption, and this allows us to include in our analysis the case in which~$\sigma_s=\delta_{e_s}$, being~$\delta_x$ the Dirac measure at~$x\in \R^N$ (this is an interesting example, comprising the case
of the sum of one-dimensional fractional Laplacians, possibly of different orders, in the coordinate directions, see~\eqref{9dihf3iu4tumy9b5} for more details).

\subsection{Superposition operators of arbitrary order}\label{vfcbieuwr678265r8ot30987654}

Building on the setting presented in Section~\ref{operator-simple},
we now introduce a class of superposition operators of arbitrary order.

This idea is inspired by the framework introduced in~\cite{DPSV23} 
for order up to~$2$
and goes as follows.
Let~$\mu^+$ and~$\mu^-$ be two (nonnegative) finite Borel measures on~$[0,+\infty)$ and consider the signed measure~$\mu:=\mu^+-\mu^-$. We assume that there exists~$s_{\ast}>0$ such that
\begin{equation}\label{mu-assumption1}
\mu^+([s_{\ast},+\infty))>0\quad\text{and} \quad \mu^-([s_{\ast},+\infty))=0.
\end{equation}
Moreover, we assume that there exists some~$\gamma\in[0,1)$ such that
\begin{equation}\label{mu-assumption2}
\mu^-([0,s_{\ast}))\leq \gamma \mu^+([s_{\ast},+\infty)).
\end{equation}

We then consider the following operator
\begin{equation}\label{defi:superposition op}
Lu:=\int_{[0+\infty)}L_{s_1,s}u\,\mu(ds)=\int_{[0+\infty)}L_{s_1,s}u\,\mu^+(ds)-\int_{[0,s_{\ast})}L_{s_1,s}u\,\mu^-(ds),
\end{equation}
where~$s_1:= \min\{n\in\Z\;:\; n>s\}$
and~$L_{s_1,s}$ is defined in~\eqref{defop}. Here above, we use the notation~$L_{1,0}:=\textnormal{id}$, being~$\textnormal{id}$
the identity operator; see Proposition~\ref{limit s to 0} for a proof of the fact that this is consistent with a limit procedure.

The measures~$\nu_s$ and~$\sigma_s$ are assumed to satisfy~\eqref{definition measure}, \eqref{assumption basis},
and~\eqref{simple ellipticityBIS} for~$s>0$, and we set
$$\sigma_0(d\theta):= \frac{1}{{\mathcal{H}}^{N-1}(\bS^{N-1})}d\cH^{N-1}_{\theta},$$
which in view of Lemma~\ref{lem:limms} below is again consistent with the limit procedure.

We stress that condition~\eqref{simple ellipticityBIS} is actually in place only when the support of~$\mu$ is unbounded, and can be dropped otherwise
(indeed, if the support of~$\mu$ is bounded, 
one can just modify~$\sigma_s$ for large~$s$ without affecting the problem).

We assume moreover that
\begin{equation}
\label{Mass} \begin{split}
&{\mbox{for every~$A\in \cB(\R^N)$, the map~$[0,\infty)\ni s\mapsto \nu_s(A)$ is measurable}}\\&{\mbox{with respect to~$\mu^+$ and~$\mu^-$.}}\end{split}\end{equation} and that
\begin{equation}\label{Eass}\begin{split}
&{\mbox{there exists~$\tilde{s}\in[s_\ast,+\infty)$ such that~$\mu^+([s_\ast,\tilde{s}])>0$ and}}\\&
\inf_{e\in\bS^{N-1}}
\int_{\bS^{N-1}} |e\cdot\theta|^{2s_\ast}\,\tilde{\sigma}_{\tilde{s}}
>0,\qquad 
{\mbox{where}}\qquad
\tilde{\sigma}_{\tilde{s}}(A):=\frac1{\mu^+([s_\ast,\tilde{s}])}\int_{[s_\ast,\tilde{s}]}\sigma_s(A)\,\mu^+(ds).
\end{split}\end{equation}

We present some remarks on the above assumptions.
\begin{enumerate}
\item Since~$\sigma_s$ are probability measures for all~$s>0$, the measurability assumption~\eqref{Mass} implies that~$Lu(x)$ is well-defined for~$u\in C^{\infty}_c(\R^N)$ as soon as~$\mu([m,+\infty))=0$ for some~$m>0$, see~\cref{lem:evaluation}.
\item Notice that the existence of~$\tilde{s}\in[s_\ast,+\infty)$ such that~$\mu^+([s_\ast,\tilde{s}])>0$ is guaranteed by the assumption on~$\mu$
in~\eqref{mu-assumption1}. The additional ellipticity assumption in~\eqref{Eass} is crucial to setup a variational framework in a Hilbert space which contains the fractional Sobolev space of order~$2s_\ast$, see Lemma~\ref{prop of x omega part1} below.
\end{enumerate}

We will provide in Section~\ref{examples} several explicit examples of superposition
operators comprised in our setting. To name a few interesting particular cases, let us point out the following examples of superposition operators covered in the above framework (see Section~\ref{examples} for details):
\begin{enumerate}
	\item The choices~$\mu:=\delta_{2}+a\delta_1$ with~$a\leq 1$ and~$\nu_s(dz):=|z|^{-N-2s}\,dz$ lead to the operator $L=\Delta^2+a\Delta$, which is associated with the clamped plate problem with tension.
	\item With~$\mu:=\delta_{a}+\delta_b$ for real
	numbers~$a$, $b>0$ and~$\nu_s(dz):=|z|^{-N-2s}\,dz$, the associated operator is given by~$L=(-\Delta)^a+(-\Delta)^b$, which can be either purely local (for~$a$, $b\in \N$), or purely nonlocal (for~$a$, $b\notin\N)$, or may represent a local-nonlocal interaction (for~$a\in\N$ and~$b\notin\N$). 
	\item Let $s_1,\ldots,s_N>0$, $\sigma_{s_k}:=\delta_{e_k}$ for~$k=1,\ldots,N$, and~$\mu:=\delta_{s_1}+\ldots \delta_{s_N}$. Then~$L=(-\partial_{x_1}^2)^{s_1}+\ldots+ (-\partial_{x_N}^2)^{s_N}$ is a sum of operators of different integer or fractional order, which only act on the respective axis.
\end{enumerate}

In this spirit,
we point out that, besides being able to deal with operators of arbitrary order,
our setting is more general than the one in~\cite{DPSV23} even in the case~$s\in(0,1]$,
since we do not deal only with the case of fractional Laplacians
and several nonlocal operators can be treated simultaneously
within the framework presented here.\medskip

We recall that studying fractional problems in a high generality accounting for anisotropies and inhomogeneity of the substratum is of great importance not only for real-world models, but also in view of significant challenges in the development of the mathematical theories. These difficulties arise, for instance, in the lack of extension methods, which are in turn a fundamental tool to obtain monotonicity formulas (see e.g.~\cite{MARV} and the references therein). In this spirit, while the general setting considered here complicates several technical aspects of the statements and the proofs presented here, we think that it is advantageous to find arguments that are robust enough to treat all these cases simultaneously and detect specific features of the measures and the functional spaces which impact the main structure of the problem (as in
Lemma~\ref{lemma remark xyz} and Theorems~\ref{special construction}, \ref{lem:strano09876} and~\ref{0qiwdofr09my5062pySgqweSth} below).

Moreover, in terms of probabilistic interpretation of nonlocal problems, the study of nonlocal operators subject to general measures is important to describe {L}\'{e}vy operators, which are the generators of strongly continuous semigroups of translation-invariant Markov operators (see~\cite{MR4149690, MR4490672, MR4884561} and the references therein).

\subsection{Existence theory for superposition operators of arbitrary order}

We now develop an existence theory for a class of nonlinear problems involving
superposition operators of arbitrary (possibly fractional) order. 

As in~\cite{DPSV23}, by~\eqref{mu-assumption1} we can find~$s_{\#}\in[s_{\ast},+\infty)$ such that 
\begin{equation}\label{mu-assumption1b}
	\mu^+([s_{\#},+\infty))>0
\end{equation}
which will play the role for our critical exponent. For this, we assume throughout this section that
\begin{equation}\label{final assumption}
\text{$\nu_{s_{\#}}$ satisfies~\eqref{Eass} (with~$s_{\#}$ in place of~${s}_\ast$).}
\end{equation}

Let~$\Omega\subset \R^N$ be an open set and
\begin{equation}\label{2 sharp}
2^{\ast}:=\left\{\begin{aligned} &\frac{2N}{N-2s_{\#}}&&\text{if~$N>2s_{\#}$,}\\ 
&p && \text{if~$N\leq 2s_{\#}$, where~$p\in(2,+\infty)$ is arbitrary large but fixed.}
\end{aligned}\right.
\end{equation} 

Furthermore, we assume that
\begin{equation}\label{nonempty xomega}
\text{there exists a nontrivial function in~$\cX(\Omega)$,}
\end{equation}
where~$\cX(\Omega)$ is the energy space given in Definition~\ref{def:chiomega}.
Of course, \eqref{nonempty xomega} is
trivially satisfied if~$\mu^+([t,+\infty))=0$ for some~$t>0$, since then~$C^{\infty}_c(\Omega)\subset \cX(\Omega)$. 

We stress that condition~\eqref{nonempty xomega} cannot be removed, since, in our general setting, {\em there exist finite measures~$\mu^+$ on~$[0,+\infty)$ for which the only smooth function with compact support contained in~$\cX(\Omega)$ is the one that vanishes identically}, as we will show in Theorem~\ref{lem:strano09876}.

A thorough discussion about the energy space and assumption~\eqref{nonempty xomega} will be provided in Section~\ref{defeber859604}. In particular,
we will show that there exist finite measures~$\mu^+$ for which {\em some, but not all, smooth functions with compact support possess finite norm}, see Theorem~\ref{special construction}.

In fact, {\em
the existence of smooth and compactly supported functions
with infinite norm is a characterizing property of finite measures~$\mu^+$
with unbounded support}: as showcased in Theorem~\ref{0qiwdofr09my5062pySgqweSth},
whenever the support of~$\mu^+$ is unbounded, one can construct
a smooth function with compact support and infinite norm,
and this does not even require the strong ellipticity condition~\eqref{ellipticity}.

We now discuss two different situations to find nontrivial solutions to the problem
\begin{eqnarray}\label{prob-gen}
	\left\{\begin{aligned}
		Lu&=f(x,u)&&\text{in~$\Omega$,}\\
		u&=0 &&\text{in~$\R^N\setminus \Omega$.}
	\end{aligned}\right.
\end{eqnarray}

Following~\cite{SV12}, we first suppose that~$f:\Omega\times \R\to\R$ is a Carath\'eodory function satisfying the following:
\begin{equation}\label{nonlinearprob-assumptions}
\begin{split}
&\text{there exist~$a_1$, $a_2>0$ and~$q\in(2,2^{\ast})$ such that, for a.e.~$x\in \Omega$ and~$t\in \R$,}\\
&\qquad |f(x,t)|\leq a_1+a_2|t|^{q-1};\\
& \lim_{|t|\to0}\frac{f(x,t)}{|t|}=0\quad \text{uniformly in~$x\in \Omega$;}\\
&\text{there exist~$\mu>2$ and~$r>0$ such that, for a.e.~$x\in \Omega$ and~$t\in \R$ with~$|t|\geq r$,}\\
&\qquad 0<\mu F(x,t)\leq tf(x,t),
\end{split}
\end{equation}
where the function~$F$ is the primitive of~$f$ with respect to the second variable, that is
$$
F(x,t)=\int_0^t f(x,\tau)\,d\tau.
$$
In this setting, we have the following result.

\begin{thm}\label{mountain pass solution}
Let~$\Omega\subset \R^N$ be an open bounded set. 
Let~$\mu$ satisfy~\eqref{mu-assumption1},
\eqref{mu-assumption2} and~\eqref{mu-assumption1b}
and~$L$ be as in~\eqref{defi:superposition op}.
Let~$f:\Omega\times \R\to\R$ be a Charath\'eodory function satisfying~\eqref{nonlinearprob-assumptions}.
Suppose that~\eqref{nonempty xomega} holds true.

Then, if~$\gamma$ is sufficently small, \eqref{prob-gen} admits a nontrivial mountain pass  solution.
\end{thm}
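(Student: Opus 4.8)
The plan is to run the classical Mountain Pass scheme of Ambrosetti--Rabinowitz, as in~\cite{SV12} for the fractional Laplacian and in~\cite{DPSV23} for superpositions of order at most~$2$, set in the energy space~$\cX(\Omega)$ of Definition~\ref{def:chiomega} (a Hilbert space with its natural norm, which is nontrivial precisely by~\eqref{nonempty xomega}). Writing $L=L^+-L^-$, with $L^+u:=\int_{[0,\infty)}L_{s_1,s}u\,\mu^+(ds)$ and $L^-u:=\int_{[0,s_\ast)}L_{s_1,s}u\,\mu^-(ds)$, and denoting by~$\cE^\pm$ the associated (nonnegative, symmetric, since~$\delta_mu(x,y)$ is even in~$y$) bilinear forms and $\cE:=\cE^+-\cE^-$, the weak formulation of~\eqref{prob-gen} reads $\cE(u,v)=\int_\Omega f(x,u)v\,dx$ for all $v\in\cX(\Omega)$, so that solutions are exactly the critical points of
\[
\cJ(u):=\tfrac12\,\cE(u,u)-\int_\Omega F(x,u)\,dx,\qquad u\in\cX(\Omega).
\]
By the growth conditions in~\eqref{nonlinearprob-assumptions} and the embeddings of~$\cX(\Omega)$ into~$L^q(\Omega)$ recorded in Lemma~\ref{prop of x omega part1} (continuous for $q\in[2,2^\ast]$ and compact for $q<2^\ast$, since~$\Omega$ is bounded), $\cJ$ is of class~$C^1$ on~$\cX(\Omega)$.

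The first key point is the \textbf{coercivity of~$\cE$ on~$\cX(\Omega)$ when~$\gamma$ is small}, which is precisely where the hypothesis on~$\gamma$ enters. On the bounded set~$\Omega$, the quadratic forms~$\cE_{s_1,s}$ of order~$s<s_\ast$ are controlled, uniformly in~$s\in[0,s_\ast)$, by the squared Sobolev norm of order~$s_\ast$ (using the Fourier comparison $|\xi|^{2s}\le|\xi|^{2s_\ast}+1$ for $s\le s_\ast$, the fractional Poincaré inequality on~$\Omega$, and the bookkeeping on the normalizations~$c_{m,s}$), while by~\eqref{Eass} the form~$\cE^+(u,u)$ is bounded below by a positive multiple of that same squared norm. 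Combining these bounds with~\eqref{mu-assumption2} yields $\cE^-(u,u)\le C\gamma\,\cE^+(u,u)$ for a constant $C=C(\Omega,\mu,\{\sigma_s\})>0$, hence $\cE(u,u)\ge(1-C\gamma)\cE^+(u,u)\ge c_0\|u\|_{\cX(\Omega)}^2$ as soon as $\gamma<1/C$; this is the meaning of ``$\gamma$ sufficiently small''. I expect this to be the main obstacle: it is the step that genuinely uses the structure of the measures~$\nu_s$,~$\sigma_s$, the uniform-in-$s$ estimates, and the definition of~$\cX(\Omega)$, whereas everything afterwards is standard once coercivity is in hand.

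Granting coercivity, the Mountain Pass geometry follows routinely. Since $\cJ(0)=0$ and the first two lines of~\eqref{nonlinearprob-assumptions} give $|F(x,t)|\le\eps|t|^2+C_\eps|t|^q$ with $q<2^\ast$, Sobolev embedding yields $\cJ(u)\ge(\tfrac{c_0}{2}-C\eps)\|u\|_{\cX(\Omega)}^2-C_\eps\|u\|_{\cX(\Omega)}^q$; choosing~$\eps$ small and then $\|u\|_{\cX(\Omega)}=\rho$ small gives $\cJ(u)\ge\alpha>0$ on the sphere of radius~$\rho$. The Ambrosetti--Rabinowitz condition in~\eqref{nonlinearprob-assumptions} implies $F(x,t)\ge c_1|t|^\mu-c_2$ with $\mu>2$, so, fixing any nontrivial $u_0\in\cX(\Omega)$ (available by~\eqref{nonempty xomega}), one has $\cJ(tu_0)\to-\infty$ as $t\to+\infty$; hence $e:=t_0u_0$ satisfies $\|e\|_{\cX(\Omega)}>\rho$ and $\cJ(e)\le0$ for~$t_0$ large.

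Finally, I would verify the Palais--Smale condition at every level $c>0$. Given a sequence with $\cJ(u_n)\to c$ and $\cJ'(u_n)\to0$ in~$\cX(\Omega)^\ast$, the estimate $\cJ(u_n)-\tfrac1\mu\langle\cJ'(u_n),u_n\rangle\ge(\tfrac12-\tfrac1\mu)c_0\|u_n\|_{\cX(\Omega)}^2-C$ (from the Ambrosetti--Rabinowitz inequality and $\mu>2$) forces $(u_n)$ bounded; passing to a subsequence, $u_n\weak u$ in~$\cX(\Omega)$ and $u_n\to u$ in~$L^q(\Omega)$ by compactness, and the growth bound on~$f$ together with H\"older's inequality gives $\int_\Omega(f(x,u_n)-f(x,u))(u_n-u)\,dx\to0$. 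Since $\langle\cJ'(u_n),u_n-u\rangle\to0$ and $\langle\cJ'(u),u_n-u\rangle\to0$, it follows that $\cE(u_n-u,u_n-u)\to0$, and coercivity then yields $u_n\to u$ strongly in~$\cX(\Omega)$. The Mountain Pass Theorem therefore produces a critical point~$u$ at the level $c\ge\alpha>0$; as $\cJ(u)>0=\cJ(0)$, we get $u\not\equiv0$, which is the desired nontrivial solution of~\eqref{prob-gen}.
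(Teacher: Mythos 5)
Your proposal is correct and follows essentially the same route as the paper: the Ambrosetti--Rabinowitz mountain-pass scheme for $\cJ(u)=\tfrac12\cE(u,u)-\int_\Omega F(x,u)\,dx$ on $\cX(\Omega)$, with the coercivity bound $\cE(u,u)\ge(1-C\gamma)\cE_+(u,u)$ (the paper's Lemma~\ref{bfndi3o2r73lemma}, built on Propositions~\ref{prop of x omega part2}--\ref{prop of x omega part3}) as the genuinely structure-dependent step, and the geometry, boundedness of Palais--Smale sequences, and their strong convergence via the compact embedding into $L^q(\Omega)$ for $q<2^\ast$ proceeding as standard applications. You correctly identified that coercivity in $\gamma$ is the crux and that the rest is routine; the paper's presentation simply splits these into Propositions~\ref{mp step1}--\ref{mp step4}.
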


We think that it is an interesting open problem to determine
whether or not one can obtain mountain pass solutions with
a prescribed sign. In particular,
in~\cite[Corollary~13]{SV12} both positive and negative solutions
are constructed, but the technique used there is not immediately
applicable to the general case treated in this paper.
Specifically, the proof of~\cite[Corollary~13]{SV12}
uses the fact that the positive and negative parts of, say, smooth functions
belong to the energy space. However, 
when~$s>\frac32$ the maps~$u\mapsto u^+$ and~$u\mapsto u^-$ are not well-defined in~$H^s(\R^N)$ (see e.g.~\cite{M89,MN19}),
making this type of arguments unavailable in the generality considered here.

The second type of right-hand side that we consider is the one with jumping nonlinearities, which reads as
\begin{eqnarray}\label{nonlinearprob}
\left\{\begin{aligned}
Lu&=bu^+-au^-+|u|^{2^\ast-2}u&&\text{in~$\Omega$,}\\
u&=0 &&\text{in~$\R^N\setminus \Omega$,}
\end{aligned}\right.
\end{eqnarray}
where~$2^{\ast}$ is as in~\eqref{2 sharp}. 

We point out that when~$a=b$, the right-hand side in
the equation in~\eqref{nonlinearprob} boils down to a Br\'ezis-Nirenberg
nonlinearity. But when~$a\neq b$ some care is needed in the analysis of~\eqref{nonlinearprob}, due to the lack
of regularity of the source term. 

We mention that the study of
nondifferentiable nonlinearities has a very consolidated tradition in terms of mathematical theory, see
e.g.~\cite{MR499709, MR1181350, MR1215262, MR1725568}, and also in light of
concrete problems, such as singular perturbations of classical nonlinearities, plasma problems, mathematical biology, etc., see~\cite{MR1303035}.

Note that our superposition operator has a {\em discrete spectrum} consisting
of (Dirichlet) eigenvalues 
$$
0<\lambda_1<\ldots < \lambda_{n}< \ldots\to +\infty\quad\text{for~$n\to+\infty$},
$$
thanks to the forthcoming \cref{prop of x omega part2}, where each eigenvalue has a finite multiplicity. The goal now is to choose~$a$ and~$b$ such that~$Lu-bu^++au^-$ has a nontrivial solution. This leads to the \textit{Dancer-Fu\u{c}\'ik spectrum}, see Section~4
in~\cite{PS13}, Section~2
in~\cite{PS23}, and the introduction in~\cite{DPSV23}.

The latter spectrum is a closed subset of~$\R^2$ and it satisfies the following properties.
Let~$\{\lambda_k\}_{k\in \N}$ be the eigenvalues of~$L$ and let~$l\in \N$ with~$l\geq 2$. There are two continuous strictly decreasing functions~$A_{l-1}$ and~$B_l$ such that:
\begin{itemize}
\item it holds~$A_{l-1}\leq B_{l}$ in~$(\lambda_{l-1},\lambda_{l+1})$,
\item $A_{l-1}(\lambda_l)=\lambda_l=B_{l}(\lambda_l)$,
\item for all~$a\in(\lambda_{l-1},\lambda_{l+1})$ both~$(a,A_{l-1}(a))$ and~$(a,B_l(a))$ belong to the Dancer-Fu\u{c}\'ik spectrum,
\item if~$a$, $b\in(\lambda_{l-1},\lambda_{l+1})$ with either~$b<A_{l-1}(a)$ or~$b>B_l(a)$, then~$(a,b)$ does not belong to the Dancer-Fu\u{c}\'ik spectrum.
\end{itemize}
Analogously to~\cite{DPSV23}, we let~$l\in\N$ with~$l\geq 2$ and assume that
\begin{equation}\label{D-F-ab}
b<A_{l-1}(a).
\end{equation}
The part in~$Q_{l}:=(\lambda_{l-1},\lambda_{l+1})^2$ where the couple~$(a,b)$ satisfies~\eqref{D-F-ab} can be visualized through the exemplification in~\cite[Figure~2]{DPSV23}.

In this setting, we have the following result.

\begin{thm}\label{critical case}
Let~$\Omega\subset \R^N$ be an open bounded set.
Let~$\mu$ satisfy~\eqref{mu-assumption1},
\eqref{mu-assumption2} and~\eqref{mu-assumption1b}.

Let~$L$ be as in~\eqref{defi:superposition op}.
Let~$l\in\N$ with~$l\geq 2$, and let~$(a,b)\in Q_l:=(\lambda_{l-1},\lambda_{l+1})^2$. Assume that~\eqref{D-F-ab} holds true.

Then, if~$\gamma$ is sufficiently small, 
problem~\eqref{nonlinearprob} admits a nontrivial solution.
\end{thm}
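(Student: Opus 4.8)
The plan is to adapt the variational linking scheme used for the Br\'ezis--Nirenberg problem with jumping nonlinearities (as in~\cite{DPSV23, PS13, PS23}) to the present superposition setting, the key new point being that the energy space~$\cX(\Omega)$ behaves, thanks to assumption~\eqref{final assumption} and Lemma~\ref{prop of x omega part1}, like the fractional Sobolev space of order~$s_{\#}$, so that the critical exponent~$2^{\ast}$ in~\eqref{2 sharp} is the natural one and the associated Sobolev embedding~$\cX(\Omega)\embed L^{2^{\ast}}(\Omega)$ holds (compactly for all subcritical exponents by \cref{prop of x omega part2}). First I would set up the energy functional
\[
\cJ(u):=\frac12\,\|u\|_{\cX(\Omega)}^2-\frac b2\int_\Omega (u^+)^2\,dx-\frac a2\int_\Omega (u^-)^2\,dx-\frac1{2^{\ast}}\int_\Omega|u|^{2^{\ast}}\,dx
\]
on~$\cX(\Omega)$, after checking that the bilinear form~$\|\cdot\|_{\cX(\Omega)}^2$ coming from~$L$ (built in Section~\ref{vfcbieuwr678265r8ot30987654}) is a genuine equivalent Hilbert norm provided~$\gamma$ is small enough, using~\eqref{mu-assumption2} to absorb the negative part~$\mu^-$ of the measure into the positive part~$\mu^+$; this smallness of~$\gamma$ is exactly where the hypothesis ``$\gamma$ sufficiently small'' enters. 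Weak solutions of~\eqref{nonlinearprob} are then precisely critical points of~$\cJ$, and~$\cJ\in C^1(\cX(\Omega))$ because the jumping term~$bu^+-au^-$ and the critical power are, respectively, Lipschitz and~$C^1$ as Nemytskii maps into~$\cX(\Omega)^{\ast}$.

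Next I would produce the linking geometry dictated by the Dancer--Fu\v{c}\'ik condition~\eqref{D-F-ab}. Splitting~$\cX(\Omega)=\cX^-\oplus\cX^+$ according to the spectral decomposition below~$\lambda_{l-1}$ and above~$\lambda_l$ (using the discrete spectrum from~\cref{prop of x omega part2}), the inequality~$b<A_{l-1}(a)$ together with the fact that~$(a,A_{l-1}(a))$ lies on the first lower curve of the Fu\v{c}\'ik spectrum guarantees, by the standard computation (see Proposition~4.x in~\cite{PS13} and the introduction of~\cite{DPSV23}), that the quadratic part~$q(u):=\|u\|^2-b\|u^+\|_{L^2}^2-a\|u^-\|_{L^2}^2$ is positive definite on a suitable finite-codimensional cone/subspace and negative on a complementary finite-dimensional one, yielding a linking pair of sets~$(S,Q)$ (a sphere in the ``positive'' part linking the boundary of a disk-plus-cone in the ``negative'' part) on which~$\cJ$ separates: $\inf_S\cJ>0=\cJ(0)$ and $\sup_{\partial Q}\cJ\le0$ (the critical term only helps here, being negative). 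From this one reads off a mountain-pass/linking minimax level~$c>0$.

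The main obstacle, as always with a critical exponent, is the failure of the global Palais--Smale condition; I would only prove that~$\cJ$ satisfies~$(PS)_c$ below the threshold~$c<\frac{s_{\#}}N\,S_{\#}^{N/(2s_{\#})}$, where~$S_{\#}$ is the best Sobolev constant for the embedding associated with the leading operator~$(-\Delta)^{s_{\#}}$ (equivalently~$\nu_{s_{\#}}$); this uses the standard concentration--compactness / Brezis--Lieb splitting, the only subtlety being to control the lower-order pieces~$\int L_{s_1,s}u\,\mu(ds)$ for~$s<s_{\#}$, which are compact perturbations by \cref{prop of x omega part2} and hence do not affect the threshold. To verify the strict inequality~$c<\frac{s_{\#}}N S_{\#}^{N/(2s_{\#})}$ I would insert the Talenti-type extremal~$u_{\eps}$ for~$(-\Delta)^{s_{\#}}$, truncated and placed inside~$\Omega$, into the linking set and estimate~$\sup\cJ$ along the corresponding family: the leading term reproduces the Aubin--Talenti balance, the linear/jumping terms contribute a strictly negative~$O(\eps^{N-2s_{\#}})$ (or a logarithmic correction in borderline dimensions) coming from the positive-definiteness gap ensured by~$b<A_{l-1}(a)$, while the cross terms with the lower-order operators are of strictly smaller order because their symbols grow like~$|\xi|^{2s}$ with~$s<s_{\#}$ (this is the analogue of the dimension restriction in Brezis--Nirenberg, and I expect one may need~$N$ large relative to~$2s_{\#}$, or an additional lower-order perturbation, exactly as in~\cite{DPSV23}); in the non-Hilbertian regime~$N\le 2s_{\#}$ the exponent~$p$ is subcritical and this step is immediate. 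Granting~$(PS)_c$ at that level, the linking theorem of Rabinowitz (or its variant for this geometry used in~\cite{PS13, PS23, DPSV23}) yields a critical point at level~$c>0$, hence a nontrivial weak solution of~\eqref{nonlinearprob}, completing the proof.
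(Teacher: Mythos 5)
Your proposal is correct in outline but follows a genuinely different, and technically heavier, route than the paper. You opt for the classical Br\'ezis--Nirenberg machinery: concentration--compactness to establish Palais--Smale compactness below the threshold $\frac{s_\#}{N}S_\#^{N/(2s_\#)}$, and truncated Talenti-type extremals inserted into the linking family to show the minimax level falls below that threshold. The paper instead invokes the abstract linking theorem of Perera--Sportelli~\cite{PS23} (reproduced as Theorem~\ref{thm:ps}), whose compactness input is the weaker requirement that every $(PS)_c$-sequence, for $0<c<c^*$, has a subsequence converging weakly to a \emph{nontrivial} critical point (Proposition~\ref{linking ps1}), and verifies the level bound $\sup_Q E<c^*$ on the cone $Q=\{v+te: v\in N_{l-1},\ t\ge0\}$ by an elementary computation on the eigenspaces: for $u$ in the span of the first $l$ eigenspaces one has $\cE(u,u)\le\lambda_l\|u\|_{L^2}^2$, and H\"older's inequality $\|u\|_{L^{2^*}}^{2^*}\ge|\Omega|^{-(2^*-2)/2}\|u\|_{L^2}^{2^*}$ yields $E(u)\le\big(\tfrac12-\tfrac1{2^*}\big)|\Omega|\,(\lambda_l-\min\{a,b\})^{2^*/(2^*-2)}=:c_*$. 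No bubble family enters, so no dimension restriction of Br\'ezis--Nirenberg type appears --- which directly resolves the uncertainty you flag about possibly needing ``$N$ large relative to $2s_\#$''. What each route buys: yours pins the compactness threshold to the best Sobolev constant, the standard and robust way to localize compactness at critical growth; the paper's sidesteps all extremal estimates by exploiting the spectral decomposition together with the abstract Perera--Sportelli theorem, at the cost of relying on the weak-compactness hypothesis of Proposition~\ref{linking ps1} --- and there your scepticism about Palais--Smale at arbitrary levels would indeed be the right thing to press on, since the dominated-convergence argument borrowed from the subcritical Proposition~\ref{mp step4} does not by itself control $\int_\Omega|u_j|^{2^*}\,dx$, and a Br\'ezis--Lieb decomposition of the sort you describe is really what is needed to close that step.
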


\begin{remark} In Theorems~\ref{mountain pass solution}
and~\ref{critical case} the smallness of~$\gamma$ 
guarantees that the contribution from~$\mu^-$ is ``suitably small'' compared to the one
from~$\mu^+$, thus allowing energy methods to be implemented.
\end{remark}

\begin{remark}
We emphasize that problems~\eqref{prob-gen} and~\eqref{nonlinearprob} are combined with \textit{Dirichlet boundary conditions}, which are encoded in the function space~$\cX(\Omega)$, see also \cref{dirichlet prob} in this regard. In particular, if~$\mu$ has positive support in~$(1,\infty)$, the fact that the functions are in $\cX(\Omega)$ does not only give that $u=0$ in $\R^N\setminus \Omega$, but that also some derivatives in the normal direction have vanishing traces depending on $\mu$ and $\nu_s$. For the classical cases involving powers of the Laplacian, see e.g.~\cite{zbMATH05712793} for polylaplacians and~\cite{zbMATH06994026, zbMATH07024011} for higher-order fractional Laplacians.
\end{remark}

\subsection{Organization of the paper}
Section~\ref{bcnowiur895u456789} contains some basic facts about the probability measures~$\sigma_s$.

In Section~\ref{sec:1} we provide a detailed analysis of the operator~$L_{m,s}$
and in Section~\ref{sec:var23} we discuss the functional setting.

Section~\ref{examples} contains several explicit examples of
superposition operators and some characterizations of
the measures and the functional spaces under consideration.

Section~\ref{sec:45} presents the functional setting associated with general superposition operators of mixed order.

Section~\ref{sec:67} contains the proofs of Theorems~\ref{mountain pass solution}
and~\ref{critical case}.


\section{On the measures~\texorpdfstring{$\sigma_s$}{sigma-s}}\label{bcnowiur895u456789}

Here we provide some basics results on the measures~$\sigma_s$.

We start with the following observation:

\begin{lemma}\label{lemma:nuovo}
Let~$\sigma$ be a probability measure on~$\bS^{N-1}$. Then, for all~$\alpha\ge0$,
\begin{equation}\label{nelladimsimpleellipticitybasic}
\max_{e\in\bS^{N-1}}\int_{\bS^{N-1}}|e\cdot \theta|^{\alpha}\,\sigma(d\theta)>0.
\end{equation} \end{lemma}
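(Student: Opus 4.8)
The plan is to prove \eqref{nelladimsimpleellipticitybasic} by contradiction. Suppose that for some fixed $\alpha \geq 0$ we had
\[
\int_{\bS^{N-1}} |e\cdot\theta|^\alpha \, \sigma(d\theta) = 0 \qquad \text{for every } e \in \bS^{N-1}.
\]
Since $|e\cdot\theta|^\alpha \geq 0$, this forces $|e\cdot\theta|^\alpha = 0$ for $\sigma$-a.e.\ $\theta$, i.e.\ $\sigma$ is concentrated on the great subsphere $\{\theta \in \bS^{N-1} : e\cdot\theta = 0\}$ (a $\sigma$-null complement). The key point is that this must hold simultaneously for \emph{all} directions $e$.

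First I would pick a fixed unit vector $e_1$ and set $H_1 := \{\theta : e_1\cdot\theta = 0\}$, so that $\sigma(\bS^{N-1}\setminus H_1) = 0$. Then choose a second unit vector $e_2$ orthogonal to $e_1$; applying the vanishing hypothesis to $e_2$ gives $\sigma(\bS^{N-1}\setminus H_2) = 0$ where $H_2 := \{\theta : e_2\cdot\theta=0\}$, hence $\sigma$ is concentrated on $H_1\cap H_2$. Iterating with an orthonormal basis $e_1,\dots,e_N$ of $\R^N$, one gets that $\sigma$ is concentrated on $\bigcap_{j=1}^N H_j = \{\theta : e_j\cdot\theta = 0 \text{ for all } j\} = \{0\}$. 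But $\{0\}\notin\bS^{N-1}$, so this intersection is empty, whence $\sigma(\bS^{N-1}) = 0$, contradicting $\sigma(\bS^{N-1}) = 1$. (In the trivial case $N=1$, $\bS^{0} = \{\pm1\}$ and $|e\cdot\theta|^\alpha = 1$ for $e = \theta$, so $\int|e\cdot\theta|^\alpha\sigma(d\theta) \geq \sigma(\{e\}) $ and the maximum over $e\in\{\pm 1\}$ is at least $\tfrac12$; this edge case should be mentioned since for $\alpha = 0$ the integrand is identically $1$ and the statement is immediate in all dimensions anyway.)

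A minor care point: when $\alpha = 0$ the expression $|e\cdot\theta|^0$ should be read as the constant $1$ (including at $e\cdot\theta = 0$, by convention $0^0 = 1$ in this measure-theoretic context), so the integral equals $\sigma(\bS^{N-1}) = 1 > 0$ for every $e$, and there is nothing to prove. For $\alpha > 0$ the argument above applies verbatim, using that $t\mapsto t^\alpha$ vanishes only at $t = 0$.

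I do not expect a serious obstacle here; the only thing to be careful about is phrasing the "concentrated on a null set" step correctly — namely that $\int g\,d\sigma = 0$ with $g\ge 0$ implies $g = 0$ $\sigma$-a.e., and that a finite intersection of $\sigma$-conull sets is $\sigma$-conull — and handling the degenerate low-dimensional/$\alpha=0$ cases explicitly so the claim "$\bigcap_j H_j = \emptyset$ in $\bS^{N-1}$" is not vacuously misapplied.
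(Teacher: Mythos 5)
Your proof is correct, but it takes a genuinely different route from the paper's. The paper averages the (assumed-zero) integral over $e \in \bS^{N-1}$ with respect to the surface measure $\mathcal{H}^{N-1}$ and uses Fubini's theorem for finite measures to swap the order of integration; by rotational invariance the inner integral $\int_{\bS^{N-1}} |e\cdot\theta|^\alpha\,d\mathcal{H}^{N-1}_e$ is a strictly positive constant (independent of $\theta$), so one gets $0 = (\text{positive constant}) \cdot \sigma(\bS^{N-1})$, a contradiction. Your argument instead uses the pointwise consequence of a vanishing integral of a nonnegative function, namely that $\sigma$ is concentrated on each great subsphere $\{e_j\cdot\theta = 0\}$, and then takes the intersection over an orthonormal basis $e_1,\dots,e_N$ to force $\sigma$ to be carried by $\{0\}\cap\bS^{N-1} = \emptyset$. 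Both arguments are short and valid. Yours is more elementary (no Fubini, and only $N$ choices of $e$ rather than an integral over all of them), but needs the separate treatment of the $\alpha = 0$ case that you correctly flag (the implication $|e\cdot\theta|^\alpha=0 \Rightarrow e\cdot\theta=0$ requires $\alpha > 0$); the paper's Fubini argument treats $\alpha \geq 0$ uniformly since $\int_{\bS^{N-1}} |e_1|^\alpha\,d\mathcal{H}^{N-1}_e > 0$ also when $\alpha = 0$. Neither proof needs the max to actually be attained, since both argue from vanishing of the integral for every $e$, which is the correct negation of \eqref{nelladimsimpleellipticitybasic} given the nonnegativity of the integrand.
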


\begin{proof}
Suppose by contradiction that, for all~$e\in\bS^{N-1}$,
$$ \int_{\bS^{N-1}}|e\cdot \theta|^{\alpha}\,\sigma(d\theta)=0.$$

We consider the surface (Hausdorff) measure~${\mathcal{H}}^{N-1}$ on~$\bS^{N-1}$ 
and we use Fubini's Theorem (for finite measures, see e.g.~\cite[Theorem~1.22]{MR3409135}) to see that
\begin{eqnarray*}&&
0=\int_{\bS^{N-1}} \left( \;\int_{\bS^{N-1}}|e\cdot \theta|^{\alpha}\,\sigma(d\theta)\right)\,d{\mathcal{H}}^{N-1}_e=
\int_{\bS^{N-1}} \left(\; \int_{\bS^{N-1}}|e\cdot \theta|^{\alpha}\,
d{\mathcal{H}}^{N-1}_e\right)\,\sigma(d\theta)\\&&\qquad
=\int_{\bS^{N-1}} \left(\; \int_{\bS^{N-1}}|e_1|^{\alpha}\,
d{\mathcal{H}}^{N-1}_e\right)\,\sigma(d\theta)= \int_{\bS^{N-1}}|e_1|^{\alpha}\,
d{\mathcal{H}}^{N-1}_e
.
\end{eqnarray*}
This establishes the desired contradiction and completes the proof.
\end{proof}

We now show that
the assumptions~\eqref{simple ellipticityBIS} and~\eqref{simple ellipticity} are actually equivalent, and this justifies the fact that~\eqref{simple ellipticity} is assumed throughout the paper.

\begin{lemma}\label{lem:equi00}
For all~$s>0$, let~$\sigma_s$ be a probability measure and
let~$e_s\in\bS^{N-1}$ be as in~\eqref{bvncmx9e76954uy95}.

Then, the statements in~\eqref{simple ellipticityBIS} and~\eqref{simple ellipticity} are equivalent.
\end{lemma}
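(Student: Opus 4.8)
The plan is to show the two conditions are equivalent by proving both implications directly. The implication \eqref{simple ellipticity}$\Rightarrow$\eqref{simple ellipticityBIS} is trivial: if the infimum over all $s>0$ of the quantity $\int_{\bS^{N-1}}|e_s\cdot\theta|^{2s}\sigma_s(d\theta) = M_{s,\sigma_s}(e_s)$ is a positive number $\lambda$, then the $\liminf$ as $s\to+\infty$ is certainly bounded below by $\lambda>0$. So the content is in the converse.

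For \eqref{simple ellipticityBIS}$\Rightarrow$\eqref{simple ellipticity}, suppose $\liminf_{s\to+\infty} M_{s,\sigma_s}(e_s) = c > 0$. Then there exists $S>0$ such that $M_{s,\sigma_s}(e_s) \geq c/2$ for all $s\geq S$. It remains to bound $M_{s,\sigma_s}(e_s)$ away from $0$ uniformly on the compact-ish range $(0,S]$. The key point is a lower bound for $M_{s,\sigma_s}(e_s) = \max_{e}\int_{\bS^{N-1}}|e\cdot\theta|^{2s}\sigma_s(d\theta)$ that does not degenerate as $s$ ranges over a bounded set. First I would observe that for $e$ fixed, $\theta\mapsto|e\cdot\theta|$ takes values in $[0,1]$, so $|e\cdot\theta|^{2s}\geq|e\cdot\theta|^{2S}$ whenever $s\leq S$ (larger exponent makes a number in $[0,1]$ smaller, so smaller exponent makes it larger — wait, we want a lower bound, so we use that $s\le S$ gives $|e\cdot\theta|^{2s}\ge |e\cdot\theta|^{2S}$). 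Hence for all $s\in(0,S]$,
\begin{equation*}
M_{s,\sigma_s}(e_s) = \max_{e\in\bS^{N-1}}\int_{\bS^{N-1}}|e\cdot\theta|^{2s}\,\sigma_s(d\theta)
\geq \max_{e\in\bS^{N-1}}\int_{\bS^{N-1}}|e\cdot\theta|^{2S}\,\sigma_s(d\theta).
\end{equation*}
Now I would like to invoke \cref{lemma:nuovo} with $\alpha:=2S$, which guarantees that for each individual $\sigma_s$ this last maximum is strictly positive; the obstacle is that \cref{lemma:nuovo} gives positivity for each $s$ but not a bound uniform in $s$. To get uniformity I would use the averaging/Fubini idea in the proof of \cref{lemma:nuovo} quantitatively: integrating $e\mapsto\int|e\cdot\theta|^{2S}\sigma_s(d\theta)$ against the normalized surface measure on $\bS^{N-1}$ and swapping the order of integration yields the constant $\fint_{\bS^{N-1}}|e_1|^{2S}\,d\cH^{N-1}$, which depends only on $N$ and $S$ and not on $\sigma_s$; since the maximum of a function is at least its average, we get
\begin{equation*}
M_{s,\sigma_s}(e_s) \geq \frac{1}{\cH^{N-1}(\bS^{N-1})}\int_{\bS^{N-1}}|e_1|^{2S}\,d\cH^{N-1}_e =: \kappa_{N,S}>0
\qquad\text{for all } s\in(0,S].
\end{equation*}

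Combining the two ranges, $\inf_{s>0} M_{s,\sigma_s}(e_s) \geq \min\{\kappa_{N,S},\,c/2\} > 0$, which is exactly \eqref{simple ellipticity}, and the proof is complete. The main obstacle, as indicated, is realizing that \cref{lemma:nuovo} alone is insufficient and that one must extract from its Fubini argument a lower bound that is uniform over the family $\{\sigma_s\}_{s\in(0,S]}$; once that is noticed, the monotonicity trick in the exponent reduces everything to a single exponent $2S$ and the argument closes cleanly.
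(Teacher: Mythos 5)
Your proof is correct, and it takes a genuinely different and in fact more elementary route than the paper's. The paper proves the nontrivial implication by contradiction: it extracts a sequence $s_j$ along which the maximum tends to zero, splits into bounded/unbounded cases, and in the bounded case invokes the weak compactness (Prokhorov) of probability measures on $\bS^{N-1}$ to pass to a weak limit $\sigma_\infty$, deriving a contradiction with Lemma~\ref{lemma:nuovo}. You instead argue directly, combining two observations: the monotonicity of $|e\cdot\theta|^{2t}$ in the exponent $t$ on $[0,1]$, which collapses the whole range $s\in(0,S]$ to the single exponent $2S$; and a \emph{quantitative} reading of the Fubini argument inside the proof of Lemma~\ref{lemma:nuovo}, namely that the $e$-average of $\int_{\bS^{N-1}}|e\cdot\theta|^{2S}\sigma_s(d\theta)$ equals $\kappa_{N,S}:=\frac{1}{\cH^{N-1}(\bS^{N-1})}\int_{\bS^{N-1}}|e_1|^{2S}\,d\cH^{N-1}_e$ for every probability measure $\sigma_s$ (rotational invariance kills the $\theta$-dependence and $\sigma_s(\bS^{N-1})=1$), so the maximum over $e$ is bounded below by a constant independent of $\sigma_s$. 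This avoids weak compactness entirely and gives the explicit uniform lower bound $\min\{\kappa_{N,S},\,c/2\}$. The only thing your writeup loses relative to the paper is that the paper reuses the compactness machinery it already needs in Lemma~\ref{lem:limms}; but for this particular lemma your approach is shorter and arguably cleaner, and it produces a quantitative constant the contradiction argument does not.
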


\begin{proof} The desired result will follow once we prove that~\eqref{simple ellipticityBIS} implies~\eqref{simple ellipticity},
being the other implication obvious.

For this, suppose, for the sake of contradiction, that there exists a sequence~$s_j$ such that
\begin{equation}\label{xm9058Ln0-9}
\sup_{e\in\bS^{n-1}}
\int_{\bS^{n-1}}|e\cdot\theta|^{2s_j}\sigma_{s_j}(d\theta)\le\frac1j.\end{equation}
We distinguish two cases, either~$s_j$ is a bounded sequence or not.
First, if~$s_j$ is bounded, we suppose, up to a subsequence, that~$s_j$ converges to some~$s_\infty\in[0,+\infty)$ as~$j\to+\infty$. 
Without loss of generality, we can assume also that~$|s_j-s_\infty|\le1$.

We pick~$\epsilon>0$ and, for all~$e\in \bS^{n-1}$, let
$$S_{\epsilon,e}:=\big\{ \theta\in\bS^{n-1}{\mbox{ s.t. }}|e\cdot\theta|\le\epsilon\big\}.$$
We observe that, when~$e\in \bS^{n-1}$ and~$\theta\in\bS^{n-1}\setminus S_{\epsilon,e}$,
\begin{eqnarray*}&&
\big| |e\cdot\theta|^{2s_j}-|e\cdot\theta|^{2s_\infty}\big|
=|e\cdot\theta|^{2s_\infty}\big||e\cdot\theta|^{2(s_j-s_\infty)}-1\big|
\le
\big||e\cdot\theta|^{2(s_j-s_\infty)}-1\big|
\\&&\qquad=
\big|\exp\big(2(s_j-s_\infty)\log |e\cdot\theta|\big)-1\big|\\&&\qquad=\left|\;
2(s_j-s_\infty)\log |e\cdot\theta| \int_0^1\exp\big(2(s_j-s_\infty)\log |e\cdot\theta|t\big)\,dt\right|
\le C_\epsilon\,|s_j-s_\infty|,
\end{eqnarray*} with~$C_\epsilon:=2\epsilon^{-2}|\log\epsilon|$.

Therefore, since~$\sigma_{s_j}$ is a probability measure,
\begin{eqnarray*}&&
\left|\;\int_{\bS^{n-1}\setminus S_{\epsilon,e}}|e\cdot\theta|^{2s_j}\sigma_{s_j}(d\theta)-
\int_{\bS^{n-1}\setminus S_{\epsilon,e}}|e\cdot\theta|^{2s_\infty}\sigma_{s_j}(d\theta)\right|\\&&\qquad\quad\le
C_\epsilon\,|s_j-s_\infty|\,\int_{\bS^{n-1}\setminus S_{\epsilon,e}}\sigma_{s_j}(d\theta)\le
C_\epsilon\,|s_j-s_\infty|.\end{eqnarray*}
This, in tandem with~\eqref{xm9058Ln0-9}, returns that
\begin{equation}\label{9mc8v5buMTFS-7n0}\begin{split}
0&\ge\lim_{j\to+\infty}\sup_{e\in\bS^{n-1}}\int_{\bS^{n-1}\setminus S_{\epsilon,e}}|e\cdot\theta|^{2s_j}\sigma_{s_j}(d\theta)\\&\ge
\lim_{j\to+\infty}\left(\sup_{e\in\bS^{n-1}}\int_{\bS^{n-1}\setminus S_{\epsilon,e}}|e\cdot\theta|^{2s_\infty}\sigma_{s_j}(d\theta)-C_\epsilon\,|s_j-s_\infty|\right)\\&=\lim_{j\to+\infty}\sup_{e\in\bS^{n-1}}\int_{\bS^{n-1}\setminus S_{\epsilon,e}}|e\cdot\theta|^{2s_\infty}\sigma_{s_j}(d\theta)
.\end{split}\end{equation}

Also, by the weak compactness of probability measures (see e.g.~\cite[Theorems~6.1 and~6.4, Chapter~2]{MR226684}),
up to a subsequence we may assume that~$\sigma_{s_j}$ converges weakly
to a probability measure~$\sigma_\infty$ and therefore, for all~$e\in\bS^{n-1}$,
$$\lim_{j\to+\infty}\int_{\bS^{n-1}\setminus S_{\epsilon,e}}|e\cdot\theta|^{2s_\infty}\sigma_{s_j}(d\theta)=\int_{\bS^{n-1}\setminus S_{\epsilon,e}}|e\cdot\theta|^{2s_\infty}\sigma_\infty(d\theta).$$

{F}rom this and~\eqref{9mc8v5buMTFS-7n0} we deduce that, for all~$e\in\bS^{n-1}$,
\begin{equation*}\begin{split}
0&=\int_{\bS^{n-1}\setminus S_{\epsilon,e}}|e\cdot\theta|^{2s_\infty}\sigma_\infty(d\theta)\\&=
\int_{\bS^{n-1}}|e\cdot\theta|^{2s_\infty}\sigma_\infty(d\theta)
-\int_{S_{\epsilon,e}}|e\cdot\theta|^{2s_\infty}\sigma_\infty(d\theta)\\&\ge
\int_{\bS^{n-1}}|e\cdot\theta|^{2s_\infty}\sigma_\infty(d\theta)
-\int_{S_{\epsilon,e}}\epsilon^{2s_\infty}\sigma_\infty(d\theta)\\&\ge
\int_{\bS^{n-1}}|e\cdot\theta|^{2s_\infty}\sigma_\infty(d\theta)
-\epsilon^{2s_\infty}.
\end{split}\end{equation*}
Hence, since~$\epsilon$ is arbitrary,
$$\int_{\bS^{n-1}}|e\cdot\theta|^{2s_\infty}\sigma_\infty(d\theta)=0$$
and this holds for all~$e\in\bS^{n-1}$, which is in contradiction with
the statement in Lemma~\ref{lemma:nuovo}.

Now we suppose that the sequence~$s_j$ is unbounded. Up to a subsequence, we have that~$s_j$ is divergent. This and~\eqref{xm9058Ln0-9}
yield that
$$\liminf_{s\to+\infty}\sup_{e\in\bS^{n-1}}\int_{\bS^{n-1}}|e\cdot\theta|^{2s}\sigma_s(d\theta)=0,$$
in contradiction with~\eqref{simple ellipticityBIS}.
\end{proof}

\section{On the operator~\texorpdfstring{$L_{m,s}$}{Lms}}\label{sec:1}

In this section we provide a systematic study of the operator~$L_{m,s}$. 

\subsection{Some notation and preliminary results on~\texorpdfstring{$L_{m,s}$}{Lms}}
If not stated otherwise, we assume throughout this section that~$\nu_s$ is given with respect to a probability measure~$\sigma_s$ in~$\bS^{N-1}$ as in~\eqref{definition measure}, \eqref{assumption basis} and~\eqref{simple ellipticity}.

In the following, for a fixed number~$s>0$ we let
\begin{equation}\label{sis2s3s4}\begin{split}
&s_0:=\max\{n\in \Z\;:\; n<s\},\qquad s_1:=\min\{n\in\Z\;:\; n>s\}\\
&\text{and}\quad \tau:=s-s_0\in(0,1].\end{split}
\end{equation}

Moreover, let~$\cL^1_s$ be the space of
locally integrable functions~$u :\R^N\to\R$ such that
\begin{equation}\label{dewfhweut765432}
\int_0^{+\infty} \int_{\bS^{N-1}} |u(r\theta)| \min\{1,r^{-1-2s}\} \,\sigma_s(d\theta)\, dr <+\infty.
\end{equation}

Furthermore, given~$x_0\in\R^N$, $R>0$ and~$\epsilon>0$,
we say that~$u\in C^{2s+\epsilon}(B_R(x_0))$ if
\begin{equation*}
\begin{split}
&{\mbox{$u\in C^{2s_0,2\tau+\epsilon}(B_R(x_0))$ if~$\tau< \frac12$,}}\\
&{\mbox{$u\in C^{2s_0+1,2\tau-1+\epsilon}(B_R(x_0))$ if~$\frac12\leq\tau<1$,}}\\
&{\mbox{or~$u\in C^{2s_0+2,\epsilon}(B_R(x_0))$ if~$\tau=1$.}}
\end{split}\end{equation*}

With this notation, we have the following observation:

\begin{lemma}\label{lem:evaluation}
Let~$x_0\in \R^N$, $R>0$ and~$\epsilon>0$.
Let~$u\in C^{2s+\epsilon}(B_R(x_0))\cap \cL^1_s$
and let~$x\in B_R(x_0)$.

Then, we have that~$|L_{m,s}u(x)|\le C$, for some~$C>0$.
\end{lemma}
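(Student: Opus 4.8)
The strategy is to split the integral defining $L_{m,s}u(x)$ in \eqref{defop} into a region near the origin and a region away from the origin, and to bound each piece separately. Write
\[
L_{m,s}u(x)=\frac{c_{m,s}}{2}\left(\int_{B_\rho}\delta_mu(x,y)\,\nu_s(dy)+\int_{\R^N\setminus B_\rho}\delta_mu(x,y)\,\nu_s(dy)\right),
\]
where $\rho>0$ is chosen small enough that $B_{m\rho}(x)\subset B_R(x_0)$, which is possible since $x\in B_R(x_0)$ is an interior point. This ensures that for $|y|<\rho$ all the shifted points $x+ky$ with $|k|\le m$ lie in $B_R(x_0)$, so the regularity hypothesis $u\in C^{2s+\epsilon}(B_R(x_0))$ can be used pointwise on $\delta_mu(x,\cdot)$.

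\textbf{Near the origin.} The key point is that the difference operator $\delta_m$ of order $2m$ annihilates polynomials of degree $\le 2m-1$; hence a Taylor expansion of $u$ at $x$ gives a bound of the form $|\delta_mu(x,y)|\le C\,|y|^{2s+\epsilon}$ for $|y|<\rho$, where $C$ depends on the relevant H\"older seminorm of $u$ on $B_R(x_0)$ (with the precise identification of ``derivatives up to order $2s_0$ (or $2s_0+1$, or $2s_0+2$) plus a H\"older remainder'' matching exactly the three cases in the definition of $C^{2s+\epsilon}(B_R(x_0))$). Indeed, since $m>s$ we have $2m-1\ge 2s_0+1\ge 2s$, so expanding to the appropriate integer order and estimating the remainder by the H\"older modulus yields the power $|y|^{2s+\epsilon}$. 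Then, using the polar form \eqref{definition measure} of $\nu_s$ and $\sigma_s(\bS^{N-1})=1$,
\[
\int_{B_\rho}|\delta_mu(x,y)|\,\nu_s(dy)\le C\int_0^\rho\int_{\bS^{N-1}}r^{2s+\epsilon}\,r^{-1-2s}\,\sigma_s(d\theta)\,dr=C\int_0^\rho r^{\epsilon-1}\,dr=\frac{C\rho^\epsilon}{\epsilon}<+\infty.
\]

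\textbf{Away from the origin.} Here we do not have regularity of $u$ on the shifted points, but we use instead the $\cL^1_s$ hypothesis \eqref{dewfhweut765432}. Bound $|\delta_mu(x,y)|\le\sum_{k=-m}^m\binom{2m}{m-k}|u(x+ky)|$ and estimate each term. For $k=0$ the contribution is $|u(x)|\,\nu_s(\R^N\setminus B_\rho)$, which is finite because $\nu_s$ is finite on compact subsets of $\R^N\setminus\{0\}$ and, for the tail $|y|>1$, $\nu_s(\R^N\setminus B_1)=\int_1^\infty r^{-1-2s}dr\,\sigma_s(\bS^{N-1})<+\infty$. For $k\neq 0$, change variables $z=ky$ (so $\nu_s$ transforms by the homogeneity $\nu_s(k\,\cdot)=|k|^{-2s}\nu_s(\cdot)$, which follows directly from \eqref{definition measure}) and translate by $x$; one is left with a constant times $\int_{\R^N\setminus B_{|k|\rho}(x)}|u(w)|\,\nu_s(dw)$, which is controlled by splitting into $\{|w|\le 1\}$ — where $u$ is locally integrable and $\min\{1,r^{-1-2s}\}\ge$ a positive constant near any compact annulus, but one must be a bit careful near $w=0$ if $0$ happens to lie far from $x$; here the translate $\nu_s(\cdot-x)$ restricted away from $B_{|k|\rho}(0)+x$, hmm — cleaner is to note $\{|w|\le 2|x|+1\}$ has $u\in L^1$ and $\nu_s$ finite there once we stay away from $0$, and handle the neighborhood of $0$ using that $x$ is a fixed interior point with $0$ either inside $B_\rho$ (covered by the near-origin piece after translation) or at positive distance — and $\{|w|>2|x|+1\}$ — where $|w-x|\ge|w|/2$ and $|u(w)|r^{-1-2s}$ is integrable by \eqref{dewfhweut765432}.

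\textbf{Conclusion and main obstacle.} Combining the two regions gives $|L_{m,s}u(x)|\le C$ with $C$ depending on $N,m,s$, on $\rho=\rho(x,R)$, on the relevant H\"older norm of $u$ on $B_R(x_0)$, and on the $\cL^1_s$ quantity in \eqref{dewfhweut765432}, proving the claim (and, incidentally, that the integral in \eqref{defop} converges absolutely). I expect the main technical obstacle to be the far-from-origin term: keeping track of how the change of variables $z=ky$ interacts with the measure $\nu_s$ — which is defined in polar coordinates centered at the origin, not at $x$ — so that the translated/dilated mass near the origin is reabsorbed correctly. The cleanest route is probably to first record the scaling identity $\int_{\R^N}g(ky)\,\nu_s(dy)=|k|^{-2s}\int_{\R^N}g(z)\,\nu_s(dz)$ for $k\neq 0$ as an immediate consequence of \eqref{definition measure}, then to split $\R^N\setminus B_\rho$ into $B_1\setminus B_\rho$ (where $u$ is bounded by its sup over $B_R(x_0)$ provided $\rho$ and $1$ are arranged so the shifts stay in $B_R(x_0)$, or else by local integrability combined with the boundedness of $\nu_s$ on that annulus) and $\R^N\setminus B_1$ (handled purely by \eqref{dewfhweut765432} after the change of variables, using that $|u(x+ky)|$ with $|y|\ge 1$ corresponds to values of $u$ whose $\cL^1_s$-weight is comparable, up to constants depending on $|k|$ and $|x|$, to the weight appearing in \eqref{dewfhweut765432}). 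Everything else is a routine Taylor estimate.
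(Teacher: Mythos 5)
Your near-origin estimate reproduces the paper's: the regularity gives $|\delta_m u(x,y)|\le C|y|^{2s+\epsilon}$ on a small $B_\eta$, and integrating against $r^{-1-2s}$ yields $\int_{B_\eta}|\delta_m u(x,y)|\,\nu_s(dy)\le C\eta^\epsilon/\epsilon$, exactly as in the paper.

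For the tail $|y|>\eta$ your argument has a gap, located precisely where you flag the ``main technical obstacle.'' The paper does not change variables in $y$ at all: it bounds $|\delta_m u(x,y)|\le C\big(|u(x)|+|u(y)|\big)$ for $|y|\ge\eta$ (here $|u(y)|$ is shorthand absorbing the shifted terms $u(x+ky)$, $k\ne 0$) and then controls the two tails directly, via $|u(x)|\,\nu_s(\R^N\setminus B_\eta)=\frac{|u(x)|}{2s\,\eta^{2s}}$ (since $\sigma_s$ is a probability measure) and via $\int_\eta^{+\infty}\int_{\bS^{N-1}}|u(r\theta)|\,r^{-1-2s}\,\sigma_s(d\theta)\,dr$, which is finite by the $\cL^1_s$ hypothesis \eqref{dewfhweut765432}. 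Your route of rescaling $z=ky$ and then translating by $x$ is the step that never closes: the scaling identity $\nu_s(k\,\cdot)=|k|^{-2s}\nu_s(\cdot)$ holds only for $k>0$ (for $k<0$ it produces the measure built from the reflected angular part $\sigma_s(-\cdot)$), and, more seriously, $\nu_s$ is a polar measure centered at the origin with a possibly singular $\sigma_s$, hence it is not translation-invariant; after shifting by $x$ the resulting expression is not $\int|u(w)|\,\nu_s(dw)$ and is not controlled by \eqref{dewfhweut765432}. Your own paragraph correctly flags all of this but then drifts without producing a completed estimate. The cleaner route is the paper's: avoid the change of variables, handle the part of the tail where the shifted points fall near $x$ by local boundedness of $u$ on a compact set, handle the part at infinity by the $\cL^1_s$ condition, and fold all $x$-, $\eta$-, $m$- and $k$-dependence into the final constant $C$, noting that the lemma asserts only finiteness of $L_{m,s}u(x)$ rather than any uniform bound.
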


\begin{proof}
The regularity assumptions on~$u$ give that, for all~$x\in B_R(x_0)$ and~$y\in B_\eta$
(for some~$\eta\in(0,1)$ sufficiently small),
$$
|\delta_mu(x,y)|\leq C|y|^{2s+\epsilon},
$$ 
for some~$C>0$.

As a consequence,
\begin{eqnarray*}
&&\left|\;\int_{B_\eta}\delta_mu(x,y) \,\nu_s(dy)\right|
\le \int_{B_\eta}|\delta_mu(x,y)| \,\nu_s(dy)
\le C\int_{B_\eta}|y|^{2s+\epsilon} \,\nu_s(dy)\\
&&\qquad = C\int_{0}^{\eta} \int_{\bS^{N-1}}r^{2s+\epsilon}r^{-1-2s}\,\sigma_s(d\theta) \,dr= \frac{C \eta^\epsilon}{\epsilon}.
\end{eqnarray*}

In addition,
\begin{eqnarray*}
&&\left|\;\int_{\R^N\setminus B_\eta}\delta_mu(x,y) \,\nu_s(dy)\right|
\le \int_{\R^N\setminus B_\eta}|\delta_mu(x,y)| \,\nu_s(dy)
\\&&\qquad \le C\left(\;|u(x)|\int_{\R^N\setminus B_\eta}\nu_s(dy)+
\int_{\R^N\setminus B_\eta}|u(y)| \,\nu_s(dy)\right) \\
&&\qquad 
\le C\left(\;\|u\|_{L^\infty(B_R(x_0))}\int_{\eta}^{+\infty} \int_{\bS^{N-1}}
r^{-1-2s}\,\sigma_s(d\theta) \,dr+\int_{\eta}^{+\infty} \int_{\bS^{N-1}}
|u(r\theta)|r^{-1-2s} \,\sigma_s(d\theta) \,dr\right) \\
&&\qquad=
C\left(\;\frac{ \|u\|_{L^\infty(B_R(x_0))}}{2s\,\eta^{2s}}
+\int_{\eta}^{+\infty} \int_{\bS^{N-1}}
|u(r\theta)|r^{-1-2s} \,\sigma_s(d\theta) \,dr\right).
\end{eqnarray*}

Gathering these pieces of information and recalling also~\eqref{dewfhweut765432}, we conclude that
\begin{eqnarray*}
&&|L_{m,s}u(x)|= \frac{c_{m,s}}{2} \left|\;\int_{\R^N}\delta_mu(x,y) \,\nu_s(dy)\right|\\
&&\qquad \le C\left(\;\frac{ \eta^\epsilon}{\epsilon}
+\frac{ \|u\|_{L^\infty(B_R(x_0))}}{2s\,\eta^{2s}}
+\int_{\eta}^{+\infty} \int_{\bS^{N-1}}
|u(r\theta)|r^{-1-2s} \,\sigma_s(d\theta) \,dr\right)
\\&&\qquad \le C,\end{eqnarray*}
as desired.
\end{proof}

See also the Introduction and Appendix~A in~\cite{GH23} on the tail space~$L^1(\R^N,\nu_s(x)dx)$ in this context.

As mentioned in the Introduction, we next show that if~$\sigma_s$ is not a probability measure but an unbounded nontrivial measure, then~$L_{m,s}u(x)$ may not be well-defined.

\begin{lemma}\label{lemma remark xyz}
Let~$\nu_s$ be as in \cref{definition measure}, where~$\sigma_s$ is a measure such that~$\sigma_s(\bS^{N-1})=+\infty$.

Then, there exists~$u\in C^{\infty}_c(\R^N)$ such that~$L_{m,s}u(x)$ is not defined for some~$x\in \R^N$.
\end{lemma}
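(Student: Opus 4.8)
The plan is to construct, for a given direction-measure $\sigma_s$ of infinite mass, a single test function $u\in C^\infty_c(\R^N)$ and a single evaluation point $x$ at which the integral defining $L_{m,s}u(x)$ diverges. The natural choice is to place $x$ at the origin (after translating) and to arrange that $\delta_m u(x,y)$ fails to decay sufficiently near $y=0$ along enough directions. Recall from \cref{definition measure} that
\[
\int_{\R^N}\delta_m u(x,y)\,\nu_s(dy)
=\int_0^{+\infty}\int_{\bS^{N-1}}\delta_m u(x,r\theta)\,r^{-1-2s}\,\sigma_s(d\theta)\,dr,
\]
so the only way the construction in \cref{lem:evaluation} can break down is through the unboundedness of $\sigma_s$: for fixed small $r$, the inner integral over $\bS^{N-1}$ need no longer be finite.

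First I would pick $x=0$ and choose $u$ smooth, compactly supported, with $u\equiv 1$ on a ball $B_\rho$ and $u\geq 0$ everywhere; then for $|ky|\le\rho$ with all $k\in\{-m,\dots,m\}$ we have $\delta_m u(0,y)=\sum_{k=-m}^m(-1)^k\binom{2m}{m-k}=0$, which is the wrong sign behaviour. So instead I would arrange $u$ to vanish on a neighbourhood of the origin and to be (say) strictly positive on an annulus, so that for $y$ in a suitable cone of small radius, $\delta_m u(0,y)=\binom{2m}{m}u(0)-2\sum_{k\ge1}(-1)^{k+1}\binom{2m}{m-k}u(ky)$ reduces, because $u(0)=0$ and $u(ky)=0$ for small $k$, to a single term $(-1)^{m}\binom{2m}{0}u(my)=u(my)$ of definite sign. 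Concretely: choose $u$ supported in the annulus $\{m/2\le|z|\le 2m\}$ with $u\equiv 1$ on $\{3m/4\le|z|\le 3m/2\}$, $0\le u\le 1$. Then for $|y|\in[3/4,3/2]$ one has $|ky|\ge 3/2>2m$ is false — let me instead scale: for $|y|$ near $1$, the points $y,2y,\dots,(m-1)y$ lie outside $\supp u$ (radii $<m/2$... ) — the precise radii must be chosen so that exactly one index $k=m$ gives $ky\in\{u\equiv1\}$ and all other nonzero indices give $ky\notin\supp u$. This is an elementary bookkeeping on the radii; the upshot is that there is an open cone $\Gamma\subset\R^N$ and an interval $r\in(a,b)$ with $0<a<b$ small such that $\delta_m u(0,r\theta)=c$ for a fixed nonzero constant $c$ and all $\theta\in\Gamma\cap\bS^{N-1}$, $r\in(a,b)$ — with, importantly, a \emph{definite sign}, so no cancellation can occur.

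Next, because $\sigma_s(\bS^{N-1})=+\infty$ and $\bS^{N-1}$ is covered by finitely many rotates of any fixed open cone slice $\Gamma\cap\bS^{N-1}$, at least one rotate $R(\Gamma\cap\bS^{N-1})$ must have $\sigma_s\big(R(\Gamma\cap\bS^{N-1})\big)=+\infty$. Replacing $u$ by $u\circ R^{-1}$ (still in $C^\infty_c(\R^N)$, still with $x=0$ fixed since $R$ fixes the origin), we may assume $\sigma_s(\Gamma\cap\bS^{N-1})=+\infty$. Then
\[
\int_{\R^N}\big|\delta_m u(0,y)\big|\,\nu_s(dy)
\ge |c|\int_a^b\int_{\Gamma\cap\bS^{N-1}}r^{-1-2s}\,\sigma_s(d\theta)\,dr
=|c|\,\sigma_s(\Gamma\cap\bS^{N-1})\int_a^b r^{-1-2s}\,dr=+\infty,
\]
and since $\delta_m u(0,\cdot)$ has a single sign on the offending region, the integral $\int\delta_m u(0,y)\,\nu_s(dy)$ itself is $+\infty$ or $-\infty$ (not merely not absolutely convergent): $L_{m,s}u(0)$ is not defined.

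The main obstacle is the bookkeeping in the middle step: one must choose the support of $u$ (the annulus radii) so that, for $r$ ranging over some genuine interval $(a,b)$ and $\theta$ ranging over an open spherical cap, exactly one of the sample points $y,\pm 2y,\dots,\pm m y$ lands inside the region where $u$ is a nonzero constant and \emph{all} the others land outside $\supp u$, guaranteeing that $\delta_m u(0,r\theta)$ equals a fixed nonzero constant with no sign changes. Once the geometry is set up correctly, the divergence is immediate from $\sigma_s(\bS^{N-1})=+\infty$ and the pigeonhole covering of the sphere by finitely many cones. Everything else (smoothness, compact support, the value of the binomial coefficient) is routine.
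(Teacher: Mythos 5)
Your strategy is sound and would work after the bookkeeping is completed, but you have reproduced (with more effort) what the paper achieves almost immediately by looking at the \emph{far} field instead of an engineered near‑field annulus. The paper takes $u(x)=\xi(|x|)$ a radial bump with $u\equiv 1$ on $B_1$ and $\supp u\subset B_2$, and simply observes that for $|y|$ large (say $|y|\ge 4$) all samples $ky$ with $k\neq 0$ fall outside $\supp u$, so only the $k=0$ term survives and
\[
\delta_m u(0,y)=\binom{2m}{m}u(0)=\binom{2m}{m}>0
\]
identically on $\R^N\setminus B_4$. Integrating this \emph{nonzero constant} against $\nu_s$ over the exterior ball,
\[
\int_{\R^N\setminus B_4}\delta_m u(0,y)\,\nu_s(dy)
=\binom{2m}{m}\,\sigma_s(\bS^{N-1})\int_4^{+\infty}r^{-1-2s}\,dr=+\infty,
\]
which is exactly the one‑line divergence you were manufacturing via the annulus. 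This removes the need for your annular support (and the delicate radii), and since the paper's $u$ is radial, $\delta_m u(0,r\theta)$ is independent of $\theta$, so your cone‑and‑pigeonhole step is also unnecessary: the divergence already sees the full mass $\sigma_s(\bS^{N-1})=+\infty$. In your version, note that the specific radii $m/2,3m/4,3m/2,2m$ do not actually satisfy the claimed non‑overlap for $m\ge 2$ (a thin annulus around radius $1$ would), and the term you isolate is $2(-1)^m u(my)$ rather than $u(my)$; these are minor since you flagged the bookkeeping as incomplete. One genuine overclaim: ``the integral itself is $+\infty$ or $-\infty$, not merely not absolutely convergent'' does not follow from having a single sign on the cone region alone, because the complementary region could in principle contribute a divergent piece of the opposite sign. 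But this does not affect the conclusion: once the positive (or negative) part of $\delta_m u(0,\cdot)$ has infinite $\nu_s$‑integral, $L_{m,s}u(0)$ is not a well‑defined real number, which is all the lemma asserts. In short, correct idea, but the far‑field observation would have let you skip the annulus, the cone, and the pigeonhole entirely.
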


\begin{proof}
Let~$\xi\in C^{\infty}([0,+\infty), [0,1])$ be nonincreasing and such that~$\xi(t)=1$ for all~$t\in[0,1]$ and~$\xi(t)=0$ for all~$t\geq 2$. Let~$u(x)=\xi(|x|)$. 
Then~$u\in C^{\infty}_c(\R^N, [0,1])$ with~$\supp\,u\subset B_2$
and~$u(x)=1$ for all~$x\in B_1$.

Also, $u$ is radially nonincreasing. As a result,
\begin{eqnarray*}
\delta_mu(0,y)=\sum_{k=-m}^m(-1)^k\binom{2m}{m-k}u(ky)\ge0.
\end{eqnarray*}

We also observe that if~$y\in\R^N\setminus B_4$ 
then~$u(ky)=0$ for all~$k\in\Z\setminus\{0\}$. Therefore,
by~\eqref{defdelta} we deduce that
\begin{equation*}
\delta_mu(0,y)=\binom{2m}{m}u(0)=\binom{2m}{m}.
\end{equation*}

Thus,
\begin{eqnarray*}&&
\frac{2}{c_{m,s}}L_{m,s}u(0)=\int_{B_4}\delta_mu(0,y)\,\nu_s(dy)+\int_{\R^N\setminus B_4}\binom{2m}{m}\nu_s(dy)
\\&&\qquad
\geq \binom{2m}{m} \int_2^{+\infty}t^{-1-2s}\sigma_s(\bS^{N-1})\, dt=+\infty,
\end{eqnarray*}
which implies the desired result.
\end{proof}

The next result aims at showing the variability of~$m$ as long as~$m>s$. For this, we need the following lemma.

\begin{lemma}\label{lem:constant}
Let~$n>m>s>0$ and let
\begin{equation}\label{padef00}
P_a:=\sum_{k=1}^a(-1)^k\binom{2a}{a-k}k^{2s}\qquad\text{ for~$a>s$.}
\end{equation}

Then, we have that~$c_{n,s}P_n=c_{m,s}P_m$.
\end{lemma}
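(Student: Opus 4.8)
The key identity to establish is that the normalizing constant $c_{m,s}$, defined through the Fourier-type relation \eqref{constcms00}, has the property that $c_{m,s} P_m$ is independent of $m$ (as long as $m > s$). The natural strategy is to compute the quantity $\tfrac{2}{c_{m,s}}$ appearing in \eqref{constcms00} in terms of $P_m$, by passing to polar coordinates and separating the radial and angular integrations. Concretely, I would start from
\[
\frac{2}{c_{m,s}} = 2^m \int_{\R^N} \bigl(1-\cos(e_s\cdot y)\bigr)^m \, \nu_s(dy)
= 2^m \int_0^{+\infty} \int_{\bS^{N-1}} \bigl(1-\cos(r\, e_s\cdot\theta)\bigr)^m \, r^{-1-2s}\,\sigma_s(d\theta)\, dr,
\]
and then use the elementary identity $2^m(1-\cos t)^m = \bigl(2\sin^2(t/2)\bigr)^m \cdot 2^{m} / 2^m$... more precisely $1 - \cos t = 2\sin^2(t/2)$, so $2^m(1-\cos t)^m = 4^m \sin^{2m}(t/2)$. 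The plan is then to use the binomial/Chebyshev expansion expressing $\sin^{2m}(t/2)$, or equivalently $(1-\cos t)^m$, as a linear combination of $\cos(kt)$ for $k=0,\dots,m$. The precise expansion is
\[
\bigl(1-\cos t\bigr)^m = \frac{1}{2^m}\sum_{k=-m}^{m} (-1)^k \binom{2m}{m-k}\cos(kt),
\]
which is exactly the ``angular twin'' of the difference operator $\delta_m$ in \eqref{defdelta} — indeed, $\delta_m u(x,y)$ applied to $u = \cos(x\cdot\xi)$ reproduces this expansion.

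\textbf{Key steps.} First I would substitute the cosine expansion into the integral, so that
\[
\frac{2}{c_{m,s}} = \int_0^{+\infty}\int_{\bS^{N-1}} \sum_{k=-m}^{m}(-1)^k\binom{2m}{m-k}\cos(k\, r\, e_s\cdot\theta)\, r^{-1-2s}\,\sigma_s(d\theta)\, dr.
\]
Second, for each fixed $\theta$ I would evaluate the radial integral $\int_0^{+\infty}\bigl(1 - \cos(kr\,e_s\cdot\theta)\bigr) r^{-1-2s}\,dr$ — note the $k=0$ term must be kept together with the others so that the combination is $\sum_{k=-m}^m (-1)^k\binom{2m}{m-k}(\cos(kr\,e_s\cdot\theta)-1)$, since $\sum_k(-1)^k\binom{2m}{m-k}=0$ — and use the classical formula $\int_0^{+\infty}(1-\cos(\lambda r))r^{-1-2s}\,dr = C_s\,|\lambda|^{2s}$ for a constant $C_s$ depending only on $s$ (valid for $0<s<1$; for general $s$ one uses instead the integral $\int_0^\infty (1-\cos(\lambda r))^m r^{-1-2s}\,dr$ directly, or iterates). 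Since $m > s$, the integrand $\bigl(1-\cos(r\,e_s\cdot\theta)\bigr)^m$ behaves like $r^{2m}$ near $0$ and is bounded at infinity, so the integral converges; this is why we need $m > s$. Collecting the $|k\,e_s\cdot\theta|^{2s} = |k|^{2s}|e_s\cdot\theta|^{2s}$ factors, the radial integration produces a factor $C_s \sum_{k=-m}^m(-1)^k\binom{2m}{m-k}|k|^{2s} = 2 C_s P_m$ (the factor $2$ from symmetry $k\leftrightarrow -k$, together with $P_m$ as defined in \eqref{padef00}), times $\int_{\bS^{N-1}}|e_s\cdot\theta|^{2s}\,\sigma_s(d\theta) = M_{s,\sigma_s}(e_s)$ by \eqref{def ms}. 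Therefore $\tfrac{2}{c_{m,s}} = 2C_s\, P_m\, M_{s,\sigma_s}(e_s)$, i.e. $c_{m,s} P_m = 1/\bigl(C_s\, M_{s,\sigma_s}(e_s)\bigr)$, which is manifestly independent of $m$. Comparing for $n$ and $m$ gives $c_{n,s}P_n = c_{m,s}P_m$.

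\textbf{Main obstacle.} The routine computations (cosine expansion, Fubini to swap the sum/angular/radial integrations, convergence bookkeeping) are standard. The delicate point is the exact evaluation of the radial integral $\int_0^{+\infty}(1-\cos(r t))^m\, r^{-1-2s}\,dr$ for general $m > s > 0$ and verifying that it factors as $C_s\, P_m\, |t|^{2s}$ with $C_s$ genuinely depending only on $s$ and not on $m$. One clean way to see this is to expand $(1-\cos(rt))^m$ via the cosine-power identity \emph{before} integrating, reducing everything to the single well-known integral $\int_0^\infty (1-\cos(ar))r^{-1-2s}\,dr$; one must then argue that the term-by-term integration is legitimate despite individual terms $\int_0^\infty \cos(kr t) r^{-1-2s}\,dr$ being divergent — this is handled by regrouping with the $k=0$ term as indicated above, or by a limiting/regularization argument (e.g. inserting $e^{-\epsilon r}$ and letting $\epsilon \to 0^+$). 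Alternatively, and perhaps more in the spirit of the paper, one can avoid the explicit constant entirely: note $\int_{\R^N}(1-\cos(e_s\cdot y))^m\,\nu_s(dy)$ is, up to the angular average, a function of $s$ alone once the $r^{2s}$ scaling is pulled out, and the dependence on $m$ is precisely through $P_m$; I would present whichever of these is shortest, likely invoking a known value of the one-dimensional integral and the combinatorial identity behind the cosine-power expansion.
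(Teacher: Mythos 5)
Your approach is genuinely different from the paper's. You attempt to compute $2/c_{m,s}$ explicitly: pass to polar coordinates, use the cosine--power expansion $2^m(1-\cos t)^m=\sum_{k=-m}^m(-1)^k\binom{2m}{m-k}\cos(kt)$, factor the radial integral as a universal $C_s$ times $|k|^{2s}$, and conclude $2/c_{m,s}=-2C_sP_m M_{s,\sigma_s}(e_s)$. The paper instead never separates the integral into pieces. It multiplies the convergent integral $\frac{2}{c_{m,s}}=\int_{\R^N}\delta_m f(0,e_s\cdot y)\,\nu_s(dy)$ (with $f=e^{i\cdot}$) by $P_n$, uses the homogeneity of $\nu_s$ to turn each $k^{2s}$ into a rescaling $y\mapsto ky$ of that \emph{whole} integral, expands both difference operators into a finite double sum $\sum_k\sum_j(-1)^{k+j}\binom{2n}{n-k}\binom{2m}{m-j}f(kjz\cdot e_s)$, and then resums in the other order to identify $P_m\cdot 2/c_{n,s}$. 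Every intermediate expression is a finite linear combination of the single convergent integral $\int(1-\cos)^\ell\,\nu_s$; no analytic continuation or regularization is ever needed.

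This difference is not cosmetic: your argument has a genuine gap for $s\ge1$, which you flag but do not resolve. The formula $\int_0^\infty(1-\cos(\lambda r))\,r^{-1-2s}\,dr=C_s|\lambda|^{2s}$ requires $s\in(0,1)$, and the lemma must hold for all $m>s>0$, so $s$ can be arbitrarily large. Your proposed fixes do not repair this: regrouping with the $k=0$ term replaces $\cos(kr)$ by $\cos(kr)-1\sim -k^2r^2/2$, which kills the tail divergence but still leaves $\int_0\,r^{2}\cdot r^{-1-2s}\,dr$ divergent at the origin whenever $s\ge1$; and inserting $e^{-\epsilon r}$ does nothing for the origin either. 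The alternative you sketch (``the dependence on $m$ is precisely through $P_m$'') is asserting exactly what needs to be proven about $\Phi_m(s):=\int_0^\infty(1-\cos r)^m r^{-1-2s}\,dr$, namely that $2^m\Phi_m(s)/(-P_m)$ is $m$-independent. To salvage your route for general $s$ you would need a higher-order subtraction/analytic-continuation argument in $s$, which is doable but considerably heavier than the paper's purely algebraic resummation. If you want to keep your structure, the fix is precisely the paper's trick: rather than evaluating $\Phi_m$ against a universal $C_s$, multiply by $P_n$ (resp.\ $P_m$), push the power $k^{2s}$ through the scaling of the \emph{entire} convergent integral, and interchange the two finite sums.
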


\begin{proof}
We first observe that
\begin{eqnarray*}
P_a&=&\frac12\sum_{k=1}^a(-1)^k\binom{2a}{a-k}k^{2s}+
\frac12\sum_{k=1}^a(-1)^k\binom{2a}{a-k}k^{2s}\\
& =& \frac12\sum_{k=1}^a(-1)^k\binom{2a}{a-k}k^{2s}+
\frac12\sum_{k=-a}^{-1}(-1)^{-k}\binom{2a}{a+k}(-k)^{2s}\\
& = &\frac12\sum_{k=1}^a(-1)^k\binom{2a}{a-k}k^{2s}+
\frac12\sum_{k=-a}^{-1}(-1)^{k}\binom{2a}{a-k}k^{2s}\\
&=&\frac12\sum_{k=-a}^a(-1)^k\binom{2a}{a-k}k^{2s}.
\end{eqnarray*}

Now, let~$f(t):=\exp(it)$ for~$t\in \R$. Then, we have that
\begin{equation}\label{agaapa0}
\delta_mf(0,t)=2^m(1-\cos t)^m.\end{equation}
For the sake of completeness, we provide the proof of~\eqref{agaapa0}
in Appendix~\ref{app:agaapa0}.

Hence, recalling~\eqref{constcms00} and performing the changes of variable~$z:=y/k$ and~$w:=jz$,
\begin{align*}
	\frac{2P_n}{c_{m,s}}&=P_n\int_{\R^N}\delta_mf(0,y_1)\,\nu_s(dy)\\
	&=\frac12\sum_{k=-n}^n(-1)^k\binom{2n}{n-k}k^{2s}
	\int_{\R^N}\delta_mf(0,y_1)\,\nu_s(dy)\\
	&=\frac12\sum_{k=-n}^n(-1)^k\binom{2n}{n-k}\int_{\R^N}\delta_mf(0,kz_1)\,\nu_s(dz)\\
	&=\frac12\int_{\R^N}\sum_{k=-n}^n\sum_{j=-m}^{m}(-1)^k\binom{2n}{n-k}(-1)^j\binom{2m}{m-j}f(0,kjz_1)\,\nu_s(dz)\\
	&=\frac12\sum_{j=-m}^{m}(-1)^j\binom{2m}{m-j}\int_{\R^N}\delta_nf(0,jz_1)\,\nu_s(dy)\\
	&=\frac12\sum_{j=-m}^{m}(-1)^j\binom{2m}{m-j}j^{2s}
	\int_{\R^N}\delta_nf(0,w_1)\,\nu_s(dw)\\
	&=P_m\int_{\R^N}\delta_nf(0,w_1)\,\nu_s(dw)\\
	&=\frac{2P_m}{c_{n,s}}.\qedhere
\end{align*}
\end{proof}

\begin{lemma}\label{lem:independence of m}
Let~$n>m>s>0$, $x_0\in \R^N$, and~$R$, $\epsilon>0$.
Let~$u\in C^{2s+\epsilon}(B_R(x_0))\cap \cL^1_s$.

Then, for all~$x\in B_R(x_0)$,
$$
L_{n,s}u(x)=L_{m,s}u(x).
$$	
\end{lemma}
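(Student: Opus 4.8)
The statement asserts that the value of $L_{n,s}u(x)$ does not depend on the particular choice of $m>s$, i.e.\ $L_{n,s}u(x)=L_{m,s}u(x)$ for all admissible $m<n$. The natural strategy is a reduction argument: first establish the identity for the building-block function $f(t)=\exp(it_1)$ (or more precisely for plane waves $x\mapsto e^{i\xi\cdot x}$), where everything is explicit, and then bootstrap to general $u\in C^{2s+\epsilon}(B_R(x_0))\cap\cL^1_s$. The key explicit input is \eqref{agaapa0}, which gives $\delta_m f(0,t)=2^m(1-\cos t)^m$, together with Lemma~\ref{lem:constant}, which packages exactly the combinatorial identity $c_{n,s}P_n=c_{m,s}P_m$ that makes the two normalizations compatible.

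\textbf{Step 1: plane waves.} For $\xi\in\R^N$ with $|\xi|=1$ consider $u_\xi(x):=e^{i\xi\cdot x}$. A direct computation using \eqref{defdelta} gives $\delta_m u_\xi(x,y)=e^{i\xi\cdot x}\,\delta_m u_\xi(0,y)$ and, by \eqref{agaapa0} applied in the direction $\xi$, $\delta_m u_\xi(0,y)=2^m\bigl(1-\cos(\xi\cdot y)\bigr)^m$. Hence
\[
L_{m,s}u_\xi(x)=e^{i\xi\cdot x}\,\frac{c_{m,s}}{2}\,2^m\int_{\R^N}\bigl(1-\cos(\xi\cdot y)\bigr)^m\,\nu_s(dy).
\]
The scaling $\nu_s$ has in polar coordinates (i.e.\ $\nu_s(\lambda\,\cdot)=\lambda^{-2s}\nu_s(\cdot)$, which follows immediately from \eqref{definition measure}) shows that $\int_{\R^N}\bigl(1-\cos(\xi\cdot y)\bigr)^m\,\nu_s(dy)=|\xi|^{2s}\,\frac{2}{2^m c_{m,s}}\cdot(\text{a quantity independent of }m)$; concretely, using \eqref{constcms00} one sees that $\frac{c_{m,s}}{2}2^m\int_{\R^N}(1-\cos(\xi\cdot y))^m\,\nu_s(dy)$ equals a constant $\Psi_s(\xi)$ that does not depend on $m$ — this is precisely the content of Lemma~\ref{lem:constant} (the passage from the trigonometric normalization to the discrete sums $P_a$ being the change of variables in the proof of that lemma). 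Therefore $L_{m,s}u_\xi(x)=L_{n,s}u_\xi(x)$ for all $m,n>s$.

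\textbf{Step 2: from plane waves to general $u$.} The cleanest way is to localize and use a Fourier/approximation argument. Fix $x\in B_R(x_0)$. Split $u=u\varphi+u(1-\varphi)$ where $\varphi\in C^\infty_c(B_R(x_0))$ equals $1$ near $x$. For the far part $u(1-\varphi)$, every difference $\delta_m$ evaluated at $x$ only involves $u(x+ky)$ with $k\ne 0$ and $x+ky$ outside a neighborhood of $x$, so by the tail integrability in \eqref{dewfhweut765432} one may expand the finite binomial sum term by term, apply the substitution $z=ky$ inside each integral (using again the homogeneity of $\nu_s$ as in the proof of Lemma~\ref{lem:constant}), and collapse the double sum over $k$ and the internal index exactly as in that proof; this shows the far contributions of $L_{m,s}$ and $L_{n,s}$ agree. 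For the near part, $u\varphi\in C^{2s+\epsilon}_c(\R^N)$ so it has an absolutely convergent Fourier representation whose derivatives up to the relevant order also converge absolutely on compacta; by Lemma~\ref{lem:evaluation} the operators $L_{m,s}$ and $L_{n,s}$ are given by absolutely convergent integrals, so one may interchange $L_{m,s}$ with the Fourier integral and invoke Step~1 frequency by frequency. Combining the two parts yields $L_{n,s}u(x)=L_{m,s}u(x)$.

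\textbf{Main obstacle.} The delicate point is the rigorous interchange of the (finite) summation in $\delta_m$, the integration against the singular-near-$0$, heavy-near-$\infty$ measure $\nu_s$, and the change of variables $z=ky$, especially on the region near the origin where $\delta_m u$ is only controlled by $|y|^{2s+\epsilon}$ and the rescaling $y\mapsto y/k$ moves mass around the singularity. In the proof of Lemma~\ref{lem:constant} this is harmless because $f$ is smooth and bounded globally, but for general $u\in C^{2s+\epsilon}(B_R(x_0))\cap\cL^1_s$ one must justify Fubini on each piece — hence the localization into a near part (handled via the finite-order Taylor/Fourier expansion and the $|y|^{2s+\epsilon}$ bound exactly as in Lemma~\ref{lem:evaluation}) and a far part (handled via \eqref{dewfhweut765432} and term-by-term rescaling). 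Once these absolute-convergence bounds are in place, the algebraic identity collapsing the sums is formally identical to the one already carried out in Lemma~\ref{lem:constant}, so I expect the write-up to be short.
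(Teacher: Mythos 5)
Your strategy differs substantially from the paper's and contains a genuine gap. The paper proves the lemma by applying the same double-sum manipulation already carried out in Lemma~\ref{lem:constant} directly to $\delta_m u$: one writes $2P_n \int \delta_m u(x,y)\,\nu_s(dy)=\sum_j(-1)^j\binom{2n}{n-j}\int \delta_m u(x,jy)\,\nu_s(dy)$ using the homogeneity of $\nu_s$, pulls the finite $j$-sum inside, reorganizes the finite double sum $\sum_j\sum_k$ into $\sum_k\sum_j$ pointwise in $y$, and recognizes $\delta_n u(x,ky)$; every interchange is legitimate because each individual integral $\int\delta_a u(x,ky)\,\nu_s(dy)$ is already known to be absolutely convergent by Lemma~\ref{lem:evaluation}. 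No plane-wave detour, no localization, and no Fourier analysis is needed. Your Step~1 is essentially a restatement of Lemma~\ref{lem:constant} and is fine as far as it goes, but it is not needed as a separate preliminary.

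The gap is in your Step~2, specifically the near part. You assert that $u\varphi\in C^{2s+\epsilon}_c(\R^N)$ ``has an absolutely convergent Fourier representation whose derivatives up to the relevant order also converge absolutely on compacta.'' This is false in the generality claimed: a compactly supported function of H\"older regularity $C^{k,\alpha}$ has Fourier transform decaying only like $|\xi|^{-k-\alpha}$, which is not in $L^1(\R^N)$ unless $k+\alpha>N$. Worse, to interchange $L_{m,s}$ with the Fourier inversion integral one needs $\int |\widehat{u\varphi}(\xi)|\,|\xi|^{2s}\,d\xi<+\infty$ (since $L_{m,s}$ applied to the plane wave $e^{i\xi\cdot x}$ produces a factor comparable to $|\xi|^{2s}$), which would require decay like $|\xi|^{-2s-N-\delta}$, i.e.\ roughly $C^{2s+N}$ regularity; the hypothesis $u\in C^{2s+\epsilon}$ falls short of this for every $N\geq1$ and small $\epsilon$. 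So the ``frequency by frequency'' step is not justified and would in fact fail for a generic $u$ in the stated class.

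Your concluding paragraph actually points at the correct fix — the $|y|^{2s+\epsilon}$ bound near the origin is precisely what makes each integral $\int\delta_a u(x,ky)\,\nu_s(dy)$ absolutely convergent, and once that is in hand the combinatorial collapse from Lemma~\ref{lem:constant} works verbatim. But this makes both the near/far decomposition and the Fourier machinery superfluous: the bound of Lemma~\ref{lem:evaluation} already covers the full integral, so one can run the argument directly on $u$ without splitting. If you replace the Fourier step by the same change-of-variables and finite-sum rearrangement you already use for the far part (valid uniformly, by Lemma~\ref{lem:evaluation}), you recover the paper's proof.
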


\begin{proof}
In light of Lemma~\ref{lem:evaluation}, since~$u\in C^{2s+\epsilon}(B_R(x_0))\cap \cL^1_s$, we have that~$L_{n,s}u(x)$ is well defined for all~$x\in B_R(x_0)$.

Now, let~$P_a$ be as in~\eqref{padef00}. Then similarly as in the proof of Lemma~\ref{lem:constant} we have
	\begin{align*}
		2P_n&\int_{\R^N}\delta_mu(x,y)\,\nu_s(dy)=\frac12\sum_{j=-n}^n(-1)^j\binom{2n}{n-j}\int_{\R^N}\delta_mu(x,jy)\,\nu_s(dy)\\
		&=\frac12\int_{\R^N} \sum_{j=-n}^n\sum_{k=-m}^m(-1)^k\binom{2m}{m-k}(-1)^j\binom{2n}{n-j}u(x+kjy)\,\nu_s(dy)\\
	&=\frac12\sum_{k=-m}^m(-1)^k\binom{2m}{m-k}\int_{\R^N}\delta_nu(x,ky)\,\nu_s(dy)=2P_m\int_{\R^N}\delta_nu(x,y)\,\nu_s(dy).
	\end{align*}
Thus the claim follows from Lemma~\ref{lem:constant}.
\end{proof}

\subsection{Limits as~\texorpdfstring{$s\to 0^+$ and as~$s\to m^-$}{s to 0+ and as s to m-}}
The following is to understand the limit as~$s\to 0^+$. 
For this, it turns out that the limit as~$s\to 0^+$ of
the quantity~$M_{s,\sigma_s}(e_s)$ given in~\eqref{bvncmx9e76954uy95}
plays a crucial role,
and we therefore provide the following result:

\begin{lemma}\label{lem:limms}
We have that
$$\lim_{s\to0^+}\max_{e\in\bS^{N-1}}\int_{\bS^{N-1}}|e\cdot\theta|^{2s}\,\sigma_s(d\theta)=1.$$
\end{lemma}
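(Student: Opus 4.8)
The plan is to prove the upper bound and the lower bound separately. Since each~$\sigma_s$ is a probability measure, we immediately have
\[
\max_{e\in\bS^{N-1}}\int_{\bS^{N-1}}|e\cdot\theta|^{2s}\,\sigma_s(d\theta)\le\int_{\bS^{N-1}}\sigma_s(d\theta)=1,
\]
which gives~$\limsup_{s\to0^+}M_{s,\sigma_s}(e_s)\le1$. So the whole content is the matching lower bound~$\liminf_{s\to0^+}M_{s,\sigma_s}(e_s)\ge1$.

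For the lower bound, the key observation is that for \emph{any fixed} direction~$e$ and any~$\theta\in\bS^{N-1}$, we have~$|e\cdot\theta|^{2s}\to1$ as~$s\to0^+$ whenever~$e\cdot\theta\neq0$, i.e.\ for all~$\theta$ outside the equator~$\{e\cdot\theta=0\}$. Fix~$\eps>0$ and let~$S_{\eps,e}:=\{\theta\in\bS^{N-1}:|e\cdot\theta|\le\eps\}$ as in the proof of Lemma~\ref{lem:equi00}. On~$\bS^{N-1}\setminus S_{\eps,e}$ we have~$|e\cdot\theta|^{2s}\ge\eps^{2s}$, hence
\[
M_{s,\sigma_s}(e)=\int_{\bS^{N-1}}|e\cdot\theta|^{2s}\,\sigma_s(d\theta)
\ge\int_{\bS^{N-1}\setminus S_{\eps,e}}|e\cdot\theta|^{2s}\,\sigma_s(d\theta)
\ge\eps^{2s}\,\sigma_s\bigl(\bS^{N-1}\setminus S_{\eps,e}\bigr)
=\eps^{2s}\bigl(1-\sigma_s(S_{\eps,e})\bigr).
\]
Now~$\eps^{2s}\to1$ as~$s\to0^+$, so if we could choose~$e$ (depending on~$s$) so that~$\sigma_s(S_{\eps,e})$ stays bounded away from~$1$, we would be done. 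The natural choice is to integrate over~$e\in\bS^{N-1}$ with respect to the surface measure~$\cH^{N-1}$ and use Fubini, exactly as in Lemma~\ref{lemma:nuovo}: averaging~$\sigma_s(S_{\eps,e})$ over~$e$ gives
\[
\frac1{\cH^{N-1}(\bS^{N-1})}\int_{\bS^{N-1}}\sigma_s(S_{\eps,e})\,d\cH^{N-1}_e
=\frac1{\cH^{N-1}(\bS^{N-1})}\int_{\bS^{N-1}}\cH^{N-1}\bigl(\{e:|e\cdot\theta|\le\eps\}\bigr)\,\sigma_s(d\theta)
=\frac{\cH^{N-1}(\{e:|e_1|\le\eps\})}{\cH^{N-1}(\bS^{N-1})}=:\kappa(\eps),
\]
where in the middle step we used the rotational invariance of~$\cH^{N-1}$ (the slab measure depends only on~$\eps$, not on~$\theta$), and~$\kappa(\eps)\to0$ as~$\eps\to0^+$. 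Since the average of~$\sigma_s(S_{\eps,e})$ over~$e$ equals~$\kappa(\eps)$, there exists (for each~$s$) a direction~$e=e(s)\in\bS^{N-1}$ with~$\sigma_s(S_{\eps,e(s)})\le\kappa(\eps)$. Plugging this~$e(s)$ into the displayed lower bound,
\[
M_{s,\sigma_s}(e_s)\ge M_{s,\sigma_s}(e(s))\ge\eps^{2s}\bigl(1-\kappa(\eps)\bigr),
\]
so~$\liminf_{s\to0^+}M_{s,\sigma_s}(e_s)\ge1-\kappa(\eps)$. Letting~$\eps\to0^+$ gives~$\liminf_{s\to0^+}M_{s,\sigma_s}(e_s)\ge1$, which combined with the upper bound yields the claim.

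The step I expect to require the most care is the selection of~$e(s)$: one must ensure that for each~$s$ the set~$\{e\in\bS^{N-1}:\sigma_s(S_{\eps,e})\le\kappa(\eps)\}$ is nonempty, which follows since a function whose average equals~$\kappa(\eps)$ cannot exceed~$\kappa(\eps)$ everywhere (and one should note measurability of~$e\mapsto\sigma_s(S_{\eps,e})$, which holds because~$S_{\eps,e}$ varies measurably in~$e$). Everything else — the Fubini computation, the rotational invariance reducing the slab measure to the constant~$\kappa(\eps)$, and the elementary limit~$\eps^{2s}\to1$ — is routine. An alternative, avoiding the selection argument, would be to pass to a weakly convergent subsequence~$\sigma_{s_j}\weakto\sigma_\infty$ (as~$s_j\to0^+$) along any sequence realizing the liminf and argue directly on~$\sigma_\infty$, but the Fubini–selection route is cleaner and does not need compactness.
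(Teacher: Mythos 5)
Your proof is correct, and it takes a genuinely different (and in some respects cleaner) route than the paper's. The paper argues by contradiction: assuming $\max_e M_{s_j,\sigma_{s_j}}(e)\le a<1$ along an infinitesimal sequence $s_j$, it invokes weak compactness of probability measures (Prokhorov) to extract a weak limit $\sigma_*$, uses monotonicity of $t\mapsto|e\cdot\theta|^{2t}$ and Fatou's Lemma to get $\sigma_*(\{e\cdot\theta\ne0\})\le a$ for every $e$, and then derives a contradiction by the same Fubini averaging argument as in Lemma~\ref{lemma:nuovo}. Your proof is direct and avoids weak compactness entirely: the same Fubini computation is used not to contradict an assumed bound but to \emph{select}, for each $s$, a direction $e(s)$ with $\sigma_s(S_{\eps,e(s)})\le\kappa(\eps)$, which immediately yields the quantitative lower bound $M_{s,\sigma_s}(e_s)\ge\eps^{2s}(1-\kappa(\eps))$. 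This is more elementary (no appeal to~\cite[Theorems~6.1 and~6.4, Chapter~2]{MR226684}), more constructive, and in fact gives a rate: $\max_e M_{s,\sigma_s}(e)\ge \sup_{\eps\in(0,1)}\eps^{2s}(1-\kappa(\eps))$, uniformly over \emph{all} families of probability measures. The paper's contradiction-plus-compactness scaffold is chosen presumably to parallel the structure of the proof of Lemma~\ref{lem:equi00}, where a weak limit is genuinely needed; for the present lemma your averaging-and-selection argument is leaner. The one step worth spelling out (which you flag) is the joint measurability of $(e,\theta)\mapsto\chi_{\{|e\cdot\theta|\le\eps\}}$, which is immediate since $(e,\theta)\mapsto|e\cdot\theta|$ is continuous, so Fubini for finite measures applies; and $\kappa(\eps)\to0$ since the equator has zero $\cH^{N-1}$-measure, matching the paper's use of that fact.
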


\begin{proof}
Suppose not. Then, recalling also~\eqref{usoduevoltefprse769043-0876}, there exist~$a\in[0,1)$ and an infinitesimal sequence~$s_j$ such that
$$\max_{e\in\bS^{N-1}}\int_{\bS^{N-1}}|e\cdot\theta|^{2s_j}\,\sigma_{s_j}(d\theta)\le a.$$
By the weak compactness of probability measures (see e.g.~\cite[Theorems~6.1 and~6.4, Chapter~2]{MR226684}), we can select a weakly convergent subsequence~$\sigma_{s_{j_k}}$ and denote by~$\sigma_*$ its weak limit, which is still a probability measure.

Also, given~$\epsilon>0$, we can suppose that~$s_{j_k}\in[0,\epsilon]$. Since, for all~$e$, $\theta\in\bS^{N-1}$, we have that~$|e\cdot\theta|\le1$, we infer that~$|e\cdot\theta|^{2s_{j_k}}\ge|e\cdot\theta|^{2\epsilon}$.

Accordingly, for all~$e\in\bS^{N-1}$,
$$\int_{\bS^{N-1}}|e\cdot\theta|^{2\epsilon}\,\sigma_{s_{j_k}}(d\theta)\le
\int_{\bS^{N-1}}|e\cdot\theta|^{2s_{j_k}}\,\sigma_{s_{j_k}}(d\theta)\le a.$$
Passing to the limit as~$k\to+\infty$ and using the weak convergence of the probability measures, we find that
$$\int_{\bS^{N-1}}|e\cdot\theta|^{2\epsilon}\,\sigma_*(d\theta)\le a,$$
and so
\begin{equation}\label{pk-dfe4560}
\int_{\{e\cdot\theta\ne0\}}|e\cdot\theta|^{2\epsilon}\,\sigma_*(d\theta)\le a.\end{equation}

We can now take the limit as~$\epsilon\to0^+$. Since, when~$e\cdot\theta\ne0$,
$$\lim_{\epsilon \to 0^+}|e\cdot\theta|^{2\epsilon}=1,$$
we deduce from~\eqref{pk-dfe4560} and Fatou's Lemma that
\begin{equation}\label{ALkl0-erfPMVbS40i}
\int_{\{e\cdot\theta\ne0\}}\,\sigma_*(d\theta)\le a.
\end{equation}

Now we consider the surface (Hausdorff) measure~${\mathcal{H}}^{N-1}$ on~$\bS^{N-1}$ and the product measure~$\sigma_* \times {\mathcal{H}}^{N-1}$ on~$(\bS^{N-1})^2$. We define
$$ \Sigma:=\big\{
{\mbox{$(\theta,e)\in(\bS^{N-1})^2$ s.t. $e\cdot\theta\ne0$}}
\big\}$$
and we use Fubini's Theorem (for finite measures, see e.g.~\cite[Theorem~1.22]{MR3409135}) to see that
\begin{eqnarray*}
\int_{\bS^{N-1}} \left( \,\int_{\{e\cdot\theta\ne0\}}\,\sigma_*(d\theta)\right)\,d{\mathcal{H}}^{N-1}_e=
\iint_\Sigma \big(\sigma_*(d\theta), \, d{\mathcal{H}}^{N-1}_e\big)=
\int_{\bS^{N-1}} \left(\, \int_{\{e\cdot\theta\ne0\}}\,
d{\mathcal{H}}^{N-1}_e\right)\,\sigma_*(d\theta).
\end{eqnarray*}
Also, the spherical equator has null surface measure, giving that
$$\int_{\{e\cdot\theta\ne0\}}\,d
{\mathcal{H}}^{N-1}_e=\int_{\bS^{N-1}}\,d
{\mathcal{H}}^{N-1}_e={\mathcal{H}}^{N-1}(\bS^{N-1}).$$
{F}rom these considerations, we arrive at
$$\int_{\bS^{N-1}} \left(\, \int_{\{e\cdot\theta\ne0\}}\,\sigma_*(d\theta)\right)\,d{\mathcal{H}}^{N-1}_e=
\int_{\bS^{N-1}} {\mathcal{H}}^{N-1}(\bS^{N-1})
\,\sigma_*(d\theta)={\mathcal{H}}^{N-1}(\bS^{N-1}).$$
This in tandem with~\eqref{ALkl0-erfPMVbS40i} returns that
$$ a{\mathcal{H}}^{N-1}(\bS^{N-1})=\int_{\bS^{N-1}} a\,d{\mathcal{H}}^{N-1}_e\ge{\mathcal{H}}^{N-1}(\bS^{N-1}),$$
which provides the desired contradiction.\end{proof}

We now check the behavior of the constant~$c_{1,s}$ as~$s\to 0^+$.

\begin{lemma}\label{rep of cms}
Let~$m\in \N$ and~$s\in(0,m)$. Then,
$$
c_{m,s}=2^{1-m}\left(\,M_{s,\sigma_s}(e_s)\int_0^{+\infty}\big(1- \cos\tau\big)^{m} \tau^{-1-2s}\,d\tau\right)^{-1}.
$$
\end{lemma}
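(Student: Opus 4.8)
The plan is to start from the defining relation~\eqref{constcms00}, which reads
\[
\frac{2}{c_{m,s}}=2^m\int_{\R^N}\big(1-\cos(e_s\cdot y)\big)^{m}\,\nu_s(dy),
\]
and to evaluate the integral on the right-hand side by passing to polar coordinates via~\eqref{definition measure}. Writing~$y=r\theta$ with~$r>0$ and~$\theta\in\bS^{N-1}$, and using~$\nu_s(dy)=r^{-1-2s}\,\sigma_s(d\theta)\,dr$, we get
\[
\int_{\R^N}\big(1-\cos(e_s\cdot y)\big)^{m}\,\nu_s(dy)
=\int_{0}^{+\infty}\int_{\bS^{N-1}}\big(1-\cos(r\, e_s\cdot\theta)\big)^{m}\,r^{-1-2s}\,\sigma_s(d\theta)\,dr.
\]
The key step is then the change of variables: for each fixed~$\theta$ with~$e_s\cdot\theta\neq 0$, substitute~$\tau:=r\,|e_s\cdot\theta|$, so that~$r=\tau/|e_s\cdot\theta|$ and~$dr=d\tau/|e_s\cdot\theta|$, hence~$r^{-1-2s}\,dr=|e_s\cdot\theta|^{2s}\,\tau^{-1-2s}\,d\tau$. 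Since~$\cos$ is even, $\cos(r\,e_s\cdot\theta)=\cos(\tau)$ regardless of the sign of~$e_s\cdot\theta$, so the inner radial integral becomes~$|e_s\cdot\theta|^{2s}\int_0^{+\infty}(1-\cos\tau)^m\,\tau^{-1-2s}\,d\tau$, a quantity independent of~$\theta$. (On the null set~$\{e_s\cdot\theta=0\}$ the integrand~$(1-\cos(r\cdot 0))^m=0$ vanishes, so these~$\theta$ contribute nothing.) Applying Fubini–Tonelli to factor the~$\theta$-integral out, the right-hand side equals
\[
\left(\int_{\bS^{N-1}}|e_s\cdot\theta|^{2s}\,\sigma_s(d\theta)\right)\int_0^{+\infty}(1-\cos\tau)^m\,\tau^{-1-2s}\,d\tau
= M_{s,\sigma_s}(e_s)\int_0^{+\infty}(1-\cos\tau)^m\,\tau^{-1-2s}\,d\tau,
\]
where the last equality is just the definition~\eqref{def ms} of~$M_{s,\sigma_s}$. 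Plugging back into~\eqref{constcms00} gives~$\frac{2}{c_{m,s}}=2^m M_{s,\sigma_s}(e_s)\int_0^{+\infty}(1-\cos\tau)^m\,\tau^{-1-2s}\,d\tau$, which rearranges to the claimed formula~$c_{m,s}=2^{1-m}\big(M_{s,\sigma_s}(e_s)\int_0^{+\infty}(1-\cos\tau)^m\,\tau^{-1-2s}\,d\tau\big)^{-1}$.

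The main thing to check carefully is convergence and the legitimacy of Fubini, i.e.\ that the scalar integral~$\int_0^{+\infty}(1-\cos\tau)^m\,\tau^{-1-2s}\,d\tau$ is finite (and positive) for~$m>s>0$. Near~$\tau=0$ one has~$1-\cos\tau\sim\tau^2/2$, so~$(1-\cos\tau)^m\sim (\tau^2/2)^m$ and the integrand behaves like~$\tau^{2m-1-2s}$, which is integrable precisely because~$2m-2s>0$; near~$\tau=+\infty$, $(1-\cos\tau)^m$ is bounded by~$2^m$ and~$\tau^{-1-2s}$ is integrable since~$s>0$. Positivity is clear since the integrand is nonnegative and not identically zero. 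Together with~$M_{s,\sigma_s}(e_s)>0$ (which holds by~\eqref{simple ellipticitybasic} / Lemma~\ref{lemma:nuovo} applied with~$\alpha=2s$, as already recorded in~\eqref{bvncmx9e76954uy95}), this shows the right-hand side of the asserted formula is a well-defined finite positive number, consistent with~$c_{m,s}>0$. I do not anticipate any genuine obstacle here; the only mild subtlety is making sure the change of variables is applied inside the~$\theta$-integral and that the null set~$\{e_s\cdot\theta=0\}$ is dispatched before performing it, which the nonnegativity of the integrand makes routine via Tonelli.
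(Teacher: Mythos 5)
Your proof is correct and follows essentially the same route as the paper: starting from~\eqref{constcms00}, switching to polar coordinates, applying Fubini, and substituting~$\tau=r|e_s\cdot\theta|$ to factor out~$M_{s,\sigma_s}(e_s)$. Your explicit handling of the null set~$\{e_s\cdot\theta=0\}$ and the convergence check for~$\int_0^{+\infty}(1-\cos\tau)^m\tau^{-1-2s}\,d\tau$ (near $0$ and at infinity) are tacit in the paper's two-line computation but are the right things to verify.
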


\begin{proof}
From the definition of~$c_{m,s}$, we have, by using polar coordinates
and the change of variable~$\tau:=|e_s\cdot \theta| t$, that
\begin{align*}
\frac{2^{1-m}}{c_{m,s}}&= \int_{\R^N}\big(1-\cos(e_s\cdot y)\big)^{m}\,\nu_{s}(dy)\\
&=\int_0^{+\infty} t^{-1-2s}\int_{\bS^{N-1}} \big(1-\cos(e_s\cdot \theta t)\big)^m\,\sigma_s(d\theta)\, dt\\
&= M_{s,\sigma_s}(e_{s})\int_{0}^{+\infty}\big(1-\cos\tau\big)^{m}\tau^{-1-2s}\,d\tau.
\end{align*}
Rearranging this identity gives the claim.
\end{proof}

\begin{lemma}\label{lem:constant estimate}
For~$m\in \N$ it holds that
$$
\lim_{s\to0^+}\frac{c_{m,s}}{s}M_{s,\sigma_s}(e_s)=\frac{2}{-P_m},
$$ where~$P_m$ is given by formula~\eqref{padef00}. 
\end{lemma}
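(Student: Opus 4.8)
The plan is to reduce the statement to the asymptotic behaviour of a one-dimensional integral. By Lemma~\ref{rep of cms} we have $c_{m,s}\,M_{s,\sigma_s}(e_s)=2^{1-m}/I(s)$, where we set $I(s):=\int_0^{+\infty}(1-\cos\tau)^m\,\tau^{-1-2s}\,d\tau$ (finite for $0<s<m$). Hence $\frac{c_{m,s}}{s}M_{s,\sigma_s}(e_s)=\dfrac{2^{1-m}}{s\,I(s)}$, and it suffices to prove that
\[
\lim_{s\to0^+}s\,I(s)=\frac12\,a_m,\qquad\text{where }\ a_m:=\frac1{2\pi}\int_0^{2\pi}(1-\cos\tau)^m\,d\tau
\]
is the mean value of the $2\pi$-periodic function $\tau\mapsto(1-\cos\tau)^m$.

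First I would split $I(s)=\int_0^1+\int_1^{+\infty}$. For the first piece, the elementary bound $1-\cos\tau\le\tau^2/2$ gives $\int_0^1(1-\cos\tau)^m\tau^{-1-2s}\,d\tau\le 2^{-m}\int_0^1\tau^{2m-1-2s}\,d\tau=\frac{2^{-m}}{2m-2s}$, which stays bounded as $s\to0^+$; therefore $s\int_0^1(1-\cos\tau)^m\tau^{-1-2s}\,d\tau\to0$. For the tail, I would expand the trigonometric polynomial $(1-\cos\tau)^m=\bigl(1-\tfrac{e^{i\tau}+e^{-i\tau}}{2}\bigr)^m=\sum_{n=-m}^{m}c_n e^{in\tau}$, noting that the constant Fourier coefficient is $c_0=a_m$. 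For each fixed $n\neq0$, one integration by parts yields $\bigl|\int_1^{+\infty}e^{in\tau}\tau^{-1-2s}\,d\tau\bigr|\le 2/|n|$, uniformly in $s>0$, so $s$ times the contribution of all the terms with $n\neq0$ tends to $0$; on the other hand $s\int_1^{+\infty}\tau^{-1-2s}\,d\tau=\tfrac12$ for every $s>0$. Summing the finitely many terms gives $\lim_{s\to0^+}s\,I(s)=\tfrac12 a_m$.

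It then remains to evaluate $a_m$ and to compare with $P_m$. Writing $1-\cos\tau=2\sin^2(\tau/2)$ we get $(1-\cos\tau)^m=2^m\sin^{2m}(\tau/2)$, and the substitution $\phi=\tau/2$ together with the Wallis formula $\int_0^{\pi}\sin^{2m}\phi\,d\phi=\pi\binom{2m}{m}4^{-m}$ gives $a_m=\binom{2m}{m}2^{-m}$. Hence
\[
\lim_{s\to0^+}\frac{c_{m,s}}{s}M_{s,\sigma_s}(e_s)=\frac{2^{1-m}}{\tfrac12 a_m}=\frac{2^{2-m}}{\binom{2m}{m}2^{-m}}=\frac{4}{\binom{2m}{m}}.
\]
Finally, by the symmetrization carried out in the proof of Lemma~\ref{lem:constant} we may write $P_m=\tfrac12\sum_{k=-m}^{m}(-1)^k\binom{2m}{m-k}k^{2s}$; letting $s\to0^+$ in~\eqref{padef00}, only the terms with $k\neq0$ survive, and since $\sum_{k=-m}^{m}(-1)^k\binom{2m}{m-k}=0$ for $m\ge1$ by the binomial theorem, we obtain $-P_m\to\tfrac12\binom{2m}{m}$, so that $\frac{2}{-P_m}=\frac{4}{\binom{2m}{m}}$, which is precisely the claimed limit.

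The main obstacle is the uniform-in-$s$ control of the oscillatory tail integrals $\int_1^{+\infty}e^{in\tau}\tau^{-1-2s}\,d\tau$: since $(1-\cos\tau)^m$ is not integrable against $\tau^{-1}$ near infinity, the whole divergence of $I(s)$ as $s\to0^+$ is produced by the constant Fourier coefficient $a_m$, and the zero-mean part must be bounded by a quantity that does not blow up as $s\to0^+$. The integration-by-parts estimate achieves this, and it is really the only non-routine step; everything else is bookkeeping with the representation from Lemma~\ref{rep of cms} and standard binomial identities.
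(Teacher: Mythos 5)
Your proof is correct, but it takes a genuinely different route from the paper's. The paper first proves the case $m=1$ by appealing to the closed-form identity $\int_0^{+\infty}(1-\cos\tau)\tau^{-1-2s}\,d\tau=\frac{\cos(\pi s)\,\Gamma(2-2s)}{2s(1-2s)}$ and computing the limit directly, and then passes to general $m>1$ via Lemma~\ref{lem:constant}, which yields $c_{m,s}=c_{1,s}/(-P_m)$, so that the general statement is inherited from the $m=1$ case. You instead handle general $m$ head-on: after the same reduction via Lemma~\ref{rep of cms}, you expand $(1-\cos\tau)^m$ as a trigonometric polynomial, isolate the constant Fourier coefficient as the sole source of the $1/s$ blow-up of $I(s):=\int_0^{+\infty}(1-\cos\tau)^m\tau^{-1-2s}\,d\tau$, and bound the zero-mean oscillatory part uniformly in $s$ by a single integration by parts, yielding $\lim_{s\to 0^+}sI(s)=\tfrac12\binom{2m}{m}2^{-m}$ via Wallis' integral. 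You then confirm that $\lim_{s\to0^+}(-P_m)=\tfrac12\binom{2m}{m}$ by the binomial theorem (using only the symmetrization step from the proof of Lemma~\ref{lem:constant}, not its conclusion). What the paper's approach buys is brevity, at the cost of an external Beta/Gamma identity and the reduction lemma; what your approach buys is self-containment and an explicit explanation of where the divergence comes from, and it also makes explicit the $s\to0^+$ limit of the $s$-dependent quantity $P_m$, which the paper leaves implicit. Both arguments are sound.
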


\begin{proof}
Using the expression of~$c_{m,s}$ as represented in \cref{rep of cms}, we see that, when~$m=1$,
$$ c_{1,s}^{-1}=M_{s,\sigma_s}(e_s)\int_{0}^{+\infty}\big(1-\cos\tau\big)\tau^{-1-2s}\,d\tau.$$
Moreover, we know (see e.g. formula~(2.15) in~\cite{MR4500878}) that
$$
\int_0^{+\infty} \big(1-\cos\tau\big){\tau^{-1-2s}}\, d\tau=\frac{\cos(\pi s)\,\Gamma(2-2s)}{2s (1-2s)}.
$$
Thus, we conclude that
\begin{eqnarray*}
c_{1,s}^{-1}=\frac{\cos(\pi s)\,\Gamma(2-2s)M_{s,\sigma_s}(e_s)}{2s (1-2s)}
\end{eqnarray*}
and therefore
\begin{eqnarray*}
\lim_{s\to0^+}\frac{c_{1,s}}{s}M_{s,\sigma_s}(e_s)=
\lim_{s\to0^+}\frac{2(1-2s)}{\cos(\pi s)\,\Gamma(2-2s)}=2.
\end{eqnarray*}
Since~$P_1=-1$, this gives the desired result when~$m=1$.

The claim for any~$m>1$ then follows from \cref{lem:constant}, noting that,
for~$m\geq 1>s>0$,
\begin{equation*}
c_{m,s}=\frac{c_{1,s}P_1}{P_m}=\frac{c_{1,s}}{-P_m}.
\qedhere \end{equation*}
\end{proof}

As a consequence of Lemma~\ref{lem:constant estimate}
we have the following result:

\begin{prop}\label{limit s to 0}
For any~$\beta\in(0,1)$ and~$u\in C^{\beta}_c(\R^N)$, it holds that
$$
 \lim_{s\to0^+}L_{m,s}u(x)=u(x)\quad\text{for all~$x\in \R^N$.}
$$
\end{prop}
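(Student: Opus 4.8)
The plan is to split the integral defining $L_{m,s}u(x)$ into a near part on a small ball $B_\eta$ and a far part on $\R^N\setminus B_\eta$, and to show that the far part vanishes as $s\to 0^+$ while the near part converges to $u(x)$. Fix $x\in\R^N$ and write
\[
L_{m,s}u(x)=\frac{c_{m,s}}{2}\int_{B_\eta}\delta_m u(x,y)\,\nu_s(dy)+\frac{c_{m,s}}{2}\int_{\R^N\setminus B_\eta}\delta_m u(x,y)\,\nu_s(dy).
\]
For the far part, using $|\delta_m u(x,y)|\le C\|u\|_{L^\infty}$ and the polar representation \eqref{definition measure} one gets
$\bigl|\int_{\R^N\setminus B_\eta}\delta_m u(x,y)\,\nu_s(dy)\bigr|\le C\|u\|_{L^\infty}\int_\eta^{+\infty}r^{-1-2s}\,dr=\frac{C\|u\|_{L^\infty}}{2s\,\eta^{2s}}$, which is $O(1/s)$. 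Since Lemma~\ref{lem:constant estimate} gives $c_{m,s}M_{s,\sigma_s}(e_s)=O(s)$ and $M_{s,\sigma_s}(e_s)\to 1$ by Lemma~\ref{lem:limms}, we have $c_{m,s}=O(s)$, so the far contribution is $c_{m,s}\cdot O(1/s)\cdot$(a factor that can be made small by choosing $\eta$ large). Actually the cleanest route is to let $\eta\to\infty$: write the far bound as $\frac{c_{m,s}}{2}\cdot\frac{C\|u\|_{L^\infty}}{2s\eta^{2s}}$ and note $c_{m,s}/s$ stays bounded, so for fixed large $\eta$ this is $O(\eta^{-2s})\to O(1)$ in $s$ but small in $\eta$; more precisely one should first fix $\eta$, bound the far term by $C\eta^{-2s}$ times a bounded quantity, and observe $\eta^{-2s}\to 1$, so the far term is merely $O(1)$, not $o(1)$ — hence the correct decomposition requires more care.

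The better decomposition is to compare $L_{m,s}u(x)$ directly against $u(x)$. Using the Hölder regularity $|\delta_m u(x,y)|\le C_u|y|^\beta$ for $|y|\le 1$ (this follows from the fact that $\delta_m u(x,y)$ is a sum of $u(x+ky)$ with coefficients summing to zero, together with $u\in C^\beta_c$), one estimates the near part on $B_\eta$ with $\eta\le 1$ by
\[
\Bigl|\frac{c_{m,s}}{2}\int_{B_\eta}\delta_m u(x,y)\,\nu_s(dy)\Bigr|\le \frac{c_{m,s}C_u}{2}\int_0^\eta r^{\beta-1-2s}\,dr=\frac{c_{m,s}C_u}{2(\beta-2s)}\,\eta^{\beta-2s},
\]
which is $O(c_{m,s})=O(s)\to 0$ provided $2s<\beta$. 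For the far part on $\R^N\setminus B_\eta$ with $\eta\le 1$, split $\delta_m u(x,y)=\binom{2m}{m}u(x)+\sum_{k\ne 0}(-1)^k\binom{2m}{m-k}u(x+ky)$. The ``sum over $k\ne 0$'' piece is bounded by $C\|u\|_{L^\infty}\int_\eta^{+\infty}r^{-1-2s}\,dr=\frac{C\|u\|_{L^\infty}}{2s\eta^{2s}}$, and multiplied by $c_{m,s}=O(s)$ this is $O(\eta^{-2s})$, which still does not obviously vanish.

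So the key identity to exploit is the normalization \eqref{constcms00} and Lemma~\ref{rep of cms}, which yields
$\frac{c_{m,s}}{2}\binom{2m}{m}\int_{\R^N\setminus B_\eta}\nu_s(dy)=\frac{c_{m,s}}{2}\binom{2m}{m}\frac{1}{2s\eta^{2s}}$,
and more importantly identifies $\frac{c_{m,s}}{2}\int_{\R^N}(1-\cos(e_s\cdot y))^m\,\nu_s(dy)=1$ for the test function. The cleanest argument then is: apply $L_{m,s}$ to the specific function $f(y)=e^{ie_s\cdot y}$ where by \eqref{agaapa0} and \eqref{constcms00} one has $L_{m,s}f(0)$ normalized, and use a comparison / density argument. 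In practice, the proof will combine (i) the near-part estimate above giving $o(1)$, (ii) the far-part split where the leading term $\binom{2m}{m}u(x)c_{m,s}/(2\cdot 2s\eta^{2s})\to u(x)$ after using Lemma~\ref{lem:constant estimate} to compute $\lim c_{m,s}/(4s)\cdot\binom{2m}{m}=\frac{\binom{2m}{m}}{-2P_m}$, and noting $-P_m=\binom{2m}{m}-(\text{something})$... — the arithmetic must produce exactly $1$, which it does precisely because $\binom{2m}{m}+2P_m$ equals $\delta_m(\mathbf{1})$-type telescoping. \textbf{The main obstacle} is handling the far-field remainder $\sum_{k\ne 0}u(x+ky)$ term carefully: it is $O(1)$ in $s$ a priori, and one must use the decay of $u$ (compact support: only finitely many $k$ contribute for $|y|$ bounded, and for $|y|$ large all terms vanish) together with dominated convergence in the $r$-integral, localizing to $r\in[\eta,R_u/|k|]$, to show this piece is genuinely $o(1/c_{m,s})=o(1/s)$ — so that after multiplying by $c_{m,s}=O(s)$ it vanishes. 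The $e_s$-dependence of $c_{m,s}$ and $M_{s,\sigma_s}(e_s)$ is harmless because of Lemma~\ref{lem:limms}, but one must carry it through consistently.
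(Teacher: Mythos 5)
Your proposal circles around the correct ingredients but never closes the argument; in its present form there is a gap in the treatment of the far field. The paper's proof avoids the difficulty you label ``the main obstacle'' by a simpler choice of splitting radius: it takes $\rho:=R+1$ \emph{larger} than the support radius of $u$ (after translating to $x=0$), so that on $\R^N\setminus B_\rho$ one has $u(\pm ky)=0$ for all $k\ne0$, and hence $\delta_mu(0,y)=\binom{2m}{m}u(0)$ exactly there — no cross terms to estimate at all. (The paper additionally invokes Lemma~\ref{lem:independence of m} to reduce to $m=1$, so that the far part is literally $2u(0)$.) Your decomposition with a small $\eta\le1$ would also work, but the step you leave as a description (``one must use \ldots to show this piece is genuinely $o(1/c_{m,s})$'') is precisely the part that needs to be done: for fixed $\eta$ and each $k\ne0$ the set where $u(x+ky)\ne0$ is bounded, so
\[
\int_{\R^N\setminus B_\eta}|u(x+ky)|\,\nu_s(dy)\le\|u\|_{L^\infty}\,\frac{\eta^{-2s}-(R_k)^{-2s}}{2s}\longrightarrow \|u\|_{L^\infty}\log\frac{R_k}{\eta}\quad\text{as }s\to0^+,
\]
i.e.\ this quantity is $O(1)$, not $O(1/s)$, and only then does multiplication by $c_{m,s}=O(s)$ kill it. You assert the conclusion without this computation.

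Two further points you leave unverified. First, the near-part Hölder estimate must be carried out on the full ball of radius $\rho$, not just $B_\eta$ with $\eta\le1$ (in the paper's version), which still gives $\frac{c_{1,s}\|u\|_{C^\beta}\rho^{\beta-2s}}{\beta-2s}\to0$ since $c_{1,s}\to0$; your version with $\eta\le1$ is fine too, but you should state the convergence explicitly. Second, the closing arithmetic ``which it does precisely because \ldots'' is a placeholder: what one needs is $P_m\big|_{s=0}=\sum_{k=1}^m(-1)^k\binom{2m}{m-k}=-\tfrac12\binom{2m}{m}$, which follows from $\sum_{k=-m}^m(-1)^k\binom{2m}{m-k}=(1-1)^{2m}=0$, so that $\frac{c_{m,s}}{s}\to\frac{2}{-P_m|_{s=0}}=\frac{4}{\binom{2m}{m}}$ (after absorbing $M_{s,\sigma_s}(e_s)\to1$ from Lemma~\ref{lem:limms}) and consequently $\frac{c_{m,s}}{2}\binom{2m}{m}\cdot\frac{1}{2s\rho^{2s}}\to1$. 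Without these verifications, the proposal remains a plausible plan rather than a proof.
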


\begin{proof}
Without loss of generality, we can suppose that~$x=0$. 
Let~$R>0$ be such that~$\supp\,u\subset B_{R}$ and let~$\rho:=R+1$.
Since~$s\to 0^+$, we may also assume that~$2s<\beta<1$. In particular, 
by Lemma~\ref{lem:independence of m},
$$
L_{m,s}u(0)=L_{1,s}u(0)=\frac{c_{1,s}}{2}u(0)\int_{\R^N\setminus B_{\rho}}\,\nu_s(dy) + \frac{c_{1,s}}{2}\int_{B_{\rho}}\big(u(0)-u(y)-u(-y)\big)\,\nu_s(dy).
$$

Now, using polar coordinates, we have that
\begin{eqnarray*}&&
\left|\;\int_{B_{\rho}}\big(u(0)-u(y)-u(-y)\big)\,\nu_s(dy)\right|=\left|\;
\int_0^{\rho}\int_{\bS^{N-1}}\frac{u(0)-u(r\theta)-u(-r\theta)}{r^{1+2s}}\,\sigma_s(d\theta)\, dr\right|\\
&&\qquad\leq 2\|u\|_{C^{\beta}(\R^N)}\int_0^{\rho}\int_{\bS^{N-1}} r^{\beta-1-2s}\,\sigma_s(d\theta)\, dr
=\frac{2\|u\|_{C^{\beta}(\R^N)} \rho^{\beta-2s}}{\beta-2s}.
\end{eqnarray*}
Thus,
\begin{equation}\label{duweigfuwtg43itg43ui}
\big|L_{m,s}u(0)- u(0)\big|\leq
|u(0)|\,\left|c_{1,s}\int_{\R^N\setminus B_{\rho}}\,\nu_s(dy)-1\right|
+ \frac{c_{1,s}\|u\|_{C^{\beta}(\R^N)} \rho^{\beta-2s}}{\beta-2s}.
\end{equation}

Now, by Lemmata~\ref{lem:limms} and~\ref{lem:constant estimate}, it follows that
\begin{equation}\label{duweigfuwtg43itg43ui2}\begin{split}
&\lim_{s\to0^+} \frac{c_{1,s}\|u\|_{C^{\beta}(\R^N)} \rho^{\beta-2s}}{\beta-2s}
=\lim_{s\to0^+} 
\frac{c_{1,s} M_{s,\sigma_s}(e_s)}{s} \,
\frac{s\|u\|_{C^{\beta}(\R^N)} \rho^{\beta-2s}}{(\beta-2s)M_{s,\sigma_s}(e_s)}\\
&\qquad=2 \lim_{s\to0^+} 
\frac{s\|u\|_{C^{\beta}(\R^N)} \rho^{\beta-2s}}{ \beta-2s}
=0.
\end{split}\end{equation}

Furthermore, since~$\sigma_s$ is a probability measure, we see that
\begin{equation}\label{gtru76680489hkthfewlj}
\int_{\R^N\setminus B_{\rho}}\,\nu_s(dy)= \int_{\rho}^{+\infty}\int_{\bS^{N-1}} 
r^{-1-2s}\,\sigma_s(d\theta) \,dr=\frac{1 }{2s\rho^{2s}},
\end{equation}
and therefore, exploiting again Lemmata~\ref{lem:limms} and~\ref{lem:constant estimate},
\begin{eqnarray*}
\lim_{s\to0^+}c_{1,s}\int_{\R^N\setminus B_{\rho}}\,\nu_s(dy)
=\lim_{s\to0^+} \, \frac{c_{1,s} }{2s\rho^{2s}}
=\lim_{s\to0^+} \frac{c_{1,s}M_{s,\sigma_s}(e_s)}s\, \frac{1}{2\rho^{2s}M_{s,\sigma_s}(e_s) }
=1.
\end{eqnarray*}
{F}rom this, \eqref{duweigfuwtg43itg43ui}
and~\eqref{duweigfuwtg43itg43ui2}, the desired claim follows.
\end{proof}

We also remark that when~$s$ is a positive integer the operator~$L_{m,s}$
defined in~\eqref{defop} boils down to a local operator of order~$2s$.

To show this, we first check the behavior of the constant~$c_{m,s}$ as~$s\to m^-$:

\begin{lemma}\label{lemma:aism}
For any~$n$, $m\in \N$ with~$n\geq m$ it holds that
$$ \lim_{s\to m^-} \frac{c_{n,s}}{m-s}M_{s,\sigma_s}(e_s)= \frac{4 P_m}{P_n},$$
where~$P_n$, $P_m$ are given by formula~\eqref{padef00}.
\end{lemma}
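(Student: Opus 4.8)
The plan is to reduce the assertion, via the reduction identity of \cref{lem:constant} and the integral representation of \cref{rep of cms}, to the asymptotics of a single one–dimensional integral. For $s$ in a left neighbourhood of $m$ we have $0<s<m\le n$, so both $c_{m,s}$ and $c_{n,s}$ are defined and, by \cref{lem:constant} (the case $n=m$ being trivial), $c_{n,s}P_n=c_{m,s}P_m$. Hence
\[
\frac{c_{n,s}}{m-s}\,M_{s,\sigma_s}(e_s)=\frac{P_m}{P_n}\cdot\frac{c_{m,s}}{m-s}\,M_{s,\sigma_s}(e_s),
\]
and, since $P_m/P_n$ depends continuously on $s$, it suffices to prove that $\lim_{s\to m^-}\frac{c_{m,s}}{m-s}M_{s,\sigma_s}(e_s)=4$; the claimed value $\frac{4P_m}{P_n}$ then follows (and reads simply $4$ in the substantive case $n=m$). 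By \cref{rep of cms} the factor $M_{s,\sigma_s}(e_s)$ cancels, leaving
\[
\frac{c_{m,s}}{m-s}\,M_{s,\sigma_s}(e_s)=\frac{2^{1-m}}{(m-s)\,I_m(s)},\qquad I_m(s):=\int_0^{+\infty}\big(1-\cos\tau\big)^m\,\tau^{-1-2s}\,d\tau,
\]
so the whole matter boils down to showing $\lim_{s\to m^-}(m-s)\,I_m(s)=2^{-(m+1)}$.

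To establish this I would split $I_m(s)=\int_0^1+\int_1^{+\infty}$. On $[1,+\infty)$ the integrand is dominated by $2^m\tau^{-1-2s}$, so $\int_1^{+\infty}(1-\cos\tau)^m\tau^{-1-2s}\,d\tau\le 2^{m-1}/s$ stays bounded as $s\to m^-$, and its contribution vanishes once multiplied by $m-s$. On $[0,1]$ I would write $(1-\cos\tau)^m=2^{-m}\tau^{2m}\phi(\tau)$ with $\phi(\tau):=\big(2(1-\cos\tau)\,\tau^{-2}\big)^m$, which extends to a smooth (indeed real-analytic) function on $[0,1]$ with $\phi(0)=1$ and $|\phi(\tau)-1|\le C\tau^2$ there. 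Splitting accordingly,
\[
\int_0^1\big(1-\cos\tau\big)^m\tau^{-1-2s}\,d\tau=2^{-m}\int_0^1\tau^{2m-1-2s}\,d\tau+2^{-m}\int_0^1\tau^{2m-1-2s}\big(\phi(\tau)-1\big)\,d\tau,
\]
the first term equals $2^{-m}\big(2(m-s)\big)^{-1}=2^{-(m+1)}/(m-s)$, which carries the full singularity, while the second is bounded uniformly for $s$ near $m$, because its integrand is $O(\tau^{2m+1-2s})$ with $2m+1-2s>1$. Therefore $I_m(s)=2^{-(m+1)}/(m-s)+O(1)$ as $s\to m^-$, hence $(m-s)I_m(s)\to 2^{-(m+1)}$.

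Combining the two steps gives $\displaystyle\lim_{s\to m^-}\frac{c_{m,s}}{m-s}\,M_{s,\sigma_s}(e_s)=\frac{2^{1-m}}{2^{-(m+1)}}=4$, and therefore $\dfrac{c_{n,s}}{m-s}\,M_{s,\sigma_s}(e_s)=\dfrac{P_m}{P_n}\,(4+o(1))$ as $s\to m^-$, which is the claim. The only delicate point is the last asymptotic: one must correctly isolate that the divergence as $s\to m^-$ is the simple pole $2^{-(m+1)}/(m-s)$ produced entirely by the behaviour of $(1-\cos\tau)^m\tau^{-1-2s}$ near $\tau=0$, with every other contribution bounded; the algebra with \cref{lem:constant} and \cref{rep of cms} is routine. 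An alternative route for that step is to use $1-\cos\tau=2\sin^2(\tau/2)$ together with a closed form for $\int_0^{+\infty}\sin^{2m}u\,u^{-1-2s}\,du$ in terms of the Gamma function (generalising the formula used in \cref{lem:constant estimate}), but the direct expansion above is shorter and self-contained.
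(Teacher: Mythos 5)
Your proof is correct and follows essentially the same route as the paper's: reduce to $n=m$ via \cref{lem:constant}, invoke the representation of \cref{rep of cms} to cancel $M_{s,\sigma_s}(e_s)$, split the integral at $\tau=1$, observe the tail contributes nothing after multiplication by $m-s$, and extract the simple pole $2^{-(m+1)}/(m-s)$ from the behaviour of $(1-\cos\tau)^m\tau^{-1-2s}$ near $\tau=0$. The only cosmetic difference is that you make the local expansion explicit via the function $\phi(\tau)=\big(2(1-\cos\tau)\tau^{-2}\big)^m$ with $|\phi-1|\le C\tau^2$, whereas the paper writes $(1-\cos\tau)^m=(\tau^2/2+O(\tau^4))^m$ directly; both yield the same uniform bound on the error term.
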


\begin{proof}
By \cref{lem:constant}, we see that
\begin{eqnarray*}
\frac{c_{n,s}}{m-s}= \frac{c_{n,s}P_n}{(m-s)P_n}= \frac{c_{m,s}P_m}{(m-s)P_n},
\end{eqnarray*}
hence it is enough to prove the desired limit in the case~$n=m\in \N$. 

Now, by \cref{rep of cms} we have
$$
2^{1-m}c_{m,s}^{-1}=M_{s,\sigma_s}(e_s)\int_0^{+\infty}\big(1- \cos\tau\big)^{m} \tau^{-1-2s}\,d\tau.
$$
As a consequence,
\begin{equation}\label{mnbvcxasdfgh234567}\begin{split}
&\lim_{s\to m^-}\frac{m-s}{c_{m,s} M_{s,\sigma_s}(e_s)}
=2^{m-1}\lim_{s\to m^-}(m-s)\int_0^{+\infty}\big(1-\cos\tau\big)^{m}\tau^{-1-2s}\,d\tau\\&\qquad\qquad
= 2^{m-1}\lim_{\eta\to 0^+}\eta\int_0^{+\infty}\big(1-\cos\tau\big)^{m}\tau^{-1-2m+2\eta}\,d\tau.
\end{split}\end{equation}

Notice that
\begin{eqnarray*}&&
\left|\int_1^{+\infty}\big(1-\cos\tau\big)^{m}\tau^{-1-2m+2\eta}\,d\tau\right|\le
2^m\int_1^{+\infty}\tau^{-1-2m+2\eta}\,d\tau=\frac{2^m}{2m-2\eta}
\end{eqnarray*}
and therefore
$$\lim_{\eta\to 0^+}\eta\int_1^{+\infty}\big(1-\cos\tau\big)^{m}\tau^{-1-2m+2\eta}\,d\tau=0.$$
Plugging this information into~\eqref{mnbvcxasdfgh234567}, we obtain that
\begin{eqnarray*}
\lim_{s\to m^-}\frac{m-s}{c_{m,s}M_{s,\sigma_s}(e_s)} &
=& 2^{m-1}\lim_{\eta\to 0^+}\eta\int_0^{1}\big(1-\cos\tau\big)^{m}\tau^{-1-2m+2\eta}\,d\tau\\&=&2^{m-1}\lim_{\eta\to 0^+}\eta\int_0^{1}\left(\frac{\tau^2}2+O(\tau^4)\right)^{m}\tau^{-1-2m+2\eta}\,d\tau
\\&=&\frac12\lim_{\eta\to 0^+}\eta\int_0^{1}\Big(1+O(\tau^2)\Big)^{m}\tau^{-1+2\eta}\,d\tau
\\&=&\frac12\lim_{\eta\to 0^+}\eta\int_0^{1}\Big(1+O(\tau^2)\Big)\tau^{-1+2\eta}\,d\tau
\\&=&\frac12\lim_{\eta\to 0^+}\eta\frac1{2\eta}\\&=&\frac14,
\end{eqnarray*}
which entails the desired result.
\end{proof}

With this we can now establish the following:

\begin{prop}\label{limit s to integers}
Let~$m\in\N\setminus\{0\}$, $U\subset\R^N$ and~$x\in U$.

Suppose that,
for all~$\alpha\in\N^N$ with~$|\alpha|=2m$,
\begin{equation}\label{limiszeroBIS2}\lim_{s\to m^-}
\frac{\displaystyle\int_{\bS^{N-1}}\theta^\alpha \,\sigma_s(d\theta)}{\displaystyle
M_{s,\sigma_s}(e_s)}= c_\alpha,
\end{equation}
for some~$ c_\alpha\in\R$. 

Then, for all~$u\in C^{2m}(U)\cap L^\infty(\R^N)$,
\begin{equation}\label{limintnnpiuuno}
\lim_{s\to m^-}L_{m,s}u(x)=
\sum_{|\alpha|=2m}\frac{(-1)^m (2m)!}{\alpha!}\,c_\alpha\,\partial^\alpha u(x).
\end{equation}

In particular, if~$\nu_s(dz)=\nu_n(dz):=|z|^{-N-2n}/\cH^{N-1}(\bS^{N-1})\, dz$, then 
\begin{equation}\label{limintnnpiuunoBIS}
\lim_{s\to m^-}L_{m,s}u(x)=(-\Delta)^mu(x).\end{equation}
\end{prop}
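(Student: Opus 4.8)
The plan is to Taylor-expand the difference operator $\delta_m u(x,y)$ to order $2m$ and track the surviving term. First I would write, for $u\in C^{2m}(U)\cap L^\infty(\R^N)$ and $y$ small, the expansion
\[
\delta_m u(x,y)=\sum_{k=-m}^m(-1)^k\binom{2m}{m-k}\sum_{|\alpha|\le 2m}\frac{(ky)^\alpha}{\alpha!}\partial^\alpha u(x)+o(|y|^{2m}),
\]
and then invoke the classical combinatorial identities for the finite differences $P_a^{(j)}:=\sum_{k=-m}^m(-1)^k\binom{2m}{m-k}k^j$, namely that $P_m^{(j)}=0$ for $0\le j<2m$ and $P_m^{(2m)}=(-1)^m(2m)!$ (this is essentially the content behind \eqref{agaapa0}, expanding $2^m(1-\cos t)^m$ around $t=0$). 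Consequently all terms of order $<2m$ cancel, and
\[
\delta_m u(x,y)=(-1)^m(2m)!\sum_{|\alpha|=2m}\frac{y^\alpha}{\alpha!}\partial^\alpha u(x)+o(|y|^{2m})
\]
as $y\to 0$; for $|y|$ bounded away from $0$, $\delta_m u(x,y)$ is just bounded by $C\|u\|_{L^\infty}$ uniformly.

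Next I would split the integral defining $L_{m,s}u(x)$ in \eqref{defop} into $B_1$ and $\R^N\setminus B_1$. On the exterior region, the $\nu_s$-mass is $\int_1^{+\infty}r^{-1-2s}\,dr=\frac{1}{2s}$ (using that $\sigma_s$ is a probability measure), so the exterior contribution is $O(c_{m,s}/s)$; since $c_{m,s}M_{s,\sigma_s}(e_s)$ is comparable to $m-s$ near $s=m$ by \cref{lemma:aism} and $M_{s,\sigma_s}(e_s)\le 1$, this piece is $O\big((m-s)/s\big)\to 0$ as $s\to m^-$. On the interior region I would substitute the Taylor expansion, use polar coordinates $y=r\theta$, and compute
\[
\int_{B_1}y^\alpha\,\nu_s(dy)=\Big(\int_0^1 r^{2m-1-2s}\,dr\Big)\int_{\bS^{N-1}}\theta^\alpha\,\sigma_s(d\theta)=\frac{1}{2m-2s}\int_{\bS^{N-1}}\theta^\alpha\,\sigma_s(d\theta),
\]
for each multi-index with $|\alpha|=2m$. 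The error term $o(|y|^{2m})$ contributes $\int_{B_1}o(|y|^{2m})\nu_s(dy)$, which after polar coordinates is $o(1/(2m-2s))$ times a bounded spherical integral, hence negligible after multiplication by $c_{m,s}$. Therefore
\[
L_{m,s}u(x)=\frac{c_{m,s}}{2}\cdot\frac{(-1)^m(2m)!}{2m-2s}\sum_{|\alpha|=2m}\frac{\partial^\alpha u(x)}{\alpha!}\int_{\bS^{N-1}}\theta^\alpha\,\sigma_s(d\theta)+o(1),
\]
and I would rewrite $\dfrac{c_{m,s}}{2(2m-2s)}\int_{\bS^{N-1}}\theta^\alpha\,\sigma_s(d\theta)=\dfrac{c_{m,s}M_{s,\sigma_s}(e_s)}{4(m-s)}\cdot\dfrac{\int_{\bS^{N-1}}\theta^\alpha\,\sigma_s(d\theta)}{M_{s,\sigma_s}(e_s)}$. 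By \cref{lemma:aism} (with $n=m$) the first factor tends to $\frac{4P_m}{P_m\cdot 4}=1$, and by hypothesis \eqref{limiszeroBIS2} the second factor tends to $c_\alpha$; this yields \eqref{limintnnpiuuno}.

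For the final assertion \eqref{limintnnpiuunoBIS} with the isotropic kernel $\nu_n(dz)=|z|^{-N-2n}/\cH^{N-1}(\bS^{N-1})\,dz$, the corresponding $\sigma_s$ is the uniform probability measure on $\bS^{N-1}$, so $M_{s,\sigma_s}(e_s)=\frac{1}{\cH^{N-1}(\bS^{N-1})}\int_{\bS^{N-1}}|e\cdot\theta|^{2s}\,d\cH^{N-1}$, independent of $e$, and one computes $c_\alpha=\lim_{s\to m^-}\frac{\int_{\bS^{N-1}}\theta^\alpha\,d\cH^{N-1}}{\int_{\bS^{N-1}}|e_1\cdot\theta|^{2m}\,d\cH^{N-1}}$; these are standard moments of the uniform measure on the sphere, vanishing unless every component of $\alpha$ is even, and plugging into \eqref{limintnnpiuuno} reproduces the symbol $|\xi|^{2m}$ of $(-\Delta)^m$ — equivalently, one can just recall that for this kernel $L_{m,s}u=(-\Delta)^s u$ for all $s<m$ (as noted in the excerpt after \eqref{constcms00}) and let $s\to m^-$. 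The main obstacle I anticipate is making the interchange of the sum/limit with the $o(|y|^{2m})$ remainder fully rigorous \emph{uniformly in $s$} near $m$: one must ensure the Taylor remainder estimate $|\delta_m u(x,y)-(-1)^m(2m)!\sum_{|\alpha|=2m}\frac{y^\alpha}{\alpha!}\partial^\alpha u(x)|\le \omega(|y|)|y|^{2m}$ with a modulus $\omega$ independent of $s$, so that $c_{m,s}\int_{B_1}\omega(|y|)|y|^{2m}\nu_s(dy)=\frac{c_{m,s}}{2m-2s}\int_0^1\omega(r)\,\frac{d(r^{2m-2s})}{\cdots}$ genuinely goes to zero — this requires only the continuity of the top-order derivatives of $u$ on a neighborhood of $x$, which is exactly the hypothesis $u\in C^{2m}(U)$, but the bookkeeping of constants (and handling the case where $\sigma_s$ is concentrated, so that spherical integrals of $\theta^\alpha$ need not be positive) deserves care.
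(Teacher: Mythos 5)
Your strategy is the same as the paper's: Taylor-expand $\delta_m u(x,y)$ to order $2m$, split the integral into a near region and a far region, and control $c_{m,s}$ via \cref{lemma:aism}. You derive the expansion yourself from the combinatorial identities $\sum_{k=-m}^m(-1)^k\binom{2m}{m-k}k^j=0$ for $j<2m$ and $=(-1)^m(2m)!$ for $j=2m$ (which indeed follows from expanding \eqref{agaapa0} around $t=0$), whereas the paper simply cites Lemma~2.4 in~\cite{AJS18a}; this is a harmless difference. For the isotropic case the paper does the spherical-moment computation and invokes the Multinomial Theorem, while you take a shortcut via $L_{m,s}u=(-\Delta)^s u$ for the exact kernel $|z|^{-N-2s}\,dz$; both are fine.

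There is, however, a genuine slip in the direction of an inequality, and it occurs at the one place where the standing ellipticity hypothesis is actually needed. You write that the far-field piece is $O(c_{m,s}/s)$ and argue it vanishes ``since $c_{m,s}M_{s,\sigma_s}(e_s)$ is comparable to $m-s$ by \cref{lemma:aism} \emph{and} $M_{s,\sigma_s}(e_s)\le 1$.'' But $M_{s,\sigma_s}(e_s)\le 1$ only gives the \emph{lower} bound $c_{m,s}\gtrsim m-s$, which is useless here. Since $c_{m,s}\sim 4(m-s)/M_{s,\sigma_s}(e_s)$, what you need to conclude $c_{m,s}=O(m-s)$ is the \emph{lower} bound $M_{s,\sigma_s}(e_s)\ge\lambda>0$ coming from the standing hypothesis \eqref{simple ellipticity} (which is assumed throughout Section~\ref{sec:1}). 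The same issue recurs in your estimate of the $o(|y|^{2m})$ remainder: $c_{m,s}\cdot o\!\left(\frac1{m-s}\right)$ is negligible only because $c_{m,s}/(m-s)$ stays bounded, which again forces $M_{s,\sigma_s}(e_s)$ to stay bounded away from zero. Without \eqref{simple ellipticity} the constant $c_{m,s}$ could blow up faster than $(m-s)^{-1}\cdot(m-s)=1$ and both the tail and the remainder estimates fail. The paper makes this dependence explicit (its remainder bound is $C\varepsilon/\lambda$ and its tail bound is a multiple of $1/\lambda$). You should cite \eqref{simple ellipticity} explicitly at these two steps.

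Finally, the ``main obstacle'' you flag---uniformity in $s$ of the Taylor-remainder modulus---is in fact not an obstacle: the modulus depends only on $u$, $x$, and the fixed integer $m$ (the Taylor expansion of $u$ makes no reference to $s$), so the bound $|\delta_m u(x,y)-\text{main term}|\le\omega(|y|)\,|y|^{2m}$ with $\omega$ independent of $s$ is automatic. The paper encodes the same fact through the $\varepsilon$--$\rho$ quantifiers.
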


\begin{proof}
We exploit Lemma~2.4 in~\cite{AJS18a} to obtain that for every~$\varepsilon>0$
there exists~$\rho>0$, depending on~$U$, $m$, $u$ and~$\varepsilon$,
such that, for all~$y\in B_\rho$,
\begin{equation}\label{qifbewk093725t9753hje}
\delta_m u(x,y)= R(x,y)+\sum_{|\alpha|=2m}\frac{(-1)^m (2m)!}{\alpha!}\partial^\alpha u(x) y^\alpha,
\end{equation}
where~$|R(x,y)|\le C\varepsilon |y|^{2m}$, for some~$C>0$, depending only on~$N$ and~$m$.

Also, recalling~\eqref{defdelta} and~\eqref{gtru76680489hkthfewlj},
\begin{eqnarray*}&&\left|\;\int_{\R^N\setminus B_\rho}\delta_mu(x,y) \,\nu_s(dy)\right|
=\left|\;\int_{\R^N\setminus B_\rho}\sum_{k=-m}^m(-1)^k\binom{2m}{m-k}u(ky)\,\nu_s(dy)\right|
\\&&\qquad\le \|u\|_{L^\infty(\R^N)}\sum_{k=-m}^m(-1)^k\binom{2m}{m-k}\int_{\R^N\setminus B_\rho}\nu_s(dy)\\&&\qquad
=\|u\|_{L^\infty(\R^N)}\frac{1}{2s\rho^{2s}}
\sum_{k=-m}^m(-1)^k\binom{2m}{m-k}.
\end{eqnarray*}
Therefore, in light of Lemma~\ref{lemma:aism} and~\eqref{simple ellipticity},
\begin{equation*}\begin{split}&
\lim_{s\to m^-}\left|\frac{c_{m,s}}{2} \int_{\R^N\setminus B_\rho}\delta_mu(x,y) \,\nu_s(dy)\right|\\
\le\;& \frac{\|u\|_{L^\infty(\R^N)}}{4}\sum_{k=-m}^m(-1)^k\binom{2m}{m-k}
\lim_{s\to m^-} \frac{c_{m,s} }{s\rho^{2s}}\\
=\;&\frac{\|u\|_{L^\infty(\R^N)}}{4m\rho^{2m}}\sum_{k=-m}^m(-1)^k\binom{2m}{m-k}
\lim_{s\to m^-} \frac{c_{m,s}M_{s,\sigma_s}(e_s)}{m-s}
\frac{(m-s) }{M_{s,\sigma_s}(e_s)}\\ \le \;&
\frac{\|u\|_{L^\infty(\R^N)}}{\lambda\,m\rho^{2m}}\sum_{k=-m}^m(-1)^k\binom{2m}{m-k}
\lim_{s\to m^-} (m-s)\\=\;&0.
\end{split}
\end{equation*}

As a consequence of this, and recalling~\eqref{defop} and~\eqref{qifbewk093725t9753hje},
\begin{equation}\label{pwq12345qwert}\begin{split}
& \lim_{s\to m^-}\left|L_{m,s} u(x)-\sum_{|\alpha|=2m}\frac{(-1)^m (2m)!}{\alpha!}\,c_\alpha\,\partial^\alpha u(x)\right|\\=\;&\lim_{s\to m^-}\left|
\frac{c_{m,s}}{2} \int_{\R^N}\delta_mu(x,y) \,\nu_s(dy)-\sum_{|\alpha|=2m}\frac{(-1)^m (2m)!}{\alpha!}\,c_\alpha\,\partial^\alpha u(x)\right| \\
\le\;& \lim_{s\to m^-}
\left|\frac{c_{m,s}}{2} \int_{B_\rho}\delta_mu(x,y) \,\nu_s(dy)-\sum_{|\alpha|=2m}\frac{(-1)^m (2m)!}{\alpha!}\,c_\alpha\,\partial^\alpha u(x)\right|
\\ \le\;& \lim_{s\to m^-}\left[
\frac{c_{m,s}}{2} \int_{B_\rho}|R(x,y)|\,\nu_s(dy)\right.
\\&\qquad\qquad\left.+
\left|\sum_{|\alpha|=2m}\frac{(-1)^m (2m)!}{\alpha!}\partial^\alpha u(x) \left(\frac{c_{m,s}}{2} \int_{B_\rho}y^\alpha\,\nu_s(dy)-c_\alpha\right)\right|\;\;\right].
\end{split}\end{equation}

Now, we point out that
\begin{equation*}\begin{split}
& \int_{B_\rho}|R(x,y)|\,\nu_s(dy)\le
C\varepsilon\int_{B_\rho} |y|^{2m}\,\nu_s(dy) =
C\varepsilon\int_{0}^{\rho}
\int_{\bS^{N-1}}r^{-1-2s+2m}\,\sigma_s(d\theta) \,dr
=\frac{C\varepsilon\rho^{2m-2s}}{2(m-s)}.
\end{split}
\end{equation*}
This, together with Lemma~\ref{lemma:aism} and~\eqref{simple ellipticity}, entails that
\begin{eqnarray*} \lim_{s\to m^-}
\frac{c_{m,s}}{2} \int_{B_\rho}|R(x,y)|\,\nu_s(dy)\le
\lim_{s\to m^-}
\frac{c_{m,s}C\varepsilon\rho^{2m-2s} }{4(m-s)}=\lim_{s\to m^-}
\frac{C\varepsilon }{M_{s,\sigma_s}(e_s)}=\frac{C\varepsilon}{\lambda}.
\end{eqnarray*}

Hence, plugging this information into~\eqref{pwq12345qwert},
and making use of~\eqref{limiszeroBIS2}, we infer that
\begin{equation*}\begin{split}
& \lim_{s\to m^-}\left|L_{m,s} u(x)-\sum_{|\alpha|=2m}\frac{(-1)^m (2m)!}{\alpha!}\,c_\alpha\,\partial^\alpha u(x)\right|\\
&\le \frac{C\varepsilon}{\lambda}+\lim_{s\to m^-}
\left|\sum_{|\alpha|=2m}\frac{(-1)^m (2m)!}{\alpha!}\partial^\alpha u(x)\left(
\frac{c_{m,s}}{2} \int_{B_\rho} y^\alpha\,\nu_s(dy)- c_\alpha\right)\right|\\
&=\frac{C\varepsilon}{\lambda}+\lim_{s\to m^-}
\left|\sum_{|\alpha|=2m}\frac{(-1)^m (2m)!}{\alpha!}\partial^\alpha u(x) \left(
\frac{c_{m,s}\rho^{2m-2s}}{4(m-s)}\int_{\bS^{N-1}}\theta^\alpha \,\sigma_s(d\theta)- c_\alpha\right)\right|\\
&=\frac{C\varepsilon}{\lambda}+\lim_{s\to m^-}
\left|\sum_{|\alpha|=2m}\frac{(-1)^m (2m)!}{\alpha!}\partial^\alpha u(x)\cdot\right.\\
&\qquad\qquad \qquad \qquad  \cdot \left.\left(
\frac{c_{m,s}}{4(m-s)}M_{s,\sigma_s}(e_s)
\rho^{2m-2s}\ \frac{\displaystyle\int_{\bS^{N-1}}\theta^\alpha \,\sigma_s(d\theta)}{M_{s,\sigma_s}(e_s)}- c_\alpha\right)\right|\\
&=\frac{C\varepsilon}{\lambda}
.\end{split}\end{equation*}
Sending~$\varepsilon\to 0$, we obtain the desired result in~\eqref{limintnnpiuuno}.

In order to check~\eqref{limintnnpiuunoBIS}, we exploit formula~(3.4) in Lemma~3.3
of~\cite{AJS18a} to see that, for all~$\beta\in\N^N$ with~$|\beta|=m$,
\begin{eqnarray*}
&&\int_{\bS^{N-1}}\theta^{2\beta}\,d{\mathcal{H}}^{N-1}_\theta=2(m-s)\int_0^1
r^{-1-2s+2|\beta|}\,dr\int_{\bS^{N-1}}\theta^{2\beta}\,d{\mathcal{H}}^{N-1}_\theta\\&&\qquad\qquad=
\int_{B_1}\frac{y^{2\beta}}{|y|^{N+2s}}\,dy=\frac{(2\beta)!}{\beta!}\,
\frac{\pi^{\frac{N}2}}{\displaystyle 2^{2m-1}\Gamma\left(\frac{N}2+m\right)}.
\end{eqnarray*}
In particular, taking~$\beta:=m e_1=(m, 0,\dots,0)$, we have that
\begin{eqnarray*}
\int_{\bS^{N-1}}|\theta_1|^{2m}\,d{\mathcal{H}}^{N-1}_\theta=
\int_{\bS^{N-1}}\theta_1^{2m}\,d{\mathcal{H}}^{N-1}_\theta=
\int_{\bS^{N-1}}\theta^{2\beta}\,d{\mathcal{H}}^{N-1}_\theta
=\frac{(2m)!}{m!}\,
\frac{\pi^{\frac{N}2}}{\displaystyle 2^{2m-1}\Gamma\left(\frac{N}2+m\right)}
.\end{eqnarray*}
As a result, taking~$\beta:=\alpha/2$, we deduce from~\eqref{limiszeroBIS2} that
\begin{eqnarray*}c_\alpha&=&
\lim_{s\to m^-}
\frac{\displaystyle\int_{\bS^{N-1}}\theta^\alpha \,\sigma_s(d\theta)}{M_{s,\sigma_s}(e_1)}=
\lim_{s\to m^-}
\frac{\displaystyle\int_{\bS^{N-1}}\theta^{2\beta} \,d{\mathcal{H}}^{N-1}_\theta}{\displaystyle
\int_{\bS^{N-1}}|\theta_1|^{2s}\,d{\mathcal{H}}^{N-1}_\theta} =
\frac{(2\beta)!}{\beta!}\,\frac{m!}{(2m)!}.
\end{eqnarray*}
We point out that here above we have used the fact that we can choose~$e_s:=e_1$ (and actually any unit vector could be picked in this case).

Accordingly, formula~\eqref{limintnnpiuuno} and the Multinomial Theorem give in this case that
\begin{eqnarray*}&&\lim_{s\to m^-}L_{m,s}u(x)=
\sum_{|\beta|=m}\frac{(-1)^m (2m)!}{(2\beta)!}\,\frac{(2\beta)!}{\beta!}\,\frac{m!}{(2m)!}\,\partial^{2\beta} u(x)\\&&\qquad\quad=
\sum_{|\beta|=m}\frac{(-1)^m m! }{\beta!}\,\partial^{2\beta} u(x)
=(-\Delta)^m u(x),
\end{eqnarray*}
which completes the proof of~\eqref{limintnnpiuunoBIS}.
\end{proof}

We point out that the assumption in~\eqref{limiszeroBIS2} is not necessarly satisfied, as pointed out by the following example:

\begin{lemma}
For all~$k\in\N$, let
$$ I_k:=\left[ 1-\frac1{2^k}, 1-\frac1{2^{k+1}}\right).$$
For all~$s\in(1,2)$, define
$$ \sigma_s:=\begin{cases}\displaystyle
\delta_{e_1} \quad{\mbox{ if $s\in I_k$ with~$k$ even}},\\
\displaystyle
\frac{ {\mathcal{H}}^1}{2\pi} \quad{\mbox{ if $s\in I_k$ with~$k$ odd}}.
\end{cases}$$

Then, the limit 
\begin{equation*}\lim_{s\to 2^-}
\frac{\displaystyle\int_{\bS^{1}}\theta_1^2\theta_2^2 \,\sigma_s(d\theta)}{\displaystyle
M_{s,\sigma_s}(e_s)}
\end{equation*} does not exist.
\end{lemma}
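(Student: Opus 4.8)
The plan is to produce two sequences $s_j\nearrow 2$ along which the quotient in the statement converges to two different limits. First I would record the structural fact built into the construction: the intervals $I_k$ are pairwise disjoint with $\diam(I_k)\to 0$ and they exhaust a left-neighbourhood of the endpoint $s=2$, so every $s$ close enough to $2$ lies in a unique $I_{k(s)}$ and $k(s)\to\infty$ as $s\to2^-$. In particular $k(s)$ is even along some sequence $s_j^{\mathrm e}\nearrow 2$ (for which $\sigma_{s_j^{\mathrm e}}=\delta_{e_1}$) and odd along some sequence $s_j^{\mathrm o}\nearrow 2$ (for which $\sigma_{s_j^{\mathrm o}}=\mathcal H^1/(2\pi)$).

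Next I would evaluate numerator and denominator in the two regimes. For $\sigma_s=\delta_{e_1}$, since $e_1=(1,0)$ we immediately get $\int_{\bS^1}\theta_1^2\theta_2^2\,\sigma_s(d\theta)=0$, whereas $M_{s,\sigma_s}(e)=|e\cdot e_1|^{2s}$, which is maximal (equal to $1$) at $e=e_1$; thus $M_{s,\sigma_s}(e_s)=1$, consistently with Lemma~\ref{lemma:nuovo}, so the quotient is identically $0$ along $s_j^{\mathrm e}$. For $\sigma_s=\mathcal H^1/(2\pi)$, rotational invariance of $\sigma_s$ shows that $e\mapsto M_{s,\sigma_s}(e)$ is constant, equal to $w(s):=\tfrac1{2\pi}\int_0^{2\pi}|\cos\phi|^{2s}\,d\phi$; hence the choice of $e_s$ is irrelevant and $M_{s,\sigma_s}(e_s)=w(s)$. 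On the other hand $\int_{\bS^1}\theta_1^2\theta_2^2\,\sigma_s(d\theta)=\tfrac1{2\pi}\int_0^{2\pi}\cos^2\phi\sin^2\phi\,d\phi=\tfrac18$ for every $s$. Since $w$ is continuous on $(0,+\infty)$ (dominated convergence, using $|\cos\phi|^{2s}\le1$) and $w(2)=\tfrac1{2\pi}\int_0^{2\pi}\cos^4\phi\,d\phi=\tfrac38$, the quotient along $s_j^{\mathrm o}$ converges to $\frac{1/8}{3/8}=\frac13$.

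Putting these together, the quotient tends to $0$ along $s_j^{\mathrm e}$ and to $\tfrac13\neq0$ along $s_j^{\mathrm o}$, so it has no limit as $s\to2^-$, which is the assertion. There is no serious obstacle here: the only points needing a little care are the rotational-invariance identity $M_{s,\mathcal H^1/(2\pi)}\equiv w(s)$ (which makes the maximiser $e_s$ immaterial and pins the denominator down exactly), the continuity of $w$ at $s=2$, and checking that $k(s)$ keeps oscillating between even and odd values all the way up to $s=2$ so that both sequences $s_j^{\mathrm e}$ and $s_j^{\mathrm o}$ really do exist.
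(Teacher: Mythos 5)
Your proof is correct and follows essentially the same route as the paper's: split into the even and odd families $I_k$, observe the quotient vanishes when $\sigma_s=\delta_{e_1}$, and compute that it tends to $\tfrac13$ when $\sigma_s=\mathcal H^1/(2\pi)$. The only cosmetic difference is that you evaluate the spherical integrals (getting $\tfrac18$ and $\tfrac38$) directly, whereas the paper reduces to the ratio $\int_{\bS^1}\theta_1^2\theta_2^2\,d\mathcal H^1\big/\int_{\bS^1}|\theta_1|^4\,d\mathcal H^1$ and cites a tabulated formula; both yield $\tfrac13$.
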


\begin{proof} We point out that, for all~$s\in(1,2)$, we can choose~$e_s:=e_1$. 

Now, if~$s\in I_k$ with~$k$ even, we have that
\begin{eqnarray*}&&
M_{s,\sigma_s}(e_s)= \int_{\bS^{1}}|\theta_1|^{2s}\,\sigma_s(d\theta)=1\qquad
{\mbox{ and }} \qquad
\int_{\bS^{1}}\theta_1^2\theta_2^2 \,\sigma_s(d\theta)=0,
\end{eqnarray*} and therefore
\begin{equation}\label{bvnckcewur984376y8648y3o9876}
\lim_{{I_k\ni s\to 2^-}\atop{k{\tiny{\mbox{ even}}}}}
\frac{\displaystyle\int_{\bS^{1}}\theta_1^2\theta_2^2 \,\sigma_s(d\theta)}{\displaystyle
M_{s,\sigma_s}(e_s)}=0.\end{equation}

If instead~$s\in I_k$ with~$k$ odd,
\begin{eqnarray*}&&
M_{s,\sigma_s}(e_s)=\frac1{2\pi} \int_{\bS^{N-1}}|\theta_1|^{2s}\,d{\mathcal{H}}^{N-1}_\theta\\ {\mbox{and }}
&&\int_{\bS^{1}}\theta_1^2\theta_2^2 \,\sigma_s(d\theta)=\frac1{2\pi}\int_{\bS^{1}}\theta_1^2\theta_2^2 \,d{\mathcal{H}}^{N-1}_\theta.
\end{eqnarray*}
As a result,
\begin{eqnarray*}&&\lim_{{I_k\ni s\to 2^-}\atop{k{\tiny{\mbox{ odd}}}}}
\frac{\displaystyle\int_{\bS^{1}}\theta_1^2\theta_2^2 \,\sigma_s(d\theta)}{\displaystyle
M_{s,\sigma_s}(e_s)}=\lim_{{I_k\ni s\to 2^-}\atop{k{\tiny{\mbox{ odd}}}}}
\frac{\displaystyle\int_{\bS^{1}}\theta_1^2\theta_2^2 \,d{\mathcal{H}}^{N-1}_\theta}{\displaystyle\int_{\bS^{N-1}}|\theta_1|^{2s}\,d{\mathcal{H}}^{N-1}_\theta}
=\frac{\displaystyle\int_{\bS^{1}}\theta_1^2\theta_2^2 \,d{\mathcal{H}}^{N-1}_\theta}{\displaystyle\int_{\bS^{N-1}}|\theta_1|^{4}\,d{\mathcal{H}}^{N-1}_\theta}.
\end{eqnarray*}
We now use formula~(A.1) in~\cite[Appendix~A]{JACK} and we conclude that
\begin{eqnarray*}
\lim_{{I_k\ni s\to 2^-}\atop{k{\tiny{\mbox{ odd}}}}}
\frac{\displaystyle\int_{\bS^{1}}\theta_1^2\theta_2^2 \,\sigma_s(d\theta)}{\displaystyle
M_{s,\sigma_s}(e_s)}=\frac13.
\end{eqnarray*}
This and~\eqref{bvnckcewur984376y8648y3o9876} entail the desired claim.
\end{proof}

We close this section with the following remark on the constant~$c_{m,s}$.

\begin{cor}\label{cor:bounds on cms}
Let~$m>0$ and~$\{\sigma_s\}_{s\in(0,m)}$ be a family of probability measures. 
Then, there exists~$C_m>0$, depending on~$m$ but
independent of~$s$, such that
$$
2^{2-m}\leq c_{m,s} \left(\frac{1}{s}+\frac{1}{m-s}\right) M_{s,\sigma_s}(e_s)\leq C\quad\text{for all~$s\in(0,m)$.}
$$
\end{cor}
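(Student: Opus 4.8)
The plan is to combine the two asymptotic results already proved, namely \cref{lem:constant estimate} (which controls $c_{m,s}M_{s,\sigma_s}(e_s)/s$ as $s\to 0^+$) and \cref{lemma:aism} (which controls $c_{m,s}M_{s,\sigma_s}(e_s)/(m-s)$ as $s\to m^-$), together with a compactness argument on the compact part of the parameter interval. First I would introduce the auxiliary function
$$
g(s):=c_{m,s}\left(\frac1s+\frac1{m-s}\right)M_{s,\sigma_s}(e_s)=\frac{m}{s(m-s)}\,c_{m,s}M_{s,\sigma_s}(e_s)\qquad\text{for }s\in(0,m),
$$
and observe that by \cref{rep of cms} we may write $c_{m,s}M_{s,\sigma_s}(e_s)=2^{1-m}\big(\int_0^{+\infty}(1-\cos\tau)^m\tau^{-1-2s}\,d\tau\big)^{-1}$, so that $g$ depends on $s$ alone (not on the particular family $\{\sigma_s\}$). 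This already gives the lower bound: we need to show $g(s)\geq 2^{2-m}$, i.e.\ $\int_0^{+\infty}(1-\cos\tau)^m\tau^{-1-2s}\,d\tau\le \tfrac{s(m-s)}{2m}$; this is a concrete one-variable inequality that can be checked by splitting the integral at $\tau=1$, using $(1-\cos\tau)^m\le(\tau^2/2)^m\le \tau^2/2$ on $(0,1)$ and $(1-\cos\tau)^m\le 2^m$ on $(1,\infty)$, and bounding the resulting elementary integrals. (If the explicit constant $2^{2-m}$ proves slightly off, one rescales; the point is only that \emph{some} positive lower bound holds, but it is cleaner to pin it down directly from \cref{rep of cms}.)

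For the upper bound I would argue that $g$ is continuous on $(0,m)$ and has finite limits at both endpoints, hence is bounded. Continuity on $(0,m)$ follows from the explicit representation above, since $s\mapsto\int_0^{+\infty}(1-\cos\tau)^m\tau^{-1-2s}\,d\tau$ is continuous (and strictly positive) on $(0,m)$ by dominated convergence — the integrand is dominated near $0$ by $C\tau^{2m-1-2s}$ and near $\infty$ by $C\tau^{-1-2s}$, uniformly for $s$ in a compact subinterval of $(0,m)$. At the endpoint $s\to 0^+$, \cref{lem:constant estimate} gives $c_{m,s}M_{s,\sigma_s}(e_s)/s\to 2/(-P_m)$, and since $m/(m-s)\to1$ we get $g(s)\to 2m/(-P_m)$, a finite limit; at $s\to m^-$, \cref{lemma:aism} (with $n=m$) gives $c_{m,s}M_{s,\sigma_s}(e_s)/(m-s)\to 4P_m/P_m=4$, and since $m/s\to1$ we get $g(s)\to 4$, again finite. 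A continuous function on the open interval $(0,m)$ with finite one-sided limits at both endpoints extends continuously to $[0,m]$ and is therefore bounded; call the bound $C_m$, which depends on $m$ only. This proves $g(s)\le C_m$ for all $s\in(0,m)$, which is the claimed upper inequality.

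The main (very mild) obstacle is purely bookkeeping: making sure that the quantity $c_{m,s}M_{s,\sigma_s}(e_s)$ really is independent of the family $\{\sigma_s\}$ so that the limits from \cref{lem:constant estimate} and \cref{lemma:aism} apply to \emph{this} function rather than to some subsequential limit of measures — but this is immediate from \cref{rep of cms}. A secondary point is that \cref{lem:constant estimate} and \cref{lemma:aism} are stated as limits along $s\to 0^+$ and $s\to m^-$; to conclude boundedness on all of $(0,m)$ one needs continuity on the interior, which I would get from the dominated-convergence argument sketched above (equivalently, from the fact that $c_{m,s}$ is given by an explicit Gamma-function expression in the model case and, in general, by \cref{rep of cms}). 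Finally I would remark that the statement as printed has a small typo ($C_m$ is introduced but the inequality is written with $C$); I would simply write $C_m$ throughout, noting $C_m:=\max\{2m/(-P_m),\,4\}$ up to the continuity bound on the interior works as an admissible choice.
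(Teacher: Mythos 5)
Your overall plan is sound and matches the paper's in spirit: use the representation of $c_{m,s}M_{s,\sigma_s}(e_s)$ from Lemma~\ref{rep of cms} (which makes the quantity universal, independent of the family $\{\sigma_s\}$), bound the integral $I(s):=\int_0^{+\infty}(1-\cos\tau)^m\tau^{-1-2s}\,d\tau$ from above for the lower bound, and control the endpoints $s\to0^+$ and $s\to m^-$ via Lemmata~\ref{lem:constant estimate} and~\ref{lemma:aism}. Your upper-bound argument — continuity of $g$ on $(0,m)$ plus finite one-sided limits implies boundedness — is a genuinely different and arguably cleaner route than the paper's, which instead produces an explicit lower bound on $I(s)$ for $s\in(\epsilon,m-\epsilon)$ (using $I(s)\geq\frac{2-\sqrt2}{2}\int_{\pi/4}^{\pi/2}\tau^{-1-2s}\,d\tau$) and then glues the three pieces together. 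Your compactness-by-continuity argument buys simplicity at the cost of losing an explicit formula for the constant; the paper's buys explicitness at the cost of a longer computation. Both are valid.

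However, there are three concrete slips in your lower-bound argument, two of which would sink it as written. First, the displayed equivalence is inverted: $g(s)\geq 2^{2-m}$ is equivalent to $I(s)\leq \frac{m}{2s(m-s)}$, not $I(s)\leq \frac{s(m-s)}{2m}$ (compute: $g(s)=\frac{m}{s(m-s)}\cdot 2^{1-m}\cdot I(s)^{-1}$, so the inequality to prove is an upper bound on $I(s)$ with the large denominator, not the small one). Second — and this is the more serious issue — your further relaxation $(\tau^2/2)^m\le\tau^2/2$ on $(0,1)$ produces $\int_0^1\tau^{1-2s}\,d\tau$, which diverges whenever $s\geq 1$. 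Since $s$ ranges over all of $(0,m)$ and $m$ is arbitrary, this step breaks the argument; you must keep the bound $(1-\cos\tau)^m\le(\tau^2/2)^m=2^{-m}\tau^{2m}$ untouched, which gives the finite quantity $\frac{2^{-m}}{2(m-s)}$ (the paper uses the pointwise bound $(1-\cos\tau)^m\leq\min\{1,\tau^{2m}\}$ for precisely this reason). Third, a minor arithmetic slip in the endpoint limit: you correctly observe $\frac{m}{m-s}\to1$ as $s\to0^+$ and $\frac{c_{m,s}M_{s,\sigma_s}(e_s)}{s}\to\frac{2}{-P_m}$, but then write the product as $\frac{2m}{-P_m}$; the correct limit is $\frac{2}{-P_m}$, in agreement with the paper's computation inside its proof. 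None of these affects the structure of your argument, but the second one requires a genuine fix before the lower bound goes through for all $s\in(0,m)$.
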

\begin{proof}
From \cref{rep of cms} we know that
$$
c_{m,s}^{-1}=2^{m-1}M_{s,\sigma_s}(e_s)\int_0^{+\infty} \big(1-\cos\tau\big)^m \tau^{-1-2s}\, d\tau.
$$
Now we note that
$$
\int_0^{+\infty} \big(1-\cos\tau\big)^m \tau^{-1-2s}\, d\tau\leq \int_0^{+\infty} \min\{1,\tau^{2m}\}\tau^{-1-2s}\, d\tau\leq \frac{1}{2(m-s)}+\frac{1}{2s}= \frac{m}{2s(m-s)}.
$$
Hence,
$$
c_{m,s} \left(\frac{1}{s}+\frac{1}{m-s}\right) M_{s,\sigma_s}(e_s) \geq 2^{2-m}\frac{s(m-s)}{m} \left(\frac{1}{s}+\frac{1}{m-s}\right)=2^{2-m}
$$
and this entails the lower bound.

Furthermore, Lemmata~\ref{lem:limms} and~\ref{lem:constant estimate} give that
$$
\lim_{s\to0^+}c_{m,s} \left(\frac{1}{s}+\frac{1}{m-s}\right)M_{s,\sigma_s}(e_s)=\frac{2}{-P_m},
$$
while \cref{lemma:aism} implies that
$$
\lim_{s\to m^-}c_{m,s} \left(\frac{1}{s}+\frac{1}{m-s}\right) M_{s,\sigma_s}(e_s)=4.
$$
Hence, there exists~$\epsilon>0$, independent of~$s$, such that, for all~$s\in(0,\epsilon]\cap [m-\epsilon,m)$,
\begin{equation}\label{0987654bchekw1qsax}
c_{m,s} \left(\frac{1}{s}+\frac{1}{m-s}\right) M_{s,\sigma_s}(e_s)\leq \max\left\{5, \frac{3}{-P_m}\right\}.
\end{equation}

Moreover, since for any~$s\in(0,m)$ we have
$$
\int_0^{+\infty} \big(1-\cos\tau\big)^m \tau^{-1-2s}\, d\tau\geq \frac{2-\sqrt{2}}2
\int_{\frac{\pi}4}^{\frac{\pi}{2}}\tau^{-1-2s}\, d\tau= \frac{2^{2s}(2-\sqrt{2})}{4s \pi^{2s}}\big(2^{2s}-1\big),
$$
it follows that, for any~$s\in(\epsilon,m-\epsilon)$,
\begin{eqnarray*}&&
c_{m,s} \left(\frac{1}{s}+\frac{1}{m-s}\right) M_{s,\sigma_s}(e_s)=
2^{1-m}\left(\frac{1}{s}+\frac{1}{m-s}\right)\left(\,\int_0^{+\infty} \big(1-\cos\tau\big)^m \tau^{-1-2s}\, d\tau\right)^{-1}\\
&&\qquad\le 
2^{1-m}\left(\frac{1}{s}+\frac{1}{m-s}\right) \frac{4s \pi^{2s}}{2^{2s}(2-\sqrt{2})(2^{2s}-1)}
\le \frac{2^{1-m} m}{\epsilon^2}\frac{4 m \pi^{2m}}{ (2-\sqrt{2})(2^{2\epsilon}-1)}.
\end{eqnarray*}
This and~\eqref{0987654bchekw1qsax} entail the upper bound.
\end{proof}

\section{Variational definitions and properties}\label{sec:var23}

Here, we introduce the variational framework that we work in.
For this, for any~$s\in(0,2m)$, with~$m\in \N$, and~$u$, $v\in C^{\infty}_c(\R^N)$,
we define the bilinear form
\begin{equation}\label{098765099werfgthkmnbgfvd876}
\cE_{2m,s}(u,v):=\frac{c_{2m,s}}{2}\iint_{\R^{2N}}\delta_mu(x,y)\delta_mv(x,y) \,dx\,\nu_s(dy).
\end{equation}
Then, we have:

\begin{prop}\label{prop:bilinear}
For~$s>0$ and~$n$, $m\in \N$ such that~$s<n\leq 2m$, it holds that,
for all~$u, v\in C^{\infty}_c(\R^N)$,
$$
\int_{\R^N}L_{n,s}u(x)v(x)\, dx=\cE_{2m,s}(u,v).
$$
\end{prop}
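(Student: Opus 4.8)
The plan is to prove the identity $\int_{\R^N}L_{n,s}u(x)v(x)\,dx=\cE_{2m,s}(u,v)$ by a chain of reductions. First I would observe that, since $s<n\leq 2m$ and $u,v\in C^\infty_c(\R^N)$, all objects in sight are well-defined: the left-hand side makes sense because $L_{n,s}u$ is bounded and continuous (Lemma~\ref{lem:evaluation}, noting $C^\infty_c\subset C^{2s+\epsilon}\cap\cL^1_s$) and $v$ has compact support, while the right-hand side converges because $\delta_m u(x,y)\delta_m v(x,y)$ is controlled by $C\min\{1,|y|^{4m}\}$ near the origin and by a compactly-supported-in-$x$ bound for large $|y|$, and $\nu_s$ integrates $\min\{1,|y|^{-1-2s}\}$ for $0<s<2m$. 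Next, by Lemma~\ref{lem:independence of m} we may replace $L_{n,s}$ by $L_{2m,s}$ on the left-hand side (since both $n>s$ and $2m>s$ and $u\in C^{2s+\epsilon}\cap\cL^1_s$), so it suffices to prove $\int_{\R^N}L_{2m,s}u(x)v(x)\,dx=\cE_{2m,s}(u,v)$; this matches the orders on the two sides, with $L_{2m,s}$ built from $\delta_{2m}$ and $\cE_{2m,s}$ built from $\delta_m\cdot\delta_m$.

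The core computation is then a discrete integration-by-parts / symmetrization argument. I would write, using Fubini (justified by the absolute integrability discussed above),
\begin{equation*}
\int_{\R^N}L_{2m,s}u(x)v(x)\,dx=\frac{c_{2m,s}}{2}\int_{\R^N}\left(\int_{\R^N}\delta_{2m}u(x,y)\,\nu_s(dy)\right)v(x)\,dx=\frac{c_{2m,s}}{2}\iint_{\R^{2N}}\delta_{2m}u(x,y)v(x)\,dx\,\nu_s(dy),
\end{equation*}
and then focus on showing, for each fixed $y$, that $\int_{\R^N}\delta_{2m}u(x,y)v(x)\,dx=\int_{\R^N}\delta_m u(x,y)\delta_m v(x,y)\,dx$. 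The key algebraic fact is that the difference operator factors as a composition: writing $(T_y\phi)(x):=\phi(x+y)$ and $D_y:=2\,\mathrm{id}-T_y-T_{-y}$ (the second-order symmetric difference, up to sign), one has $\delta_m u(x,y)=(D_y^m u)(x-0\cdot y)$ in the appropriate normalization, i.e. $\sum_{k=-m}^m(-1)^k\binom{2m}{m-k}u(x+ky)=\big((2-T_y-T_{-y})^m u\big)(x)$, which follows from the binomial expansion of $(2-T_y-T_{-y})^m$ together with the identity $\sum_j\binom{m}{j}\binom{m}{l}(-1)^{\cdots}=\binom{2m}{m-k}$ grouping terms by total shift $k$ (this is the same combinatorial identity implicitly used in Lemma~\ref{lem:constant}, and of the same flavour as \eqref{agaapa0}). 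Granting $\delta_{2m}u(x,y)=(D_y^{2m}u)(x)$ and $\delta_m u(x,y)=(D_y^m u)(x)$, and noting that $D_y$ is self-adjoint on $L^2(\R^N)$ with respect to Lebesgue measure (since $T_{\pm y}$ are adjoint to each other, being measure-preserving), we get
\begin{equation*}
\int_{\R^N}\delta_{2m}u(x,y)v(x)\,dx=\int_{\R^N}(D_y^{2m}u)(x)\,v(x)\,dx=\int_{\R^N}(D_y^{m}u)(x)\,(D_y^m v)(x)\,dx=\int_{\R^N}\delta_m u(x,y)\delta_m v(x,y)\,dx.
\end{equation*}
Multiplying by $c_{2m,s}/2$ and integrating against $\nu_s(dy)$ yields exactly $\cE_{2m,s}(u,v)$.

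I expect the main obstacle to be bookkeeping the interchange of integrals and sums rigorously rather than any deep difficulty: one must justify Fubini both in the $(x,y)$-integral (splitting into $|y|<1$ and $|y|\geq1$, using $|\delta_m u(x,y)|\leq C\min\{1,|y|^{2m}\}$ and compact support of $v$) and in the finite sums defining $\delta_m$, $\delta_{2m}$ — the latter being harmless since they are finite. A secondary point requiring care is the translation-invariance change of variables $x\mapsto x\pm ky$ inside $\int_{\R^N}\cdots dx$, which is where the self-adjointness of $D_y$ concretely enters; this is legitimate because Lebesgue measure is translation invariant and the integrands are $L^1$ in $x$ for each fixed $y$ (again using that $u$ is bounded with all needed derivatives and $v$ is compactly supported, so products of translates are integrable). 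Once these measure-theoretic checks are in place, the combinatorial factorization $\delta_m=D_y^m$ and the composition law $D_y^m\circ D_y^m=D_y^{2m}$ do all the work, and the proof concludes.
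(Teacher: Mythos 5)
Your proof is correct, and it takes a more conceptual route than the paper's, although the underlying combinatorial content is equivalent. The paper expands $\delta_{2m}u\cdot v$ and $\delta_m u\cdot\delta_m v$ into double sums over translates, reindexes via $z:=x+jy$, $h:=k-j$, and then invokes the Chu--Vandermonde identity to match the resulting coefficients term by term. You instead observe the operator factorization $\delta_m u(\cdot,y)=D_y^m u$ with $D_y:=2\,\mathrm{id}-T_y-T_{-y}$, so that $\delta_{2m}=D_y^{2m}=D_y^m\circ D_y^m$ is automatic, and then transfer one factor of $D_y^m$ onto $v$ by self-adjointness of $D_y$ (translation invariance of Lebesgue measure). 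This repackages the Chu--Vandermonde step as the trivial composition law for $D_y$, with the only nontrivial combinatorics being the identity $\delta_m=D_y^m$ itself; the cleanest way to see that is not the multinomial grouping you gesture at (your ``$\sum_j\binom{m}{j}\binom{m}{l}(-1)^{\cdots}$'' is left vague), but the factorization $2-T-T^{-1}=-(T^{1/2}-T^{-1/2})^2$ followed by the binomial theorem, which is precisely the manipulation the paper carries out in Appendix~\ref{app:agaapa0} to prove~\eqref{agaapa0}, so you may simply cite that. (Your attribution of the same identity to Lemma~\ref{lem:constant} is a slight misdirection --- that lemma uses a different reindexing --- but it does not affect the argument.) Both proofs are valid; yours has the advantage of making explicit the self-adjointness structure responsible for the symmetry of the bilinear form, while the paper's is more elementary in the sense of working only with finite sums of binomial coefficients.
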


\begin{proof}
In light of Lemma~\ref{lem:independence of m},
for all~$u, v\in C^{\infty}_c(\R^N)$, we have that
\begin{eqnarray*}&&
\int_{\R^N}L_{n,s}u(x)v(x)\, dx=
\int_{\R^N}L_{2m,s}u(x)v(x)\, dx
=\frac{c_{2m,s}}{2}\iint_{\R^{2N}}\delta_{2m}u(x,y) v(x) \,dx\,\nu_s(dy).
\end{eqnarray*}
Therefore, recalling the definition of~$\delta_{2m}$ in~\eqref{defdelta},
\begin{equation}\label{ewuigiuty45ou745u56}
\int_{\R^N}L_{n,s}u(x)v(x)\, dx=
\frac{c_{2m,s}}{2}\sum_{k=-2m}^{2m}(-1)^k
\binom{4m}{2m-k}
\iint_{\R^{2N}}u(x+ky) v(x) \,dx\,\nu_s(dy).
\end{equation}

On the other hand, changing variable~$z:=x+jy$ and then changing index~$h:=k-j$,
\begin{equation}\label{ewuigiuty45ou745u562}\begin{split}
&\cE_{2m,s}(u,v)=
\frac{c_{2m,s}}{2}\iint_{\R^{2N}}\delta_mu(x,y)\delta_mv(x,y) \,dx\,\nu_s(dy)
\\&\qquad =
\frac{c_{2m,s}}{2}
\sum_{{-m\le k\le m}\atop{-m\le j\le m}} (-1)^{k+j}\binom{2m}{m-k}
\binom{2m}{m-j}
\iint_{\R^{2N}}u(x+ky)v(x+jy) \,dx\,\nu_s(dy)\\
&\qquad=
\frac{c_{2m,s}}{2}
\sum_{{-m\le k\le m}\atop{-m\le j\le m}} (-1)^{k+j}\binom{2m}{m-k}
\binom{2m}{m-j}
\iint_{\R^{2N}}u\big(z+(k-j)y\big)v(z) \,dz\,\nu_s(dy)\\
&\qquad=
\frac{c_{2m,s}}{2}
\sum_{{-m\le k\le m}\atop{k-m\le h\le k+m}} (-1)^{h}\binom{2m}{m-k}
\binom{2m}{m-k+h}
\iint_{\R^{2N}}u\big(z+hy\big)v(z) \,dz\,\nu_s(dy).
\end{split}\end{equation}

We now claim that
\begin{equation}\label{ewuigiuty45ou745u563}
\sum_{{-m\le k\le m}\atop{k-m\le h\le k+m}}\binom{2m}{m-k}
\binom{2m}{m-k+h} =\sum_{h=-2m}^{2m}\binom{4m}{2m-h}.
\end{equation}
Indeed, for any~$a\in\N$ and~$b\in\Z$, we set
\begin{equation}\label{conventp607u}
\binom{a}{b}:=\begin{cases}\displaystyle
\frac{a!}{b!(a-b)!} &{\mbox{ if }}
0\le b \le a,\\
0 &{\mbox{ otherwise.}}
\end{cases}\end{equation}
With this notation, we write the Chu-Vandermonde Identity,
for~$a$, $b\in\N$ and~$r\in\Z$, as
$$  \binom{a+b}{r} =\sum_{\ell\in\Z}\binom{a}{\ell}\binom{b}{r-\ell}.
$$
Hence, for all~$h\in\{-2m,\dots,2m\}$, taking~$a:=2m$, $b:=2m$ and~$r:=2m-h$,
we obtain that
$$  \binom{4m}{2m-h} =\sum_{\ell\in\Z}\binom{2m}{\ell}\binom{2m}{2m-h-\ell}.
$$
Setting~$k:=m-\ell$, we get that
$$  \binom{4m}{2m-h} =\sum_{k\in\Z}\binom{2m}{m-k}
\binom{2m}{m+k-h}=\sum_{k\in\Z}\binom{2m}{m-k}
\binom{2m}{m-k+h}.
$$
Accordingly, recalling also the convention in~\eqref{conventp607u},
\begin{eqnarray*}
\sum_{h=-2m}^{2m}\binom{4m}{2m-h}
&=&\sum_{h=-2m}^{2m}\sum_{k\in\Z}\binom{2m}{m-k}
\binom{2m}{m-k+h}\\
&=&\sum_{h=-2m}^{2m}\;\sum_{k=\max\{-m,h-m\}}^{\min\{m,m+h\}}\binom{2m}{m-k}\binom{2m}{m-k+h}\\
&=&\sum_{k=-m}^{m}\;\sum_{h=k-m}^{k+m}\binom{2m}{m-k}\binom{2m}{m-k+h},
\end{eqnarray*}
which establishes~\eqref{ewuigiuty45ou745u563}.

{F}rom~\eqref{ewuigiuty45ou745u56},
\eqref{ewuigiuty45ou745u562} and~\eqref{ewuigiuty45ou745u563},
we obtain the desired result.
\end{proof}

Furthermore, let 
\begin{equation*}
\cX^s:=\cX^s(\R^N):=\Big\{u\in L^2(\R^N)\;:\; \cE_{2n,s}(u,u)<+\infty\text{ for some~$n\in \N$ with~$2n>s$}\Big\}.
\end{equation*}
The space~$\cX^s$ is equipped with the scalar product
$$
\langle u,v\rangle_{\cX^s}:=\int_{\R^N}u(x)v(x)\, dx+\cE_{2n,s}(u,v).
$$
The norm associated with this scalar product is given by
\begin{equation}\label{normdef098765}
\|u\|_{\cX^s}:=
\sqrt{\langle u,u\rangle_{\cX^s}}=
\sqrt{\;\int_{\R^N}|u(x)|^2\, dx+\cE_{2n,s}(u,u)}.
\end{equation}

When~$s=0$, we make the ansatz that~$\cX^s=L^2(\R^N)$ (notice that this is compatible with the limit procedure
in Propositions~\ref{limit s to 0} and~\ref{prop:bilinear}).

We emphasize that the definition of~$\cX^s$ does not depend on the choice of~$n$, thanks to \cref{lem:independence of m} and \cref{prop:bilinear}. 

In this setting, we have:

\begin{prop}\label{hilbert-space}
$\cX^s$ is a Hilbert space
with the scalar product~$\langle \cdot, \cdot\rangle_{\cX^s}$.
\end{prop}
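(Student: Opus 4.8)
The plan is to view $\cX^s$ as (isometric to) a closed subspace of a product Hilbert space, so that completeness is reduced to a closedness statement, and then to identify limits by hand. Throughout I fix once and for all an integer $n$ with $2n>s$; this is legitimate because, as remarked before the statement, $\cE_{2n,s}(u,u)$ (hence $\langle\cdot,\cdot\rangle_{\cX^s}$ and $\|\cdot\|_{\cX^s}$) does not depend on this choice, by \cref{lem:independence of m} and \cref{prop:bilinear}. First I would check that $\langle\cdot,\cdot\rangle_{\cX^s}$ is genuinely a scalar product: bilinearity and symmetry are immediate from the definition of the $L^2$ pairing and of the bilinear form in \eqref{098765099werfgthkmnbgfvd876}; positivity and definiteness follow from $c_{2n,s}>0$ (which holds since $\tfrac2{c_{2n,s}}=2^{2n}\int_{\R^N}(1-\cos(e_s\cdot y))^{2n}\,\nu_s(dy)>0$ by \eqref{constcms00}, because $M_{s,\sigma_s}(e_s)>0$ by Lemma~\ref{lemma:nuovo}; alternatively one may invoke Corollary~\ref{cor:bounds on cms}), so that $\langle u,u\rangle_{\cX^s}\ge\|u\|_{L^2(\R^N)}^2\ge0$ with equality only if $u=0$.

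Next I would introduce the Hilbert space $\cH:=L^2(\R^N,dx)\times L^2\big(\R^{2N},dx\,\nu_s(dy)\big)$ with norm $\|(f,g)\|_\cH^2:=\|f\|_{L^2(\R^N)}^2+\tfrac{c_{2n,s}}{2}\iint_{\R^{2N}}|g(x,y)|^2\,dx\,\nu_s(dy)$. For $u\in L^2(\R^N)$ the map $(x,y)\mapsto u(x+ky)$ is a well-defined measurable function on $\R^{2N}$ (each of these linear changes of variables pulls Lebesgue-null sets of $\R^N$ back to $(dx\,dy)$-null sets of $\R^{2N}$, by Fubini), hence so is $\delta_nu(x,y)=\sum_{k=-n}^n(-1)^k\binom{2n}{n-k}u(x+ky)$; and the condition $\cE_{2n,s}(u,u)<+\infty$ says precisely that $\delta_nu\in L^2(\R^{2N},dx\,\nu_s(dy))$. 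Therefore $\iota:\cX^s\to\cH$, $\iota(u):=(u,\delta_nu)$, is well defined and linear, and by \eqref{normdef098765} it is an isometry, $\|\iota(u)\|_\cH=\|u\|_{\cX^s}$ (using $\delta_n(u-v)=\delta_nu-\delta_nv$). Since $\cH$ is complete, it suffices to prove that $\iota(\cX^s)$ is a closed subspace of $\cH$.

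To prove closedness, take a sequence $(u_j)\subset\cX^s$ with $\iota(u_j)\to(f,g)$ in $\cH$; equivalently $u_j\to f$ in $L^2(\R^N)$ and $\delta_nu_j\to g$ in $L^2(\R^{2N},dx\,\nu_s(dy))$. The key point is to show $g=\delta_nf$ in $L^2(\R^{2N},dx\,\nu_s(dy))$. On one hand, for \emph{every} fixed $y\in\R^N$, translation being an $L^2$-isometry gives $u_j(\cdot+ky)\to f(\cdot+ky)$ in $L^2(\R^N)$ for each $k$, hence $\delta_nu_j(\cdot,y)\to\delta_nf(\cdot,y)$ in $L^2(\R^N_x)$. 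On the other hand, passing to a subsequence, $\delta_nu_j\to g$ pointwise $dx\,\nu_s(dy)$-a.e., so by Fubini, for $\nu_s$-a.e.\ $y$ one has $\delta_nu_j(\cdot,y)\to g(\cdot,y)$ Lebesgue-a.e.\ on $\R^N$. Comparing the $L^2_x$-limit and the a.e.\ limit for such $y$ (extracting a further a.e.-convergent subsequence from the $L^2_x$-convergent one) forces $g(\cdot,y)=\delta_nf(\cdot,y)$ a.e.\ for $\nu_s$-a.e.\ $y$, i.e.\ $g=\delta_nf$. (Equivalently, one may identify $g$ weakly by testing against $\varphi\in C_c\big(\R^N\times(\R^N\setminus\{0\})\big)$, using that $\nu_s$ is finite on compact subsets of $\R^N\setminus\{0\}$, dominated convergence, and the change of variables $z=x+ky$.) Consequently $\cE_{2n,s}(f,f)=\tfrac{c_{2n,s}}{2}\|g\|_{L^2(dx\,\nu_s)}^2<+\infty$, so $f\in\cX^s$ and $\iota(f)=(f,g)$; thus $\iota(\cX^s)$ is closed, and $(\cX^s,\langle\cdot,\cdot\rangle_{\cX^s})$ is a Hilbert space. (In terms of the original sequence, this also reads: $\|u_j-f\|_{\cX^s}^2=\|u_j-f\|_{L^2}^2+\tfrac{c_{2n,s}}2\|\delta_nu_j-g\|_{L^2(dx\,\nu_s)}^2\to0$.)

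The only delicate step is the identification $g=\delta_nf$: one must combine the $L^2(\R^N)$-convergence $u_j\to f$, which controls the $x$-slices of $\delta_nu_j$, with the $L^2(dx\,\nu_s)$-convergence $\delta_nu_j\to g$ living on the product space where $\nu_s$ is infinite near the origin, and the clean way to do this is the diagonal subsequence plus Fubini argument (or its weak-testing variant) above. Everything else is formal.
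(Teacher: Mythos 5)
Your proof is correct but takes a genuinely different route from the paper's. The paper argues directly with a Cauchy sequence: since $\|\cdot\|_{L^2}\le\|\cdot\|_{\cX^s}$, an $\cX^s$-Cauchy sequence $(u_n)$ has an $L^2$-limit $u$; passing to an a.e.-convergent subsequence $(u_{n_k})$, Fatou's Lemma applied to the integrand $(\delta_n u_{n_k})^2$ gives $u\in\cX^s$, and a second application of Fatou (to $(\delta_n(u_{n_k}-u_{n_j}))^2$ as $j\to+\infty$ with $k$ fixed, together with the Cauchy property) gives $u_{n_k}\to u$ in $\cX^s$. You instead realize $\cX^s$ isometrically as the graph-like subspace $\{(u,\delta_nu)\}$ of the product Hilbert space $L^2(\R^N)\times L^2(\R^{2N},dx\,\nu_s(dy))$ (note $\nu_s$ is $\sigma$-finite, so the second factor is indeed complete) and reduce completeness to closedness of the image; the work then shifts to identifying the limit, $g=\delta_n f$, which you do correctly by a Fubini-plus-subsequence argument, comparing the $L^2_x$-limit $\delta_n u_j(\cdot,y)\to\delta_n f(\cdot,y)$ (translation invariance of $L^2$) against the a.e.-limit $\delta_n u_j(\cdot,y)\to g(\cdot,y)$ for $\nu_s$-a.e.\ $y$, and using uniqueness of a.e.\ limits after a further ($y$-dependent) extraction. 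Both routes are standard for Sobolev-type energy spaces and both rest on the same preliminary observation that the definition is independent of the choice of $n$. The Fatou argument is a bit shorter and avoids any limit identification; the product-space argument is more structural, makes the completeness mechanism transparent (closed subspace of a known complete space), and directly yields convergence of the full original sequence rather than first of a subsequence. Your attention to the limit-identification step, which is where such arguments typically go wrong when the measure $\nu_s$ is infinite near the origin, is exactly right.
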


\begin{proof}
Clearly, $\langle \cdot, \cdot\rangle_{\cX^s}$ is a scalar product. To finish the proof, note that for~$s\in(0,1)$ this is stated in~\cite[Lemma~2.3]{FKV15} or~\cite[Proposition~2.2]{JW16}. The proof presented there easily extends to the general case here. 

Indeed, let~$(u_n)_n$ be a Cauchy sequence, then since~$\cX^s\subset L^2(\R^N)$, there exists~$u\in L^2(\R^N)$ such that~$u_n\to u$ in~$L^2(\R^N)$. Passing to a subsequence, we have that~$u_{n_k}\to u$ a.e. in~$\R^N$ as~$k\to+\infty$. Therefore, Fatou's Lemma implies that
$$
\cE_{2n,s}(u,u)\leq \liminf_{k\to+\infty}\cE_{2n,s}(u_{n_k},u_{n_k})\leq \sup_{n\in \N} \cE_{2n,s}(u_n,u_n)<+\infty. 
$$
This says that~$u\in \cX^s$.

Then, for~$k\in \N$, by applying again Fatou's Lemma, we find that
$$
\cE_{2n,s}(u_{n_k}-u,u_{n_k}-u)\leq \liminf_{j\to+\infty}\cE_{2n,s}(u_{n_k}-u_{n_j},u_{n_k}-u_{n_j})\leq \sup_{j\geq k} \cE_{2n,s}(u_{n_k}-u_{n_j},u_{n_k}-u_{n_j}). 
$$
Since~$(u_n)_n$ is a Cauchy sequence with respect to~$\langle \cdot,\cdot \rangle _{\cX^s}$, it follows that~$u_{n_k}\to u$ in~$\cX^s$ as~$k\to+\infty$. The completeness of~$\cX^s$ follows.
\end{proof}

\begin{prop}\label{unif-convex}
$\cX^s$ is a uniformly convex space.
\end{prop}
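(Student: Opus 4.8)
The plan is to deduce uniform convexity of $\cX^s$ from the fact that it is a Hilbert space, as established in Proposition~\ref{hilbert-space}. Indeed, every inner product space satisfies the parallelogram law
\begin{equation*}
\|u+v\|_{\cX^s}^2+\|u-v\|_{\cX^s}^2=2\|u\|_{\cX^s}^2+2\|v\|_{\cX^s}^2
\qquad\text{for all }u,v\in\cX^s,
\end{equation*}
and uniform convexity is a classical and direct consequence of this identity together with completeness (which is also part of Proposition~\ref{hilbert-space}).

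Concretely, I would argue as follows. Fix $\eps\in(0,2]$ and take $u,v\in\cX^s$ with $\|u\|_{\cX^s}\le 1$, $\|v\|_{\cX^s}\le 1$, and $\|u-v\|_{\cX^s}\ge\eps$. Plugging this into the parallelogram law gives
\begin{equation*}
\Big\|\frac{u+v}{2}\Big\|_{\cX^s}^2
=\frac12\|u\|_{\cX^s}^2+\frac12\|v\|_{\cX^s}^2-\Big\|\frac{u-v}{2}\Big\|_{\cX^s}^2
\le 1-\frac{\eps^2}{4},
\end{equation*}
so that $\big\|\tfrac{u+v}{2}\big\|_{\cX^s}\le 1-\delta$ with the explicit choice $\delta:=1-\sqrt{1-\eps^2/4}>0$, which depends only on $\eps$. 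This is precisely the definition of uniform convexity, so the proof is complete once the Hilbert space structure is invoked.

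There is essentially no obstacle here: the only mildly delicate point is the bookkeeping that $\delta$ can be chosen independently of $u$ and $v$ (it depends solely on $\eps$), which is transparent from the formula above, and the observation that $1-\eps^2/4\ge 0$ for $\eps\in(0,2]$ so that the square root is well-defined. One should also recall, for completeness of the definition, that a uniformly convex space is by convention required to be a Banach space, which is guaranteed by Proposition~\ref{hilbert-space}. Thus the statement is an immediate corollary of the previous proposition, and the write-up amounts to recording the parallelogram-law computation.
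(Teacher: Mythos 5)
Your proof is correct and follows essentially the same route as the paper: both arguments reduce uniform convexity to the parallelogram law, the only cosmetic difference being that the paper verifies the law pointwise by applying $|a+b|^2+|a-b|^2\le 2(|a|^2+|b|^2)$ inside the integrals defining $\|\cdot\|_{\cX^s}$, while you invoke it abstractly from the inner-product structure established in Proposition~\ref{hilbert-space}. The two are equivalent, and your normalization $\delta=1-\sqrt{1-\eps^2/4}$ matches the paper's $\delta=2-\sqrt{4-\eps^2}$ up to the choice of convention ($\|u+v\|\le 2-\delta$ versus $\|\tfrac{u+v}{2}\|\le 1-\delta$).
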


\begin{proof}
We need to prove that for every~$\eps \in(0, 2]$ there exists~$\delta>0$ such that if~$u$, $v\in \cX^s$ are such that~$\|u\|_{\cX^s}=\|v\|_{\cX^s}
=1$ and~$\|u-v\|_{\cX^s} >\eps$, then~$\|u+v\|_{\cX^s}\le 2-\delta$.

To this aim, we recall that, for all~$a$, $b\in\R$,
$$ |a+b|^2+|a-b|^2\le 2\big(|a|^2+|b|^2\big).$$ 
Thus, we use this inequality with~$a:=u(x)$ and~$b:=v(x)$, and also
with~$a:=\delta_nu(x,y)$ and~$b:=\delta_nv(x,y)$,
and we see that
\begin{eqnarray*}
\|u+v\|_{\cX^s}^2&=&\int_{\R^N}|u(x)+v(x)|^2\, dx+
\frac{c_{2n,s}}{2}\iint_{\R^{2N}}\Big(\delta_nu(x,y)+\delta_nv(x,y)\Big)^2 \,dx\,\nu_s(dy)\\
&\le& 
2\int_{\R^N}\Big(|u(x)|^2+|v(x)|^2\Big)\, dx-
\int_{\R^N}\Big(|u(x)-v(x)|^2\Big)\, dx\\
&&\qquad
+{c_{2n,s}}\iint_{\R^{2N}}\Big[\Big(\delta_nu(x,y)\Big)^2+\Big(\delta_nv(x,y)\Big)^2\Big] \,dx\,\nu_s(dy)
\\&&\qquad-
\frac{c_{2n,s}}{2}\iint_{\R^{2N}}\Big(\delta_nu(x,y)-\delta_nv(x,y)\Big)^2 \,dx\,\nu_s(dy)\\
&=&2 \Big( \|u\|_{\cX^s}^2+\|v\|_{\cX^s}^2\Big)- \|u-v\|_{\cX^s}^2
\\&=& 4-\eps^2
\\&=& \big(2-\delta\big)^2,
\end{eqnarray*}
with
$$ \delta:=\delta(\eps):=2-\big(4-\eps^2\big)^{\frac12}.
$$
Notice that~$\delta$ is strictly increasing in~$\eps\in(0,2]$, and therefore~$\delta>\delta(0)=0$. This completes the proof.
\end{proof}

\begin{prop}\label{density in rn}
The space~$C^{\infty}_c(\R^N)$ is dense in~$\cX^s$ with respect to the norm~$\|\cdot\|_{\cX^s}$.
\end{prop}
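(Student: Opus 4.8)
The plan is to follow the classical two--step scheme (truncation, then mollification) used for $s\in(0,1)$ in~\cite{FKV15,JW16}, fixing once and for all $u\in\cX^s$ and $n\in\N$ with $2n>s$, and to pinpoint the single place where arbitrary order forces a genuinely new ingredient. First one records that $C^\infty_c(\R^N)\subset\cX^s$: splitting $\iint_{\R^{2N}}=\iint_{|y|\le1}+\iint_{|y|>1}$, on $\{|y|\le1\}$ one uses $|\delta_n\varphi(x,y)|\le C|y|^{2n}$ (with $x$--support in a fixed compact set) together with $4n>2s$, and on $\{|y|>1\}$ one uses $\|\delta_n\varphi(\cdot,y)\|_{L^2}\le C\|\varphi\|_{L^2}$ together with $\nu_s(\R^N\setminus B_1)=\frac1{2s}<+\infty$. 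Hence it suffices to approximate $u$ in $\cX^s$ by compactly supported elements of $\cX^s$, and then each such element by functions in $C^\infty_c(\R^N)$.

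\emph{Truncation.} Let $\eta\in C^\infty_c(B_2)$ with $\eta\equiv1$ on $B_1$ and $0\le\eta\le1$, set $\eta_R(x):=\eta(x/R)$, $\phi_R:=\eta_R-1$ (so $|\phi_R|\le1$, $\phi_R\equiv0$ on $B_R$ and $\|D^k\phi_R\|_{L^\infty}\le C_kR^{-k}$ for $k\ge1$), and $u_R:=\eta_R u$. The $L^2$ part of $\|u_R-u\|_{\cX^s}$ tends to $0$ by dominated convergence, so the point is $\cE_{2n,s}(u\phi_R,u\phi_R)\to0$. On $\{|y|>1\}$ one has $\|\delta_n(u\phi_R)(\cdot,y)\|_{L^2}^2\le C\|u\phi_R\|_{L^2}^2=C\|u\|_{L^2(\R^N\setminus B_R)}^2$, and multiplying by $\nu_s(\R^N\setminus B_1)<+\infty$ this tends to $0$. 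On $\{|y|\le1\}$ I would Taylor--expand $\phi_R(x+ky)=\sum_{|\alpha|\le2n-1}\frac{(ky)^\alpha}{\alpha!}D^\alpha\phi_R(x)+\mathcal R_k(x,y)$ with $|\mathcal R_k(x,y)|\le C|k|^{2n}|y|^{2n}R^{-2n}$, which gives
\[\delta_n(u\phi_R)(x,y)=\phi_R(x)\,\delta_n u(x,y)+\sum_{1\le|\alpha|\le2n-1}\frac{y^\alpha D^\alpha\phi_R(x)}{\alpha!}\,W_{|\alpha|}(x,y)+\sum_{k=-n}^{n}(-1)^k\binom{2n}{n-k}u(x+ky)\,\mathcal R_k(x,y),\]
where $W_j(x,y):=\sum_{k=-n}^n(-1)^k\binom{2n}{n-k}k^j\,u(x+ky)$. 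The first term tends to $0$ in $L^2(dx\,\nu_s)$ by dominated convergence ($\phi_R\to0$ pointwise and $|\phi_R\,\delta_n u|^2\le|\delta_n u|^2\in L^1(dx\,\nu_s)$); the remainder term contributes at most $CR^{-4n}\|u\|_{L^2}^2\int_0^1 r^{4n-1-2s}\,dr\to0$ since $4n>2s$; and the intermediate terms with $|\alpha|=:j>s$ contribute at most $CR^{-2j}\|u\|_{L^2}^2\int_0^1 r^{2j-1-2s}\,dr\to0$ via the trivial bound $\|W_j(\cdot,y)\|_{L^2}\le C\|u\|_{L^2}$.

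\emph{The main obstacle} is the intermediate terms with $1\le j\le s$: these appear only for $s\ge1$, so they are exactly what is new relative to the $s\in(0,1)$ argument of~\cite{FKV15}, and for them the trivial $L^2$ bound on $W_j$ loses $s$ derivatives on $u$, so one must use the cancellation built into $W_j$. On the Fourier side $\widehat{W_j(\cdot,y)}(\xi)=\psi_j(y\cdot\xi)\,\widehat u(\xi)$ with $\psi_j(t)=(-i\,d/dt)^j\big[(-1)^n e^{int}(1-e^{-it})^{2n}\big]$, which vanishes to order $2n-j\ge1$ at $t=0$ and is bounded, whence $|\psi_j(t)|\le C\min\{|t|^{2n-j},1\}$. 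Combining this with the spectral identity (which I would isolate as a short lemma, proved by Plancherel in $x$ and the substitution $\tau=r|\theta\cdot\xi|$)
\[\cE_{2n,s}(w,w)=c'_{n,s}\int_{\R^N}|\widehat w(\xi)|^2\Big(\int_{\bS^{N-1}}|\theta\cdot\xi|^{2s}\,\sigma_s(d\theta)\Big)\,d\xi,\qquad c'_{n,s}>0,\]
and the elementary bound $\int_0^1 r^{2j-1-2s}\min\{(r|\theta\cdot\xi|)^{2(2n-j)},1\}\,dr\le C\big(1+|\theta\cdot\xi|^{2s}\big)$ (valid since $4n>2s$), one estimates
\[\int_0^1 r^{2j-1-2s}\int_{\bS^{N-1}}\|W_j(\cdot,r\theta)\|_{L^2}^2\,\sigma_s(d\theta)\,dr\le C\big(\|u\|_{L^2}^2+\cE_{2n,s}(u,u)\big)\le C\|u\|_{\cX^s}^2,\]
so the corresponding term in $\cE_{2n,s}(u\phi_R,u\phi_R)$ is $\le CR^{-2j}\|u\|_{\cX^s}^2\to0$, finishing the truncation. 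Thus the hard part is recognizing that the naive Leibniz estimate is insufficient beyond order one, and repairing it through the spectral representation of $\cE_{2n,s}$ (equivalently, a Marchaud-type inequality in the direction $\theta$); everything else is a routine adaptation of the $s\in(0,1)$ argument.

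\emph{Mollification.} If $v\in\cX^s$ has compact support and $\rho_\eps$ is a standard mollifier, then $v*\rho_\eps\in C^\infty_c(\R^N)$; since $\delta_n$ acts only by translations of the argument it commutes with convolution, so Young's inequality gives $\|\delta_n(v*\rho_\eps)(\cdot,y)\|_{L^2}\le\|\delta_n v(\cdot,y)\|_{L^2}$ and, for $\nu_s$--a.e.\ $y$, $\|\delta_n(v*\rho_\eps-v)(\cdot,y)\|_{L^2}\to0$ as $\eps\to0$, dominated by $2\|\delta_n v(\cdot,y)\|_{L^2}\in L^2(\nu_s)$; dominated convergence together with $v*\rho_\eps\to v$ in $L^2$ yields $v*\rho_\eps\to v$ in $\cX^s$. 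Combined with the truncation step this proves the claim. One could instead bypass the truncation entirely: since $\cX^s$ is a Hilbert space and, by the spectral identity above, $\mathrm{id}+L_{2n,s}$ is a Fourier multiplier with symbol $\ge1$, any $u\in\cX^s$ orthogonal to $C^\infty_c(\R^N)$ in $\langle\cdot,\cdot\rangle_{\cX^s}$ satisfies $\widehat u\equiv0$; I would adopt whichever presentation is shorter given the surrounding material.
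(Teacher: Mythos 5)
Your proof is correct and, while both you and the paper truncate and mollify, you take the two steps in the opposite order and, more importantly, you supply the nontrivial half of the argument that the paper leaves implicit. The paper proves density of $C^\infty(\R^N)\cap\cX^s$ first, by mollification: the Jensen-type bound $\cE_{2n,s}(u*\rho_\eps,u*\rho_\eps)\le\cE_{2n,s}(u,u)$, then Fatou's lemma to get convergence of norms, weak convergence of the bilinear form, and finally uniform convexity (Proposition~\ref{unif-convex}) to upgrade to strong convergence; it then asserts that the remaining truncation $\rho_\eps*(\phi_{1/\eps}u)\to u$ ``follows in a standard way,'' with no details. Your truncation step is precisely what is missing there, and you correctly diagnose the obstruction: for $s\ge1$ the Taylor expansion of $\delta_n\big((\eta_R-1)u\big)$ produces intermediate Leibniz terms $D^\alpha(\eta_R-1)\cdot W_{|\alpha|}$ with $1\le|\alpha|\le s$ that cannot be closed by the naive $L^2$ bound on $W_{|\alpha|}$, and you repair this by passing to the Fourier multiplier $\psi_j$, which vanishes to order $2n-j\ge1$ at the origin, and then invoking the spectral identity of Proposition~\ref{fourier rep} to obtain control by $\|u\|_{\cX^s}^2$. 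This really is the new ingredient forced by arbitrary order, and the paper's ``standard'' gloss conceals it. Your mollification step is also more elementary than the paper's: since $\delta_n$ commutes with convolution, Young's inequality together with dominated convergence in the $y$-variable gives strong convergence in $\cX^s$ at once, with no need for the uniform-convexity detour. The Fourier-orthogonality alternative you sketch at the end also works, but note that it is $\sqrt{m}\,\widehat u$, not $m\widehat u$, that is known to lie in $L^2$; the clean route is to observe that $m\widehat u$ is a tempered distribution (by the polynomial growth of $m$) annihilating $\widehat\varphi$ for every $\varphi\in C^\infty_c(\R^N)$, hence zero, whence $\widehat u\equiv0$ since $m\ge1$.
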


\begin{proof}
We observe that~$C^{\infty}_c(\R^N)\subset \cX^s$ by \cref{lem:evaluation} and Proposition~\ref{prop:bilinear}.

We will first show that
\begin{equation}\label{firstcinfdensxp}
{\mbox{$C^{\infty}(\R^N)\cap \cX^s$ is dense in~$\cX^s$
with respect to the norm~$\|\cdot\|_{\cX^s}$.}}\end{equation}
For this, let~$(\rho_{\eps})_{\eps>0}$ be a Dirac sequence, that is~$\rho_{\eps}(x)=\eps^{-N}\rho(x/\eps)$ for a smooth nonnegative function~$\rho:\R^N\to\R$ with~$\|\rho\|_{L^1(\R^N)}=1$ and~$\supp\,\rho\subset B_1(0)$. Then, following~\cite[Proposition~4.1]{JW19}, given~$u\in\cX^s$,
we set~$u_{\eps}:=u\ast \rho_{\eps}$ and we have that
\begin{eqnarray*}
&&\cE_{2n,s}(u_{\eps},u_{\eps})\\
&=&
\frac{c_{2n,s}}{2}\iint_{\R^{2N}}\Big(\delta_nu_\eps(x,y)\Big)^2 \,dx\,\nu_s(dy)\\
&=&\frac{c_{2n,s}}{2}
\iiiint_{\R^{4N}}\rho_{\eps}(z)\rho_{\eps}(z')\delta_nu(x-z,y)\delta_nu(x-z',y)\,dz\,dz'\,dx\,\nu_s(dy)\\
&=&\frac{c_{2n,s}}{2}
\iiiint_{\R^{4N}}\rho_{\eps}(z)\rho_{\eps}(z')\delta_nu(x+z'-z,y)\delta_nu(x,y)\,dz\,dz'\,dx\,\nu_s(dy)\\
&=& \frac{c_{2n,s}}{2}
\iiiint_{\R^{4N}}\rho_{\eps}(z'-\widetilde{z})\rho_{\eps}(z')\delta_nu(x+\widetilde{z},y)\delta_nu(x,y)\, dx\,d\widetilde{z}\,dz'\,\nu_s(dy)\\
&=& \frac{c_{2n,s}}{2}
\iint_{\R^{2N}} \delta_nu(x,y)\left(\;\int_{\R^N}
(\rho_{\eps}\ast\rho_{\eps})(\widetilde z)\delta_nu(x+\widetilde z,y)\,d\widetilde z\right)\,dx\,\nu_s(dy).
\end{eqnarray*}
Therefore, by H\"older Inequality and Jensen Inequality,
\begin{equation}\label{oiuytrew098765432mnbvcxzjhgfds}\begin{split}
&\cE_{2n,s}(u_{\eps},u_{\eps})\\
\leq\;&\frac{c_{2n,s}}{2}
\left(\;\iint_{\R^{2N}} \Big(\delta_nu(x,y)\Big)^2\,dx\,\nu_s(dy)\right)^{\frac12}\\&\qquad \times
\left(\; \iint_{\R^{2N}}\left(\;\int_{\R^N}
(\rho_{\eps}\ast\rho_{\eps})(\widetilde z)\delta_nu(x+\widetilde z,y)\,d\widetilde z\right)^2\,dx\,\nu_s(dy)\right)^{\frac12} \\ \le\;&
\frac{c_{2n,s}}{2}
\left(\;\iint_{\R^{2N}} \Big(\delta_nu(x,y)\Big)^2\,dx\,\nu_s(dy)\right)^{\frac12}\\&\qquad\times
\left(\; \int_{\R^{N}} (\rho_{\eps}\ast\rho_{\eps})(\widetilde z)
\left(\;\iint_{\R^{2N}}
\Big(\delta_nu(x+\widetilde z,y)\Big)^2\,dx\,\nu_s(dy)
\right)\,d\widetilde z\right)^{\frac12} \\=\;&
\frac{c_{2n,s}}{2}
\iint_{\R^{2N}} \Big(\delta_nu(x,y)\Big)^2\,dx\,\nu_s(dy)
\left(\; \int_{\R^{N}} (\rho_{\eps}\ast\rho_{\eps})(\widetilde z)\,d\widetilde z\right)^{\frac12} \\=\;&
\cE_{2n,s}(u,u)
\left(\; \int_{\R^{N}} (\rho_{\eps}\ast\rho_{\eps})(\widetilde z)\,d\widetilde z\right)^{\frac12}.
\end{split}\end{equation}
Now we point out that
\begin{eqnarray*}
&&\int_{\R^N}(\rho_{\eps}\ast\rho_{\eps})(z)\, dz
= \iint_{\R^{2N}}\rho_{\eps}(w) \rho_{\eps}(w-z)\, dz\,dw
=\|\rho_{\eps}\|_{L^1(\R^N)}^2=1,
\end{eqnarray*}
and thus, plugging this information into~\eqref{oiuytrew098765432mnbvcxzjhgfds}, we conclude that
\begin{equation}\label{fruey7658655p45436548}
\cE_{2n,s}(u_{\eps},u_{\eps})\le\cE_{2n,s}(u,u).\end{equation}
This implies that~$u_\eps\in C^{\infty}(\R^N)\cap \cX^s$.

Hence, to complete the proof of~\eqref{firstcinfdensxp}, it remains
to check that
\begin{equation}\label{fhiwo4ty34y345py35y34y3ghjghjkegw}
\lim_{\eps\to0} \|u-u_\eps\|_{\cX^s}=0.
\end{equation}
To this end, we recall that
\begin{equation}\label{lduewte76790}
{\mbox{$u_\eps$ converges to~$u$ in~$L^2(\R^N)$
and a.e. in~$\R^N$ as~$\epsilon\to0$}}\end{equation}
(see e.g.~\cite[Section~4.2.1]{MR3409135}).

Accordingly, we can use Fatou's Lemma and~\eqref{fruey7658655p45436548}
to see that
\begin{eqnarray*}
\cE_{2n,s}(u,u)&=&\frac{c_{2n,s}}{2}\iint_{\R^{2N}}\Big(\delta_nu(x,y)\Big)^2 \,dx\,\nu_s(dy)\\
&\le&\liminf_{\eps\to0}\frac{c_{2n,s}}{2}\iint_{\R^{2N}}\Big(\delta_nu_\eps(x,y)\Big)^2 \,dx\,\nu_s(dy)
\\&\le&\limsup_{\eps\to0}\frac{c_{2n,s}}{2}\iint_{\R^{2N}}\Big(\delta_nu_\eps(x,y)\Big)^2 \,dx\,\nu_s(dy)\\&
\le& \frac{c_{2n,s}}{2}\iint_{\R^{2N}}\Big(\delta_nu(x,y)\Big)^2 \,dx\,\nu_s(dy)
\\&=&\cE_{2n,s}(u,u).
\end{eqnarray*}
This says that
$$
\lim_{\eps\to0}\cE_{2n,s}(u_\eps,u_\eps)=\cE_{2n,s}(u,u),$$
and therefore, recalling also~\eqref{lduewte76790},
\begin{equation}\label{fhiwo4ty34y345py35y34y3ghjghjkegw1}
\lim_{\eps\to0}\|u_\eps\|_{\cX^s}=\|u\|_{\cX^s}.
\end{equation}

We now show that, for all~$v\in \cX^s$,
\begin{equation}\label{fhiwo4ty34y345py35y34y3ghjghjkegw2}
\lim_{\eps\to0} \cE_{2n,s}(u-u_{\eps},v)= 0.
\end{equation}
For this, we take~$v\in \cX^s$ and we observe that, by H\"older Inequality
and~\eqref{fruey7658655p45436548},
\begin{eqnarray*}
&&\frac{c_{2n,s}}{2}\iint_{\R^{2N}}\delta_n(u-u_\eps)(x,y)\delta_nv(x,y) \,dx\,\nu_s(dy)\\&=&\frac{c_{2n,s}}{2}
\iint_{\R^{2N}}\delta_nu(x,y)\delta_nv(x,y) \,dx\,\nu_s(dy)-\frac{c_{2n,s}}{2}
\iint_{\R^{2N}}\delta_n u_\eps(x,y)\delta_nv(x,y) \,dx\,\nu_s(dy)
\\&\le& \Big(\cE_{2n,s}(u,u)\Big)^{\frac12}
\Big(\cE_{2n,s}(v,v)\Big)^{\frac12}
+\Big(\cE_{2n,s}(u_\eps,u_\eps)\Big)^{\frac12}
\Big(\cE_{2n,s}(v,v)\Big)^{\frac12}\\
&\le& 2\Big(\cE_{2n,s}(u,u)\Big)^{\frac12}
\Big(\cE_{2n,s}(v,v)\Big)^{\frac12}.
\end{eqnarray*}
This and the Dominated Convergence Theorem give~\eqref{fhiwo4ty34y345py35y34y3ghjghjkegw2}.

Thus, \eqref{fhiwo4ty34y345py35y34y3ghjghjkegw}
follows from~\eqref{lduewte76790}, \eqref{fhiwo4ty34y345py35y34y3ghjghjkegw1},
\eqref{fhiwo4ty34y345py35y34y3ghjghjkegw2}
and Proposition~\ref{unif-convex}.
This, in turn, completes the proof of~\eqref{firstcinfdensxp}.

Having established~\eqref{firstcinfdensxp},
the desired claim of the proposition now follows in a standard way by considering for~$t>0$ a smooth radial cut-off~$\phi_t:\R^N\to\R$ satisfying~$\phi_t=1$ in~$B_t$ and~$\phi_t=0$ in~$\R^N\setminus B_{t+1}$. Then~$\rho_{\eps}\ast(\phi_{1/\eps}u)\in C^{\infty}_c(\R^N)$ is a sequence which approximates~$u$ in~$\cX^s$. 
\end{proof}

Due to the Hilbert space structure, $\cX^s$ has a representation via Fourier Transform. This representation will be helpful when comparing spaces~$\cX^s$ with respect to different measures~$\sigma_s$ or varying expontents~$s$.

\begin{prop}\label{fourier rep}
	Let~$n\in \N$ and~$s\in(0,n)$. Then, for any~$u\in \cX^s$,
	$$
	\cE_{2n,s}(u,u)=(M_{s,\sigma_s}(e_s))^{-1}\int_{\R^N} M_{s,\sigma_s}\left(\frac{\xi}{|\xi|}\right)|\xi|^{2s}|\widehat{u}(\xi)|^2\, d\xi
	$$
	where~$M_{s,\sigma_s}$ is as in~\eqref{def ms}.
\end{prop}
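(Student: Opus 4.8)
\emph{The plan} is to establish the identity first for $u\in C^\infty_c(\R^N)$ by a direct computation on the Fourier side, and then to extend it to all of $\cX^s$ using the density of $C^\infty_c(\R^N)$ in $\cX^s$ (Proposition~\ref{density in rn}).

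\emph{The core computation.} Let $u\in C^\infty_c(\R^N)$ and fix $y\in\R^N$. Combining the translation rule $\widehat{u(\cdot+ky)}(\xi)=e^{iky\cdot\xi}\,\widehat u(\xi)$ with the elementary identity
$$\sum_{k=-n}^{n}(-1)^k\binom{2n}{n-k}e^{ikt}=2^n(1-\cos t)^n\qquad(t\in\R),$$
which one verifies by setting $k=n-j$, applying the binomial theorem, and using $1-e^{-it}=2i\,e^{-it/2}\sin(t/2)$, one gets $\widehat{\delta_n u(\cdot,y)}(\xi)=2^n\bigl(1-\cos(y\cdot\xi)\bigr)^n\,\widehat u(\xi)$. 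Plancherel's identity in $x$ (with the unitary normalization of the Fourier transform) and Tonelli's theorem (applicable since the integrands are nonnegative) then yield
$$\iint_{\R^{2N}}\bigl(\delta_n u(x,y)\bigr)^2\,dx\,\nu_s(dy)=2^{2n}\int_{\R^N}|\widehat u(\xi)|^2\left(\int_{\R^N}\bigl(1-\cos(y\cdot\xi)\bigr)^{2n}\,\nu_s(dy)\right)d\xi.$$
To evaluate the inner integral, write $\xi=|\xi|e$ with $e=\xi/|\xi|$, pass to polar coordinates via~\eqref{definition measure}, and rescale first $\rho:=r|\xi|$ and then $\tau:=\rho\,|e\cdot\theta|$; the $\theta$-integration produces exactly $\int_{\bS^{N-1}}|e\cdot\theta|^{2s}\,\sigma_s(d\theta)=M_{s,\sigma_s}(e)$, so that
$$\int_{\R^N}\bigl(1-\cos(y\cdot\xi)\bigr)^{2n}\,\nu_s(dy)=|\xi|^{2s}\,M_{s,\sigma_s}\!\bigl(\tfrac{\xi}{|\xi|}\bigr)\int_0^{+\infty}\bigl(1-\cos\tau\bigr)^{2n}\tau^{-1-2s}\,d\tau.$$
Specializing to $\xi=e_s$ and comparing with~\eqref{constcms00} used with $m=2n$ gives $\int_0^{+\infty}(1-\cos\tau)^{2n}\tau^{-1-2s}\,d\tau=(2^{2n-1}c_{2n,s}M_{s,\sigma_s}(e_s))^{-1}$. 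Plugging these two facts into $\cE_{2n,s}(u,u)=\tfrac{c_{2n,s}}{2}\iint(\delta_n u)^2\,dx\,\nu_s(dy)$, the powers of $2$ and the factor $c_{2n,s}$ cancel, and the claimed identity follows for $u\in C^\infty_c(\R^N)$.

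\emph{Passing to the limit.} Since $\sigma_s$ is finite, dominated convergence shows $M_{s,\sigma_s}$ is continuous on $\bS^{N-1}$, so $d\mu(\xi):=(M_{s,\sigma_s}(e_s))^{-1}M_{s,\sigma_s}(\xi/|\xi|)|\xi|^{2s}\,d\xi$ is a $\sigma$-finite Borel measure on $\R^N$, absolutely continuous with respect to Lebesgue measure, and the previous step reads $\cE_{2n,s}(v,v)=\|\widehat v\|_{L^2(\mu)}^2$ for $v\in C^\infty_c(\R^N)$. Given $u\in\cX^s$, choose $u_k\in C^\infty_c(\R^N)$ with $u_k\to u$ in $\cX^s$ (Proposition~\ref{density in rn}). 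Then $(u_k)$ is Cauchy in $\cX^s$, hence $\|\widehat u_k-\widehat u_l\|_{L^2(\mu)}^2=\cE_{2n,s}(u_k-u_l,u_k-u_l)\to0$, so $(\widehat u_k)$ converges in $L^2(\mu)$. On the other hand $u_k\to u$ in $L^2(\R^N)$ forces $\widehat u_k\to\widehat u$ in $L^2(\R^N)$, hence (along a subsequence) Lebesgue-a.e.\ and therefore $\mu$-a.e., so the $L^2(\mu)$-limit equals $\widehat u$. Thus $\widehat u_k\to\widehat u$ in $L^2(\mu)$, and letting $k\to+\infty$ in $\cE_{2n,s}(u_k,u_k)=\|\widehat u_k\|_{L^2(\mu)}^2$ gives $\cE_{2n,s}(u,u)=\|\widehat u\|_{L^2(\mu)}^2$, which is the assertion.

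\emph{Main obstacle.} The computation for $u\in C^\infty_c(\R^N)$ is mostly bookkeeping, the delicate points being to invoke~\eqref{constcms00} with the correct index $m=2n$ (so that all powers of $2$ and the constant $c_{2n,s}$ disappear) and to justify the interchange of integrals, which is legitimate by Tonelli's theorem since here $\nu_s$, although only $\sigma$-finite near the origin, integrates nonnegative functions. The density step requires some care because the right-hand side is an $L^2$-norm against the possibly non-finite measure $\mu$; the key point is that absolute continuity of $\mu$ with respect to Lebesgue measure allows one to identify the $L^2(\mu)$-limit of the $\widehat u_k$ with $\widehat u$.
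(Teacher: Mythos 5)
Your core computation is exactly the paper's: Plancherel in $x$ for each fixed $y$, the identity $\widehat{\delta_n u(\cdot,y)}(\xi)=2^n(1-\cos(y\cdot\xi))^n\,\widehat u(\xi)$, Tonelli, polar coordinates via~\eqref{definition measure}, and the normalization~\eqref{constcms00} with $m=2n$ to identify $\int_0^\infty(1-\cos\tau)^{2n}\tau^{-1-2s}d\tau$. The only difference is that you first restrict to $u\in C^\infty_c(\R^N)$ and then invoke density; this extra step is harmless but not needed, since the translation rule and Plancherel already hold in the $L^2$-sense and Tonelli applies to the nonnegative integrand, so the paper's computation is valid directly for any $u\in\cX^s$.
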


\begin{proof}
The idea is based on the computations in~\cite[Proposition~6.1]{DK20} ---since the proof there is based on the Fourier Transform and the~$m$-th order difference operator translates in a similar way (see also~\cite[Theorem~1.7]{AJS18a}). We now provide the details of the proof.

We recall that the function~$f(t):=\exp(it)$ satisfies~$\delta_nf(0,t)=2^n(1-\cos t)^n$
(see formula~\eqref{agaapa0}). Thus,
by the Plancherel Formula in the~$x$-variable and Fubini's Theorem, we have that
\begin{equation}\label{step1-equal spaces}\begin{split}
		&\frac{2}{c_{2n,s}}\cE_{2n,s}(u,u)=\iint_{\R^{2N}}\Big(\delta_{n} {u}(x,y)\Big)^2\, dx\, \nu_s(dy) \\
		&\quad=\iint_{\R^{2N}}\Big(\widehat{\delta_{n}{u}}(\xi,y)\Big)^2\, d\xi\, \nu_s(dy) \\
		&\quad=\iint_{\R^{2N}}\Big(\delta_{n}f(0,y\cdot\xi)\Big)^2|\widehat{u}(\xi)|^2\, d\xi\, \nu_s(dy)
		\\
		&\quad=\iint_{\R^{2N}}4^{n}\Big(1-\cos(y\cdot \xi)\Big)^{2n} |\widehat{u}(\xi)|^2  \,\nu_s(dy)\, d\xi\\
		&\quad=4^{n}\int_{\R^N}\left(\; \int_0^{+\infty} r^{-1-2s}\int_{\bS^{N-1}}\Big(1-\cos(r \theta\cdot\xi)\Big)^{2n}\,\sigma_s(d\theta)\,dr\right)|\widehat{u}(\xi)|^2\, d\xi\\
		&\quad=4^{n}\int_{\R^N} \left(\;
		\int_0^{+\infty} t^{-1-2s}\int_{\bS^{N-1}}\left(\;1-\cos\left(\,\frac{t\theta\cdot\xi}{|\xi|}\right)\right)^{2n}\,\sigma_s(d\theta)\,dt\right)|\xi|^{2s}|\widehat{u}(\xi)|^2\, d\xi\\
		&\quad=4^{n}\int_{\R^N} \left(\;
		\int_{\R^N} \left(\;1-\cos\left(\,y\cdot \frac{\xi}{|\xi|}\right)\right)^{2n}\,\nu_s(dy)\right) |\xi|^{2s}|\widehat{u}(\xi)|^2\, d\xi.
\end{split}\end{equation}

Now, we can rewrite the integral in~$\nu_s$ as follows. Given~$h\in \bS^{N-1}$, we have by the substitution~$\tau := r|\theta  \cdot h|$ that
\begin{eqnarray*}
	&& \int_{\R^N} \Big(1-\cos(y\cdot h)\Big)^{2n}\,\nu_s(dy)
	= \int_0^{+\infty} r^{-1-2s}\int_{\bS^{N-1}}\Big(1- \cos( r\theta \cdot h)\Big)^{2n}\sigma_s(d\theta)\,dr\\
	&&\qquad= M_{s,\sigma_s}(h)\int_0^{+\infty}\big(1- \cos\tau\big)^{2n} \tau^{-1-2s}\,d\tau.
\end{eqnarray*}
Also, using \cref{rep of cms} we find that
$$
\frac{4^{n}c_{2n,s}}{2}\int_0^{+\infty} \big(1- \cos\tau\big)^{2n}\tau^{-1-2s}\,d\tau=(M_{s,\sigma_s}(e_s))^{-1},
$$
which entails that
$$\int_{\R^N} \Big(1-\cos(y\cdot h)\Big)^{2n}\,\nu_s(dy)=\frac{2}{4^{n}c_{2n,s}}M_{s,\sigma_s}(h)(M_{s,\sigma_s}(e_s))^{-1}.
$$

Plugging this information into~\eqref{step1-equal spaces}, we obtain the desired result.
\end{proof}

\subsection{Some useful embeddings}\label{sec:embeddings}

As discussed in~\cite[Theorem~1.7]{AJS18a}, with the choice~$\nu_s(dz):=|z|^{-N-2s}\, dz$ (with a possible renormalization by a factor~$\cH^{N-1}(\bS^{N-1})$)
it follows that the space~$\cX^s$ coincides with the Sobolev space~$H^s(\R^N)=W^{s,2}(\R^N)$ for~$s>0$ and we have
$$
\cE_s(u,u)=\cE_{2s_1,s}(u,u),
$$
where~$\cE_s$ denotes the bilinear form of the classical fractional Sobolev space~$H^s(\R^N)$ given by
\begin{equation}\label{defesse097}
	\cE_s(u,v):=\int_{\R^N} (-\Delta)^{s/2}u(x) (-\Delta)^{s/2}v(x)\, dx.
	\end{equation}
Our aim is now to understand the relation between the spaces~$\cX^s$ and~$H^s(\R^N)$ in our general setting.

\begin{prop}\label{relation to hs}
Let~$s>0$ and~$n\in \N$ with~$n>s$. Then, for all~$u\in H^s(\R^N)$,
	$$
	\cE_{2n,s}(u,u)\leq (M_{s,\sigma_s}(e_s))^{-1} \cE_s(u,u).
	$$
In particular, $H^s(\R^N)\subset \cX^s$.
	
	If, in addition, the strong ellipticity assumption~\eqref{ellipticity} is satisfied for some~$\lambda_0>0$, then
	\begin{equation}\label{0qwdoj320949b65m7uRM2} \cE_s(u,u)
\le  \frac{M_{s,\sigma_s}(e_s)}{\lambda_0}  \,\cE_{2n,s}(u,u),\end{equation}
	and~$H^s(\R^N)=\cX^s$.
\end{prop}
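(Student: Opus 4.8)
The plan is to read both bounds off the Fourier-side identity of \cref{fourier rep}, namely
$$\cE_{2n,s}(u,u)=\big(M_{s,\sigma_s}(e_s)\big)^{-1}\int_{\R^N}M_{s,\sigma_s}\!\left(\frac{\xi}{|\xi|}\right)|\xi|^{2s}\,|\widehat u(\xi)|^2\,d\xi,$$
together with the two pointwise bounds on the symbol: $M_{s,\sigma_s}(e)\le 1$ for every $e\in\bS^{N-1}$ (this is~\eqref{usoduevoltefprse769043-0876}), and, when~\eqref{ellipticity} holds, $M_{s,\sigma_s}(e)=\int_{\bS^{N-1}}|e\cdot\theta|^{2s}\,\sigma_s(d\theta)\ge\lambda_0$ for every $e\in\bS^{N-1}$. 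I will also use the Fourier representation $\cE_s(u,u)=\int_{\R^N}|\xi|^{2s}\,|\widehat u(\xi)|^2\,d\xi$ of the classical fractional form (consistent with the normalization of \cref{fourier rep}, with which it coincides in the rotationally symmetric case $\nu_s(dz)=|z|^{-N-2s}\,dz$, where $M_{s,\sigma_s}(\xi/|\xi|)$ is constant).

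For the first inequality I would argue as follows. The identity of \cref{fourier rep}, although stated for $u\in\cX^s$, in fact holds for \emph{every} $u\in L^2(\R^N)$ with both sides read in $[0,+\infty]$: indeed $\delta_nu(\cdot,y)\in L^2(\R^N)$ for each $y\ne 0$, and every step of its proof is either a Plancherel identity in the $x$-variable or an application of Tonelli's theorem to a nonnegative integrand, so no finiteness is used. (Alternatively, one can first prove the inequality for $u\in C^\infty_c(\R^N)\subset\cX^s$ using \cref{fourier rep} as stated, then pass to $u\in H^s(\R^N)$ by $H^s$-approximation and Fatou's Lemma, noting that a Lebesgue-a.e.\ convergent subsequence $u_k\to u$ yields $\delta_nu_k\to\delta_nu$ also $(\mathrm{Leb}\times\nu_s)$-a.e., since every $x$-slice of the exceptional set is Lebesgue-null.) Granting this, for $u\in H^s(\R^N)$ the bound $M_{s,\sigma_s}(\xi/|\xi|)\le1$ inside the identity gives $\cE_{2n,s}(u,u)\le\big(M_{s,\sigma_s}(e_s)\big)^{-1}\cE_s(u,u)$; since $\cE_s(u,u)<+\infty$, $u\in L^2(\R^N)$, and $2n>s$ (as $n>s$), this shows $u\in\cX^s$, i.e.\ $H^s(\R^N)\subset\cX^s$.

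For the reverse inequality, assume~\eqref{ellipticity} and let $u\in\cX^s$, so that \cref{fourier rep} applies as stated. Using the lower bound $M_{s,\sigma_s}(\xi/|\xi|)\ge\lambda_0$ inside the identity gives
$$\cE_{2n,s}(u,u)\ge\frac{\lambda_0}{M_{s,\sigma_s}(e_s)}\int_{\R^N}|\xi|^{2s}\,|\widehat u(\xi)|^2\,d\xi=\frac{\lambda_0}{M_{s,\sigma_s}(e_s)}\,\cE_s(u,u),$$
which rearranges to~\eqref{0qwdoj320949b65m7uRM2}; in particular $\cE_s(u,u)<+\infty$, so $u\in H^s(\R^N)$, and together with the previous step this yields $H^s(\R^N)=\cX^s$, with equivalent norms since $M_{s,\sigma_s}(e_s)\in[\lambda_0,1]$. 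The only place where a little care is needed is the passage, in the first inequality, from $\cX^s$ (or $C^\infty_c$) to all of $H^s(\R^N)$ — i.e.\ the $L^2$-extension of \cref{fourier rep}, or equivalently the density/Fatou argument; once this is settled the rest is a purely pointwise comparison of Fourier multipliers, so I do not anticipate any serious obstacle.
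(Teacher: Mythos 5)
Your proposal is correct and follows essentially the same route as the paper's proof: read off both bounds from the Fourier-side identity of Proposition~\ref{fourier rep}, combined with the pointwise bounds $M_{s,\sigma_s}\le 1$ (from~\eqref{usoduevoltefprse769043-0876}) and, under~\eqref{ellipticity}, $M_{s,\sigma_s}\ge\lambda_0$. The only substantive difference is that you explicitly address the technicality that Proposition~\ref{fourier rep} is stated for $u\in\cX^s$, whereas the claim is for $u\in H^s(\R^N)$; the paper applies the identity directly without comment, while you observe (correctly) that the Plancherel-plus-Tonelli argument in its proof yields the identity in $[0,+\infty]$ for all $u\in L^2(\R^N)$, and you also sketch a density-plus-Fatou alternative---a small but genuine improvement in rigor over the paper's presentation.
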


\begin{proof}
From \cref{fourier rep} we have that
$$
\cE_{2n,s}(u,u)=(M_{s,\sigma_s}(e_s))^{-1}\int_{\R^N} M_{s,\sigma_s}\left(\frac{\xi}{|\xi|}\right)|\xi|^{2s}|\widehat{u}(\xi)|^2\, d\xi,
$$
while from~\cite[Theorems~1.8 and~1.9]{AJS18a} we know that
$$
\cE_{s}(u,u)=\int_{\R^N}  |\xi|^{2s}|\widehat{u}(\xi)|^2\, d\xi.
$$
This entails that
$$
(M_{s,\sigma_s}(e_s))^{-1}\inf_{h\in \bS^{N-1}} M_{s,\sigma_s}(h)  \cE_s(u,u)\leq \cE_{2n,s}(u,u)\leq (M_{s,\sigma_s}(e_s))^{-1}\sup_{h\in \bS^{N-1}}M_{s,\sigma_s}(h)\cE_s(u,u)
$$
from which the claim follows using~\eqref{usoduevoltefprse769043-0876} and~\eqref{ellipticity}.
\end{proof}

In the case where~$\cX^s=H^s(\R^N)$, we have in particular the classical embedding theorems,
see e.g.~\cite[Theorem~1.4.4.1]{zbMATH05960425} or~\cite[Theorem 2.8.1 and eq. (19)]{T78}. Hence, if~\eqref{ellipticity} is satisfied, \cref{relation to hs} gives immediately the following.

\begin{cor}\label{cor:sobolev embedding}
Let~$\nu_s$ satisfy~\eqref{ellipticity}. Then, the following embeddings hold:
\begin{enumerate}
\item If~$s<\frac{N}{2}$, then~$\cX^s\subset L^{2_{s}^{\ast}}(\R^N)$ with~$2_s^{\ast}:=\frac{2N}{N-2s}$.
\item If~$s\geq \frac{N}{2}$ and~$s-\frac{N}{2}\notin\N_0$, then~$\cX^s\subset C^{s-\frac{N}{2}}(\R^N)$.
\end{enumerate}
\end{cor}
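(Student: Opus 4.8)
The plan is to reduce the statement to the classical Sobolev embedding theorems for the fractional Sobolev space $H^s(\R^N)$, the bridge being \cref{relation to hs}. First I would note that, since the strong ellipticity condition~\eqref{ellipticity} is in force with some~$\lambda_0>0$, \cref{relation to hs} yields, for every~$n\in\N$ with~$n>s$ and every~$u\in H^s(\R^N)$, the two-sided bound
$$\frac{\lambda_0}{M_{s,\sigma_s}(e_s)}\,\cE_s(u,u)\le \cE_{2n,s}(u,u)\le \frac{1}{M_{s,\sigma_s}(e_s)}\,\cE_s(u,u).$$
Since the~$L^2$ parts of~$\|\cdot\|_{\cX^s}$ and of the~$H^s$-norm literally coincide, this shows that~$\cX^s=H^s(\R^N)$ as sets and that the two norms are equivalent; in particular, any continuous embedding of~$H^s(\R^N)$ into another function space transfers verbatim to~$\cX^s$, with a control of the full (inhomogeneous) Sobolev norm by~$\|\cdot\|_{\cX^s}$.

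Next I would simply invoke the classical embedding theorems for~$H^s(\R^N)=W^{s,2}(\R^N)$, valid for all real~$s>0$ (the integer case included): if~$s<\frac N2$ then~$H^s(\R^N)\subset L^{2_s^\ast}(\R^N)$ with~$2_s^\ast=\frac{2N}{N-2s}$, whereas if~$s\ge\frac N2$ and~$s-\frac N2\notin\N_0$ then~$H^s(\R^N)\subset C^{s-N/2}(\R^N)$, where for a non-integer exponent~$\gamma=s-\frac N2$ the space~$C^\gamma$ denotes the Hölder space~$C^{\lfloor\gamma\rfloor,\,\gamma-\lfloor\gamma\rfloor}$. These are exactly the statements recorded in~\cite[Theorem~1.4.4.1]{zbMATH05960425} and~\cite[Theorem~2.8.1]{T78}. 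Combining them with the norm equivalence from the previous step gives both items~(1) and~(2).

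The argument involves no genuine obstacle; the only points requiring a little care are bookkeeping ones: checking that \cref{relation to hs} indeed delivers equivalence of norms (and not merely equality of the underlying sets), and observing that the endpoint case~$s=\frac N2$ is admissible precisely when~$N$ is odd, so that~$s-\frac N2$ is a half-integer and the Morrey--Sobolev embedding into a Hölder space is the relevant one, the case~$s-\frac N2\in\N_0$ being excluded exactly because there the sharp target is a Zygmund-type space rather than~$C^{s-N/2}$. No estimate beyond \cref{relation to hs} is needed.
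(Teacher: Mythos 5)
Your proposal matches the paper's proof exactly: both invoke \cref{relation to hs} under the strong ellipticity hypothesis~\eqref{ellipticity} to identify~$\cX^s$ with~$H^s(\R^N)$ (with equivalent norms), and then transport the classical Sobolev and Morrey embeddings from~\cite[Theorem~1.4.4.1]{zbMATH05960425} and~\cite[Theorem~2.8.1]{T78}. The extra remarks you make about the endpoint~$s=\frac N2$ and the role of Zygmund spaces when~$s-\frac N2\in\N_0$ are correct but not needed, since the hypothesis already excludes that case.
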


Now we check the scaling properties of the energy functional~$\cE_{2s_1,s}$.

\begin{lemma}\label{scaling}
Let~$u\in \cX^s$. For~$t>0$, let~$u_\rho(x):=u(x/\rho)$ for all~$x\in\R^N$.

Then, $u_\rho\in \cX^s$ and
$$
\cE_{2s_1,s}(u,u)=\rho^{2s-N}\cE_{2s_1,s}(u_\rho,u_\rho).
$$ 
\end{lemma}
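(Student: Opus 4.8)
The plan is to follow the dilation $x\mapsto x/\rho$ through each of the three ingredients of the bilinear form in~\eqref{098765099werfgthkmnbgfvd876} with $m=s_1$: the difference operator, the Lebesgue measure $dx$, and the measure $\nu_s$. First I would record the elementary identity
\[
\delta_{s_1}u_\rho(x,y)=\delta_{s_1}u\!\left(\frac{x}{\rho},\frac{y}{\rho}\right)\qquad\text{for all }x,y\in\R^N,
\]
which is immediate from the definition~\eqref{defdelta}, since $u_\rho(x+ky)=u\big(\tfrac{x}{\rho}+k\tfrac{y}{\rho}\big)$ for every $k\in\Z$.

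The key step is a homogeneity property of $\nu_s$: for every nonnegative measurable $f:\R^N\to\R$ and every $\rho>0$,
\[
\int_{\R^N} f(y/\rho)\,\nu_s(dy)=\rho^{-2s}\int_{\R^N} f(y)\,\nu_s(dy).
\]
This follows by writing $\nu_s$ in polar coordinates as in~\eqref{definition measure} and substituting $r=\rho\tilde r$, which turns $r^{-1-2s}\,dr$ into $(\rho\tilde r)^{-1-2s}\rho\,d\tilde r=\rho^{-2s}\,\tilde r^{-1-2s}\,d\tilde r$, the measure $\sigma_s$ on $\bS^{N-1}$ being untouched.

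Combining these two facts, for $u\in\cX^s$ I would compute, using Tonelli's theorem (allowed since $(\delta_{s_1}u_\rho)^2\ge 0$),
\[
\iint_{\R^{2N}}\big(\delta_{s_1}u_\rho(x,y)\big)^2\,dx\,\nu_s(dy)
=\iint_{\R^{2N}}\Big(\delta_{s_1}u\big(x/\rho,y/\rho\big)\Big)^2\,dx\,\nu_s(dy),
\]
then perform the change of variables $x=\rho x'$ (producing a factor $\rho^N$) and, for each fixed $x'$, apply the homogeneity property of $\nu_s$ to $f(y)=\big(\delta_{s_1}u(x',y)\big)^2$ (producing a further factor $\rho^{-2s}$). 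This yields
\[
\iint_{\R^{2N}}\big(\delta_{s_1}u_\rho(x,y)\big)^2\,dx\,\nu_s(dy)
=\rho^{N-2s}\iint_{\R^{2N}}\big(\delta_{s_1}u(x,y)\big)^2\,dx\,\nu_s(dy),
\]
and multiplying by $c_{2s_1,s}/2$ gives $\cE_{2s_1,s}(u_\rho,u_\rho)=\rho^{N-2s}\,\cE_{2s_1,s}(u,u)<+\infty$, which is the claimed identity upon rearranging. Finally, since $\|u_\rho\|_{L^2(\R^N)}^2=\rho^N\|u\|_{L^2(\R^N)}^2<+\infty$ as well, we conclude that $u_\rho\in\cX^s$.

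I do not expect any genuine obstacle: the only points requiring a little care are the bookkeeping in the iterated change of variables (the $x$-integration followed by the radial substitution in $\nu_s$), legitimate by Tonelli's theorem thanks to the sign of the integrand, and the fact that $\cE_{2s_1,s}(u,u)$ is unambiguous because $\cX^s$ and its form do not depend on the chosen integer order, as already noted after the definition of~$\cX^s$ via Lemma~\ref{lem:independence of m} and Proposition~\ref{prop:bilinear}.
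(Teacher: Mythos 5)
Your proof is correct and follows essentially the same route as the paper's: the paper carries out the same two substitutions ($\widetilde x=x/\rho$ and $\widetilde r=r/\rho$ in the polar representation of $\nu_s$) in a single chain of equalities, while you merely package the radial substitution as a stand-alone homogeneity property of $\nu_s$. The only addition you make beyond the paper is to spell out the $L^2$-scaling $\|u_\rho\|_{L^2}^2=\rho^N\|u\|_{L^2}^2$ to justify $u_\rho\in\cX^s$, which is a harmless and correct extra sentence.
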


\begin{proof}
The changes of variable~$\widetilde x:=x/\rho$ and~$\widetilde r:=r/\rho$ give that
\begin{eqnarray*} \frac{2}{c_{2s_1,s}}
\cE_{2s_1,s}(u_\rho,u_\rho)&=&
\iint_{\R^{2N}}\Big(\delta_{s_1}u_\rho(x,y)\Big)^2\,dx\,\nu_s(dy) \\
&=& \iint_{\R^{2N}}\left(\delta_{s_1}u\left(\frac{x}{\rho},\frac{y}{\rho}\right)\right)^2\,dx\,\nu_s(dy) \\
&=&\int_{\R^{N}} \int_{0}^{+\infty}\int_{\bS^{N-1}} r^{-1-2s}
\left(\delta_{s_1}u\left(\frac{x}{\rho},\frac{r\theta}{\rho}\right)\right)^2\,dx\,dr\,
\sigma_s(d\theta)
\\&=& \rho^{N-2s}\int_{\R^N}\int_{0}^{+\infty}\int_{\bS^{N-1}}
\widetilde{r}^{-1-2s}\Big(\delta_{s_1}u(\widetilde{x}, \theta \widetilde{r})\Big)^2\,d\widetilde{x}\,d\widetilde{r}\,\sigma_s(d\theta)
\\&=&\rho^{N-2s}\iint_{\R^{2N}}\Big(\delta_{s_1}u(x,y)\Big)^2\,dx\,\nu_s(dy) \\
&=& \rho^{N-2s}\frac{2}{c_{2s_1,s}}
\cE_{2s_1,s}(u,u),
\end{eqnarray*}
as desired.
\end{proof}

Now, given an open set~$\Omega\subset \R^N$ and~$s>0$,
we define
\begin{equation}\label{vbncmxe8ty849et93476643807}
{\mbox{$\cX^s(\Omega)$ as the closure of the space~$C^\infty_c(\Omega)$
with respect to the norm in~\eqref{normdef098765}.}}\end{equation}
When~$s=0$, we make the ansatz that~$\cX^s(\Omega)=L^2(\Omega)$ (notice that this is compatible with the limit procedure
in Propositions~\ref{limit s to 0} and~\ref{prop:bilinear}). In view of \cref{density in rn}, this definition is consistent for $\Omega=\R^N$ and it follows that $\cX^s(\Omega)$ is a Hilbert space with the same norm as on $\cX^s$.

\begin{prop}\label{poincare}
Let~$s\ge0$ and~$\Omega\subset \R^N$ be open and bounded.

Then, for all~$u\in \cX^s(\Omega)$,
\begin{equation}\label{51IS}
\|u\|_{L^2(\R^N)}^2\leq C\cE_{2s_1,s}(u,u),
\end{equation}
with~$s_1$ defined in~\eqref{sis2s3s4} and
\begin{equation}\label{explicitconstant of poincare}
C:=2^{4s_1+1}\binom{2s_1}{s_1}^{-2}(\diam(\Omega))^{2s}.
\end{equation}

In particular,~$\cE_{2s_1,s}$ is a scalar product which induces an equivalent norm on $\cX^s(\Omega)$.
\end{prop}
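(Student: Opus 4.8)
The plan is to prove the estimate first for $u\in C^\infty_c(\Omega)$ and then extend it to the closure $\cX^s(\Omega)$ by density. The case $s=0$ is immediate from the conventions adopted for that value of the parameter: consistently with the limit procedures in Propositions~\ref{limit s to 0} and~\ref{prop:bilinear}, $\cE_{2,0}$ is the $L^2$ scalar product and $\cX^0(\Omega)=L^2(\Omega)$, so~\eqref{51IS} holds with any $C\ge1$, in particular with the one in~\eqref{explicitconstant of poincare}. Hence I assume $s>0$ from now on, and I recall that $\cE_{2s_1,s}$ corresponds, in the notation of~\eqref{098765099werfgthkmnbgfvd876}, to the choice $m=s_1$ (legitimate since $s<s_1<2s_1$).

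The geometric heart of the argument is the following observation. Put $d:=\diam(\Omega)$. If $x\in\Omega$ and $|y|>d$, then for every $k\in\{-s_1,\dots,s_1\}\setminus\{0\}$ we have $|ky|\ge|y|>d=\diam(\Omega)$, so $x+ky\notin\Omega$ and therefore $u(x+ky)=0$. Thus~\eqref{defdelta} collapses to the single term $k=0$, giving
\[
\delta_{s_1}u(x,y)=\binom{2s_1}{s_1}u(x)\qquad\text{whenever }x\in\Omega,\ |y|>d.
\]
I would then insert this into~\eqref{098765099werfgthkmnbgfvd876} with $m=s_1$ and keep only the region $x\in\Omega$, $|y|>d$ in the double integral (the rest of the integrand being a nonnegative function), obtaining
\[
\cE_{2s_1,s}(u,u)\ \ge\ \frac{c_{2s_1,s}}{2}\binom{2s_1}{s_1}^2\left(\int_\Omega u(x)^2\,dx\right)\left(\int_{\{|y|>d\}}\nu_s(dy)\right).
\]
Since $\supp u\subset\Omega$ the first factor equals $\|u\|_{L^2(\R^N)}^2$, and since $\sigma_s$ is a probability measure the second factor is $\int_d^{+\infty}r^{-1-2s}\,dr=\frac1{2s\,d^{2s}}$. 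Rearranging yields $\|u\|_{L^2(\R^N)}^2\le\frac{4s\,d^{2s}}{c_{2s_1,s}\binom{2s_1}{s_1}^2}\,\cE_{2s_1,s}(u,u)$.

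To pin down the explicit constant I would apply Corollary~\ref{cor:bounds on cms} with $m=2s_1$ and discard the factor $M_{s,\sigma_s}(e_s)\le1$ using~\eqref{usoduevoltefprse769043-0876}, which gives $c_{2s_1,s}\ge 2^{2-2s_1}\,\frac{s(2s_1-s)}{2s_1}$; since $s<s_1$ forces $2s_1-s>s_1$, this produces $\frac{4s}{c_{2s_1,s}}<2^{2s_1+1}\le 2^{4s_1+1}$, hence~\eqref{51IS} with the constant in~\eqref{explicitconstant of poincare} (in fact even with the sharper $2^{2s_1+1}$) for all $u\in C^\infty_c(\Omega)$. Because $\|\cdot\|_{L^2(\R^N)}\le\|\cdot\|_{\cX^s}$ and $v\mapsto\cE_{2s_1,s}(v,v)$ is $\|\cdot\|_{\cX^s}$-continuous (by bilinearity together with the Cauchy--Schwarz inequality for $\cE_{2s_1,s}$), both sides of~\eqref{51IS} are continuous in the $\cX^s$-norm, so the inequality passes to the closure $\cX^s(\Omega)$.

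For the final assertion: $\cE_{2s_1,s}$ is symmetric, bilinear and nonnegative (as $c_{2s_1,s}>0$ by~\cref{rep of cms}), and~\eqref{51IS} forces $\cE_{2s_1,s}(u,u)=0\Rightarrow u=0$, so it is a scalar product on $\cX^s(\Omega)$; its induced norm is equivalent to $\|\cdot\|_{\cX^s}$ because, by~\eqref{normdef098765} and~\eqref{51IS}, $\cE_{2s_1,s}(u,u)\le\|u\|_{\cX^s}^2=\|u\|_{L^2(\R^N)}^2+\cE_{2s_1,s}(u,u)\le(C+1)\,\cE_{2s_1,s}(u,u)$. I expect the only genuinely delicate point to be the bookkeeping of the explicit constant, i.e.\ correctly marrying the lower bound for $c_{2s_1,s}$ from Corollary~\ref{cor:bounds on cms} with the elementary inequalities $M_{s,\sigma_s}(e_s)\le1$ and $s<s_1$; the support/diameter argument and the passage to the closure are routine.
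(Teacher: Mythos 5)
Your proposal is correct and follows essentially the same approach as the paper's proof: for $x\in\Omega$ and $|y|$ large the difference operator $\delta_{s_1}u(x,y)$ collapses to its $k=0$ term $\binom{2s_1}{s_1}u(x)$, so restricting the double integral defining $\cE_{2s_1,s}(u,u)$ to that region and integrating the tail of $\nu_s$ yields the Poincar\'e bound, after which Corollary~\ref{cor:bounds on cms} and $M_{s,\sigma_s}(e_s)\le1$ control the constant. The only (benign) deviation is that you integrate over $|y|>\diam(\Omega)$ whereas the paper conservatively takes $R:=2\diam(\Omega)$, which is why you obtain the slightly sharper prefactor $2^{2s_1+1}$ instead of $2^{4s_1+1}$; both are, of course, bounded by the constant stated in~\eqref{explicitconstant of poincare}.
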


\begin{proof}
We first deal with the case~$s=0$. 
Thanks to Propositions~\ref{limit s to 0} and~\ref{prop:bilinear} we have that, for all~$u\in C^\infty_c(\Omega)$,
\begin{eqnarray*}
\lim_{s\to0^+}\cE_{2s_1,s}(u,u)=\lim_{s\to0^+}\int_{\R^N}L_{s_1,s}u(x)u(x)\, dx=\int_\Omega |u(x)|^2\,dx=\|u\|_{L^2(\Omega)}^2.
\end{eqnarray*}
Therefore, the desired inequality holds true for all~$u\in C^\infty_c(\Omega)$, and then for all~$u\in  \cX^s(\Omega)$ by density.

Now we suppose that~$s>0$ and we recall that, for any~$R>0$,
\begin{equation}\label{jiewoytokjhgfdsnbvcxqwertyusdfge45678}
\nu_s\big(\R^N\setminus B_R\big)= 
\int_R^{+\infty}r^{-1-2s}\, dr = \frac{1}{2sR^{2s}}.
\end{equation}
We choose~$R:=2 \diam(\Omega)$: in this way, we have that~$u(x+ y)=0$ for all~$x\in \Omega$ and~$y\in \R^N\setminus B_R$. 
Note that also~$u(x\pm k y)=0$ for all~$x\in \Omega$,
$y\in \R^N\setminus B_R$ and~$ k\in\{1,\ldots,s_1\}$. Thus,
\begin{eqnarray*}&&
\iint_{\R^{2N}}\Big(\delta_{s_1}u(x,y)\Big)^2\,\nu_s(dy)\,dx 
\\&\geq& \int_{\Omega}\int_{\R^N\setminus B_R} \Big(\delta_{s_1}u(x,y)\Big)^2\,\nu_s(dy)\,dx
\\&=&\sum_{{-s_1\le k\le s_1}\atop{-s_1\le j\le s_1}} (-1)^{k+j}
\binom{2s_1}{s_1-k}\binom{2s_1}{s_1-j}\int_{\Omega}\int_{\R^N\setminus B_R}
u(x+ky)u(x+jy)\,\nu_s(dy)\,dx
\\&=& \binom{2s_1}{s_1}^2\|u\|_{L^2(\Omega)}^2\int_{\R^N\setminus B_R}\,\nu_s(dy)\\
&=&\binom{2s_1}{s_1}^2\frac{1}{2sR^{2s}}
\|u\|_{L^2(\Omega)}^2,
\end{eqnarray*}
where we have also used~\eqref{jiewoytokjhgfdsnbvcxqwertyusdfge45678}
in the last line.

Therefore, the inequality in~\eqref{51IS} holds true with the constant
$$
C_0:=\left(\;\frac{c_{2s_1,s}}2\binom{2s_1}{s_1}^2\frac{1}{2^{1+2s}s (\diam(\Omega))^{2s}}\right)^{-1}.
$$
Now, by \cref{cor:bounds on cms} and~\eqref{usoduevoltefprse769043-0876},
and recalling that~$s\in(0,s_1)$, we have
$$
\frac{c_{2s_1,s}}{s}=\frac{c_{2s_1,s}(2s_1-s)}{2s_1}\left(\frac{1}{s}+\frac{1}{2s_1-s}\right)
\geq\frac{ 2^{2-2s_1}(2s_1-s)}{2s_1\,M_{s,\sigma_s}(e_s)}\geq 2^{1-2s_1}.
$$
Hence, we have
$$
C_0\leq 2^{2s_1-1}\left(\; \binom{2s_1}{s_1}^2\frac{1}{2^{2+2s}(\diam(\Omega))^{2s}}\right)^{-1}\leq 2^{4s_1+1}\binom{2s_1}{s_1}^{-2}(\diam(\Omega))^{2s},
$$ and this gives the desired inequality
recalling~\eqref{explicitconstant of poincare}.

{F}rom the inequality in~\eqref{51IS} and Proposition~\ref{hilbert-space}
it follows that~$\cX^s(\Omega)$ is a Hilbert space with scalar
product~$\cE_{2s_1,s}$.
\end{proof}

The following result can be seen as the counterpart of~\cite[Lemma~2.1]{DPSV23} (that deals with Gagliardo seminorms), with additional difficulties due to the fact that the fractional exponent is any arbitrary nonnegative number and the energy involves a very general measure~$\nu_s$.

\begin{prop}\label{prop:spaces ordered}
Let~$\bar{s}>0$. Let~$0<t\le s \le\bar{s}$ and let~$\nu_s$ and~$\nu_t$ satisfy \cref{definition measure}, \cref{assumption basis} and~\cref{simple ellipticity} for~$s$ and~$t$ respectively. Suppose that there exists~$\Lambda\ge\lambda$, possibly depending on~$\bar{s}$, such that
	\begin{equation}
	\label{suppcontster897654} 
	M_{t,\sigma_t} \le \Lambda M_{s,\sigma_s}\quad\text{ on~$\bS^{N-1}$}.
	\end{equation}
	
Then, for all functions~$u:\R^N\to\R$ we have
	\begin{equation}\label{firstclaim}
	\cE_{2t_1,t}(u,u)\leq \frac{\Lambda}{\lambda}\Big(\|u\|_{L^2(\R^N)}^2+\cE_{2s_1,s}(u,u)\Big).
	\end{equation}
In particular, $\cX^s(\Omega)\subset \cX^t(\Omega)$ for all open sets $\Omega\subset \R^N$.
	
If, moreover, $\Omega\subset\R^N$ is an open and bounded set, then, for all~$u\in \cX^s(\Omega)$,
	\begin{equation}\label{firstclaim2}
	\cE_{2t_1,t}(u,u)\leq \frac{\Lambda (1+C)}{\lambda} \cE_{2s_1,s}(u,u),
\end{equation}
	where~$C$ is given in~\eqref{explicitconstant of poincare} and is independent of~$t$.
\end{prop}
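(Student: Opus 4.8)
The plan is to reduce the whole statement to the Fourier-side representation of the energies established in Proposition~\ref{fourier rep}. First, observe that \eqref{firstclaim} is trivial when $u\notin L^2(\R^N)$, since then the right-hand side is $+\infty$; so I would assume $u\in L^2(\R^N)$. For such $u$ the identity of Proposition~\ref{fourier rep} holds with both sides interpreted in $[0,+\infty]$: its proof uses only the Plancherel Formula and Tonelli's Theorem (all integrands being nonnegative), so no a priori membership in the energy space is needed. This lets me write $\cE_{2t_1,t}(u,u)=(M_{t,\sigma_t}(e_t))^{-1}\int_{\R^N}M_{t,\sigma_t}(\xi/|\xi|)\,|\xi|^{2t}\,|\widehat u(\xi)|^2\,d\xi$, and likewise for $\cE_{2s_1,s}(u,u)$ with $t$ replaced by $s$.

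Next I would assemble the relevant pointwise bounds in $\xi$. By \eqref{simple ellipticity} one has $M_{t,\sigma_t}(e_t)\ge\lambda$, hence $(M_{t,\sigma_t}(e_t))^{-1}\le\lambda^{-1}$; the standing hypothesis \eqref{suppcontster897654} gives $M_{t,\sigma_t}(\xi/|\xi|)\le\Lambda\,M_{s,\sigma_s}(\xi/|\xi|)$; since $0<t\le s$ one has $|\xi|^{2t}\le\max\{1,|\xi|^{2s}\}\le 1+|\xi|^{2s}$ for every $\xi\neq 0$; and \eqref{usoduevoltefprse769043-0876} gives $M_{s,\sigma_s}\le 1$ on $\bS^{N-1}$ (in particular $M_{s,\sigma_s}(e_s)\le 1$). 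Chaining these inequalities bounds $\cE_{2t_1,t}(u,u)$ by $\frac{\Lambda}{\lambda}\int_{\R^N}M_{s,\sigma_s}(\xi/|\xi|)\,(1+|\xi|^{2s})\,|\widehat u(\xi)|^2\,d\xi$, which I would split into the mass term $\int_{\R^N}|\widehat u|^2=\|u\|_{L^2(\R^N)}^2$ (Plancherel) plus $\int_{\R^N} M_{s,\sigma_s}(\xi/|\xi|)|\xi|^{2s}|\widehat u|^2=M_{s,\sigma_s}(e_s)\,\cE_{2s_1,s}(u,u)\le\cE_{2s_1,s}(u,u)$ (the Fourier representation of $\cE_{2s_1,s}$ together with $M_{s,\sigma_s}(e_s)\le1$). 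This is exactly \eqref{firstclaim}.

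The inclusion $\cX^s(\Omega)\subset\cX^t(\Omega)$ then follows by a completeness argument: given $u\in\cX^s(\Omega)$, choose $u_k\in C^\infty_c(\Omega)$ with $u_k\to u$ in $\cX^s$; applying \eqref{firstclaim} to the differences $u_k-u_j$ and using that the $\cX^s$-norm dominates the $L^2(\R^N)$-norm shows that $(u_k)_k$ is Cauchy in $\cX^t$, hence converges in $\cX^t$ to some $v\in\cX^t(\Omega)$; since the $\cX^t$-norm dominates the $L^2(\R^N)$-norm, comparing $L^2$ limits gives $v=u$. For \eqref{firstclaim2}, with $\Omega$ open and bounded and $u\in\cX^s(\Omega)$, I would invoke the Poincar\'e inequality of Proposition~\ref{poincare} \emph{at the exponent $s$} to get $\|u\|_{L^2(\R^N)}^2\le C\,\cE_{2s_1,s}(u,u)$ with $C$ the explicit constant \eqref{explicitconstant of poincare} --- which depends only on $N$, $s$, $s_1$ and $\diam(\Omega)$ and is therefore independent of $t$ --- and substitute this into \eqref{firstclaim}, obtaining $\cE_{2t_1,t}(u,u)\le\frac{\Lambda(1+C)}{\lambda}\,\cE_{2s_1,s}(u,u)$.

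I do not expect a serious obstacle here: the estimate is essentially a one-line Fourier comparison once Proposition~\ref{fourier rep} is in place. The two points requiring attention are (i) legitimising the Fourier representation for functions not known in advance to belong to the energy space, which is why I isolate the case $u\in L^2(\R^N)$ and appeal to the Tonelli version of that proof, and (ii) the bookkeeping of exponents, namely applying Poincar\'e at level $s$ (not $t$) so that the constant appearing in \eqref{firstclaim2} is literally the one in \eqref{explicitconstant of poincare} and does not depend on $t$.
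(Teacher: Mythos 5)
Your proof is correct and follows essentially the same route as the paper: both start from the Fourier representation of Proposition~\ref{fourier rep}, chain the pointwise bounds $M_{t,\sigma_t}(e_t)\ge\lambda$, $M_{t,\sigma_t}\le\Lambda M_{s,\sigma_s}\le\Lambda$, $|\xi|^{2t}\le1+|\xi|^{2s}$, and $M_{s,\sigma_s}(e_s)\le1$ to get \eqref{firstclaim}, and then combine with Proposition~\ref{poincare} at level~$s$ for \eqref{firstclaim2} (you run the inequality chain forward from $\cE_{2t_1,t}$ while the paper runs it backward from $\|u\|_{L^2}^2+\cE_{2s_1,s}$, but this is purely cosmetic). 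Your added remarks---interpreting the Fourier identity in $[0,+\infty]$ via Tonelli for general $u\in L^2(\R^N)$, and the explicit Cauchy-sequence argument for $\cX^s(\Omega)\subset\cX^t(\Omega)$---fill in details the paper leaves implicit and are both valid.
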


\begin{proof}
By \cref{fourier rep} we have that, for all~$u\in	\cX^s$,
\begin{eqnarray*}
\cE_{2s_1,s}(u,u)=(M_{s,\sigma_s}(e_s))^{-1}\int_{\R^N} M_{s,\sigma_s}\left(\frac{\xi}{|\xi|}\right)|\xi|^{2s}|\widehat{u}(\xi)|^2\, d\xi.
\end{eqnarray*}
Therefore, recalling also~\eqref{suppcontster897654},
\begin{eqnarray*}&&
\|u\|_{L^2(\R^N)}^2+\cE_{2s_1,s}(u,u)=\int_{\R^N}|\widehat{u}(\xi)|^2\, d\xi+
(M_{s,\sigma_s}(e_s))^{-1}\int_{\R^N} M_{s,\sigma_s}\left(\frac{\xi}{|\xi|}\right)|\xi|^{2s}|\widehat{u}(\xi)|^2\, d\xi
\\&&\quad
\ge (M_{t,\sigma_t}(e_t))^{-1}\int_{\R^N}M_{t,\sigma_t}\left(\frac{\xi}{|\xi|}\right)|\widehat{u}(\xi)|^2\, d\xi+
(\Lambda\, M_{s,\sigma_s}(e_s))^{-1}\int_{\R^N} M_{t,\sigma_t}\left(\frac{\xi}{|\xi|}\right)|\xi|^{2s}|\widehat{u}(\xi)|^2\, d\xi
\\&&\quad
\ge (\Lambda\,M_{s,\sigma_s}(e_t))^{-1}\int_{\R^N}M_{t,\sigma_t}\left(\frac{\xi}{|\xi|}\right)|\widehat{u}(\xi)|^2\, d\xi+
(\Lambda\, M_{s,\sigma_s}(e_s))^{-1}\int_{\R^N} M_{t,\sigma_t}\left(\frac{\xi}{|\xi|}\right)|\xi|^{2s}|\widehat{u}(\xi)|^2\, d\xi
\\&&\quad
\ge (\Lambda\,M_{s,\sigma_s}(e_s))^{-1}\int_{\R^N}M_{t,\sigma_t}\left(\frac{\xi}{|\xi|}\right)\big(1+|\xi|^{2s}\big)|\widehat{u}(\xi)|^2\, d\xi
\\&&\quad
\ge (\Lambda\,M_{s,\sigma_s}(e_s))^{-1}\int_{\R^N}M_{t,\sigma_t}\left(\frac{\xi}{|\xi|}\right)|\xi|^{2t} |\widehat{u}(\xi)|^2\, d\xi.
\end{eqnarray*}

Thus, exploiting again \cref{fourier rep} (and recalling \cref{prop:bilinear}),
\begin{eqnarray*}
\|u\|_{L^2(\R^N)}^2+\cE_{2s_1,s}(u,u)\ge (\Lambda\,M_{s,\sigma_s}(e_s))^{-1}M_{t,\sigma_t}(e_t)\,\cE_{2t_1,t}(u,u)
.\end{eqnarray*}
The inequality in~\eqref{firstclaim} then follows from~\cref{usoduevoltefprse769043-0876} and~\cref{simple ellipticity}.

{F}rom~\eqref{firstclaim} and \cref{poincare}, we also obtain~\eqref{firstclaim2}.
\end{proof}

We close this section with the following compactness statement.

\begin{thm}\label{compact}
	Let~$s>0$ and~$\Omega\subset \R^N$ be an open bounded set. Then~$\cX^s(\Omega)\hookrightarrow L^2(\Omega)$ is compact.
\end{thm}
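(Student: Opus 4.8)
The plan is to prove sequential compactness by a Fourier-analytic Rellich--Kondrachov argument, using the representation from \cref{fourier rep} together with the fact that functions in $\cX^s(\Omega)$ vanish a.e. outside $\Omega$. The essential structural input is the uniform ellipticity $m_0:=\inf_{\bS^{N-1}}M_{s,\sigma_s}>0$ (i.e.\ \eqref{ellipticity} specialised to the exponent $s$); granting it, \cref{relation to hs} gives $\cX^s=H^s(\R^N)$, and since the elements of $\cX^s(\Omega)$ are supported in a fixed ball, the statement is an instance of the classical compact embedding of $H^s$-with-compact-support into $L^2$. I will nonetheless run the argument directly. So, let $(u_k)_k$ be bounded in $\cX^s(\Omega)$, with $\|u_k\|_{\cX^s}\le K$. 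By the Hilbert space structure (\cref{hilbert-space}, \cref{poincare}), after passing to a subsequence we have $u_k\weakto u$ in $\cX^s(\Omega)$, hence also $u_k\weakto u$ in $L^2(\R^N)$; moreover all the $u_k$, and hence $u$, vanish a.e.\ in $\R^N\setminus\Omega$, this being a closed condition in $L^2$. Replacing $u_k$ by $u_k-u$, it suffices to show that $u_k\weakto0$ in $L^2(\R^N)$ and $\|u_k\|_{\cX^s}\le K$ force $\|u_k\|_{L^2}\to0$.

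Next I would split in frequency. Fix $R>0$ with $\Omega\subset B_R$ and $\rho>0$. By Plancherel, $\|u_k\|_{L^2}^2=\int_{\{|\xi|\le\rho\}}|\widehat{u_k}|^2\,d\xi+\int_{\{|\xi|>\rho\}}|\widehat{u_k}|^2\,d\xi$. For the high-frequency tail, \cref{fourier rep}, $M_{s,\sigma_s}(e_s)\le1$ (by \eqref{usoduevoltefprse769043-0876}) and $\cE_{2s_1,s}(u_k,u_k)\le\|u_k\|_{\cX^s}^2\le K^2$ give
\[
m_0\,\rho^{2s}\int_{\{|\xi|>\rho\}}|\widehat{u_k}(\xi)|^2\,d\xi\le\int_{\R^N}M_{s,\sigma_s}\!\Big(\tfrac{\xi}{|\xi|}\Big)|\xi|^{2s}|\widehat{u_k}(\xi)|^2\,d\xi=M_{s,\sigma_s}(e_s)\,\cE_{2s_1,s}(u_k,u_k)\le K^2,
\]
so $\int_{\{|\xi|>\rho\}}|\widehat{u_k}|^2\le K^2/(m_0\rho^{2s})$ uniformly in $k$. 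For the low-frequency part, since $\supp u_k\subset B_R$ and $u_k\weakto0$ in $L^2(B_R)$, testing against $x\mapsto e^{-i\langle x,\xi\rangle}\chi_{B_R}(x)\in L^2(B_R)$ gives $\widehat{u_k}(\xi)\to0$ for each fixed $\xi$, while $|\widehat{u_k}(\xi)|\le(2\pi)^{-N/2}|B_R|^{1/2}\|u_k\|_{L^2}\le(2\pi)^{-N/2}|B_R|^{1/2}K$; hence dominated convergence on the bounded region $\{|\xi|\le\rho\}$ yields $\int_{\{|\xi|\le\rho\}}|\widehat{u_k}|^2\to0$ as $k\to\infty$. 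Combining, $\limsup_k\|u_k\|_{L^2}^2\le K^2/(m_0\rho^{2s})$ for every $\rho>0$; letting $\rho\to+\infty$ gives $\|u_k\|_{L^2}\to0$.

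I expect the high-frequency estimate to be the only genuine point, and the place where the strong ellipticity \eqref{ellipticity} is indispensable: along directions in which $M_{s,\sigma_s}$ degenerates, the tail $\int_{\{|\xi|>\rho\}}|\widehat{u_k}|^2$ is invisible to $\cE_{2s_1,s}$, and indeed without a uniform lower bound on $M_{s,\sigma_s}$ the embedding need not be compact (for instance with $\sigma_s=\delta_{e_1}$ and $N\ge2$ the functions $u_k(x)=\phi(x_1)\chi(x_2,\dots,x_N)\cos(k x_2)$, for suitable cut-offs $\phi,\chi$, form a bounded sequence in $\cX^s(\Omega)$ that converges weakly but not strongly in $L^2$). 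Every other ingredient — weak compactness in the Hilbert space $\cX^s(\Omega)$, the pointwise decay of $\widehat{u_k}$ from weak $L^2$-convergence combined with the uniformly compact support, and the dominated convergence at low frequencies — is the standard Fréchet--Kolmogorov machinery.
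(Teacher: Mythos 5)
Your argument takes a genuinely different route from the paper's, which works entirely on the spatial side and never uses the Fourier transform. The paper truncates the L\'evy measure away from the origin, setting $\nu_s^\delta(U):=\int_\delta^{+\infty}\int_{\bS^{N-1}}\chi_U(r\theta)\,r^{-1-2s}\,\sigma_s(d\theta)\,dr$, introduces the averaging operator $T_\delta u(x):=\int_{\R^N}\big(u(x-y)+u(x+y)\big)\,\nu_s^\delta(dy)$, asserts in Lemma~\ref{compact-step1} that $R_KT_\delta$ is compact for every compact $K$ (by approximating $\nu_s^\delta$ with mollified densities $w_{\delta,n}\,\mathcal{H}^N$ and appealing to closedness of compact operators in operator norm), proves the Jensen-type bound $\|u-T_\delta u/(2\nu_s^\delta(\R^N))\|_{L^2}\le\|u\|_{\cX^s}/(2\sqrt{\nu_s^\delta(\R^N)})$ in Lemma~\ref{compact-step2}, and lets $\delta\to 0$ so that $\nu_s^\delta(\R^N)=(2s\delta^{2s})^{-1}\to+\infty$ to obtain total boundedness of $R_K(M)$. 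Crucially, this route does \emph{not} invoke the strong ellipticity condition \eqref{ellipticity}, and the theorem is claimed in full generality, $\sigma_s=\delta_{e_1}$ included, whereas your high-frequency tail estimate uses $m_0:=\inf_{\bS^{N-1}}M_{s,\sigma_s}>0$ in an essential way.

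You are right, however, that some such positivity is unavoidable, and your counterexample is correct. With $\sigma_s=\delta_{e_1}$, $N\ge2$, and $u_k(x)=\phi(x_1)\chi(x')\cos(kx_2)$, the measure $\nu_s$ charges only the $e_1$-axis, so $\delta_nu_k(x,re_1)=\delta_n\phi(x_1,r)\,\chi(x')\cos(kx_2)$ and $\cE_{2n,s}(u_k,u_k)$ factorizes into a $k$-independent $x_1$-integral times $\int\chi^2\cos^2(kx_2)\,dx'$, hence stays bounded in $k$, while $\|u_k\|_{L^2}^2\to\tfrac12\|\phi\|_{L^2}^2\|\chi\|_{L^2}^2>0$ and $u_k\rightharpoonup0$; no subsequence converges in $L^2$. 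This exposes a gap in Lemma~\ref{compact-step1}: the last step infers compactness of $R_KT_\delta$ from the estimate $\|R_KT_\delta-R_KT_{\delta,n}\|_{\mathrm{op}}^2\le4|\mu_n|(\R^N)$ with $\mu_n=w_{\delta,n}\mathcal{H}^N-\nu_s^\delta$, on the grounds that ``$\mu_n$ converges weakly to $0$.'' But weak convergence of measures does not give convergence in total variation; when $\nu_s^\delta$ is singular with respect to Lebesgue measure (as for $\sigma_s=\delta_{e_1}$, where it lives on the $e_1$-axis), $w_{\delta,n}\mathcal{H}^N$ and $\nu_s^\delta$ are mutually singular and $|\mu_n|(\R^N)=2\nu_s^\delta(\R^N)$ for all $n$. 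Consistently, $T_\delta$ then averages only along $e_1$, carries the $\cos(kx_2)$-oscillations unchanged, and is not locally compact. So your Fourier-side proof is sound and self-contained under \eqref{ellipticity}; the paper's proof is meant to dispense with that hypothesis, but as written Lemma~\ref{compact-step1} does not cover singular spherical measures, and your example shows the conclusion of Theorem~\ref{compact} itself fails in that regime.
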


For the proof we proceed similarly to~\cite[Section~2]{JW20}. However, we need to modify the statements slightly, as we are dealing with measures.

In the following, a continuous operator~$T:E\to L^2(\R^N)$ on a normed vector space is called \textit{locally compact}, if~$R_KT:E\to L^2(\R^N)$ is compact for every compact set~$K\subset \R^N$, where~$R_K:L^2(\R^N)\to L^2(\R^N)$ defined as~$R_Ku=\chi_Ku$ is a kind of cut-off operator represented by the multiplication with the characteristic function~$\chi_K$.

A key step is to isolate the singularity of the measure at zero and use it to conclude the total boundedness in~$L^2(K)$ of a bounded subset~$M$ of~$\cX^s$ for some compact set~$K\subset \R^N$. For this, we define for~$\delta>0$ the following family of \textit{modified measures}
\begin{equation}\label{modified-compact}
\nu_s^{\delta}(U)=\int_{\delta}^{+\infty}\int_{\mathbb{S}^{N-1}}\chi_U(r\theta)r^{-1-2s}\sigma_s(d\theta)\,dr\quad\text{for~$U\in \cB(\R^N)$}
\end{equation}
and we show the following modifications of~\cite[Lemmata~2.1 and~2.2]{JW20}.

\begin{lemma}\label{compact-step1}
For~$\delta>0$ the operator~$
T_{\delta}:L^2(\R^N)\to L^2(\R^N)$ defined as
$$ T_{\delta}u(x):=\int_{\R^N} (u(x-y)+u(x+y))\nu_s^{\delta}(dy)
$$
is locally compact.
\end{lemma}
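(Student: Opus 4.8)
The plan is to realize $T_\delta$ as convolution with a finite measure, cut off its unbounded tail, and then obtain compactness of $R_KT_\delta$ from the Fréchet–Kolmogorov criterion, the only real input being that the Fourier transform of $\nu_s^\delta$ vanishes at infinity. Since $\sigma_s$ is a probability measure, $\nu_s^\delta$ is a finite Borel measure on $\R^N$ with $\nu_s^\delta(\R^N)=\int_\delta^{+\infty}r^{-1-2s}\,dr=(2s\delta^{2s})^{-1}$, and $T_\delta u=\mu_\delta\ast u$ with $\mu_\delta:=\nu_s^\delta+\widetilde{\nu_s^\delta}$, where $\widetilde{\nu_s^\delta}(A):=\nu_s^\delta(-A)$; thus $\mu_\delta$ is a finite symmetric measure and $T_\delta\in\cL(L^2(\R^N))$ by Young's inequality for the convolution of a finite measure with an $L^2$ function. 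For $R>\delta$ put $\mu_\delta^R:=\mu_\delta|_{B_R}$ and $T_\delta^Ru:=\mu_\delta^R\ast u$; then $\|T_\delta-T_\delta^R\|_{\cL(L^2(\R^N))}\le\mu_\delta(\R^N\setminus B_R)=(sR^{2s})^{-1}\to0$ as $R\to+\infty$. Since the set of operators $T\in\cL(L^2(\R^N))$ such that $R_KT$ is compact for every compact $K$ is a norm-closed subspace, it suffices to show that each $T_\delta^R$ is locally compact.

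Fix a compact $K\subset\R^N$ and a compact $K'\supset K$, and set $L:=K'+\overline{B_R}$. Since $\mu_\delta^R$ is supported in $\overline{B_R}$, for $\|u\|_{L^2(\R^N)}\le1$ the restriction $(T_\delta^Ru)|_{K'}$ depends only on $u|_L$, so it equals $(T_\delta^R(\chi_Lu))|_{K'}$ with $\|\chi_Lu\|_{L^2}\le1$. Hence it is enough to show that $\mathcal C:=\{T_\delta^Rv:\ v\in L^2(\R^N),\ \|v\|_{L^2}\le1,\ \supp v\subset L\}$ is precompact in $L^2(\R^N)$; indeed, then $\{T_\delta^Ru:\|u\|_{L^2}\le1\}$ is precompact in $L^2(K')$ for every compact $K'$, and composing with multiplication by $\chi_K$ (bounded from $L^2(K')$ into $L^2(\R^N)$ when $K'\supset K$) yields compactness of $R_KT_\delta^R$. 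Now every element of $\mathcal C$ is supported in the fixed compact set $L+\overline{B_R}$ and $\mathcal C$ is bounded in $L^2$; moreover, since convolution commutes with translations, Plancherel's theorem and the convolution theorem give
$$\sup_{\|v\|_{L^2}\le1}\big\|T_\delta^Rv(\cdot-h)-T_\delta^Rv\big\|_{L^2(\R^N)}\ \le\ \sup_{\xi\in\R^N}\big|e^{-ih\cdot\xi}-1\big|\,\big|\widehat{\mu_\delta^R}(\xi)\big|.$$
Splitting the last supremum into $\{|\xi|\le\Lambda\}$, where $|e^{-ih\cdot\xi}-1|\le|h|\Lambda$, and $\{|\xi|>\Lambda\}$, where $|\widehat{\mu_\delta^R}(\xi)|$ is small for $\Lambda$ large provided $\widehat{\mu_\delta^R}\in C_0(\R^N)$, we see that it tends to $0$ as $h\to0$. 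The Fréchet–Kolmogorov theorem then shows that $\mathcal C$ is precompact in $L^2(\R^N)$, as required.

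It remains to prove that $\widehat{\mu_\delta^R}\in C_0(\R^N)$. Writing $\mu_\delta^R$ in polar coordinates, one finds $\widehat{\mu_\delta^R}(\xi)=2\int_{\bS^{N-1}}H_R(\theta\cdot\xi)\,\sigma_s(d\theta)$, where $H_R(\tau):=\int_\delta^{R}r^{-1-2s}\cos(r\tau)\,dr$ is the Fourier transform of the function $r\mapsto r^{-1-2s}\chi_{[\delta,R]}(r)\in L^1(\R)$. By the Riemann–Lebesgue lemma $H_R(\tau)\to0$ as $|\tau|\to+\infty$, and $|H_R(\tau)|\le\int_\delta^{R}r^{-1-2s}\,dr\le(2s\delta^{2s})^{-1}$. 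Given $\xi_n$ with $|\xi_n|\to+\infty$, every subsequence admits a further subsequence along which $\xi_n/|\xi_n|\to e\in\bS^{N-1}$; then $|\theta\cdot\xi_n|\to+\infty$ for every $\theta$ with $\theta\cdot e\neq0$, hence $H_R(\theta\cdot\xi_n)\to0$ for $\sigma_s$-a.e. $\theta$ — here one uses the nondegeneracy property that $\sigma_s$ gives no mass to the equatorial subsphere $\{\theta\in\bS^{N-1}:\theta\cdot e=0\}$ — and dominated convergence gives $\widehat{\mu_\delta^R}(\xi_n)\to0$. Therefore $\widehat{\mu_\delta^R}\in C_0(\R^N)$, which completes the proof.

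The heart of the matter, and the point where the argument genuinely departs from the one in~\cite{JW20}, is the $C_0$-decay just established: there the ``kernel'' is an $L^1$ function, so this decay is immediate and the compact operators can be produced directly by $L^1$-approximation of the kernel, whereas here $\nu_s^\delta$ may be a genuinely singular measure (for instance supported on finitely many rays when $\sigma_s$ is a combination of Dirac masses), so it cannot be approximated in total variation by smooth kernels, and one must instead exploit the polar structure of $\nu_s$, Riemann–Lebesgue in the radial variable, and the nondegeneracy of $\sigma_s$ on equatorial subspheres (which is automatic when $\sigma_s$ is absolutely continuous with respect to $\cH^{N-1}$).
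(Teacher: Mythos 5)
Your approach is genuinely different from the paper's. The paper replaces $\nu_s^\delta$ by mollified densities $w_{\delta,n}=q_n\ast\nu_s^\delta\in L^1(\R^N)$, cites \cite[Lemma~2.1]{JW20} for local compactness of $L^1$-convolutions, and passes to the limit; you truncate the measure to $B_R$, use that locally compact operators form a norm-closed set, and for each truncation run a Fourier-side Fr\'echet--Kolmogorov argument. The two routes require different regularity inputs on the measure: the paper's passage to the limit needs $w_{\delta,n}\,dx\to\nu_s^\delta$ in total variation (not merely weakly, as written there), while yours needs $\widehat{\mu_\delta^R}\in C_0(\R^N)$.

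There is, however, a genuine gap. Your proof that $\widehat{\mu_\delta^R}\in C_0(\R^N)$ requires $\sigma_s\big(\{\theta\in\bS^{N-1}:\theta\cdot e=0\}\big)=0$ for every $e\in\bS^{N-1}$, a hypothesis you invoke mid-argument. This is not stated in the lemma, it does not follow from the paper's standing assumptions \eqref{assumption basis}, \eqref{simple ellipticity}, nor from the condition \eqref{Eass} under which the compactness theorem is later applied; and the paper explicitly includes the case $\sigma_s=\delta_{e_s}$ (see the discussion of \eqref{ellipticity} and Example~2). For $\sigma_s=\delta_{e_1}$ in $\R^N$ with $N\ge 2$ your nondegeneracy fails at $e=e_2$: one has $\widehat{\mu_\delta^R}(te_2)\equiv 2H_R(0)\neq0$, the symbol does not vanish at infinity, and the equicontinuity estimate collapses. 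This is not merely a defect of your method --- the lemma is actually false in that case: with $\sigma_s=\delta_{e_1}$ the operator $T_\delta$ is a one-dimensional convolution in $x_1$ that acts as the identity in the orthogonal variables, and for a fixed bump $\phi$ the bounded sequence $u_n(x)=\phi(x_1)\sin(nx_2)\chi_{(0,1)}(x_2)$ is such that $R_KT_\delta u_n=g(x_1)\sin(nx_2)\chi_{(0,1)}(x_2)\chi_K(x)$ has no convergent subsequence in $L^2$. Your closing paragraph compounds the problem: you offer ``$\sigma_s$ a combination of Dirac masses'' as the motivating singular example and then claim to ``exploit~\dots~the nondegeneracy of $\sigma_s$ on equatorial subspheres,'' but that nondegeneracy is precisely what fails for Dirac masses in $N\ge 2$. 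To repair this you must promote the no-equatorial-mass condition to an explicit hypothesis of the lemma and note that without it the statement is false in the generality advertised.
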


\begin{proof}
Since
\begin{align*}
\int_{\R^N}|T_{\delta}u(x)|^2\,dx&\leq \int_{\R^N}\nu_s^{\delta}(\R^N)\int_{\R^N}(u(x-y)+u(x+y))^2\nu_s^{\delta}(dy)\, dx\\
&\leq 2\nu_s^{\delta}(\R^N)\Bigg(\ \int_{\R^N}\int_{\R^N}u^2(x-y)\, dx\,\nu_s^{\delta}(dy)+\int_{\R^N}\int_{\R^N}u^2(x+y)\, dx\,\nu_s^{\delta}(dy)\Bigg)\\
&\leq 4\Big(\nu_s^{\delta}(\R^N)\Big)^2\|u\|_{L^2(\R^N)}^2
\end{align*}
it follows that~$T_{\delta}$ is a well-defined bounded, linear operator.

Next, let~$q_n\in C^{\infty}_c(\R^N,[0,+\infty))$ for~$n\in \N$ be such that 
$$\int_{\R^N}q_n(x)\,dx=1\qquad{\mbox{and}}\qquad
\lim_{n\to+\infty}\int_{\R^N\setminus B_{\eps}}q_n(x)\,dx=0.$$ Consider 
$$
w_{\delta,n}(z):=\int_{\R^N}q_n(y-z)\nu_s^{\delta}(dy)\quad \text{for~$n\in \N$.}
$$
Then~$w_{\delta,n}\in L^1(\R^N)$ and the operator~$L^2(\R^N)\to L^2(\R^N)$
given by~$u\mapsto w_{\delta,n}\ast u$ is a locally compact operator by~\cite[Lemma~2.1]{JW20} for every~$n\in \N$. Thus, also the operator
$$
T_{\delta,n}:L^2(\R^N)\to L^2(\R^N),\quad T_{\delta,n}u(x):=\int_{\R^N}\big(u(x-y)+u(x+y)\big)w_{\delta,n}(y)\,dy
$$
is locally compact for every~$n\in \N$.

Note that for any measurable~$U\subset \R^N$ and almost all~$y\in \R^N$
it holds 
$$\lim_{n\to+\infty}\int_U  q_n(y-z)\, dz= \chi_U(y).$$
Thus, the Dominated Convergence Theorem implies
$$ \lim_{n\to+\infty}
\int_U w_{\delta,n}(z)\,dz=\lim_{n\to+\infty}\int_{\R^N}\int_U  q_n(y-z)\, dz\,\nu_s^{\delta}(dy)= \int_{\R^N}\chi_U\nu_s^{\delta}(dy)=\nu_s^{\delta}(U).
$$
This entails that~$w_{\delta,n}\,{\mathcal{H}}^N$ converges weakly (in the sense of measures) to~$\nu_s^{\delta}$, where~${\mathcal{H}}^N$ denotes the Lebesgue measure. 

Finally, note that~$\mu_n=w_{\delta,n}\,{\mathcal{H}}^N-\nu_s^{\delta}$ defines a signed measure. Then, for~$K\subset \R^N$ compact subset,
\begin{align*}
\|\big(R_KT_{\delta}-R_KT_{\delta,n}\big)u\|_{L^2(\R^N)}^2&\leq \int_{K}\Bigg(\ \int_{\R^N}|u(x-y)+u(x+y)| |\mu_n|(dy)\Bigg)^2\,dx\\
&\leq 2|\mu_n|(\R^N)\int_{\R^N}\int_{\R^N}u^2(x-y)+u^2(x+y)|\mu_n|(dy)\\
&\leq 4\|u\|_{L^2(\R^N)}^2|\mu_n|(\R^N).
\end{align*}
Since~$\mu_n$ converges weakly to~$0$ and~$R_KT_{\delta,n}$ is compact for every~$n\in\N$, it follows that also~$R_KT_{\delta}$ is compact as claimed.
\end{proof}

\begin{lemma}\label{compact-step2}
Let~$\delta>0$ and~$T_{\delta}$ be as in \cref{compact-step1}.

Then, for all~$u\in \cX^s(\R^N)$,
$$
\left\|u-\frac{1}{2\nu_s^{\delta}(\R^N)}T_{\delta}u\right\|_{L^2(\R^N)}\leq \frac{1}{2\sqrt{\nu_s^{\delta}(\R^N)}}\,\|u\|_{\cX^s}.
$$
\end{lemma}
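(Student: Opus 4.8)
The idea is to recast the difference $w:=u-\tfrac{1}{2\nu_s^\delta(\R^N)}T_\delta u$ through the second-order difference operator $\delta_1$ from \eqref{defdelta} and to pass to the Fourier side. Since $\int_{\R^N}\big(u(\cdot-y)+u(\cdot+y)\big)\,\nu_s^\delta(dy)=2\nu_s^\delta(\R^N)\,u-\int_{\R^N}\delta_1u(\cdot,y)\,\nu_s^\delta(dy)$, we have $w=\tfrac{1}{2\nu_s^\delta(\R^N)}\int_{\R^N}\delta_1u(\cdot,y)\,\nu_s^\delta(dy)$; by \cref{density in rn} it suffices to treat $u\in C_c^\infty(\R^N)$, where all the ensuing integrals converge absolutely. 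Using $\widehat{\delta_1u}(\xi,y)=2\big(1-\cos(y\cdot\xi)\big)\widehat u(\xi)$ together with Plancherel's identity, one obtains
\begin{equation*}
\|w\|_{L^2(\R^N)}^2=\frac{1}{\big(\nu_s^\delta(\R^N)\big)^2}\int_{\R^N}\big(h_\delta(\xi)\big)^2|\widehat u(\xi)|^2\,d\xi,\qquad h_\delta(\xi):=\int_{\R^N}\big(1-\cos(y\cdot\xi)\big)\,\nu_s^\delta(dy).
\end{equation*}

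Next I would compute $h_\delta$ explicitly. Writing $\omega:=\xi/|\xi|$, using polar coordinates in \eqref{modified-compact} and, for each $\theta$, the substitution $\tau=r|\xi|\,|\theta\cdot\omega|$, one finds
\begin{equation*}
h_\delta(\xi)=|\xi|^{2s}\int_{\bS^{N-1}}|\theta\cdot\omega|^{2s}\,\Theta\big(\delta|\xi|\,|\theta\cdot\omega|\big)\,\sigma_s(d\theta),\qquad \Theta(a):=\int_a^{+\infty}\tau^{-1-2s}\big(1-\cos\tau\big)\,d\tau .
\end{equation*}
Since $\sigma_s$ is a probability measure, splitting $|\theta\cdot\omega|^{2s}=|\theta\cdot\omega|^{s}\cdot|\theta\cdot\omega|^{s}$ and applying the Cauchy--Schwarz inequality yields $\big(h_\delta(\xi)\big)^2\le|\xi|^{4s}\,M_{s,\sigma_s}(\omega)\int_{\bS^{N-1}}|\theta\cdot\omega|^{2s}\,\Theta\big(\delta|\xi|\,|\theta\cdot\omega|\big)^2\,\sigma_s(d\theta)$, with $M_{s,\sigma_s}$ as in \eqref{def ms}.

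The crux is then the scalar estimate $a^{2s}\,\Theta(a)^2\le\tfrac{1}{8s}$ for all $a>0$, which I would establish by splitting the integral defining $\Theta(a)$ at $\tau\sim1$ and using $0\le 1-\cos\tau\le\min\{2,\tau^2/2\}$, keeping careful track of the constant. Granting it, the substitution $a=\delta|\xi|\,|\theta\cdot\omega|$ gives $|\theta\cdot\omega|^{2s}\Theta\big(\delta|\xi|\,|\theta\cdot\omega|\big)^2=(\delta|\xi|)^{-2s}a^{2s}\Theta(a)^2\le\big(8s(\delta|\xi|)^{2s}\big)^{-1}$; since $\nu_s^\delta(\R^N)=\int_\delta^{+\infty}r^{-1-2s}\,dr=\tfrac{1}{2s\delta^{2s}}$, i.e. $\tfrac{1}{8s\delta^{2s}}=\tfrac14\nu_s^\delta(\R^N)$, this leads to $\big(h_\delta(\xi)\big)^2\le\tfrac14\nu_s^\delta(\R^N)\,|\xi|^{2s}M_{s,\sigma_s}(\omega)$. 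Inserting this into the Plancherel identity above and invoking \cref{fourier rep} in the form $\int_{\R^N}|\xi|^{2s}M_{s,\sigma_s}(\xi/|\xi|)|\widehat u(\xi)|^2\,d\xi=M_{s,\sigma_s}(e_s)\,\cE_{2n,s}(u,u)$, together with $M_{s,\sigma_s}(e_s)\le1$ (see \eqref{usoduevoltefprse769043-0876}) and $\cE_{2n,s}(u,u)\le\|u\|_{\cX^s}^2$, gives $\|w\|_{L^2(\R^N)}^2\le\tfrac{1}{4\nu_s^\delta(\R^N)}\|u\|_{\cX^s}^2$, which is the claim after taking square roots.

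The main obstacle is the scalar inequality in the previous paragraph: it is elementary but delicate, and its sharp form is precisely what produces the constant $\tfrac12$ in the statement, so the bulk of the work lies in performing the splitting of $\Theta(a)$ and the ensuing estimates carefully (the crude splitting gives only a constant depending on $s$, which already suffices for the compactness application in \cref{compact}). Everything else is routine once the Fourier representation of \cref{fourier rep} is in hand.
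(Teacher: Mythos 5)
Your Fourier-side framework (the rewriting $w=\tfrac{1}{2\nu_s^\delta(\R^N)}\int_{\R^N}\delta_1u(\cdot,y)\,\nu_s^\delta(dy)$, the polar computation of $h_\delta$, and the Cauchy--Schwarz reduction) is set up correctly, but the scalar estimate you reduce the proof to is false, and not merely by a suboptimal constant. At $a=1$,
$$\Theta(1)=\int_1^{+\infty}\tau^{-1-2s}\,d\tau-\int_1^{+\infty}\tau^{-1-2s}\cos\tau\,d\tau=\frac{1}{2s}+O(1)\qquad\text{as }s\to0^+,$$
so $1^{2s}\,\Theta(1)^2\sim\tfrac{1}{4s^2}$, which violates $a^{2s}\Theta(a)^2\le\tfrac{1}{8s}$ for every small $s$. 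In fact $\sup_{a>0}a^{2s}\Theta(a)^2$ is of order $s^{-2}$, so feeding the true supremum into your chain produces a prefactor of order $\sqrt{2s\cdot s^{-2}}=\sqrt{2/s}$ in front of $\|u\|_{\cX^s}/\sqrt{\nu_s^\delta(\R^N)}$ rather than $\tfrac12$. No careful splitting of $\Theta(a)$ can recover $\tfrac12$ this way; this is precisely the step you flagged as the crux, and it fails.

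For comparison, the paper's proof stays in physical space: after your first rewriting, it applies Cauchy--Schwarz directly in the finite measure $\nu_s^\delta(dy)$ to get
$\|w\|_{L^2(\R^N)}^2\le\tfrac{1}{4\nu_s^\delta(\R^N)}\iint_{\R^{2N}}(\delta_1u)^2\,\nu_s^\delta(dy)\,dx$,
and then uses $\nu_s^\delta\le\nu_s$ together with $\iint_{\R^{2N}}(\delta_1u)^2\,\nu_s(dy)\,dx=\tfrac{2}{c_{2,s}}\,\cE_{2,s}(u,u)$. That route is much shorter, and it too in fact carries a hidden factor $\tfrac{2}{c_{2,s}}$ of order $s^{-1}$ as $s\to0^+$ (Lemma~\ref{lem:constant estimate}), consistent with what your corrected computation gives; the clean $\tfrac12$ in the lemma's statement is a harmless slip. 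Since Theorem~\ref{compact} uses only the $\nu_s^\delta(\R^N)^{-1/2}$ decay and absorbs any $s$-dependent constant by shrinking $\delta$, your Fourier approach with the crude $s$-dependent bound on $\Theta$ --- which you correctly note is easy to obtain --- would suffice for the application, but it cannot reproduce the stated constant and is substantially heavier than the paper's one-line Cauchy--Schwarz argument.
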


\begin{proof}
Let~$u\in \cX^s$. Then by H\"older's inequality
\begin{align*}
\left\|u-\frac{1}{2\nu_s^{\delta}(\R^N)}T_{\delta}u\right\|_{L^2(\R^N)}^2&=\int_{\R^N}\Bigg|u(x)-\frac{1}{2\nu_s^{\delta}(\R^N)}\int_{\R^N}(u(x+y)+u(x-y))\,\nu_s^{\delta}(dy)\Bigg|^2\,dx\\
&=\frac{1}{4(\nu_s^{\delta}(\R^N))^2} \int_{\R^N}\Bigg|\int_{\R^N}(2u(x)-u(x+y)-u(x-y))\,\nu_s^{\delta}(dy)\Bigg|^2\,dx\\
&\leq \frac{1}{4 \nu_s^{\delta}(\R^N)} \int_{\R^N} \int_{\R^N}(2u(x)-u(x+y)-u(x-y))^2\,\nu_s^{\delta}(dy) \,dx\\
&\leq \frac{1}{4\nu_s^{\delta}(\R^N)}\|u\|_{\cX^s}
\end{align*}
by the definition of~$\nu_s^{\delta}$. The claim follows.
\end{proof}

\begin{proof}[Proof of \cref{compact}]
The claim follows once we show that the embedding~$\cX^s\hookrightarrow L^2(\R^N)$ is locally compact. 

Moreover, if~$s\ge \frac12$, we may consider the measure 
$$
\nu_{t}(U)=\int_0^{+\infty}\int_{\mathbb{S}^{N-1}}\chi_U(r\theta)r^{-1-2t}\sigma_s(d\theta)\,dr\quad\text{for~$U\in \cB(\R^N)$}
$$
for some~$t\in(0,\frac12)$. By \cref{prop:spaces ordered} it holds that~$\cX^s\subset \cX^t$ (with~$\sigma_t=\sigma_s$) and a bounded subset~$M\subset \cX^s$ is also bounded in~$\cX^t$. In particular, the local compactness of~$\cX^s\hookrightarrow L^2(\R^N)$ follows once we show that~$\cX^t\hookrightarrow L^2(\R^N)$ is locally compact. Hence, without loss of generality, we assume in the following that~$s\in(0,\frac12)$.

We follow the estimates presented in~\cite[Section~2]{JW20}. Let~$M\subset \cX^s(\R^N)$ be a bounded set and let~$K\subset \R^N$ be compact. Let~$C:=\sup_{u\in M}\|u\|_{\cX^s}$ and let~$\epsilon>0$. Since~$\nu_s(\R^N)=+\infty$, we can choose~$\delta>0$ small enough such that
$$
\nu_s^{\delta}(\R^N)\geq \frac{C^2}{4\epsilon^2}.
$$
Note that, by \cref{compact-step1}, 
$$\widetilde{M}:=\left[\frac1{2\nu_s^{\delta}(\R^N)} R_KT_{\delta}\right](M)$$ is relatively compact in~$L^2(\R^N)$. Moreover, for~$u\in M$, \cref{compact-step2} entails that
$$
\left\|R_Ku-\left[\frac1{2\nu_s^{\delta}(\R^N)} R_KT_{\delta}\right]u\right\|_{L^2(\R^N)}\leq \left\|u-\left[\frac1{2\nu_s^{\delta}(\R^N)} T_{\delta}\right]u\right\|_{L^2(\R^N)}\leq \frac{\|u\|_{\cX^s}}{2\sqrt{\nu_s^{\delta}(\R^N)}}\leq   \epsilon.
$$
Therefore, $R_K(M)$ is contained in the~$\epsilon$-neighborhood of~$\widetilde{M}$. Since~$\epsilon>0$ is chosen arbitrary, it follows that~$R_K(M)$ is totally bounded and thus relatively compact in~$L^2(K)$ as claimed.
\end{proof}

\begin{remark}
As in~\cite{JW20}, it is possible to extend \cref{compact} to possibly unbounded open sets~$\Omega\subset \R^N$ provided that~$|\Omega|<+\infty$. The proof is analogous to the one presented in~\cite[Section~3]{JW20}.
\end{remark}

\section{Some remarks and examples of superposition operators}\label{examples}

\subsection{Definitions and observations on the energy space}\label{defeber859604}
We point out that, though the integration in~\eqref{defi:superposition op} seems to be also in~$s_1$, due to Lemma~\ref{lem:independence of m} we may write the operator as an infinite sum
$$
Lu=\sum_{k=1}^{+\infty}\,\int_{[k-1,k)} L_{k,s}u\,\mu(ds).
$$

\begin{defi}\label{def:chiomega}
For~$\Omega\subset \R^N$ open, let~$\cX(\Omega)$ be
the closure of the space
\begin{equation}\label{mnbvc0987rkgjujrfy}
C^\infty_c(\Omega)\cap\left\{u:\R^N\to\R\,{\mbox{ s.t. }}\, \int_{[0,+\infty)}\cE_{2s_1,s}(u,u)\,\mu^+(ds)<+\infty\right\}
\end{equation}
with respect to the norm
\begin{eqnarray*}
 \|u\|_{\cX(\Omega)}&:=&\left(\|u\|_{L^2(\R^N)}^2+
\int_{[0,+\infty)}\cE_{2s_1,s}(u,u)\,\mu^+(ds)\right)^{\frac12}\\
&=&\left(\|u\|_{L^2(\R^N)}^2+
\sum_{k=1}^{+\infty}\,\int_{[k-1,k)}\cE_{2k,s}(u,u)\,\mu^+(ds)\right)^{\frac12}
.\end{eqnarray*}
\end{defi}

The presence of the intersection in formula~\eqref{mnbvc0987rkgjujrfy} of Definition~\ref{def:chiomega} is motivated by the fact that there exist significant examples of measures~$\mu^+$ for which some, but not all, smooth functions with compact support possess finite norm. 
We present one of these examples in the following result, where
we consider the case~$\nu_s(dz)=\nu_n(dz):=|z|^{-N-2n}\, dz$
(we omit here the normalization factor~$\cH^{N-1}(\bS^{N-1})$ since it does not play any role).

\begin{thm}\label{special construction}
Let~$\varphi\in C^\infty_c\left(\left(-\frac12,\frac12\right)\right)$ and not vanishing identically. Let~$\psi(x):=\varphi(2x)$ for all~$x\in\R$.
Let
$$ c_k:=\| D^k\varphi\|_{L^2(\R)}\qquad{\mbox{and}}\qquad
\mu^+:=\sum_{k=1}^{+\infty} \frac{\delta_k}{c_k\,k^2}.$$

Then,
\begin{eqnarray*}&&
\| \varphi\|_{L^2(\R)}+\sum_{k=1}^{+\infty} \frac{\| D^k \varphi\|_{L^2(\R)}}{c_k\,k^2}<+\infty \\{\mbox{and }} &&
\| \psi\|_{L^2(\R)}+\sum_{k=1}^{+\infty} \frac{\| D^k \psi\|_{L^2(\R)}}{c_k\,k^2}=+\infty
.\end{eqnarray*}
\end{thm}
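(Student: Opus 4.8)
The plan is to read off what $\cE_{2k,s}(u,u)$ becomes when $s=k$ is a positive integer and the measure is $\nu_k(dz)=|z|^{-N-2k}\,dz$. By Proposition~\ref{prop:bilinear} (with $n=2k$, say) together with the identity $L_{m,k}u=(-\Delta)^k u$ for integer exponents recalled in the excerpt, and the representation $\cE_k(u,u)=\int_{\R}|\xi|^{2k}|\widehat u(\xi)|^2\,d\xi$ of the classical fractional energy, one gets that for $u\in C^\infty_c(\R)$,
\begin{equation*}
\cE_{2k,k}(u,u)=\cE_k(u,u)=\int_{\R}|\xi|^{2k}|\widehat u(\xi)|^2\,d\xi=\|D^k u\|_{L^2(\R)}^2
\end{equation*}
by Plancherel. (Here $N=1$, and $\delta_k$ is the Dirac mass at the integer $k$, so the sum defining $\mu^+$ picks out exactly these integer energies.) Consequently the $\cX(\Omega)$-type quantity attached to $\mu^+=\sum_{k\ge1}\delta_k/(c_k k^2)$ applied to a function $w$ is
\begin{equation*}
\|w\|_{L^2(\R)}^2+\sum_{k=1}^{+\infty}\frac{\cE_{2k,k}(w,w)}{c_k k^2}
=\|w\|_{L^2(\R)}^2+\sum_{k=1}^{+\infty}\frac{\|D^k w\|_{L^2(\R)}^2}{c_k k^2},
\end{equation*}
so the two displayed claims are purely statements about the Sobolev-type quantities $\sum_k \|D^k w\|_{L^2}^2/(c_k k^2)$ for $w=\varphi$ and $w=\psi$; the $L^2$ terms are harmless since $\varphi,\psi\in C^\infty_c$.

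For the convergent statement ($w=\varphi$), the point is simply that $c_k=\|D^k\varphi\|_{L^2(\R)}$ is the normalizing constant itself, so each term of the series equals $\|D^k\varphi\|_{L^2(\R)}/(c_k k^2)=1/k^2$ (with the convention that a term is $0$ if $c_k=0$), whence $\sum_k \|D^k\varphi\|_{L^2}/(c_k k^2)\le \sum_k k^{-2}=\pi^2/6<+\infty$. This is immediate; I would spell out only that the statement as written has $\|D^k\varphi\|$ to the first power in the numerator (matching $\cE^{1/2}$ in norm-vs-norm-squared bookkeeping — note the Theorem is phrased with the square-root already absorbed), so each summand is exactly $1/k^2$.

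The substantive part is the divergent statement for $w=\psi$, where $\psi(x)=\varphi(2x)$. Scaling gives $D^k\psi(x)=2^k(D^k\varphi)(2x)$, hence $\|D^k\psi\|_{L^2(\R)}^2=2^{2k}\cdot\frac12\|D^k\varphi\|_{L^2(\R)}^2$, i.e. $\|D^k\psi\|_{L^2(\R)}=2^{k-1/2}c_k$. Therefore the $k$-th term of the series for $\psi$ equals
\begin{equation*}
\frac{\|D^k\psi\|_{L^2(\R)}}{c_k k^2}=\frac{2^{k-1/2}c_k}{c_k k^2}=\frac{2^{k-1/2}}{k^2}\xrightarrow[k\to\infty]{}+\infty,
\end{equation*}
so the series diverges (its terms do not even tend to $0$). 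The one genuine subtlety is that this manipulation is only legitimate for indices $k$ with $c_k=D^k\varphi\not\equiv 0$; since $\varphi\in C^\infty_c$ is not identically zero, $\widehat\varphi$ is a nonzero entire function, so $\int|\xi|^{2k}|\widehat\varphi|^2>0$ and $c_k>0$ for every $k\ge1$. Thus no term is vacuous and the divergence is real. I would therefore structure the proof as: (i) identify $\cE_{2k,k}(w,w)=\|D^k w\|_{L^2}^2$ via Proposition~\ref{prop:bilinear} and Plancherel; (ii) dispatch $\varphi$ by the telescoping $\|D^k\varphi\|/c_k=1$; (iii) dispatch $\psi$ by the scaling identity $\|D^k\psi\|=2^{k-1/2}c_k$ plus the remark $c_k>0$. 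The main (mild) obstacle is just making sure the normalization constants $c_{2k,k}$ in the definition of $\cE_{2k,k}$ are exactly those that reproduce $(-\Delta)^k$, which is precisely the content recalled after \eqref{constcms00} in the excerpt, so nothing new is needed.
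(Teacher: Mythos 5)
Your proof of the two displayed conclusions follows essentially the same route as the paper's: reduce the $\cX$-norm to $\|w\|_{L^2}+\sum_k\|D^kw\|_{L^2}/(c_kk^2)$ via the integer-order identification $\cE_{2k,k}(w,w)=\|D^kw\|_{L^2}^2$, observe the telescoping $\|D^k\varphi\|_{L^2}/c_k=1$ for $\varphi$, and use the scaling identity $\|D^k\psi\|_{L^2}=2^{k-1/2}c_k$ to force divergence for $\psi$. These computations are correct, and your remark that $c_k>0$ (via the nonvanishing entire function $\widehat\varphi$) is needed to make the manipulations legitimate is a sound point, handled slightly differently from the paper.

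There is, however, a real omission. For the theorem to serve the purpose announced in the surrounding text --- exhibiting a \emph{finite} measure $\mu^+$ for which some but not all test functions have finite norm --- one must verify that $\mu^+([0,+\infty))=\sum_{k\ge1}(c_kk^2)^{-1}<+\infty$, and the framework of Section~1.3 explicitly assumes $\mu^+$ finite. Your observation $c_k>0$ does not deliver this: a priori $c_k$ could tend to $0$ fast enough that $\sum_k (c_kk^2)^{-1}$ diverges, in which case $\mu^+$ would not be an admissible measure and the example would be vacuous. The paper closes this gap with the inductive Poincar\'e-type estimate $\|D^k\varphi\|_{L^2(\R)}\ge\|\varphi\|_{L^2(\R)}$ for all $k\in\N$ (proved by integrating from the endpoint $x=\tfrac12$ where all derivatives vanish), which gives the uniform lower bound $c_k\ge\|\varphi\|_{L^2(\R)}>0$ and hence $\mu^+([0,+\infty))\le\|\varphi\|_{L^2(\R)}^{-1}\sum_k k^{-2}<+\infty$. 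You should add this step, or an equivalent lower bound on $c_k$; strict positivity alone is not enough.
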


\begin{proof}
We point out that, for all~$k\in\N$,
\begin{equation}\label{02ei:01-2r3fpeEC}
\| D^k\varphi\|_{L^2(\R)}\ge\| \varphi\|_{L^2(\R)}.
\end{equation}
We check this by induction. The claim is true for~$k=0$.
If~$k=1$ we have that
\begin{eqnarray*}&&
\|\varphi\|_{L^2(\R)}^2=\int_{-1/2}^{1/2}\left|\varphi\left(\frac12\right)-\varphi(x)\right|^2\,dx\le
\int_{-1/2}^{1/2}\left(\int_x^{1/2}|\nabla\varphi(y)|\,dy\right)^2\,dx
\\&&\qquad
\le \int_{-1/2}^{1/2}
\left(\,\int_{-1/2}^{1/2}|\nabla\varphi(y)|^2\,dy
\right)\,dx=\|\nabla\varphi\|^2_{L^2(\R^N)}.
\end{eqnarray*}

We now suppose the statement true for an index~$k\ge1$ and prove it for~$k+1$.
In order to achieve this goal, we point out that there exists~$x_k\in\left(-\frac12,\frac12\right)$ such that~$|D^k\varphi(x_k)|\ge \| \varphi\|_{L^2(\R)}$,
otherwise
$$ \| D^k \varphi\|_{L^2(\R)}^2<\int_{-1/2}^{1/2}\| \varphi\|_{L^2(\R)}^2\,dx
=\| \varphi\|_{L^2(\R)}^2,$$
against our recursive assumption.

On that account,
\begin{eqnarray*}
&&\| \varphi\|_{L^2(\R)}\le|D^k\varphi(x_k)|=
\left|D^k \varphi\left(\frac12\right)-D^k\varphi(x_k)\right|\le
\int_{x_k}^{1/2} |D^{k+1}\varphi(x)|\,dx\\&&\qquad
\le \int_{-1/2}^{1/2} |D^{k+1}\varphi(x)|\,dx
\le \sqrt{
\int_{-1/2}^{1/2} |D^{k+1}\varphi(x)|^2\,dx
}=\| D^{k+1}\varphi\|_{L^2(\R)}.
\end{eqnarray*}
The proof of~\eqref{02ei:01-2r3fpeEC} is thereby complete.

Now we stress that the measure~$\mu^+$ is finite, because, in light of~\eqref{02ei:01-2r3fpeEC},
$$\mu^+\left([0,+\infty)\right)=\sum_{k=1}^{+\infty}\frac1{c_k\,k^2}\le \frac1{\|\varphi\|_{L^2(\R)}}
\sum_{k=1}^{+\infty}\frac1{k^2}<+\infty.$$

Now, recalling Propositions~\ref{limit s to integers}
and~\ref{prop:bilinear},
we see that the norm in Definition~\ref{def:chiomega} boils down to
\begin{equation}\label{Sqw120oVJBAF}\| u\|_{L^2(\R)}+\sum_{k=1}^{+\infty} \frac{\| D^k u\|_{L^2(\R)}}{c_k\,k^2},\end{equation}
which, for~$\varphi$, returns
\begin{equation*}\| \varphi\|_{L^2(\R)}+\sum_{k=1}^{+\infty} \frac{\| D^k \varphi\|_{L^2(\R)}}{\| D^k \varphi\|_{L^2(\R)}
\,k^2}
=\| \varphi\|_{L^2(\R)}+\sum_{k=1}^{+\infty} \frac{1}{
k^2}<+\infty.
\end{equation*}

Yet, while the norm of~$\varphi$ is finite, this is not the case for all smooth, compactly supported functions. Indeed, taking~$\psi(x)=\varphi(2x)$, we have that~$\psi\in C^\infty_c(B_{1/4})$ and
$$ \| D^k\psi\|_{L^2(\R)}^2=2^{2k}\int_{\R} |D^k\varphi(2x)|^2\,dx=
2^{2k-1}\int_{\R} |D^k\varphi(y)|^2\,dy=2^{2k-1} c_k^2.$$
That being the case, we infer from~\eqref{Sqw120oVJBAF} that the norm of~$\psi$ is
\begin{equation*}
\frac1{\sqrt2}\left(
\| \varphi\|_{L^2(\R)}+\sum_{k=1}^{+\infty} \frac{2^k\| D^k \varphi\|_{L^2(\R)}}{c_k\,k^2}\right)
=\frac1{\sqrt2}\left(
\| \varphi\|_{L^2(\R)}+\sum_{k=1}^{+\infty} \frac{2^k}{
k^2}\right)=
+\infty.\qedhere \end{equation*}
\end{proof}

We also stress that condition~\eqref{nonempty xomega} cannot be removed, since there exist finite measures~$\mu^+$ on~$[0,+\infty)$ for which the only smooth function with compact support contained in~$\cX(\Omega)$ is the one that vanishes identically:

\begin{thm}\label{lem:strano09876} Let
$$\mu^+:=\sum_{k=1}^{+\infty}\frac{\delta_k}{k^2}.$$
Then, if~$\phi\in C^\infty_c(\R)$ and 
\begin{equation}\label{vbhut7438t4y7gbuewk}
\sum_{k=1}^{+\infty}\frac{\|D^k\varphi\|_{L^2(\R)}^2}{k^2}<+\infty,
\end{equation}
we have that~$\phi$ necessarily vanishes identically. 

In particular, for any open set~$\Omega\subset \R$ we have~$\cX(\Omega)=\{0\}$ for this choice of~$\mu$.
\end{thm}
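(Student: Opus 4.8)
The plan is to argue by contradiction: I would assume that $\phi\in C^\infty_c(\R)$ does not vanish identically and deduce that the series in~\eqref{vbhut7438t4y7gbuewk} diverges. The guiding principle is that, for a nonzero smooth compactly supported $\phi$, the $L^2$-norms of the derivatives $D^k\phi$ grow \emph{exponentially} in $k$, which is incompatible with the summability of $k^{-2}\|D^k\phi\|_{L^2(\R)}^2$. The first step is to pass to the Fourier side: by Plancherel's Theorem there is a constant $c_0>0$, independent of $k$, such that
$$
\|D^k\phi\|_{L^2(\R)}^2=c_0\int_\R|\xi|^{2k}\,|\widehat\phi(\xi)|^2\,d\xi\qquad\text{for every }k\in\N.
$$

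The second step exploits the rigidity of compactly supported functions under the Fourier transform. Since $\phi\in C^\infty_c(\R)\setminus\{0\}$, the Paley--Wiener Theorem tells us that $\widehat\phi$ extends to a nontrivial entire function, hence its zero set on $\R$ is discrete and $\widehat\phi\neq 0$ almost everywhere; in particular $\int_{\{2\le|\xi|\le 3\}}|\widehat\phi(\xi)|^2\,d\xi>0$. Choosing a suitable super-level set of $|\widehat\phi|$ inside this annulus, we obtain $c>0$ and a measurable set $E\subseteq\{\xi\in\R:\ 2\le|\xi|\le 3\}$ with $|E|>0$ and $|\widehat\phi(\xi)|\ge c$ for a.e.\ $\xi\in E$. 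It then follows that, for every $k\in\N$,
$$
\|D^k\phi\|_{L^2(\R)}^2\ \ge\ c_0\int_E|\xi|^{2k}\,|\widehat\phi(\xi)|^2\,d\xi\ \ge\ c_0\,c^2\,|E|\,4^{k},
$$
whence
$$
\sum_{k=1}^{+\infty}\frac{\|D^k\phi\|_{L^2(\R)}^2}{k^2}\ \ge\ c_0\,c^2\,|E|\sum_{k=1}^{+\infty}\frac{4^{k}}{k^2}\ =\ +\infty,
$$
contradicting~\eqref{vbhut7438t4y7gbuewk}. Hence $\phi\equiv 0$.

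For the final assertion $\cX(\Omega)=\{0\}$, I would identify the norm of Definition~\ref{def:chiomega} for the present choice of $\mu^+$ and for $\nu_s(dz)=|z|^{-N-2s}\,dz$ in dimension $N=1$. Since $s=k$ is a positive integer, the operator $L_{s_1,s}$ coincides with $(-\Delta)^k$ (see~\cite{SKM93,AJS18a}), so Proposition~\ref{prop:bilinear} and an integration by parts give $\cE_{2(k+1),k}(u,u)=\|D^ku\|_{L^2(\R)}^2$ for every $u\in C^\infty_c(\R)$. Therefore, for $u\in C^\infty_c(\Omega)$,
$$
\int_{[0,+\infty)}\cE_{2s_1,s}(u,u)\,\mu^+(ds)=\sum_{k=1}^{+\infty}\frac{\|D^ku\|_{L^2(\R)}^2}{k^2},
$$
which, by the first part of the proof, is finite only if $u\equiv 0$. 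Consequently the set displayed in~\eqref{mnbvc0987rkgjujrfy} reduces to $\{0\}$, and so does its closure $\cX(\Omega)$.

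The only genuinely delicate point is recognizing the strength of the hypothesis: at face value $\sum_k k^{-2}\|D^k\phi\|_{L^2(\R)}^2<+\infty$ only yields $\|D^k\phi\|_{L^2(\R)}=o(k)$, which is far too weak to force $\phi\equiv 0$ on its own; the decisive observation is that $\|D^k\phi\|_{L^2(\R)}$ in fact grows at least geometrically as soon as $\widehat\phi$ is not supported in the closed unit ball, and a nonzero smooth function with compact support can never be band-limited. Everything else (Plancherel, the super-level-set argument for $\widehat\phi$, and the identification of $\cE_{2(k+1),k}$ with $\|D^k\cdot\|_{L^2(\R)}^2$ through the earlier results) is routine.
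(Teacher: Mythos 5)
Your proof is correct, but the route is genuinely different from the paper's. The paper argues via real analyticity: if $\phi\not\equiv0$, there must be a sequence $k_j\to+\infty$ and points $x_{k_j}$ with $|D^{k_j}\phi(x_{k_j})|\ge k_j!$ (otherwise $\phi$ would be real analytic, which is impossible for a nonzero compactly supported function); combining this with the vanishing of $D^{k_j}\phi$ at the edge of the support, the Fundamental Theorem of Calculus and Cauchy--Schwarz give $\|D^{k_j+1}\phi\|_{L^2(\R)}\gtrsim k_j!$, and the weighted series diverges along this subsequence. Your proof instead moves to the Fourier side: Plancherel gives $\|D^k\phi\|_{L^2}^2\simeq\int|\xi|^{2k}|\widehat\phi|^2$, and Paley--Wiener guarantees that $\widehat\phi$ is a nontrivial entire function, hence $|\widehat\phi|\ge c>0$ on a set $E$ of positive measure inside $\{2\le|\xi|\le3\}$; this yields the uniform lower bound $\|D^k\phi\|_{L^2}^2\gtrsim 4^k$, which already kills the sum. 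Each approach has a small advantage: yours is more streamlined and establishes geometric growth for \emph{every} $k$, while the paper's is more elementary and self-contained, invoking only the Fundamental Theorem of Calculus and the characterization of real analytic functions rather than the Paley--Wiener theorem. The closing identification $\cE_{2(k+1),k}(u,u)=\|D^ku\|_{L^2(\R)}^2$ via Propositions~\ref{limit s to integers} and~\ref{prop:bilinear} is the same in both proofs; you make explicit (and correctly so) that this rests on the standard choice $\nu_s(dz)=|z|^{-1-2s}\,dz$ in dimension $N=1$, which the paper leaves implicit from the surrounding discussion.
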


\begin{proof}
{F}rom Propositions~\ref{limit s to integers} and~\ref{prop:bilinear}
we see that, for~$k\in \N_0$,
\begin{eqnarray*}
\cE_{2(k+1),k}(\phi,\phi)=\int_{\R} L_{k+1,k}\phi(x)\phi(x)\, dx=
\int_{\R} (-\Delta)^{k}\phi(x)\phi(x)\, dx=
\int_{\R}|D^k\phi(x)|^2\, dx.
\end{eqnarray*}
This entails that
$$
\|\phi\|_{\cX(\Omega)}^2=\|\phi\|_{L^2(\R)}^2+\sum_{k=1}^{+\infty}\frac{\|D^k\phi\|_{L^2(\R)}^2}{k^2}.
$$

Now we prove the first statement of Theorem~\ref{lem:strano09876}.
By contradiction,
we suppose $\phi\not\equiv 0$. Then there exists a sequence~$k_j\to+\infty$
and points~$x_{k_j}\in\R$ such that
$$ |D^{k_j}\phi(x_{k_j})|\ge k_j!,$$
otherwise~$\phi$ would be real analytic (in contradiction with the fact that its support is bounded),
see e.g.~\cite[Corollary~1.2.9]{MR1916029}.

Let also~$R>0$ be such that the support of~$\phi$ is contained in~$(-R,R)$. In this way, it follows that~$x_k\in(-R,R)$ and~$D^k\phi(R)=0$ for $k\in \N_0$.
On this account, we have that
\begin{eqnarray*}
&&k_j!\le|D^{k_j}\phi(R)-D^{k_j}\phi(x_{k_j})|\le
\int_{x_{k_j}}^R |D^{k_j+1}\phi(y)|\,dy\le\int_{-R}^R |D^{k_j+1}\phi(y)|\,dy
\\&&\qquad \le\sqrt{2R
\int_{-R}^R |D^{k_j+1}\phi(y)|^2\,dy}=
\sqrt{2R}\,\|D^{k_j+1}\phi\|_{L^2(\R)}.
\end{eqnarray*}
Consequently, the norm of~$\varphi$ is bounded from below by
$$\sum_{j=1}^{+\infty} \frac{\|D^{k_j+1}\phi\|_{L^2(\R)}^2}{(k_j+1)^2}\ge
\frac1{2R}
\sum_{j=1}^{+\infty} \frac{(k_j!)^2}{(k_j+1)^2}
=+\infty,$$
in contradiction with~\eqref{vbhut7438t4y7gbuewk}. 

The first claim of Theorem~\ref{lem:strano09876} is thereby established
and we now focus on the second claim.
For this, let~$\Omega\subset \R$ be open and bounded and let~$\phi\in \cX(\Omega)$. Then,
$$
\sum_{k=1}^{+\infty}\frac{\|D^k\phi\|_{L^2(\R)}^2}{k^2}
\le \|\phi\|_{\cX(\Omega)}^2<+\infty
$$
and the first part implies that~$\phi$ vanishes identically. Hence,~$\cX(\Omega)=\{0\}$, as claimed.
\end{proof}

Now  we show that the existence
of smooth and compactly supported functions with infinite norm is equivalent to the unboundedness
of the support of the measure~$\mu^+$.

\begin{thm}\label{0qiwdofr09my5062pySgqweSth}
If the support of~$\mu^+$ is unbounded, then there exists~$u\in C^\infty_c(B_1)$ such that
$$\int_{[0,+\infty)} {\mathcal{E}}_{2s_1,s}(u,u)\,\mu^+(ds)=+\infty.$$
\end{thm}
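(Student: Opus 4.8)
The plan is to exploit the scaling behaviour of the energy functionals $\cE_{2s_1,s}$ established in Lemma~\ref{scaling}, together with the lower ellipticity bound~\eqref{simple ellipticity}, to manufacture a function whose energy blows up along a sequence of exponents $s_j\to+\infty$ lying in the support of $\mu^+$. First I would fix a single nontrivial bump $u\in C^\infty_c(B_1)$ with, say, $\|u\|_{L^2(\R^N)}=1$, and analyze $\cE_{2s_1,s}(u,u)$ as $s\to+\infty$ along a sequence $s_j$ in $\supp\mu^+$ with $s_j\to+\infty$ (such a sequence exists precisely because $\supp\mu^+$ is unbounded). Using the Fourier representation of Proposition~\ref{fourier rep},
$$
\cE_{2s_1,s}(u,u)=\bigl(M_{s,\sigma_s}(e_s)\bigr)^{-1}\int_{\R^N} M_{s,\sigma_s}\!\left(\frac{\xi}{|\xi|}\right)|\xi|^{2s}|\widehat{u}(\xi)|^2\, d\xi,
$$
and the trivial bound $M_{s,\sigma_s}\le 1$ from~\eqref{usoduevoltefprse769043-0876}, I would obtain
$$
\cE_{2s_1,s}(u,u)\ge \int_{\R^N} M_{s,\sigma_s}\!\left(\frac{\xi}{|\xi|}\right)|\xi|^{2s}|\widehat{u}(\xi)|^2\, d\xi \ge \int_{\{|\xi|\ge 2\}} M_{s,\sigma_s}\!\left(\frac{\xi}{|\xi|}\right)2^{2s}|\widehat{u}(\xi)|^2\, d\xi.
$$
Here is where the ellipticity hypothesis enters: by~\eqref{simple ellipticity} we have $M_{s,\sigma_s}(e_s)\ge\lambda>0$ for all $s$, but this is a bound only in the direction $e_s$, not uniformly over $\bS^{N-1}$, so I cannot simply bound $M_{s,\sigma_s}(\xi/|\xi|)$ from below on the whole sphere. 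Instead I would choose $u$ adapted to a suitable direction, or better, rotate: since $\cE_{2s_1,s}$ scales and the whole construction is translation invariant, but not rotation invariant when $\sigma_s$ is anisotropic, I would instead fix a fixed direction and use that $\widehat u$ has nonnegligible mass in a cone around any prescribed direction once $u$ is chosen as a modulated bump.

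More concretely, the cleanest route is: for each $j$ let $e_{s_j}\in\bS^{N-1}$ be as in~\eqref{bvncmx9e76954uy95}, so $M_{s_j,\sigma_{s_j}}(e_{s_j})\ge\lambda$. Since $M_{s,\sigma_s}$ is continuous on the (compact) sphere, there is an open cap $\cC_j$ around $e_{s_j}$ on which $M_{s_j,\sigma_{s_j}}\ge\lambda/2$; however the size of $\cC_j$ is not controlled uniformly, which is a genuine issue. To sidestep this, I would instead build $u=u_j$ depending on $j$: take a fixed profile and translate its Fourier support to be concentrated near a large multiple of $e_{s_j}$, i.e. set $u_j(x):=e^{iR_j e_{s_j}\cdot x}\chi(x)$ (real part thereof) for a fixed real bump $\chi\in C^\infty_c(B_1)$ and $R_j\to+\infty$; then $\widehat{u_j}$ is concentrated in a ball of fixed radius around $R_j e_{s_j}$, so on its support $|\xi|\sim R_j$ and $\xi/|\xi|\approx e_{s_j}$. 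Then
$$
\cE_{2s_1,s_j}(u_j,u_j)\ge c\,R_j^{2s_j}\,M_{s_j,\sigma_{s_j}}(e_{s_j})\,\|\chi\|_{L^2}^2 \cdot (M_{s_j,\sigma_{s_j}}(e_{s_j}))^{-1} + (\text{lower order}),
$$
and more carefully the $(M_{s_j,\sigma_{s_j}}(e_{s_j}))^{-1}$ factor and the $M_{s_j,\sigma_{s_j}}(e_{s_j})$ factor nearly cancel, leaving $\cE_{2s_1,s_j}(u_j,u_j)\gtrsim R_j^{2s_j}$ times a constant depending only on $\chi$ — but this only shows each individual energy is large, not that a single $u$ has infinite total energy. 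The essential trick, exactly as in this kind of argument, is therefore to superpose: write $u=\sum_j a_j u_j$ with the $u_j$ having pairwise disjoint (or nearly disjoint) Fourier supports, chosen so that $u\in C^\infty_c(B_1)$ — which requires care since multiplying by oscillatory factors keeps the spatial support in $B_1$ but one must ensure the sum converges in $C^\infty$ — and with coefficients $a_j$ decaying fast enough that $u$ is smooth and in $L^2$, yet slow enough (relative to the explosion $R_j^{2s_j}$, which I can take as large as I like by choosing $R_j$) that $\sum_j a_j^2 \cE_{2s_1,s_j}(u_j,u_j)\,\mu^+(\{s_j\})=+\infty$.

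The main obstacle, and the point requiring the most care, is precisely this last balancing act: ensuring that a single function $u\in C^\infty_c(B_1)$ can be built whose high-frequency content is spread across the sequence of directions $e_{s_j}$ and scales $R_j$ in such a way that (i) $u$ is genuinely smooth and compactly supported — so the coefficients $a_j$ must decay faster than any polynomial in $R_j$ — while (ii) the $\mu^+$-weighted sum of the energies still diverges. Because I am free to choose $R_j$ after seeing $s_j$ and $\mu^+(\{s_j\})$ (or more generally $\mu^+$ of a small interval around $s_j$), I can always arrange $R_j^{2s_j}\mu^+(I_j)\ge a_j^{-2} j$ for a rapidly decaying $a_j$; the subtlety is that increasing $R_j$ also increases the $C^k$ norms of $u_j$, so smoothness of the sum forces $a_j$ small, and one must check the quantifiers line up. A clean way to organize this: fix $a_j=2^{-j}$, then for each $j$ choose $R_j$ a large integer so that $\cE_{2s_1,s_j}(u_j,u_j)\ge 4^j\, j\,/\,\mu^+(I_j)$ (possible since $\cE_{2s_1,s_j}(u_j,u_j)\to+\infty$ as $R_j\to+\infty$ by the scaling/Fourier lower bound above), and along the way also arrange $R_j$ large enough and the supports in frequency disjoint so that $\|u\|_{C^k(B_1)}<+\infty$ for every $k$ — here one uses that $\widehat{u_j}$ is supported in $B_1(R_j e_{s_j})$ and Bernstein-type estimates give $\|u_j\|_{C^k}\lesssim R_j^k$, which is absorbed by choosing, if necessary, a faster-decaying $a_j$. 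Then $\int_{[0,\infty)}\cE_{2s_1,s}(u,u)\,\mu^+(ds)\ge\sum_j a_j^2\,\cE_{2s_1,s_j}(u_j,u_j)\,\mu^+(I_j)\ge\sum_j j=+\infty$, using that the energy of the sum dominates the sum of energies over the disjoint frequency blocks (which follows from Proposition~\ref{fourier rep} since the Fourier integral splits over the disjoint supports). That no strong ellipticity is needed is visible throughout: only $M_{s,\sigma_s}(e_s)\ge\lambda$ is used, via the direction $e_{s_j}$, never a lower bound uniform over $\bS^{N-1}$.
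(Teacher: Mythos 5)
Your strategy is genuinely different from the paper's. The paper builds a \emph{radial} function $u(x)=g(|x|^2/2)$ with $g$ produced by Borel's Lemma, forces arbitrarily large high-order derivatives at the origin via the multivariate Fa\`a di Bruno formula and Sobolev embedding, and — because the ellipticity condition \eqref{simple ellipticity} holds only in the one direction $e_s$ — exploits the radial symmetry of $\widehat u$ to extend a Fourier lower bound from a small cap around $e_s$ to all of $\bS^{N-1}$ by covering the sphere with finitely many rotated copies of that cap. Your modulated-bump construction handles the anisotropy in a different way, by aiming the Fourier mass of each piece directly at the good direction $R_je_{s_j}$, which in principle sidesteps the rotation-covering entirely; that is a legitimate, and arguably cleaner, alternative route.

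However, there is a concrete gap in the final step. For any nontrivial $\chi\in C^\infty_c(B_1)$ the modulated bump $u_j(x)=\cos(R_je_{s_j}\cdot x)\,\chi(x)$ has $\widehat{u_j}(\xi)=\tfrac12\bigl(\widehat\chi(\xi-R_je_{s_j})+\widehat\chi(\xi+R_je_{s_j})\bigr)$, and $\widehat\chi$ is an entire function (Paley--Wiener), so the $\widehat{u_j}$ \emph{cannot} have pairwise disjoint supports. Consequently the cross terms in $|\widehat u|^2=\bigl|\sum_j a_j\widehat{u_j}\bigr|^2$ need not be nonnegative, and the key inequality $\cE_{2s_1,s_j}(u,u)\ge a_j^2\,\cE_{2s_1,s_j}(u_j,u_j)$ — which you justify by saying ``the Fourier integral splits over the disjoint supports'' — is not established. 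To recover it you would have to localize the $\xi$-integral to an annulus near $|\xi|\approx R_j$ and bound the off-diagonal contributions using the superpolynomial decay of $\widehat\chi$ together with a lacunary choice of the $R_j$; this almost-orthogonality estimate is the substantive missing content, and the cited justification is incorrect as stated. The surrounding scaffolding — choosing $R_j$ after seeing $s_j$ and $\mu^+(I_j)$, the Bernstein bounds $\|u_j\|_{C^k}\lesssim R_j^k$, balancing rapid decay of $a_j$ against the arbitrarily large factor $R_j^{2s_j}$, and the continuity estimate $|M_{s_j,\sigma_{s_j}}(\xi/|\xi|)-M_{s_j,\sigma_{s_j}}(e_{s_j})|\lesssim s_j/R_j$ needed to invoke \eqref{simple ellipticity} along a narrow cone about $e_{s_j}$ — is sound.
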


\begin{proof}
Suppose that~$\mu^+$ has unbounded support in~$[0,+\infty)$ (this support being, as usual, the set of all points in~$[0,+\infty)$ for which every open neighborhood has positive measure). Then, there exists a diverging sequence~$s_j$ in this support. Without loss of generality, we can suppose that~$s_{j+1}>s_j+1$. In this way, the open neighborhoods~$I_j:=\left(s_j-\frac14,s_j+\frac14\right)$ are disjoint and~$\mu_j:=\mu^+(I_j)>0$.
We also consider a diverging sequence~$m_j\in\N$ with
\begin{equation}\label{bcjqwry3847t673465t8934ty8fkwjas}
s_j\ge m_j+N+\frac14.\end{equation}

Let~$a_k$ be a sequence of
positive reals, to be specifically chosen here below
with~$a_0=\frac12$. Borel's Lemma (see e.g. page~30 in~\cite{MR346855}) gives that there exists a function~$g\in C^\infty(\R)$ such that~$g^{(k)}(0)=a_k$ for all~$k\in\N$. Since~$g(0)=a_0=\frac12$,
up to modifying~$g$ outside a small neighborhood of the origin, we can suppose that~$g\in C^\infty_c\left(\left(-\frac12,\frac12\right),[0,1]\right)$.
We define $$u(x):=g\left(\frac{|x|^2}2\right).$$ In this way, we see that~$u\in C^\infty_c(B_1)$
and it is a radial function.

Moreover,
\begin{equation}\label{01eu348m9vb6y96nm7Hy4v32} \|u\|_{L^2(\R^N)}^2=
\int_{B_{1}}\left|g\left(\frac{|x|^2}2\right)\right|^2\,dx\le {\mathcal{H}}^N(B_{1}),\end{equation}
where~${\mathcal{H}}^N$ denotes the Lebesgue measure.

Let~$e_i$ be the $i$th element of the Euclidean basis.
We claim that, for each~$m\in\N$ and~$i\in\{1,\dots,N\}$,
\begin{equation}\label{2-pif2521}
D^{2m e_i}u(0)\ge c_m\,a_{m},
\end{equation}
for a suitable constant~$c_m>0$, using the standard multi-index notation.

To prove this, we use the multivariate Fa\`a di Bruno Formula (see e.g.~\cite[Theorem~2.1]{MR1325915}) with the notation~$\gamma(x):=\frac{|x|^2}2$.

Hence, since
$$ D^\ell \gamma(0)=
\begin{cases}
1 &{\mbox{ if $\ell\in\{ 2e_1,\dots,2e_N\}$,}}\\
0&{\mbox{ otherwise,}}
\end{cases}
$$ 
we conclude that
\begin{eqnarray*}
D^{2m e_i}u(0)&=&\sum_{1\le h\le2m} g^{(h)}(0)
\sum_{q=1}^{2m}\sum_{\star\in p_q(h)} (2m)!\prod_{j=1}^q\frac{
\big(D^{\ell_j}\gamma(0)\big)^{k_j}}{k_j!\;(\ell_j!)^{k_j}}\\&\ge&
\sum_{1\le h\le2m}a_h
\sum_{\star\in p_1(h)} (2m)!\frac{
\big(D^{\ell_1}\gamma(0)\big)^{k_1}}{k_1!\;(\ell_1!)^{k_1}}\\&\ge&
a_{m}
\sum_{\star\in p_1(m)} (2m)!\frac{
\big(D^{\ell_1}\gamma(0)\big)^{k_1}}{k_1!\;(\ell_1!)^{k_1}}\\&=&
\frac{a_{m}\,(2m)!}{m!\;2^{m}}.
\end{eqnarray*}
Here above,
the ``$\star$'' in the summation index stands for~$
k_1,\dots,k_q,\ell_1,\dots,\ell_q\in
p_q(h)$, with~$k_1,\dots,k_q\in\N\setminus\{0\}$
and~$\ell_1,\dots,\ell_q\in\big(\N\setminus\{0\}\big)^N$,
for a suitable finite set of indices~$p_q(h)$
with the property that
$$ p_1(m)=\{k_1=m,\; \ell_1=2e_i\}.$$
This completes the proof of~\eqref{2-pif2521}.

We now use a tensorial notation for which~$D^{2m}u$ collects all partial derivatives of order~$m\in\N$
and we infer from~\eqref{2-pif2521} that
\begin{equation*}\|
D^{2m }u\|_{L^\infty(B_1)}\ge \widetilde c_m\,a_{m},
\end{equation*}
for a suitable constant~$\widetilde c_m>0$.

Hence, by the Sobolev Embedding Theorem
(namely, the embedding~$H^{2N}_0(B_1)$ into~$L^\infty(B_1)$) we conclude that
\begin{equation}\label{S3290utotu4m59mbmynm954o}\|
D^{2m +2N}u\|_{L^2(B_1)}\ge\kappa_m\,a_{m},
\end{equation}
for a suitable constant~$\kappa_m>0$.

Now, given~$e$, $\theta\in\bS^{N-1}$ and~$\omega\in\R^N$ with~$|\omega|\le\frac14$, we set~$\widetilde e:=
\frac{e+\omega}{|e+\omega|}$
and observe that
\begin{eqnarray*}&&
\big| |\theta\cdot e|-|\theta\cdot \widetilde e|\big|\le
\left| |\theta\cdot e|-\left| \frac{\theta\cdot e}{|e+\omega|}\right|\right|+
\left|\frac{\theta\cdot\omega}{|e+\omega|}\right|\\&&\qquad\le |\theta\cdot e|
\left| 1- \frac{1}{|e+\omega|}\right|+2|\omega|\le
2\big|\big|e+\omega|-1\big|+2|\omega|\le4|\omega|.
\end{eqnarray*}
As a result, for all~$s\ge\frac12$,
\begin{eqnarray*}&&
\big| |\theta\cdot e|^{2s}-|\theta\cdot \widetilde e|^{2s}\big|
=2s\left|\;
\int^{|\theta\cdot e|}_{|\theta\cdot \widetilde e|}
t^{2s-1}\,dt
\right|\le2s\big| |\theta\cdot e|-|\theta\cdot \widetilde e|\big|\le8s|\omega|
\end{eqnarray*}
and therefore, recalling~\eqref{def ms},
$$\big|M_{s,\sigma_s}(e)-
M_{s,\sigma_s}(\widetilde e)\big|\le
\int_{\bS^{N-1}}\big| |\theta\cdot e|^{2s}-|\theta\cdot \widetilde e|^{2s}\big|\sigma_s(d\theta).
\le8s|\omega|.$$
In particular, setting
$${\mathcal{S}}_s:=
\left\{\frac{e_s+\omega}{|e_s+\omega|},\;
\omega\in\R^N,\;|\omega|\le\min\left\{\frac14,\frac{\lambda}{16s}\right\}
\right\},$$by means of~\eqref{simple ellipticity}
we conclude that, for all~$\zeta\in{\mathcal{S}}_s$,
$$ M_{s,\sigma_s}(\zeta)\ge M_{s,\sigma_s}(e_s)-\frac\lambda2\ge\frac\lambda2.$$

Hence, by Proposition~\ref{fourier rep} and~\eqref{usoduevoltefprse769043-0876}, for all~$s\ge\frac12$,
	\begin{equation}\label{0qdpojfrprh56yth-3}\begin{split}
	\cE_{2s_1,s}(u,u)&=(M_{s,\sigma_s}(e_s))^{-1}\int_{\R^N} M_{s,\sigma_s}\left(\frac{\xi}{|\xi|}\right)|\xi|^{2s}|\widehat{u}(\xi)|^2\, d\xi\\&\ge\int_{\R^N} M_{s,\sigma_s}\left(\frac{\xi}{|\xi|}\right)|\xi|^{2s}|\widehat{u}(\xi)|^2\, d\xi\\&\ge\frac{\lambda}2
\int_{\frac{\xi}{|\xi|}\in{\mathcal{S}}_s} |\xi|^{2s}|\widehat{u}(\xi)|^2\, d\xi.\end{split}\end{equation}
	
We also observe that~$\widehat u$ is radial, since so is~$u$. Hence we can write~$\widehat u(\xi)=U(|\xi|)$ for some~$U:[0,+\infty)\to\C$. This, combined with~\eqref{0qdpojfrprh56yth-3}, gives that
\begin{equation}\label{0ojedftivmy9056iox}
\cE_{2s_1,s}(u,u)\ge
\frac{\lambda}2
\int_{\frac{\xi}{|\xi|}\in{\mathcal{S}}_s}|\xi|^{2s}|U(|\xi|)|^2\, d\xi.
\end{equation}

We now remark that, for each~$s\ge\frac12$, there exist finitely many rotations~$R_{s,1},\dots,R_{s,n_s}$,
with~$n_s$ nondecreasing in~$s$, such that~$\bS^{N-1}=( R_{s,1}{\mathcal{S}}_s)\cup\dots\cup(R_{s,n_s}{\mathcal{S}}_s)$. Consequently, using the change of variable~$\eta:=R_{s,i}^{-1}\,\xi$,
we see that
$$ \int_{\R^N}|\xi|^{2s}|U(|\xi|)|^2\, d\xi
\le\sum_{i=1}^{n_s}
\int_{\frac{\xi}{|\xi|}\in R_{s,i}({\mathcal{S}}_s)}|\xi|^{2s}|U(|\xi|)|^2\, d\xi
=n_s\int_{\frac{\eta}{|\eta|}\in{\mathcal{S}}_s}|\eta|^{2s}|U(|\eta|)|^2\, d\eta.$$
It follows from this observation and~\eqref{0ojedftivmy9056iox} that\footnote{We stress that~\eqref{0qwdoj320949b65m7uRM} can be seen as a refinement of~\eqref{0qwdoj320949b65m7uRM2}.
Specifically, \eqref{0qwdoj320949b65m7uRM2} is valid without any additional symmetry
assumption on the function~$u$, but it relies on the strong ellipticity hypothesis~\eqref{ellipticity}.
Instead, here the function~$u$ is supposed to be rotationally symmetric,
but we do not need to require the strong ellipticity hypothesis~\eqref{ellipticity}.}
\begin{equation}\label{0qwdoj320949b65m7uRM}
\cE_{2s_1,s}(u,u)\ge
\frac{\lambda}{2n_s}
\int_{\R^N}|\xi|^{2s}|U(|\xi|)|^2\, d\xi=\frac{\lambda}{2n_s}
\int_{\R^N}|\xi|^{2s}|\widehat u(\xi)|^2\, d\xi.
\end{equation}
On this account,
\begin{equation}\label{qsd0owvtn04b59ypom6uX7ik7i}
\begin{split}
\int_{[0,+\infty)} {\mathcal{E}}_{2s_1,s}(u,u)\,\mu^+(ds)&\ge
\sum_{j=2}^{+\infty}\int_{I_j} {\mathcal{E}}_{2s_1,s}(u,u)\,\mu^+(ds)\\& \ge\frac{\lambda}{2}
\sum_{j=2}^{+\infty} \int_{I_j} \left(\;
\int_{\R^N}\frac{|\xi|^{2s}|\widehat u(\xi)|^2}{n_s}\, d\xi
\right)\,\mu^+(ds)
\\& \ge\frac{\lambda}{2}
\sum_{j=2}^{+\infty}\frac{1}{\widetilde n_j} \int_{I_j} \left(\;
\int_{\R^N} |\xi|^{2s}|\widehat u(\xi)|^2\, d\xi
\right)\,\mu^+(ds),
\end{split}
\end{equation}for some~$\widetilde n_j$.

We also observe that if~$s\in I_j$, then~$s\ge s_j-\frac14\ge m_j+N$ (thanks to~\eqref{bcjqwry3847t673465t8934ty8fkwjas}) and therefore
\begin{eqnarray*}
&&\int_{\R^N}|\xi|^{2s}|\widehat u(\xi)|^2\, d\xi\ge
\int_{\{|\xi|\ge1\}}|\xi|^{2m_j+2N}|\widehat u(\xi)|^2\, d\xi\\&&\qquad=
\int_{\R^N}|\xi|^{2m_j+2N}|\widehat u(\xi)|^2\, d\xi-
\int_{\{|\xi|<1\}}|\xi|^{2m_j+2N}|\widehat u(\xi)|^2\, d\xi\\&&\qquad\ge
\int_{\R^N}|\xi|^{2m_j+2N}|\widehat u(\xi)|^2\, d\xi-
\int_{\{|\xi|<1\}}|\widehat u(\xi)|^2\, d\xi\\&&\qquad\ge
\int_{\R^N}|\xi|^{2m_j+2N}|\widehat u(\xi)|^2\, d\xi-
\int_{\R^N}|\widehat u(\xi)|^2\, d\xi\\&&\qquad\ge
b_j\int_{\R^N} \big|D^{2m_j+2N} u(x)\big|^2\, dx-
b\int_{\R^N}| u(x)|^2\, dx,
\end{eqnarray*}for suitable positive constants~$b_j$ and~$b$
(depending on the normalization of the Fourier Transform
and the corresponding Plancherel Theorem that we have used in the last step).

Hence, recalling~\eqref{01eu348m9vb6y96nm7Hy4v32} and~\eqref{S3290utotu4m59mbmynm954o},
\begin{eqnarray*}
&&\int_{\R^N}|\xi|^{2s}|\widehat u(\xi)|^2\, d\xi\ge
\widetilde b_j a_{m_j}-\widetilde b,
\end{eqnarray*}for suitable positive constants~$\widetilde b_j$ and~$\widetilde b$.

This, in tandem with~\eqref{qsd0owvtn04b59ypom6uX7ik7i}, returns that
\begin{equation*}
\begin{split}
&\int_{[0,+\infty)} {\mathcal{E}}_{2s_1,s}(u,u)\,\mu^+(ds)\ge\frac{\lambda}{2}
\sum_{j=2}^{+\infty}\frac{1}{\widetilde n_j} \int_{I_j} \left(
\widetilde b_j a_{m_j}-\widetilde b
\right)\,\mu^+(ds)=\frac\lambda{2}
\sum_{j=2}^{+\infty}\frac{\mu_j}{\widetilde n_j} \left(
\widetilde b_j a_{m_j}-\widetilde b
\right).
\end{split}
\end{equation*}It now suffices to choose~$ a_{m_j}:=\frac{\widetilde n_j}{\widetilde b_j \;\mu_j}+\frac{\widetilde b}{\widetilde b_j }
$ to obtain the desired result.
\end{proof}

\subsection{Some examples}
We emphasize that if there exists some~$\widehat{s}>s_{\ast}$ such that~$\mu^+((\widehat{s},+\infty))=0$, then we simply have
$$
Lu=\int_{[0,\widehat{s}]}L_{\widehat{s}_1,s}u\,\mu(ds)\qquad\text{and}\qquad \int_{[0,+\infty)}\cE_{2s_1,s}(u,v)\,\mu^+(ds)=\int_{[0,\widehat{s}]}\cE_{2\widehat{s}_1,s}(u,v)\,\mu^+(ds).
$$

Given~$m>0$, for all~$U\in \cB(\R^N)$ we set
$$
\tilde{\mu}(U):=\int_0^mc_{2m,s}\,\nu_s(U)\,\mu(ds)=\int_0^m c_{2m,s}\int_0^{+\infty}r^{-1-2s} \int_{\bS^{N-1}}\chi_{U}(r\theta)\,\sigma_s(d\theta)\,dr\,\mu(ds). 
$$
This definition is well-posed thanks to assumption~\eqref{Mass}. In particular, since the map~$s\mapsto c_{2m,s}r^{-1-2s}$ is continuous in~$(0,m)$, the measurability assumption~\eqref{Mass} affects the measurability of the family of measures~$s\mapsto \sigma_s$. 

With this notation, using the Fubini-Tonelli Theorem, we see that, for all~$m>0$,
\begin{eqnarray*}&&
\int_0^m\cE_{2m,s}(u,u)\,\mu(ds)=
\int_0^m\left(\;\frac{c_{2m,s}}2 \iint_{\R^{2N}}(\delta_{m}u(x,y))^2\,dx\,\nu_s(dy)\right)\,\mu(ds)\\&&\qquad
=
\frac12\iint_{\R^{2N}}(\delta_{m}u(x,y))^2\,\tilde{\mu}(dy)\,dx.
\end{eqnarray*}

{F}rom these remarks, it follows that~$Lu(x)$ is well-defined for~$u\in C^{\infty}_c(\R^N)$ and~$x\in \R^N$ if there exists~$m>0$ such that~$\mu((m,+\infty))=0$.

In this part we give some examples covered by our framework. This is followed by a short discussion on what kind of assumptions can be added, so that~$Lu(x)$ also exists if~$\mu((m,+\infty))\neq 0$ for all~$m>0$.

\paragraph{Example 1.} The choice 
$$
\nu_s(dz):=|z|^{-N-2s}\,dz\quad\text{for~$s>0$}
$$
leads to the operators~$L_{m,s}=(-\Delta)^s$, the~$s$-th power of the Laplacian, which for~$s\in(0,1)$ is called the fractional Laplacian and for~$s\in \N$ with~$s>1$ is also called the polylaplacian. Note that since~$d\sigma_s(\theta)=d\mathcal{H}^{N-1}_{\theta}$ coincides with the surface measure for~$\mathbb{S}^{N-1}$, the family~$\nu_s(dz)$ clearly satisfies~\eqref{assumption basis} (after normalization) and
the strong ellipticity assumption~\eqref{ellipticity}. 

With different choices of~$\mu$, the following operators can be represented by the superposition operator~$L$ such that~\eqref{mu-assumption1}, \eqref{mu-assumption2}, \eqref{Mass}, and~\eqref{Eass} are satisfied, see also~\cite{DPSV23}. We provide here below a list of possible choices:
\begin{itemize}
\item[(1)] With~$\mu=\delta_s$, the Dirac measure with weight in~$s>0$, $L$ reduces to~$(-\Delta)^s$.
\item[(2)] With~$\mu=\delta_{s}+\delta_{t}$ where~$s$, $t>0$, $L$ reduces to the sum of~$(-\Delta)^{s}+(-\Delta)^{t}$. In particular, with~$s\in \N$ and~$t\notin\N$, $L$ can be seen as a mixed local-nonlocal operator.
\item[(3)] With~$\mu=\delta_{s}-\alpha \delta_{t}$, where~$s>t>0$ and~$\alpha>0$ is small enough, we have~$L=(-\Delta)^{s}-\alpha(-\Delta)^{t}$. Thus~$L$ is a representative for a class of operators with competing trends.
\item[(4)] More generally, $\mu=\delta_{s}+\delta_{t}-\alpha(\delta_{a}+\delta_{b})$, where~$t$, $a$, $b>0$, $s>\max\{t,a,b\}$ and~$\alpha>0$ is small enough, $L=(-\Delta)^{s}+(-\Delta)^{t}-\alpha((-\Delta)^{a}+(-\Delta)^{b})$ describes a competing operator of possibly mixed local-nonlocal interactions.
\item[(5)] Let~$(c_k)_{k\in\N}\subset \R$ be a sequence such that
$$\sum_{k=0}^{+\infty}c_k\quad{\mbox{is convergent}} $$ and satisfies the following:
$$
\text{there exists~$\bar{k}\in \N$ with~$c_k>0$ for~$k\in\{1,\ldots \bar{k}\}$ and }\sum_{1}^{\bar{k}}c_k\leq \gamma \sum_{k=\bar{k}+1}^{+\infty}c_k
$$
for some~$\gamma\in[0,1)$. Then, the choice
$$\mu=\sum_{k=1}^{+\infty}c_k\delta_{s_k},$$ where~$(s_k)_k\subset(0,+\infty)$ is
strictly decreasing, leads to the operator
$$L=\sum_{k=1}^{+\infty}c_k(-\Delta)^{s_k}.$$
\item[(6)] More generally, for any measurable, nontrivial function~$f:[0,+\infty)\to\R$ such that there exists~$s_{\ast}>0$ with
$$
f\geq 0\quad\text{in~$[s^{\ast},+\infty)$ and }\quad \int_{0}^{s_{\ast}}f(s)\,ds\leq \gamma\int_{s_\ast}^{+\infty}f(s)\,ds
$$
for some~$\gamma\in[0,1)$ we may consider~$\mu=f(s)\,ds$. This leads to the operator
$$Lu=\int_0^{+\infty}(-\Delta)^suf(s)\,ds.$$ 
\end{itemize}

We make some observations on the example in~(6) above.
If~$f>0$ on~$(s_{\ast},+\infty)$, it is a rather delicate question if~$Lu$ exists for~$u\in C^{\infty}_c(\R^N)$ c.~f. the discussion in \cref{defeber859604}. To give a suitable assumption on $f$ so that~\eqref{nonempty xomega} is satisfied, we take a closer look at \cref{lem:evaluation} and its proof for~$u\in C^{\infty}_c(\R^N)$.

\begin{lemma}\label{evaluation test function}
Let~$u\in C^{\infty}(\R^N)$ with $|u|\leq 1$, $m\in \N$, and~$s\in(0,m)$. Then, for all~$x\in \R^N$,
$$
|L_{m,s}u(x)|\leq  \frac{C_m\,4^{m-1}\mathcal{H}^{N-1}(\mathbb{S}^{N-1})}{M_{s,\sigma_s}(e_s)}\max\big\{1,\|D^mu(x)\|\big\},
$$
where~$C_m$ is the constant appearing in
Corollary~\ref{cor:bounds on cms}, $D^mu(x)$ denotes the differential of order~$m$ of~$u$ at~$x$
and~$\|D^mu(x)\|$ its respective norm.
\end{lemma}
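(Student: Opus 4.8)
The plan is to split the integral defining $L_{m,s}u(x)$ at the unit ball, estimate the two resulting pieces separately, and then convert all the $s$-dependence of the constant $c_{m,s}$ into a factor $M_{s,\sigma_s}(e_s)^{-1}$ by means of \cref{cor:bounds on cms}. Concretely, I would write
$$
L_{m,s}u(x)=\frac{c_{m,s}}{2}\int_{B_1}\delta_mu(x,y)\,\nu_s(dy)+\frac{c_{m,s}}{2}\int_{\R^N\setminus B_1}\delta_mu(x,y)\,\nu_s(dy)=:I_{\mathrm{near}}+I_{\mathrm{far}}.
$$

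For $I_{\mathrm{far}}$: since $|u|\le 1$ and $\sum_{k=-m}^{m}\binom{2m}{m-k}=2^{2m}=4^m$, we have the crude bound $|\delta_mu(x,y)|\le 4^m$, while, in polar coordinates,
$$
\nu_s(\R^N\setminus B_1)=\sigma_s(\bS^{N-1})\int_1^{+\infty}r^{-1-2s}\,dr=\frac{\sigma_s(\bS^{N-1})}{2s}\le\frac{\mathcal{H}^{N-1}(\bS^{N-1})}{2s}.
$$
Hence $|I_{\mathrm{far}}|\le 4^{m-1}\,\mathcal{H}^{N-1}(\bS^{N-1})\,\dfrac{c_{m,s}}{s}$, and, discarding the nonnegative summand $\tfrac1{m-s}$ in \cref{cor:bounds on cms}, we get $\dfrac{c_{m,s}}{s}\le\dfrac{C_m}{M_{s,\sigma_s}(e_s)}$, so that $I_{\mathrm{far}}$ already has the desired form (and the factor $\max\{1,\|D^mu(x)\|\}\ge1$ in the statement leaves room to add the near contribution).

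For $I_{\mathrm{near}}$: here the point is that $\delta_m$ is a finite difference of order $2m$ — equivalently $\delta_mu(x,y)=(-1)^m\Delta_y^{2m}u(x-my)$ — so it annihilates every polynomial of degree $\le 2m-1$. Taylor-expanding each $u(x+ky)$ about $x$ to the order needed to beat the singularity exponent $1+2s$ (a choice that is admissible since $s<m$, so this order may be taken $\le 2m$) and using the cancellation, one bounds $|\delta_mu(x,y)|$ by an $(N,m)$-combinatorial constant times $\max\{1,\|D^mu(x)\|\}$ times a power of $|y|$ that is integrable against $r^{-1-2s}\,dr$ near $0$; integrating over $B_1$ in polar coordinates then produces a factor $\tfrac1{m-s}$ (up to constants), which is absorbed by the complementary bound $\dfrac{c_{m,s}}{m-s}\le\dfrac{C_m}{M_{s,\sigma_s}(e_s)}$ from \cref{cor:bounds on cms}. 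Combining $I_{\mathrm{near}}$ with $I_{\mathrm{far}}$ gives the claim; a preliminary reduction to $m=s_1$ via \cref{lem:independence of m} is harmless and may streamline the constants.

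The far-field term is routine. The delicate point is $I_{\mathrm{near}}$: one must control the Taylor remainder uniformly as $s\to m^-$, so that its blow-up is exactly compensated by the vanishing of $c_{m,s}$ recorded in \cref{cor:bounds on cms}, and one must track the combinatorial constants carefully so that the final prefactor is no larger than $C_m\,4^{m-1}\,\mathcal{H}^{N-1}(\bS^{N-1})$.
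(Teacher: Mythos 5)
Your overall strategy is exactly the paper's: split at the unit ball, bound the far field trivially from $|u|\le1$ and $\sum_k\binom{2m}{m-k}=4^m$, bound the near field via the $2m$-th order cancellation of $\delta_m$, then use Corollary~\ref{cor:bounds on cms} to convert the $c_{m,s}$-dependence into a factor $M_{s,\sigma_s}(e_s)^{-1}$. The paper only streamlines the bookkeeping: rather than estimating two integrals separately, it records the single pointwise bound $|\delta_mu(x,y)|\le4^m\max\{1,\|D^mu(x)\|\}\min\{1,|y|^{2m}\}$ and computes $\int_0^{+\infty}\min\{t^{2m},1\}\,t^{-1-2s}\,dt=\tfrac1{2s}+\tfrac1{2(m-s)}$ once, which matches the two terms in Corollary~\ref{cor:bounds on cms} simultaneously.

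The ``delicate point'' you flag in the near field is, however, a genuine gap in your sketch --- and the paper's own proof does not close it either. Since $\delta_m$ annihilates polynomials of degree at most $2m-1$, the Taylor argument you describe naturally yields $|\delta_mu(x,y)|\le C\,\|D^{2m}u\|_{L^\infty(B_{m|y|}(x))}\,|y|^{2m}$, with derivatives of order $2m$ taken in a neighbourhood of $x$, not the quantity $\|D^mu(x)\|$ appearing in the statement. The paper asserts the near-field inequality $|\delta_mu(x,y)|\le4^m\|D^mu(x)\|\,|y|^{2m}$ for $|y|\le1$ without justification, and as written it fails already for $m=1$: if $u$ coincides with $|x|^2/2$ near the origin (and is suitably cut off), then $\delta_1u(0,y)=|y|^2$ for small $|y|$ while $Du(0)=0$. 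If you carry out your near-field estimate honestly, the factor $\max\{1,\|D^mu(x)\|\}$ will be replaced by something of the form $\max\{1,\|D^{2m}u\|_{L^\infty(B_m(x))}\}$, so you should not expect to recover the statement literally as given; your instinct that this is where the difficulty lies is correct.
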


\begin{proof}
Let~$x$, $y\in \R^N$.  If~$|y|\leq 1$, then
\begin{align*}
|\delta_mu(x,y)|&\leq \|D^mu(x)\|\sum_{k=-m}^{m}\binom{2m}{m-k} |y|^{2m}=4^m \|D^mu(x)\|\, |y|^{2m}
\end{align*}
and, if~$|y|\geq 1$, then 
\begin{align*}
|\delta_mu(x,y)|&\leq \sum_{k=-m}^{m}\binom{2m}{m-k} = 4^m.
\end{align*}
Thus, for all~$x$, $y\in \R^N$,
$$
|\delta_mu(x,y)|\leq 4^m\max\big\{1,\|D^mu(x)\|\big\}\min\big\{1,|y|^{2m}\big\}.
$$
Therefore, using Corollary~\ref{cor:bounds on cms}, we have that
\begin{eqnarray*}
|L_{m,s}u(x)|&&\leq \frac{c_{m,s}4^m}{2}\mathcal{H}^{N-1}(\mathbb{S}^{N-1})\max\big\{1,\|D^mu(x)\|\big\}
\int_0^{+\infty}\min\big\{t^{2m},1\big\}t^{-1-2s}\,dt\\
&&= c_{m,s}4^{m-1}\mathcal{H}^{N-1}(\mathbb{S}^{N-1})\max\big\{1,\|D^mu(x)\|\big\}\left(\frac{1}{s}+\frac{1}{m-s}\right)\\
&&\leq \frac{C_m\,4^{m-1}\mathcal{H}^{N-1}(\mathbb{S}^{N-1})}{M_{s,\sigma_s}(e_s)}\max\big\{1,\|D^mu(x)\|\big\},
\end{eqnarray*} 
as desired.
\end{proof}

\begin{cor}\label{coro:05348953jfk}
Let $u\in C^\infty_b(\R^N)$ and let~$f:\R^N\to[0,+\infty)$ be a measurable function such that
\begin{equation}\label{vbjkwe23456789}
\sup_{x\in \R^N}\sum_{m=1}^{+\infty}C_m\,4^{m-1}\max\big\{1,\|D^mu(x)\|\big\}\int_{m-1}^{m}f(s)\, ds<+\infty,
\end{equation} where~$C_m$ is the constant appearing in
Corollary~\ref{cor:bounds on cms}.

If~\eqref{simple ellipticity} is satisfied, then 
$$
Lu(x)=\int_0^{+\infty}(-\Delta)^su(x)f(s)\,ds
$$
is finite for every~$x\in \R^N$.

If in addition~$u$ is compactly supported in an open bounded set~$\Omega\subset \R^N$, then~$\|u\|_{\cX(\Omega)}<+\infty$.
\end{cor}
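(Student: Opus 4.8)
The plan is to combine the pointwise bound of Lemma~\ref{evaluation test function} with the decomposition of $L$ over the integer partition of $[0,+\infty)$ and, for the energy estimate, with the bilinear identity of Proposition~\ref{prop:bilinear}. First I would record a version of Lemma~\ref{evaluation test function} valid for an arbitrary $u\in C^\infty_b(\R^N)$ (not just $|u|\le 1$): since $L_{m,s}$ and $D^m$ are linear in $u$, applying the lemma to $u/\|u\|_{L^\infty(\R^N)}$ (the case $u\equiv 0$ being trivial) and using~\eqref{simple ellipticity}, so that $M_{s,\sigma_s}(e_s)\ge\lambda$, yields for all $m\in\N$, $s\in(0,m)$ and $x\in\R^N$
$$
|L_{m,s}u(x)|\le\frac{\mathcal{H}^{N-1}(\mathbb{S}^{N-1})\,\max\{1,\|u\|_{L^\infty(\R^N)}\}}{\lambda}\,C_m\,4^{m-1}\max\big\{1,\|D^mu(x)\|\big\},
$$
where I used $\max\{a,b\}\le\max\{1,a\}\max\{1,b\}$ for $a,b\ge0$.

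For the first assertion, I would use that in the setting of Example~1 one has $L_{m,s}u=(-\Delta)^su$ whenever $m>s$ (see the discussion after Example~1 and Lemma~\ref{lem:independence of m}); hence, writing $s_1=\min\{n\in\Z:n>s\}$, on each interval $s\in[m-1,m)$ one has $s_1=m$ and $(-\Delta)^su(x)=L_{m,s}u(x)$. Therefore
$$
\int_0^{+\infty}\big|(-\Delta)^su(x)\big|\,f(s)\,ds=\sum_{m=1}^{+\infty}\int_{m-1}^{m}|L_{m,s}u(x)|\,f(s)\,ds\le\frac{\mathcal{H}^{N-1}(\mathbb{S}^{N-1})\max\{1,\|u\|_{L^\infty}\}}{\lambda}\sum_{m=1}^{+\infty}C_m\,4^{m-1}\max\{1,\|D^mu(x)\|\}\int_{m-1}^{m}f(s)\,ds,
$$
which is finite by~\eqref{vbjkwe23456789}. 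Thus the integral defining $Lu(x)$ converges absolutely and $Lu(x)$ is finite for every $x\in\R^N$.

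For the second assertion, note that here $\mu^+=f(s)\,ds$ (and $\mu^-=0$), so only the $\mu^+$-integrated energy enters $\|\cdot\|_{\cX(\Omega)}$. By Proposition~\ref{prop:bilinear} (applied with $n=s_1$, which is admissible since $s<s_1\le 2s_1$) one has $\cE_{2s_1,s}(u,u)=\int_{\R^N}L_{s_1,s}u(x)\,u(x)\,dx=\int_\Omega L_{s_1,s}u(x)\,u(x)\,dx$, the last equality because $\supp u\subset\Omega$. Bounding $|u|\le\|u\|_{L^\infty}$ and inserting the pointwise estimate above (with $s_1=m$ on $[m-1,m)$) gives
$$
\int_{[0,+\infty)}\cE_{2s_1,s}(u,u)\,f(s)\,ds\le\frac{\|u\|_{L^\infty}\max\{1,\|u\|_{L^\infty}\}\,\mathcal{H}^{N-1}(\mathbb{S}^{N-1})}{\lambda}\sum_{m=1}^{+\infty}C_m\,4^{m-1}\Big(\int_\Omega\max\{1,\|D^mu(x)\|\}\,dx\Big)\int_{m-1}^{m}f(s)\,ds.
$$
Since all summands are nonnegative, Tonelli's theorem lets me move the sum over $m$ inside the $\Omega$-integral, and then bound the resulting integrand pointwise by the supremum appearing in~\eqref{vbjkwe23456789}; this bounds the right-hand side by $|\Omega|$ times the finite quantity in~\eqref{vbjkwe23456789}. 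Combined with $\|u\|_{L^2(\R^N)}^2\le|\Omega|\,\|u\|_{L^\infty}^2<+\infty$, this shows $\|u\|_{\cX(\Omega)}<+\infty$, and since $u\in C^\infty_c(\Omega)$ this means $u$ lies in the space~\eqref{mnbvc0987rkgjujrfy}, hence in its closure $\cX(\Omega)$.

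The individual estimates are elementary once Lemma~\ref{evaluation test function} is in hand; I expect the only genuine subtlety to be the order of operations in the last step, namely keeping the $\Omega$-integral \emph{inside} the sum over $m$ before estimating (one cannot exchange $\sup_x$ and $\sum_m$ in the wrong direction), together with the bookkeeping of the exponent $s_1$ along the partition $[0,+\infty)=\bigcup_{m\ge1}[m-1,m)$ so that $(-\Delta)^s=L_{s_1,s}=L_{m,s}$ with a fixed $m$ on each piece.
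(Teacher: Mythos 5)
Your proposal is correct and follows essentially the same route as the paper: bound each $L_{m,s}u(x)=(-\Delta)^su(x)$ via Lemma~\ref{evaluation test function} together with~\eqref{simple ellipticity}, sum over the intervals $[m-1,m)$ to get finiteness of $Lu(x)$, and then obtain the energy bound from Proposition~\ref{prop:bilinear} together with the pointwise estimate. The one place you are actually a bit more careful than the paper's own proof is in observing that Lemma~\ref{evaluation test function} is stated for $|u|\le 1$: your reduction via $u/\|u\|_{L^\infty}$ and the inequality $\max\{a,b\}\le\max\{1,a\}\max\{1,b\}$ correctly handles general $u\in C^\infty_b(\R^N)$ at the harmless cost of an extra factor $\max\{1,\|u\|_{L^\infty}\}$, a point the paper glosses over.
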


\begin{proof}
In view of Lemma~\ref{evaluation test function} and the assumptions in~\eqref{simple ellipticity} and~\eqref{vbjkwe23456789}, we find that 
\begin{eqnarray*}
|Lu(x)|&\leq& \sum_{m=1}^{+\infty}\frac{C_m\,4^{m-1}\mathcal{H}^{N-1}(\mathbb{S}^{N-1})}{M_{s,\sigma_s}(e_s)}\max\big\{1,\|D^mu(x)\|\big\}
\int_{m-1}^{m} f(s)\, ds\\
&\leq& \frac{ \mathcal{H}^{N-1}(\mathbb{S}^{N-1})}{\lambda }\sum_{m=1}^{+\infty} 4^{m-1} \max\big\{1,\|D^mu(x)\|\big\}
\int_{m-1}^{m} f(s)\, ds\\&<&+\infty.
\end{eqnarray*}

If, in addition, $\supp\,u\subset \Omega$, then thanks to Proposition~\ref{prop:bilinear} and the first statement of Corollary~\ref{coro:05348953jfk},
we find that
$$
\|u\|_{\cX(\Omega)}^2=\|u\|_{L^2(\Omega)}^2+\int_{\Omega}Lu(x)u(x)\, dx<+\infty
$$
as claimed.
\end{proof}

\begin{remark}
We collect next some particular cases in which~$Lu(x)$ is well-defined.
\begin{enumerate}
\item As noted above, if~$f=0$ on~$(s^{\ast},+\infty)$ for some~$s^{\ast}>0$, then $Lu$ exists pointwisely in $\R^N$ for every $u\in C^{\infty}_b(\R^N)$.
\item Similar to the construction in \cref{special construction} it holds: For any~$u\in C^{\infty}_b(\R^N)$ let
$$
U_m=\sup_{x\in\R^N}\max\{1,\|D^mu(x)\|\}.
$$
If there is~$g\in L^1([0,\infty))$ such that for any $m\in \N$ the function $f$ satisfies
$$
0\leq f(s)\leq \frac{4^{1-m}}{C_mU_m}g(s) \quad\text{for $s\in[m-1,m)$,}
$$
where $C_m$ is the constant appearing in
Corollary~\ref{cor:bounds on cms}, then $Lu(x)$ exists for any $x\in \R^N$.
\end{enumerate}
\end{remark}

\paragraph{Example 2.} In Example~1 we focused on the interplay between different orders of operators. Next, we consider the case where~$\sigma_s$ differs from the surface measure of the sphere.
\begin{enumerate}
\item We may choose 
\begin{equation}\label{bvncmx45y4oifhewi098765}
\sigma_s=\sum_{k=1}^{N}\delta_{\theta_k}
\end{equation} for a family of linear independent vectors~$\theta_1,\ldots \theta_N\in \mathbb{S}^{N-1}$. With~$\mu=\delta_s$ for~$s>0$ it follows that~$L$ is the sum 
of one-dimensional (fractional) Laplacians given by
$$Lu(x)=\sum_{k=1}^{N}[(-\Delta)^su_{x,\theta_k}](x\cdot \theta_k),$$ where~$u_{x,e}(t)=u(x+(t-x\cdot e)e)$ for~$t\in \R$, $x\in \R^N$, $e\in \mathbb{S}^{N-1}$, and~$(-\Delta)^s$ is the one-dimensional fractional Laplacian.
\item With~$\sigma_s$ as in~\eqref{bvncmx45y4oifhewi098765}, we can then consider all choices of~$\mu$ as in Example~1. In particular, let~$s^{(1)},\ldots,s^{(n)}\in (0,+\infty)$ be a sequence of parameters, and for each~$k\in \{1,\ldots,n\}$ let~$\theta_k\in \mathbb{S}^{N-1}$. Then, with the choice
$$\mu=\sum_{k=1}^{n}\delta_{s^{(k)}}\qquad {\mbox{and}}\qquad\sigma_{s^{(k)}}=\delta_{\theta_k}$$ we have
$$
Lu(x)=\sum_{k=1}^{N}(-\Delta)^{s^{(k)}}u_{x,\theta_k}(x\cdot \theta_k).
$$
Particularly the choice~$\theta_k=e_k$, with~$e_k$ being the~$k$-th unit vector, is of interest. In this case we have
\begin{equation}\label{9dihf3iu4tumy9b5}
Lu(x)=\sum_{k=1}^{N}\Big(-\partial^2_{x_k}\Big)^{s^{(k)}}u(x)
\end{equation}
as a sum of one-dimensional fractional Laplacians. 

Let us emphasize that the examples in 1. and 2. in particular satisfy~\eqref{Eass}.
\item Finally, one may also consider competing lower order terms in some directions in addition to the above examples by choosing~$\mu^-$ nontrivial.
\end{enumerate}

\paragraph{Example 3.} As a final example, let us note that~$\sigma_s$ can also be chosen as a sum of different powers of the Laplacian acting on different lower dimensional spaces. More precisely, for~$x\in \R^N$ let~$x=(x_a,x_b)$, where~$x_a\in \R^k$ and~$x_b\in \R^{N-k}$, and for a set~$U\subset \R^N$ let 
$$
U_b:=\big\{x_b\;:\; (x_a,x_b)\in U\text{ for some~$x_a\in \R^{k}$}\big\}.
$$
Then, consider
$$\sigma_s(A):=\sum_{i=1}^{k}\delta_{e_i}(A)+\cH^{N-1-k}(A\cap \mathbb{S}^{N-k-1}).$$ The operator~$L_{m,s}$ then is represented by
$$
L_{m,s}u(x)= \sum_{i=1}^{k}(-\Delta)^s_{x_i}u(x_a,x_b)+(-\Delta)^s_{x_b}u(x_a,x_b).
$$

Similarly, the operator~$L$ can be given as a sum of different order operators in this setting, e.g.
$$
Lu(x)=\sum_{i=1}^{k}(-\Delta)^{s^{(i)}}_{x_i}u(x_a,x_b)+(-\Delta)^{s^{(k+1)}}_{x_b}u(x_a,x_b)
$$
for~$s^{(1)}, \ldots,s^{(k+1)}>0$.

\section{Functional analytic properties of the superposition}\label{sec:45}

Here we provide the functional analytic setting needed to deal with the
mixed order operator~$L$ given in~\eqref{defi:superposition op}. For this purpose, we denote by~$\cE_{m,0}$ the~$L^2$ scalar product for any~$m\in \N$.

In the following, $\Omega\subset \R^N$ is an open set and we assume that~$\mu^+$ and~$\mu^-$ satisfy~\eqref{mu-assumption1} and~\eqref{mu-assumption2} for some~$s_\ast$, $\gamma>0$. Moreover, $\nu_s$ are given with respect to a family of probability measures~$\sigma_s$ for~$s>0$ which satisfy~\eqref{simple ellipticity} and~\eqref{Mass}.

For all all~$u$, $v\in \cX(\Omega)$, let
\begin{equation}\label{65yt4revc987654gvreg4polkmn}\begin{split}
\cE_+(u,v)&:=\int_{[0,+\infty)}\cE_{2s_1,s}(u,v)\,\mu^+(ds)\\
{\mbox{and }}\quad
\cE_-(u,v)&:=\int_{[0,+\infty)}\cE_{2s_1,s}(u,u)\,\mu^-(ds)=\int_{[0,s_{\ast})}\cE_{2s_1,s}(u,v)\,\mu^-(ds),
\end{split}\end{equation}
where~$\cE_{2s_1,s}(u,u)$ is defined in~\eqref{098765099werfgthkmnbgfvd876} for~$s>0$, $\cE_{2,0}(u,u)=\|u\|_{L^2(\R^N)}^2$, and the notation in~\eqref{sis2s3s4} is in use. 

We also recall that the spaces~$\cX^s(\Omega)$ and~$\cX(\Omega)$
have been introduced in formula~\eqref{vbncmxe8ty849et93476643807} and Definition~\ref{def:chiomega} respectively.

To use the results of \cref{sec:var23}, we introduce the following \textit{integrated measure} with respect to a given parameter~$t\ge s_\ast$
such that~$\mu^+([s_\ast,t])>0$ (notice that the existence of such a parameter is guaranteed by the assumption on~$\mu$ in~\eqref{mu-assumption1}). 
That is, we let~$\tilde{\sigma}_t$ be a probability measure on~$\bS^{N-1}$ given for~$U\in \cB(\bS^{N-1})$ by
\begin{equation}\label{measure ast}
\tilde{\sigma}_t(U):=\frac{1}{\mu^+([s_\ast,t])}
\int_{[s_\ast,t]}\sigma_s(U)\,\mu^+(ds).
\end{equation}
We also take~$\tilde{e}_t\in \bS^{N-1}$ such that
\begin{equation}\label{measure ast and e}
\max_{e\in \bS^{N-1}}M_{s_\ast,\tilde{\sigma}_t}(e)=M_{s_\ast,\tilde{\sigma}_t}(\tilde{e}_t)=\int_{\bS^{N-1}}|\tilde{e}_t\cdot \theta|^{2s_\ast}\,
\tilde{\sigma}_t(d\theta).
\end{equation}

Moreover, we let
$$
\tilde{\nu}_t(U):= \int_0^{+\infty} \int_{\bS^{N-1}}\chi_U(r\theta) r^{-1-2t}\,\tilde{\sigma}_t(d\theta)\,dr\quad\text{for~$U\in \cB(\R^N)$}.
$$
Analogously to Section~\ref{sec:var23} we then define
\begin{eqnarray*}
\tilde{L}_{m,t}u(x)&=&\frac{\tilde{c}_{m,t}}{2} \int_{\R^N}\delta_mu(x,y) \,\tilde{\nu}_t(dy) \\
{\mbox{and }} \qquad \tilde{\cE}_{2t_1,t}(u,u)&=&\frac{\tilde{c}_{2t_1,t}}{2}\iint_{\R^{2N}}(\delta_{t_1}u(x,y))^2 \,dx\,\tilde{\nu}_t(dy)
\end{eqnarray*}
as the respective operator and associated bilinear form to the measure~$\tilde{\nu}_t$.

Finally, we let~$\tilde{\cX}^{t}(\Omega)$ be the associated Hilbert space, as defined in~\eqref{vbncmxe8ty849et93476643807} with~$\tilde{\nu}_t$ in place of~$\nu_t$.

With this notation, we have the following:

\begin{prop}\label{prop of x omega part2}
Let~$\Omega\subset\R^N$ be open and bounded and 
let~$\mu^+$ be a nonnegative Borel measure on~$[0,+\infty)$.

Then, for all~$t\ge s_\ast$ such that~$\mu^+([s_\ast,t])>0$ there exists~$C_t>0$
such that, for any~$u\in \cX(\Omega)$,
\begin{equation}\label{poincare gen}
{C}_t\mu^+([s_\ast,t])\|u\|_{L^2(\Omega)}^2\leq \cE_+(u,u)
\end{equation}
and
\begin{equation}\label{vcniewr637rygufh894whtigvnigy8hgrie}  
C_t\,\mu^+([s_\ast,t]) M_{s_\ast,\tilde{\sigma}_t}(\tilde{e}_{t})
\tilde{\cE}_{2(s_\ast)_1,s_\ast}(u,u)\leq \cE_+(u,u).
\end{equation}
In particular, $\cX(\Omega)\subset \tilde{\cX}^{s_\ast}(\Omega)$. 

Also,~$\cX(\Omega)$ is a uniformly convex Hilbert space with scalar product~$\cE_+(u,v)$ and the embedding~$\cX(\Omega)\hookrightarrow L^2(\Omega)$ is compact.
\end{prop}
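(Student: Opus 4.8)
The plan is to establish, in this order, the two Poincar\'e-type inequalities \eqref{poincare gen}--\eqref{vcniewr637rygufh894whtigvnigy8hgrie}, then the Hilbert-space and uniform-convexity assertions, and finally the compact embedding, each time reducing matters to the results of Section~\ref{sec:var23}. It suffices to argue for $u$ in the dense subspace~\eqref{mnbvc0987rkgjujrfy} of $\cX(\Omega)$ --- so that $u\in C^\infty_c(\Omega)$ and every energy at a fixed order is finite --- and then pass to the limit, exploiting that $\cX(\Omega)$-convergence forces $\|u_n-u\|_{L^2}\to0$ and $\cE_+(u_n-u,u_n-u)\to0$ (hence $\cE_+(u_n,u_n)\to\cE_+(u,u)$), together with the Fatou-type lower semicontinuity of the various energies along $L^2$-convergent subsequences.

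First I would prove~\eqref{poincare gen}. For $s\in[s_\ast,t]$ one has $1\le (s_\ast)_1\le s_1\le t_1$, so the explicit constant in~\eqref{explicitconstant of poincare}, namely $2^{4s_1+1}\binom{2s_1}{s_1}^{-2}(\diam(\Omega))^{2s}$, is bounded from above by a quantity $\bar C_t$ depending only on $t$ and $\diam(\Omega)$ (using $\binom{2s_1}{s_1}\ge2$ and the monotonicity of $r\mapsto r^{2s}$). Proposition~\ref{poincare} then gives $\|u\|_{L^2(\R^N)}^2\le \bar C_t\,\cE_{2s_1,s}(u,u)$ for every such $s$; integrating over $[s_\ast,t]$ against $\mu^+$ and discarding the remaining nonnegative mass of $\mu^+$ yields $\cE_+(u,u)\ge \bar C_t^{-1}\,\mu^+([s_\ast,t])\,\|u\|_{L^2(\Omega)}^2$, that is,~\eqref{poincare gen}.

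Next, for~\eqref{vcniewr637rygufh894whtigvnigy8hgrie} I would apply the Fourier representation of Proposition~\ref{fourier rep} to the order-$s_\ast$ form associated with the base measure $\tilde{\sigma}_t$, obtaining $M_{s_\ast,\tilde{\sigma}_t}(\tilde{e}_t)\,\tilde{\cE}_{2(s_\ast)_1,s_\ast}(u,u)=\int_{\R^N}M_{s_\ast,\tilde{\sigma}_t}(\xi/|\xi|)\,|\xi|^{2s_\ast}\,|\widehat{u}(\xi)|^2\,d\xi$. Writing $M_{s_\ast,\tilde{\sigma}_t}(h)=\mu^+([s_\ast,t])^{-1}\int_{[s_\ast,t]}\big(\int_{\bS^{N-1}}|h\cdot\theta|^{2s_\ast}\,\sigma_s(d\theta)\big)\,\mu^+(ds)$ --- justified by the measurability assumption~\eqref{Mass} --- and using Tonelli's theorem to pull the $\mu^+$-integral outside, it remains to bound, for each fixed $s\in[s_\ast,t]$, the quantity $\int_{\R^N}\big(\int_{\bS^{N-1}}|\xi\cdot\theta|^{2s_\ast}\,\sigma_s(d\theta)\big)|\widehat{u}(\xi)|^2\,d\xi$. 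Here the elementary inequality $a^{2s_\ast}\le 1+a^{2s}$, valid for all $a\ge0$ because $s_\ast\le s$, together with the fact that $\sigma_s$ is a probability measure, gives $\int_{\bS^{N-1}}|\xi\cdot\theta|^{2s_\ast}\,\sigma_s(d\theta)\le 1+\int_{\bS^{N-1}}|\xi\cdot\theta|^{2s}\,\sigma_s(d\theta)$; invoking $M_{s,\sigma_s}(e_s)\le1$ from~\eqref{usoduevoltefprse769043-0876} and Proposition~\ref{fourier rep} once more, that quantity is at most $\|u\|_{L^2(\R^N)}^2+\cE_{2s_1,s}(u,u)$. Integrating over $[s_\ast,t]$ against $\mu^+$ and absorbing the resulting $L^2$-term by~\eqref{poincare gen} yields~\eqref{vcniewr637rygufh894whtigvnigy8hgrie}; taking the minimum of the two constants produced so far furnishes a single admissible $C_t$. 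Since $M_{s_\ast,\tilde{\sigma}_t}(\tilde{e}_t)>0$ by Lemma~\ref{lemma:nuovo}, \eqref{vcniewr637rygufh894whtigvnigy8hgrie} shows that an $\cX(\Omega)$-Cauchy sequence drawn from $C^\infty_c(\Omega)$ is Cauchy in $\tilde{\cX}^{s_\ast}(\Omega)$ as well, which gives the continuous inclusion $\cX(\Omega)\subset\tilde{\cX}^{s_\ast}(\Omega)$.

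Finally, completeness of $\cX(\Omega)$ follows verbatim from the argument of Proposition~\ref{hilbert-space}: an $\cX(\Omega)$-Cauchy sequence converges in $L^2$, and Fatou's lemma applied first to each $\cE_{2s_1,s}$ and then to the $\mu^+$-integral gives the lower semicontinuity of $\cE_+$, the membership of the limit, and the convergence in $\cX(\Omega)$. Since $\cE_+(\cdot,\cdot)$ is a symmetric bilinear form which, by~\eqref{poincare gen}, induces a norm equivalent to $\|\cdot\|_{\cX(\Omega)}$, the space $(\cX(\Omega),\cE_+)$ is a Hilbert space, hence uniformly convex (the parallelogram identity yields $\delta(\eps)=2-\sqrt{4-\eps^2}$ exactly as in Proposition~\ref{unif-convex}). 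The compactness of $\cX(\Omega)\hookrightarrow L^2(\Omega)$ then follows by composing the continuous inclusion $\cX(\Omega)\hookrightarrow\tilde{\cX}^{s_\ast}(\Omega)$ with the compact embedding $\tilde{\cX}^{s_\ast}(\Omega)\hookrightarrow L^2(\Omega)$ provided by Theorem~\ref{compact} (whose only structural hypothesis, namely~\eqref{simple ellipticity} for the single exponent $s_\ast$, reduces to $M_{s_\ast,\tilde{\sigma}_t}(\tilde{e}_t)>0$, again by Lemma~\ref{lemma:nuovo}). I expect the main obstacle to be~\eqref{vcniewr637rygufh894whtigvnigy8hgrie}: unlike in Proposition~\ref{prop:spaces ordered}, one cannot dominate $M_{s_\ast,\sigma_s}$ by a constant multiple of $M_{s,\sigma_s}$ pointwise on $\bS^{N-1}$ (the ratio degenerates near the equator orthogonal to $e_s$), so the reduction of the order from $s$ down to $s_\ast$ has to be carried out on the Fourier side and costs an $L^2$-term that can be reabsorbed only through the Poincar\'e inequality~\eqref{poincare gen}.
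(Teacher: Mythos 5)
Your proof is correct and follows essentially the same route as the paper's: Proposition~\ref{poincare} with a $t$-uniform constant for~\eqref{poincare gen}, the Fourier representation of Proposition~\ref{fourier rep} plus the elementary inequality $a^{2s_\ast}\le 1+a^{2s}$ (for $s\ge s_\ast$), Fubini, and reabsorption of the $L^2$-term via~\eqref{poincare gen} for~\eqref{vcniewr637rygufh894whtigvnigy8hgrie}, then completeness and uniform convexity as in Propositions~\ref{hilbert-space} and~\ref{unif-convex}, and compactness by composing with Theorem~\ref{compact}. The only cosmetic deviation is that you bound the Poincar\'e constant crudely ($\binom{2s_1}{s_1}\ge 2$ plus monotonicity in $s$, giving something growing like $16^{t_1}$) where the paper uses Gautschi's inequality to get a linear-in-$t_1$ constant, but both produce an admissible $C_t>0$.
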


\begin{proof}
We begin with the proof of the Poincar\'e inequality~\eqref{poincare gen}. For this, note that
by Proposition~\ref{poincare} we have, for any~$s\in[0,t]$, that
\begin{equation}\label{vbncmx7345y687342tu}
\cE_{2s_1,s}(u,u)\geq 2^{-1-4s_1}\binom{2s_1}{s_1}^2(\diam(\Omega))^{-2s}\|u\|_{L^2(\Omega)}^2.
\end{equation}
We observe that, using Gautschi's inequality, for~$m\in \N$ it holds that
\begin{align*}
2^{-1-4m}\binom{2m}{m}^2=\frac{(2m)!^2}{m!^4 2^{1+4m}}=\frac{(\Gamma(m+\frac12))^2}{2\pi (\Gamma(m+1))^2}\geq \frac{1}{2\pi (m+1)}.
\end{align*}
Hence, using this inequality with~$m:=s_1$, we see that 
\begin{equation*} \begin{split}&
2^{-1-4s_1}\binom{2s_1}{s_1}^2(\diam(\Omega))^{-2s}\geq 
\frac{1}{2\pi (s_1+1)}(\diam(\Omega))^{-2s}
\\&\qquad\geq
\frac{1}{2\pi (t_1+1)}\min\left\{1,(\diam(\Omega))^{-2t}\right\}=:{C}_t.
\end{split}\end{equation*}
This and~\eqref{vbncmx7345y687342tu} entail that, for any~$s\in[0,t]$,
$$
\cE_{2s_1,s}(u,u)\geq {C}_t\|u\|_{L^2(\Omega)}^2
$$ and therefore
\begin{eqnarray*}
\cE_+(u,u)
=\int_{[0,+\infty)}\cE_{2s_1,s}(u,u)\,\mu^+(ds)\geq\int_{[s_\ast,t]}\cE_{2s_1,s}(u,u)\,\mu^+(ds)
\geq {C}_t\mu^+([s_\ast,t])\|u\|_{L^2(\Omega)}^2,
\end{eqnarray*} which gives the desired inequality in~\eqref{poincare gen}.

We now check~\eqref{vcniewr637rygufh894whtigvnigy8hgrie}. For this, we exploit~\eqref{poincare gen} to see that
\begin{eqnarray*}&&\left(1+ \frac{1}{{C}_t}\right)\cE_+(u,u)\geq \cE_+(u,u)+\mu^+([s_\ast,t])\|u\|_{L^2(\R^N)}^2
\geq \int_{[s_\ast,t]} \left(\cE_{2s_1,s}(u,u)+
\|u\|_{L^2(\R^N)}^2\right)\,\mu^+(ds).
\end{eqnarray*}
Thus, using Proposition~\ref{fourier rep} and~\eqref{usoduevoltefprse769043-0876},
\begin{eqnarray*}
\left(1+ \frac{1}{{C}_t}\right)\cE_+(u,u)&\geq&
\int_{[s_\ast,t]} \left(\;\int_{\R^N} \left((M_{s,\sigma_s}(e_s))^{-1}
M_{s,\sigma_s}\left(\frac{\xi}{|\xi|}\right)|\xi|^{2s}+1\right)|\widehat{u}(\xi)|^2\,d\xi \right)\,\mu^+(ds)\\&\geq&
\int_{[s_\ast,t]} \left(\;\int_{\R^N} \left(
M_{s,\sigma_s}\left(\frac{\xi}{|\xi|}\right)|\xi|^{2s}+1\right)|\widehat{u}(\xi)|^2\,d\xi \right)\,\mu^+(ds).
\end{eqnarray*}
{F}rom this and Fubini's Theorem, recalling also that~$\sigma_s$ is a probability measure, we infer that
\begin{eqnarray*}
\left(1+ \frac{1}{{C}_t}\right)\cE_+(u,u)&\geq&\int_{\R^N} 
\left(\;\int_{[s_\ast,t]}\left(
M_{s,\sigma_s}\left(\frac{\xi}{|\xi|}\right)|\xi|^{2s}+1\right)\,\mu^+(ds)\right)|\widehat{u}(\xi)|^2\,d\xi 
\\&=&
\int_{\R^N}\left(\; \int_{[s_\ast,t]}\int_{\bS^{N-1}} \left(|\theta\cdot \xi|^{2s}+1\right)\,\sigma_s(d\theta)\,  \mu^+(ds)\right) |\hat{u}(\xi)|^2\, d\xi
\\&\geq&\int_{\R^N}\left(\; \int_{[s_\ast,t]}\int_{\bS^{N-1}}
|\theta\cdot \xi|^{2s_\ast} \,\sigma_s(d\theta)\,  \mu^+(ds)\right) |\hat{u}(\xi)|^2\, d\xi\\
&=&\mu^+([s_\ast,t])
\int_{\R^N}\left(\; \int_{\bS^{N-1}}
|\theta\cdot\xi|^{2s_\ast}\,\tilde\sigma_t(d\theta)\right) |\hat{u}(\xi)|^2\, d\xi,
\end{eqnarray*} where the measure~$\tilde\sigma_t$ is defined in~\eqref{measure ast}.

As a result,
\begin{eqnarray*}
\left(1+ \frac{1}{{C}_t}\right)\cE_+(u,u)&\geq&\mu^+([s_\ast,t])
\int_{\R^N}\left(\; \int_{\bS^{N-1}}
\left|\theta\cdot\frac{\xi}{|\xi|}\right|^{2s_\ast} \,\tilde\sigma_t(d\theta)\right)
|\xi|^{2s_\ast}|\hat{u}(\xi)|^2\, d\xi \\&=&\mu^+([s_\ast,t])
\int_{\R^N} M_{s_\ast,\tilde\sigma_t}\left(\frac{\xi}{|\xi|}\right)|\xi|^{2s_\ast}|\hat{u}(\xi)|^2\, d\xi\\&=&
\mu^+([s_\ast,t]) M_{s_\ast,\tilde{\sigma}_t}(\tilde{e}_{t})
\tilde{\cE}_{2(s_\ast)_1,s_\ast}(u,u),
\end{eqnarray*}
where~$\tilde{e}_{t}\in \bS^{N-1}$ is chosen as in~\eqref{measure ast and e}.
Rearranging gives the inequality~\eqref{vcniewr637rygufh894whtigvnigy8hgrie}.

To see the completeness of~$\cX(\Omega)$, let~$(u_n)_n$ be a Cauchy sequence with respect to~$\cE_+$. Then~\eqref{vcniewr637rygufh894whtigvnigy8hgrie} implies that~$(u_n)_n$ is a Cauchy sequence in~$\tilde{\cX}^{s_\ast}(\Omega)$, that is, there exists~$u\in \tilde{\cX}^{s_\ast}(\Omega)$ such that~$u_n\to u$ in~$\tilde{\cX}_{s_\ast}(\Omega)$ as~$n\to+\infty$. Passing to a subsequence, we may assume that
$$
u_n\to u\quad\text{in~$L^2(\Omega)$ and pointwisely almost everywhere as~$n\to +\infty$.}
$$
{F}rom here, the proof follows analogously to the proof of \cref{hilbert-space} with Fatou's Lemma.

The proof of the uniform convexity is analogous to the one of \cref{unif-convex}.

Finally, the compactness statement is a consequence of the continuous embedding~$\cX(\Omega)\hookrightarrow\tilde{\cX}^{s_\ast}(\Omega)$ by the first part and
the compact embedding~$\tilde{\cX}^{s_\ast}(\Omega)\hookrightarrow L^2(\Omega)$ by Theorem~\ref{compact}. 
\end{proof}

\begin{lemma}\label{prop of x omega part1}
Let~$\Omega\subset\R^N$ be open and bounded,
let~$\mu$ satisfy~\eqref{mu-assumption1} and~\eqref{mu-assumption2}
and let~$s_\ast$ be as in~\eqref{mu-assumption1} and~\eqref{mu-assumption2}. 

Let~$\tilde{s}\ge s_\ast$ be such that~$\mu^+([s_\ast,\tilde{s}])>0$ and
\begin{equation}\label{345678fghjew0-78lkjhgcx}
\tilde{\lambda}:=\inf_{e\in\bS^{N-1}}
M_{s_\ast, \tilde{\sigma}_{\tilde{s} }}(e)>0,
\end{equation}
where~$\tilde{\sigma}_{\tilde{s}}$ is the probability measure given in~\eqref{measure ast}.

Then, there exists~$\tilde{C}>0$, depending on~$\mu^+$, $s_\ast$,
$\tilde{s}$ and~$\tilde{\lambda}$, such that for all~$u\in \cX(\Omega)$,
\begin{equation}\label{sdfgy8932r576ygh-765432qfghokijhgf}
\cE_{s_\ast}(u,u)\leq \tilde{C} \cE_+(u,u).\end{equation}
In particular, $\cX(\Omega)\subset \cH^{s_\ast}_0(\Omega)$, where~$\cE_{s_\ast}$ 
is the Gagliardo semi-norm (defined in~\eqref{defesse097})
and
$$
\cH^{s_\ast}_0(\Omega):=\{u\in H^{s_\ast}(\R^N)\;:\; u\chi_{\R^N\setminus \Omega}\equiv 0\}.
$$

Moreover, the following embeddings hold:
\begin{enumerate}
\item If~$s_\ast<\frac{N}{2}$,
then~$\cX(\Omega)\subset L^{ 2_{ s_\ast }^{\ast} }(\R^N)$ with~$2_{s_\ast}^{\ast}:=\frac{2N}{N-2s_\ast}$.
\item If~$s_\ast\geq \frac{N}{2}$ and~$s_\ast-\frac{N}{2}\notin\N_0$, then~$\cX(\Omega)\subset C^{s_\ast-\frac{N}{2}}(\R^N)$.
\end{enumerate}
\end{lemma}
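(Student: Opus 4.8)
The core of the argument is the inequality~\eqref{sdfgy8932r576ygh-765432qfghokijhgf}; once this is established, the embedding~$\cX(\Omega)\subset \cH^{s_\ast}_0(\Omega)$ follows by density (smooth compactly supported functions are dense in~$\cX(\Omega)$ by construction, and they sit inside~$H^{s_\ast}(\R^N)$ with support in~$\overline\Omega$), and the Sobolev-type embeddings in items~(1) and~(2) then come for free from the classical embedding theorems for~$\cH^{s_\ast}_0(\Omega)$ (e.g.~\cite[Theorem~1.4.4.1]{zbMATH05960425} and~\cite[Theorem~2.8.1]{T78}), exactly as in Corollary~\ref{cor:sobolev embedding}. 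So the plan is to prove~\eqref{sdfgy8932r576ygh-765432qfghokijhgf} and then invoke these standard consequences.

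To prove~\eqref{sdfgy8932r576ygh-765432qfghokijhgf}, the natural route is through the Fourier representation of Proposition~\ref{fourier rep} combined with the integrated-measure construction~\eqref{measure ast}. First I would apply Proposition~\ref{prop of x omega part2}, specifically inequality~\eqref{vcniewr637rygufh894whtigvnigy8hgrie} with~$t:=\tilde s$, which already gives
$$
C_{\tilde s}\,\mu^+([s_\ast,\tilde s])\,M_{s_\ast,\tilde\sigma_{\tilde s}}(\tilde e_{\tilde s})\,\tilde{\cE}_{2(s_\ast)_1,s_\ast}(u,u)\le \cE_+(u,u),
$$
so it remains only to bound~$\cE_{s_\ast}(u,u)$ from above by a constant times~$\tilde{\cE}_{2(s_\ast)_1,s_\ast}(u,u)$. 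For this I would use Proposition~\ref{fourier rep} applied to the integrated measure~$\tilde\nu_{\tilde s}$ (which is legitimate since~$\tilde\sigma_{\tilde s}$ is a probability measure on~$\bS^{N-1}$): it yields
$$
\tilde{\cE}_{2(s_\ast)_1,s_\ast}(u,u)=\big(M_{s_\ast,\tilde\sigma_{\tilde s}}(\tilde e_{\tilde s})\big)^{-1}\int_{\R^N}M_{s_\ast,\tilde\sigma_{\tilde s}}\!\left(\frac{\xi}{|\xi|}\right)|\xi|^{2s_\ast}|\widehat u(\xi)|^2\,d\xi,
$$
while~$\cE_{s_\ast}(u,u)=\int_{\R^N}|\xi|^{2s_\ast}|\widehat u(\xi)|^2\,d\xi$ by~\cite[Theorems~1.8 and~1.9]{AJS18a}. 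The assumption~\eqref{345678fghjew0-78lkjhgcx} is precisely the statement that~$M_{s_\ast,\tilde\sigma_{\tilde s}}\ge\tilde\lambda>0$ on~$\bS^{N-1}$, so dividing gives
$$
\cE_{s_\ast}(u,u)\le \frac{M_{s_\ast,\tilde\sigma_{\tilde s}}(\tilde e_{\tilde s})}{\tilde\lambda}\,\tilde{\cE}_{2(s_\ast)_1,s_\ast}(u,u).
$$
Chaining the two displayed inequalities produces~\eqref{sdfgy8932r576ygh-765432qfghokijhgf} with
$$
\tilde C:=\frac{1}{\tilde\lambda\,C_{\tilde s}\,\mu^+([s_\ast,\tilde s])},
$$
which indeed depends only on~$\mu^+$, $s_\ast$, $\tilde s$ and~$\tilde\lambda$, as the~$M_{s_\ast,\tilde\sigma_{\tilde s}}(\tilde e_{\tilde s})$ factors cancel.

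I expect the only genuinely delicate point is making sure Proposition~\ref{fourier rep} applies verbatim to~$\tilde\nu_{\tilde s}$, i.e.\ checking that the hypotheses of that proposition (probability measure on~$\bS^{N-1}$, together with~\eqref{simple ellipticity}) are inherited by~$\tilde\sigma_{\tilde s}$; the ellipticity here is exactly furnished by~\eqref{345678fghjew0-78lkjhgcx}, and~$\tilde\sigma_{\tilde s}$ is a probability measure by its normalization in~\eqref{measure ast}, so this is essentially bookkeeping. The deduction of items~(1)–(2) from $\cX(\Omega)\subset\cH^{s_\ast}_0(\Omega)$ is then routine and identical in structure to Corollary~\ref{cor:sobolev embedding}, with the caveat that one should note~$C^\infty_c(\Omega)$ is dense in~$\cX(\Omega)$ (by definition of~$\cX(\Omega)$ in Definition~\ref{def:chiomega}) so that the inequality~\eqref{sdfgy8932r576ygh-765432qfghokijhgf}, first checked on smooth functions, passes to the whole space by completeness.
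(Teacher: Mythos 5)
Your proof is correct, and it is a cleaner route than the one taken in the paper. The paper's proof invokes only the Poincar\'e inequality \eqref{poincare gen} from Proposition~\ref{prop of x omega part2} and then re-runs the entire Fourier-side computation (Plancherel, Fubini, passage to the integrated measure~$\tilde\sigma_{\tilde s}$) from scratch, at the last step using \eqref{345678fghjew0-78lkjhgcx} to bound~$M_{s_\ast,\tilde\sigma_{\tilde s}}(\xi/|\xi|)$ from below and land directly on~$\cE_{s_\ast}(u,u)$. You instead take the already-proven inequality \eqref{vcniewr637rygufh894whtigvnigy8hgrie} as a black box, and supply only the missing link~$\tilde{\cE}_{2(s_\ast)_1,s_\ast}(u,u)\ge (M_{s_\ast,\tilde\sigma_{\tilde s}}(\tilde e_{\tilde s}))^{-1}\tilde\lambda\,\cE_{s_\ast}(u,u)$, which follows from Proposition~\ref{fourier rep} applied to~$\tilde\nu_{\tilde s}$ together with \eqref{345678fghjew0-78lkjhgcx} (this is the same mechanism as \eqref{0qwdoj320949b65m7uRM2} in Proposition~\ref{relation to hs}, specialized to~$\tilde\nu_{\tilde s}$, which you could also have cited). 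This avoids duplicating the Fubini/Plancherel manipulations that already appear verbatim in the proof of \eqref{vcniewr637rygufh894whtigvnigy8hgrie}, and in fact yields a marginally sharper constant ($\tilde C=(\tilde\lambda\,C_{\tilde s}\,\mu^+([s_\ast,\tilde s]))^{-1}$ versus the paper's $(1+1/C_{\tilde s})(\tilde\lambda\,\mu^+([s_\ast,\tilde s]))^{-1}$). Two minor remarks: applying Proposition~\ref{fourier rep} to~$\tilde\nu_{\tilde s}$ needs only that~$\tilde\sigma_{\tilde s}$ is a probability measure (which \eqref{measure ast} guarantees) and that~$M_{s_\ast,\tilde\sigma_{\tilde s}}(\tilde e_{\tilde s})>0$ (Lemma~\ref{lemma:nuovo}); the stronger bound \eqref{345678fghjew0-78lkjhgcx} is used only for the pointwise lower bound on~$M_{s_\ast,\tilde\sigma_{\tilde s}}(\xi/|\xi|)$, not for invoking the Fourier representation. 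Also, your closing remark about first checking the inequality on smooth functions is unnecessary, since the Fourier-side argument applies directly to every~$u\in\cX(\Omega)$ via the chain through Proposition~\ref{prop of x omega part2}.
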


\begin{proof}
We can exploit
Proposition~\ref{prop of x omega part2} with~$t:=\tilde{s}$ and obtain
from~\eqref{poincare gen} that, for any~$u\in \cX(\Omega)$,
\begin{equation*}
{C}_{\tilde{s}}\mu^+([s_\ast,\tilde{s}])\|u\|_{L^2(\Omega)}^2\leq \cE_+(u,u).
\end{equation*}
As a result, using also Proposition~\ref{fourier rep} and~\eqref{usoduevoltefprse769043-0876},
\begin{eqnarray*}
\left(1+ \frac{1}{{C}_{\tilde{s}}}\right)\cE_+(u,u)
&\geq& \cE_+(u,u)+\mu^+([s_\ast,\tilde{s}])\|u\|_{L^2(\R^N)}^2\\
&\geq&
\int_{[s_\ast,\tilde{s}]} \left(\;\int_{\R^N} \left((M_{s,\sigma_s}(e_s))^{-1}
M_{s,\sigma_s}\left(\frac{\xi}{|\xi|}\right)|\xi|^{2s}+1\right)|\widehat{u}(\xi)|^2\,d\xi \right)\,\mu^+(ds)\\&\geq&
\int_{[s_\ast,\tilde{s}]} \left(\;\int_{\R^N} \left(
M_{s,\sigma_s}\left(\frac{\xi}{|\xi|}\right)|\xi|^{2s}+1\right)|\widehat{u}(\xi)|^2\,d\xi \right)\,\mu^+(ds)\\
&=&\int_{\R^N} 
\left(\;\int_{[s_\ast,\tilde{s}]}\left(
M_{s,\sigma_s}\left(\frac{\xi}{|\xi|}\right)|\xi|^{2s}+1\right)\,\mu^+(ds)\right)|\widehat{u}(\xi)|^2\,d\xi\\&=&
\int_{\R^N}\left(\; \int_{[s_\ast,\tilde{s}]}\int_{\bS^{N-1}} \left(|\theta\cdot \xi|^{2s}+1\right)\,\sigma_s(d\theta)\,  \mu^+(ds)\right) |\hat{u}(\xi)|^2\, d\xi \\&\geq&\int_{\R^N}\left(\; \int_{[s_\ast,\tilde{s}]}\int_{\bS^{N-1}}
|\theta\cdot \xi|^{2s_\ast} \,\sigma_s(d\theta)\,  \mu^+(ds)\right) |\hat{u}(\xi)|^2\, d\xi \\&=&\mu^+([s_\ast,\tilde{s}])
\int_{\R^N}\left(\; \int_{\bS^{N-1}}
|\theta\cdot\xi|^{2s_\ast}\,\tilde\sigma_{\tilde{s}}(d\theta)\right) |\hat{u}(\xi)|^2\, d\xi,
\end{eqnarray*}
where the measure~$\tilde\sigma_{\tilde{s}}$ is defined in~\eqref{measure ast}.

Consequently, recalling~\eqref{345678fghjew0-78lkjhgcx},
\begin{eqnarray*}
\left(1+ \frac{1}{{C}_{\tilde{s}}}\right)\cE_+(u,u)
&\geq& \mu^+([s_\ast,\tilde{s}])
\int_{\R^N}\left(\; \int_{\bS^{N-1}}
\left|\theta\cdot\frac{\xi}{|\xi|}\right|^{2s_\ast} \,\tilde\sigma_{\tilde{s}}(d\theta)\right)
|\xi|^{2s_\ast}|\hat{u}(\xi)|^2\, d\xi \\
&=&\mu^+([s_\ast,\tilde{s}])
\int_{\R^N} M_{s_\ast,\tilde\sigma_{\tilde{s}}}
\left(\frac{\xi}{|\xi|}\right)|\xi|^{2s_\ast}|\hat{u}(\xi)|^2\, d\xi \\
&\ge& \mu^+([s_\ast,\tilde{s}])\tilde{\lambda}
\int_{\R^N}|\xi|^{2s_\ast}|\hat{u}(\xi)|^2\, d\xi\\
&=& \mu^+([s_\ast,\tilde{s}])\tilde{\lambda}\,\cE_{s_\ast}(u,u).
\end{eqnarray*}
This  establishes~\eqref{sdfgy8932r576ygh-765432qfghokijhgf}.

The second part of Lemma~\ref{prop of x omega part1}
follows immediately from Corollary~\ref{cor:sobolev embedding}.
\end{proof}

\begin{remark}
Following the proofs of \cref{prop of x omega part2} and \cref{prop of x omega part1} the following adjustments also hold for general open sets of~$\R^N$:
\begin{enumerate}
\item Let $\mu^+$ be a nonnegative Borel measure on~$[0,+\infty)$, $t\geq s_*$ be such that~$\mu^+([s_*,t])>0$. Then~$\cX(\Omega)\subset \tilde{X}^{s_*}(\Omega)$ for any open set $\Omega\subset \R^N$ and it holds
$$
\cE_{2(s_*)_1,s_*}(u,u)\leq \frac{1+\mu^+([s_*,t])}{\mu^+([s_*,t])M_{s_*,\tilde{\sigma_t}}(\tilde{e}_t)}\Big(\|u\|_{L^2(\R^N)}^2+\cE_+(u,u)\Big)\quad\text{for all $u\in \cX(\Omega)$.}
$$
\item Under the assumptions of \cref{prop of x omega part1} (but with $\Omega$ no necessarily bounded), it holds
$$
\cE_{s_*}(u,u)\leq \frac{1+\mu^+([s_*,\tilde{s}])}{\mu^+([s_*,\tilde{s}])\tilde{\lambda}}\Big(\|u\|_{L^2(\R^N)}^2+\cE_{+}(u,u)\Big)\quad\text{for all $u\in \cX^(\Omega)$.}
$$
In particular, $\cX(\Omega)\subset \cH^{s_*}_0(\Omega)$ and the embeddings as stated in points~$1$ and~$2$ of
Lemma~\ref{prop of x omega part1} hold.
\end{enumerate}
\end{remark}

We will now deal with the problem of ``reabsorbing'' the energy with the ``wrong'' sign given by~$\cE_-$ into the energy with the positive sign
given by~$\cE_+$. The precise statement is as follows:

\begin{prop}\label{prop of x omega part3}
Let~$\Omega\subset\R^N$ be open and bounded and
let~$\mu$ satisfy~\eqref{mu-assumption1} and~\eqref{mu-assumption2}.
Let~$s_\ast$ be as in~\eqref{mu-assumption1} and~\eqref{mu-assumption2} and let~$\tilde{s}\ge s_\ast$ be such that~$\mu^+([s_\ast,\tilde{s}])>0$ and
\begin{equation*}
\inf_{e\in\bS^{N-1}}M_{s_\ast, \tilde{\sigma}_{\tilde{s} }}(e)>0,
\end{equation*}
where~$\tilde{\sigma}_{\tilde{s}}$ is the probability measure given in~\eqref{measure ast}.

Then, $\cE_-$ is well-defined and there exists~$C>0$, depending on~$\mu$, $s_\ast$, $\tilde{s}$
and~$\lambda$ (as given in~\eqref{simple ellipticity}), such that,
for all~$u\in \cX(\Omega)$,
$$
\cE_-(u,u)\leq C\gamma \cE_+(u,u), 
$$ where~$\gamma$ is as in~\eqref{mu-assumption2}.
\end{prop}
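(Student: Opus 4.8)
The plan is to reduce everything to a single uniform estimate: the existence of a constant $C_\star>0$, independent of $s$, such that
\[
\cE_{2s_1,s}(u,u)\le C_\star\,\cE_+(u,u)\qquad\text{for all }s\in[0,s_\ast)\text{ and all }u\in\cX(\Omega).
\]
Granting this, integrating against $\mu^-$ and using that $\mu^-$ is concentrated on $[0,s_\ast)$ together with~\eqref{mu-assumption2} gives $\cE_-(u,u)\le C_\star\,\mu^-([0,s_\ast))\,\cE_+(u,u)\le C_\star\,\gamma\,\mu^+([s_\ast,+\infty))\,\cE_+(u,u)$, and the claim follows with $C:=C_\star\,\mu^+([0,+\infty))<+\infty$. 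As for the well-definedness of $\cE_-$: each $\cE_{2s_1,s}(u,u)$ is nonnegative, the map $s\mapsto\cE_{2s_1,s}(u,u)$ is $\mu^-$-measurable by assumption~\eqref{Mass} (via Proposition~\ref{prop:bilinear} and a density argument in $\cX(\Omega)$, as in the remark following~\eqref{Mass}), and finiteness of the integral is exactly the content of the uniform estimate.

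To prove the uniform estimate, fix $u\in\cX(\Omega)$. By Lemma~\ref{prop of x omega part1} (applicable thanks to the ellipticity hypothesis on $\tilde{\sigma}_{\tilde{s}}$ assumed here) one has $u\in\cH^{s_\ast}_0(\Omega)\subset H^{s_\ast}(\R^N)$ and $\cE_{s_\ast}(u,u)\le\tilde{C}\,\cE_+(u,u)$, with $\cE_{s_\ast}$ the Gagliardo form in~\eqref{defesse097}; and Proposition~\ref{prop of x omega part2} with $t:=\tilde{s}$ gives $\|u\|_{L^2(\R^N)}^2=\|u\|_{L^2(\Omega)}^2\le\big(C_{\tilde{s}}\,\mu^+([s_\ast,\tilde{s}])\big)^{-1}\cE_+(u,u)$. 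If $s=0$, then $\cE_{2s_1,s}(u,u)=\cE_{2,0}(u,u)=\|u\|_{L^2(\Omega)}^2$ and the estimate already holds. If $s\in(0,s_\ast)$, I would use $H^{s_\ast}(\R^N)\subset H^s(\R^N)\subset\cX^s$ (the first inclusion because $(1+|\xi|^2)^s\le(1+|\xi|^2)^{s_\ast}$, the second by Proposition~\ref{relation to hs}) to legitimate applying Proposition~\ref{fourier rep} to $u$, and then bound, using $0\le M_{s,\sigma_s}(\xi/|\xi|)\le1$ from~\eqref{usoduevoltefprse769043-0876} and $M_{s,\sigma_s}(e_s)\ge\lambda$ from~\eqref{simple ellipticity},
\[
\cE_{2s_1,s}(u,u)=\frac{1}{M_{s,\sigma_s}(e_s)}\int_{\R^N}M_{s,\sigma_s}\left(\frac{\xi}{|\xi|}\right)|\xi|^{2s}|\widehat{u}(\xi)|^2\,d\xi\le\frac{1}{\lambda}\int_{\R^N}|\xi|^{2s}|\widehat{u}(\xi)|^2\,d\xi.
\]
Since $|\xi|^{2s}\le1+|\xi|^{2s_\ast}$ for every $\xi$ when $0\le s\le s_\ast$, the last integral is at most $\|u\|_{L^2(\R^N)}^2+\cE_{s_\ast}(u,u)$, and combining with the two bounds above yields the uniform estimate with $C_\star:=\lambda^{-1}\big((C_{\tilde{s}}\,\mu^+([s_\ast,\tilde{s}]))^{-1}+\tilde{C}\big)$, manifestly independent of $s$; consequently $C=C_\star\,\mu^+([0,+\infty))$ depends only on $\mu$, $s_\ast$, $\tilde{s}$ and $\lambda$ (and on $\Omega$ through $C_{\tilde{s}}$).

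The main obstacle is keeping the constant uniform as $s\uparrow s_\ast$; this is precisely what the crude inequality $|\xi|^{2s}\le1+|\xi|^{2s_\ast}$ handles, at the price of reintroducing $\|u\|_{L^2}^2$, which the Poincar\'e inequality~\eqref{poincare gen} then absorbs. A secondary, bookkeeping-type point is the measurability needed for $\cE_-$ to be defined, for which~\eqref{Mass} is tailored. One could replace the Fourier computation by a direct appeal to Proposition~\ref{prop:spaces ordered} (with $\bar{s}:=s_\ast$, comparing $\nu_s$ to the order-$s_\ast$ measure with angular density $\tilde{\sigma}_{\tilde{s}}$, whose space contains $\cX(\Omega)$ by Proposition~\ref{prop of x omega part2}), since $M_{s,\sigma_s}\le1\le\tilde{\lambda}^{-1}M_{s_\ast,\tilde{\sigma}_{\tilde{s}}}$ on $\bS^{N-1}$ with $\tilde{\lambda}:=\inf_{e\in\bS^{N-1}}M_{s_\ast,\tilde{\sigma}_{\tilde{s}}}(e)>0$ provides a uniform comparison constant; I would keep the Fourier route as the primary one, as it is the most transparent here.
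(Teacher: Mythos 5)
Your proof is correct, and it takes a genuinely different (and more direct) route than the paper's. Both proofs share the first reduction, namely $\cE_{2s_1,s}(u,u)\le\lambda^{-1}\cE_s(u,u)$ via \cref{fourier rep} (the paper phrases this through \cref{relation to hs} and~\eqref{simple ellipticity}, obtaining its~\eqref{vcn832432othjkshgerk}). They diverge afterwards: the paper compares the Gagliardo forms $\cE_s$ and $\cE_{\tilde{s}}$ by invoking \cref{prop:spaces ordered} with the uniform surface measure playing the role of $\sigma_s$ (verifying the intermediate hypotheses~\eqref{assumption basis}, \eqref{simple ellipticity} and~\eqref{suppcontster897654} in that special case), and then appeals to \cref{prop of x omega part1}. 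You instead apply the elementary pointwise inequality $|\xi|^{2s}\le 1+|\xi|^{2s_\ast}$ inside the Fourier representation, splitting $\cE_s(u,u)\le\|u\|_{L^2}^2+\cE_{s_\ast}(u,u)$, and absorb the two summands with the Poincar\'e bound~\eqref{poincare gen} and the estimate $\cE_{s_\ast}(u,u)\le\tilde C\,\cE_+(u,u)$ from \cref{prop of x omega part1} respectively. Your route is shorter, tracks the constant $C_\star=\lambda^{-1}\big((C_{\tilde s}\mu^+([s_\ast,\tilde s]))^{-1}+\tilde C\big)$ transparently, and sidesteps a small awkwardness in the paper's writeup (where \cref{prop of x omega part1} is cited as giving $\cE_{\tilde s}(u,u)\le\tilde C\,\cE_+(u,u)$ although its statement is for $\cE_{s_\ast}$); the paper's approach has the virtue of reusing the general comparison machinery of \cref{prop:spaces ordered}. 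You also correctly justify the applicability of \cref{fourier rep} by first establishing the chain $\cX(\Omega)\subset\cH^{s_\ast}_0(\Omega)\subset H^{s_\ast}(\R^N)\subset H^s(\R^N)\subset\cX^s$ for $0<s<s_\ast$, and you correctly note (as does the paper implicitly, through $\overline C$) that the constant also depends on $\Omega$ via $\diam(\Omega)$, which is a minor imprecision in the statement shared by both proofs.
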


\begin{proof}
First note that if~$\mu^-([0,+\infty))=0$, then nothing needs to be shown. Hence, from now on we assume that~$\mu^-([0,+\infty))>0$.

We observe that, thanks to \cref{relation to hs} and the assumption
in~\eqref{simple ellipticity},
\begin{equation}\label{vcn832432othjkshgerk}
\cE_{2s_1,s}(u,u)\leq (M_{s,\sigma_s}(e_s))^{-1}\cE_{s}(u,u)\leq \frac{1}{\lambda} \cE_s(u,u).
\end{equation}

Now we would like to exploit Proposition~\ref{prop:spaces ordered}
with~$\sigma_s(d\theta):=({\mathcal{H}}^{N-1}(\bS^{N-1}))^{-1} d\cH^{N-1}_{\theta}$ for all~$s\in[0,\tilde{s}]$. 
With this choice,
the conditions in~\eqref{definition measure}
and~\eqref{assumption basis} are satisfied. Moreover, for all~$e\in\bS^{N-1}$,
\begin{eqnarray*}
\frac1{{\mathcal{H}}^{N-1}(\bS^{N-1})}\int_{\bS^{N-1}}|e\cdot\theta|^{2s}\,d\cH^{N-1}_{\theta}
&=&\frac1{{\mathcal{H}}^{N-1}(\bS^{N-1})}\int_{\bS^{N-1}}|\theta_1|^{2s}\,d\cH^{N-1}_{\theta}
\\& \ge
&\frac1{{\mathcal{H}}^{N-1}(\bS^{N-1})}\int_{\bS^{N-1}}
|\theta_1|^{2\tilde{s}}\,d\cH^{N-1}_{\theta},
\end{eqnarray*}
and so~\eqref{simple ellipticity} is also fulfilled in~$[0,\tilde{s}]$ with
$$\lambda:=\frac1{{\mathcal{H}}^{N-1}(\bS^{N-1})}\int_{\bS^{N-1}}
|\theta_1|^{2\tilde{s}}\,d\cH^{N-1}_{\theta}.$$

Furthermore, in this setting, we have that, for any~$0<t\le s\le \tilde{s}$ and for any~$e\in\bS^{N-1}$,
\begin{eqnarray*} &&{\mathcal{H}}^{N-1}(\bS^{N-1})
M_{t,\sigma_t}(e)=\int_{\bS^{N-1}}|e\cdot\theta|^{2t}\,d\cH^{N-1}_{\theta}=
\int_{\bS^{N-1}}|\theta_1|^{2t}\,d\cH^{N-1}_{\theta}\le 1
\\&&\qquad\le\frac{\displaystyle
\int_{\bS^{N-1}}|\theta_1|^{2s}\,d\cH^{N-1}_{\theta}}{\displaystyle
\int_{\bS^{N-1}}|\theta_1|^{2\tilde{s}}\,d\cH^{N-1}_{\theta}}
=\frac{\displaystyle
{\mathcal{H}}^{N-1}(\bS^{N-1})}{\displaystyle
\int_{\bS^{N-1}}|\theta_1|^{2\tilde{s}}\,d\cH^{N-1}_{\theta}}
M_{s,\sigma_s}(e),
\end{eqnarray*} and this entails that~\eqref{suppcontster897654}
is satisfied with
$$\Lambda:=\left(\;\int_{\bS^{N-1}}|\theta_1|^{2\tilde{s}}\,d\cH^{N-1}_{\theta}\right)^{-1}.$$

As a result, we are in the position of employing Proposition~\ref{prop:spaces ordered} in this case and we obtain that, for all~$s\in[0,\tilde{s}]$,
\begin{eqnarray*}
\cE_s(u,u)\le 
{\mathcal{H}}^{N-1}(\bS^{N-1}) \left(\;\int_{\bS^{N-1}}
|\theta_1|^{2\tilde{s}}\,d\cH^{N-1}_{\theta}\right)^{-2}\big(1+\overline{C}\big)\cE_{\tilde{s}}(u,u),
\end{eqnarray*}
where (recall~\eqref{explicitconstant of poincare}) $$
\overline{C}:=\sup_{s\in[0,\tilde{s}]}2^{4s_1+1}\binom{2s_1}{s_1}^{-2}(\diam(\Omega))^{-2s}. $$
We point out that~$\overline{C}$ depends only on~$\tilde{s}$.

Plugging this information into~\eqref{vcn832432othjkshgerk} we find that, for all~$s\in[0,\tilde{s}]$,
$$
\cE_{2s_1,s}(u,u)\leq \frac{(1+\overline{C}){\mathcal{H}}^{N-1}(\bS^{N-1}) }{\lambda} \left(\;\int_{\bS^{N-1}}
|\theta_1|^{2\tilde{s}}\,d\cH^{N-1}_{\theta}\right)^{-2}
\cE_{\tilde{s}}(u,u).$$
As a result,
\begin{equation}\label{vbcnxiory3826493658gfsdjk}\begin{split}&
\cE_-(u,u)=\int_{[0,s_{\ast})}\cE_{2s_1,s}(u,u)\,\mu^-(ds)
\\&\qquad\le 
\frac{(1+\overline{C}){\mathcal{H}}^{N-1}(\bS^{N-1}) }{\lambda} \left(\;\int_{\bS^{N-1}}
|\theta_1|^{2\tilde{s}}\,d\cH^{N-1}_{\theta}\right)^{-2} \mu^-([0,s_{\ast}))
\cE_{\tilde{s}}(u,u).
\end{split}\end{equation}

Moreover, by Lemma~\ref{prop of x omega part1} we have that
\begin{equation*}
\tilde{C}\cE_+(u,u)\geq \cE_{\tilde{s}}(u,u).
\end{equation*}
This, combined with~\eqref{vbcnxiory3826493658gfsdjk} and~\eqref{mu-assumption2}, gives that
\begin{eqnarray*}
\cE_-(u,u)&\le&  
\frac{(1+\overline{C}){\mathcal{H}}^{N-1}(\bS^{N-1}) }{\lambda} \left(\;\int_{\bS^{N-1}}
|\theta_1|^{2\tilde{s}}\,d\cH^{N-1}_{\theta}\right)^{-2}
\gamma \mu^+([s_{\ast},+\infty))
\cE_{\tilde{s}}(u,u)\\&\le& 
\frac{(1+\overline{C}){\tilde{C}\mathcal{H}}^{N-1}(\bS^{N-1}) }{\lambda} \left(\;\int_{\bS^{N-1}}
|\theta_1|^{2\tilde{s}}\,d\cH^{N-1}_{\theta}\right)^{-2}
\gamma \,\cE_+(u,u),\end{eqnarray*}
which leads to the desired result.
\end{proof}

\begin{remark}\label{dirichlet prob}
Though we aim at studying the nonlinear problem in the forthcoming section, let us remark that a combination of Propositions~\ref{prop of x omega part2} and~\ref{prop of x omega part3} immediately gives the following statement.

Let the assumptions in Proposition~\ref{prop of x omega part3} be satisfied and let $C$ be given as stated there. If either~$\mu^-([0,+\infty))=0$ or~$\gamma$ in~\eqref{mu-assumption2}
is sufficiently small, then for any~$f\in L^2(\Omega)$ there exists a unique~$u\in \cX(\Omega)$ such that
\begin{eqnarray*}\left\{\begin{aligned}
		Lu&=f&&\text{in~$\Omega$,}\\
		u&=0 &&\text{in~$\R^N\setminus \Omega$,}
	\end{aligned}\right.\end{eqnarray*}
in the sense that
$$
\cE(u,\phi)=\int_{\Omega}f\phi\, dx\qquad\text{for all $\phi \in \cX(\Omega)$.}
$$
\end{remark}

\section{The nonlinear problem and its variational setting}\label{sec:67}

In this section we deal with the application of the theory on superposition operators of mixed order developed so far to the study of nonlinear problems of the type~\eqref{prob-gen}.

We recall the definitions in~\eqref{65yt4revc987654gvreg4polkmn}
and we set
$$ \cE(u,u):=\cE_+(u,u)-\cE_-(u,u).$$
We have that:

\begin{lemma}\label{bfndi3o2r73lemma}
Let~$\Omega\subset\R^N$ be open and bounded and
let~$\mu$ satisfy~\eqref{mu-assumption1} and~\eqref{mu-assumption2}. Suppose that~\eqref{Mass} and~\eqref{final assumption} are fulfilled.

Then, there exists~$\gamma_0>0$, depending on~$\mu$, $s_{\#}$, $\tilde{s}$ and~$\lambda$ (as given in~\eqref{simple ellipticity}), such that for all~$\gamma\in[0,\gamma_0]$ there exists~$c_1>0$, depending on~$\gamma$, such that
\begin{equation}\label{embedding-mp2}
\cE(u,u)\geq c_1\|u\|_{L^2(\Omega)}^2
\end{equation}
and
\begin{equation}\label{embedding-mp} \cE(u,u)\geq
c_1\|u\|_{L^{2^{\ast}}(\Omega)}^2,
\end{equation} where~$2^\ast$ is as in~\eqref{2 sharp}.

In particular, $\cE$ defines a scalar product on~$\cX(\Omega)$.

Moreover,
\begin{equation}\label{bvn86801qsx5tyh9o}
{\mbox{the embedding~$\cX(\Omega)\hookrightarrow L^q(\Omega)$ is compact for any~$q\in[1,2^{\ast})$.}}\end{equation}
\end{lemma}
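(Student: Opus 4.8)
The plan is to combine the coercivity estimates of Proposition \ref{prop of x omega part2} and Lemma \ref{prop of x omega part1} with the reabsorption estimate of Proposition \ref{prop of x omega part3}. First I would fix $\tilde{s}\ge s_{\#}$ as in \eqref{final assumption}, so that $\mu^+([s_{\#},\tilde{s}])>0$ and the ellipticity condition $\inf_{e\in\bS^{N-1}}M_{s_{\#},\tilde{\sigma}_{\tilde{s}}}(e)>0$ holds. Applying Proposition \ref{prop of x omega part3} (with $s_{\#}$ playing the role of $s_\ast$) produces a constant $C>0$ such that $\cE_-(u,u)\le C\gamma\,\cE_+(u,u)$ for all $u\in\cX(\Omega)$. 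Setting $\gamma_0:=\tfrac1{2C}$, for every $\gamma\in[0,\gamma_0]$ we obtain
\begin{equation*}
\cE(u,u)=\cE_+(u,u)-\cE_-(u,u)\ge(1-C\gamma)\,\cE_+(u,u)\ge\tfrac12\,\cE_+(u,u).
\end{equation*}
(One must shrink $\gamma_0$ further if needed so that it is also small enough for Proposition \ref{prop of x omega part3} to apply, but its hypothesis only needs $\gamma\in[0,1)$, so this is automatic.) In particular $\cE(u,u)\ge0$, and $\cE(u,u)=0$ forces $\cE_+(u,u)=0$, hence $\|u\|_{L^2(\Omega)}=0$ by \eqref{poincare gen}; so $\cE$ is a scalar product on $\cX(\Omega)$.

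Next, \eqref{embedding-mp2} follows directly: combining the above with the Poincaré inequality \eqref{poincare gen} (applied with $t=\tilde{s}$) gives $\cE(u,u)\ge\tfrac12 C_{\tilde{s}}\,\mu^+([s_{\#},\tilde{s}])\,\|u\|_{L^2(\Omega)}^2$, so \eqref{embedding-mp2} holds with $c_1:=\tfrac12 C_{\tilde{s}}\,\mu^+([s_{\#},\tilde{s}])$ (possibly decreased below to also serve \eqref{embedding-mp}). For \eqref{embedding-mp} I would distinguish the two cases in the definition \eqref{2 sharp} of $2^\ast$. If $N>2s_{\#}$, then Lemma \ref{prop of x omega part1} gives $\cE_{s_{\#}}(u,u)\le\tilde C\,\cE_+(u,u)\le 2\tilde C\,\cE(u,u)$, and the classical fractional Sobolev embedding $\cH^{s_{\#}}_0(\Omega)\hookrightarrow L^{2^\ast}(\Omega)$ (point (1) of Corollary \ref{cor:sobolev embedding}, or the standard Sobolev inequality applied via $\cE_{s_{\#}}(u,u)=\int_{\R^N}|\xi|^{2s_{\#}}|\widehat u(\xi)|^2\,d\xi$) yields $\|u\|_{L^{2^\ast}(\Omega)}^2\le C'\cE_{s_{\#}}(u,u)\le C''\cE(u,u)$. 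If $N\le 2s_{\#}$, then $2^\ast=p$ is a fixed finite exponent; Lemma \ref{prop of x omega part1} together with point (2) of Corollary \ref{cor:sobolev embedding} (when $s_{\#}-N/2\notin\N_0$) or, more robustly, with the embedding $\cH^{s_{\#}}_0(\Omega)\hookrightarrow L^p(\Omega)$ for bounded $\Omega$ and any finite $p$ (which holds for all $s_{\#}\ge N/2$ since $\cH^{s_{\#}}_0(\Omega)\subset\cH^{N/2-\eps}_0(\Omega)$ continuously for small $\eps>0$ by Proposition \ref{prop:spaces ordered} applied with the standard surface measure, and $\cH^{N/2-\eps}_0(\Omega)\hookrightarrow L^p(\Omega)$) gives the same bound $\|u\|_{L^p(\Omega)}^2\le C''\cE(u,u)$. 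Taking $c_1$ to be the minimum of the constants produced in the two estimates establishes \eqref{embedding-mp2} and \eqref{embedding-mp} simultaneously.

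Finally, \eqref{bvn86801qsx5tyh9o} follows from the compact embedding $\cX(\Omega)\hookrightarrow L^2(\Omega)$ already established in Proposition \ref{prop of x omega part2}, combined with the continuous embedding $\cX(\Omega)\hookrightarrow L^{2^\ast}(\Omega)$ just proved: for $q\in[1,2^\ast)$ one interpolates, writing $\tfrac1q=\tfrac{\vartheta}{2}+\tfrac{1-\vartheta}{2^\ast}$ with $\vartheta\in(0,1]$, so that $\|u_n-u\|_{L^q(\Omega)}\le\|u_n-u\|_{L^2(\Omega)}^{\vartheta}\|u_n-u\|_{L^{2^\ast}(\Omega)}^{1-\vartheta}$; a bounded sequence in $\cX(\Omega)$ has a subsequence converging in $L^2(\Omega)$ by compactness and is bounded in $L^{2^\ast}(\Omega)$, hence converges in $L^q(\Omega)$. (When $q<2$ one simply uses $\|\cdot\|_{L^q(\Omega)}\le|\Omega|^{1/q-1/2}\|\cdot\|_{L^2(\Omega)}$.)

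I expect the only mildly delicate point to be the case $N\le 2s_{\#}$ in \eqref{embedding-mp}, where one must either invoke the borderline-excluded Hölder embedding of Corollary \ref{cor:sobolev embedding}(2) or, to cover the excluded integer values $s_{\#}-N/2\in\N_0$, pass first to a slightly smaller order via Proposition \ref{prop:spaces ordered} with the surface-measure family; everything else is a direct assembly of results already proved in the excerpt.
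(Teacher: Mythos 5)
Your argument is correct, and for the first part (the coercivity estimates \eqref{embedding-mp2}, \eqref{embedding-mp} and the scalar product) it coincides with the paper's: both start from the reabsorption bound $\cE_-(u,u)\le C\gamma\,\cE_+(u,u)$ of Proposition~\ref{prop of x omega part3}, pass to $\cE(u,u)\ge(1-C\gamma)\cE_+(u,u)$, and then invoke Proposition~\ref{prop of x omega part2} and Lemma~\ref{prop of x omega part1} with $s_\#$ in place of $s_\ast$. For the compact embedding \eqref{bvn86801qsx5tyh9o}, however, you take a genuinely different and more self-contained route. The paper passes through $\cX(\Omega)\subset\cH^{s_\#}_0(\Omega)$ and invokes compact Sobolev embeddings from Triebel, handling the three regimes $s_\#<N/2$, $s_\#\ge N/2$ with $s_\#-N/2\notin\N_0$ (via Ascoli--Arzel\`a), and the integer borderline case by dropping to a nearby exponent; it also needs a preliminary reduction to smooth domains. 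You instead exploit the compact embedding $\cX(\Omega)\hookrightarrow L^2(\Omega)$ already established in Proposition~\ref{prop of x omega part2}, couple it with the continuous embedding $\cX(\Omega)\hookrightarrow L^{2^\ast}(\Omega)$ just derived, and interpolate: for $2<q<2^\ast$ the identity $\tfrac1q=\tfrac{\vartheta}{2}+\tfrac{1-\vartheta}{2^\ast}$ with $\vartheta>0$ upgrades $L^2$-convergence of a bounded sequence to $L^q$-convergence, while $q\le 2$ is handled by H\"older on the bounded domain. This sidesteps the smooth-boundary reduction, the external compactness citation, and the case split on $s_\#$ relative to $N/2$, at the modest cost of needing the continuous $L^{2^\ast}$ embedding to be in place first (which it is, as you prove it in the same lemma). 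Both approaches are valid; yours is arguably the leaner assembly of the already-proved ingredients.
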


\begin{proof}
By Proposition~\ref{prop of x omega part3} we have that, for all~$u\in \cX(\Omega)$,
\begin{equation}\label{vbncmxtur84u3ogfebwjugfo765}
\cE(u,u)=\cE_+(u,u)-\cE_-(u,u)\geq (1-C\gamma)\cE_+(u,u),
\end{equation}
where~$C>0$ depends on~$\mu$, $s_{\#}$, $\tilde{s}$ and~$\lambda$.

Thus, from formula~\eqref{poincare gen} in Proposition~\ref{prop of x omega part2}, if~$\gamma$ is sufficiently small,
\begin{equation*}
\cE(u,u)\geq (1-C\gamma){C}_{\tilde{s}}\mu^+([s_\ast,\tilde{s}])\|u\|_{L^2(\Omega)}^2,
\end{equation*} which gives~\eqref{embedding-mp2}. 

Moreover, thanks to~\eqref{final assumption} we can exploit Lemma~\ref{prop of x omega part1} with~$s_{\#}$ in place of~$s_\ast$. In this way, we see that~$\cX(\Omega)\subset L^{ 2^{\ast} }(\R^N)$. This fact and~\eqref{vbncmxtur84u3ogfebwjugfo765}
entail~\eqref{embedding-mp}.

We now check the compact embedding in~\eqref{bvn86801qsx5tyh9o}.
We begin with the additional assumption that~$\Omega$ has a smooth boundary. Since~$\Omega$ is a bounded open set, we have that~$\cX(\Omega)\subset \cH^{s_{\#}}_0(\Omega)$ by Lemma~\ref{prop of x omega part1}. This and~\cite[Remark~1 on page~233]{T10}
(see also~\cite[Corollary~7.2]{MR2944369} when~$s_{\#}\in(0,1)$)
give the desired compact embedding if~$s_{\#}\in\left(0,\frac{N}2\right)$.

If instead~$s_{\#}\ge\frac{N}2$ 
and~$s_{\#}-\frac{N}{2}\notin\N_0$, then
we know by Lemma~\ref{prop of x omega part1} that~$\cX(\Omega)\subset C^{s_{\#}-\frac{N}{2}}(\R^N)$, and therefore the compact embedding follows from the Ascoli-Arzel\`a Theorem.

Finally, if~$s_{\#}\ge\frac{N}2$ 
and~$s_{\#}-\frac{N}{2}\in\N_0$, we consider~$\overline{s}<s_{\#}$
such that~$\overline{s}-\frac{N}{2}\notin\N_0$ and we notice that~$\cH^{s_{\#}}_0(\Omega)\subset \cH^{\overline{s}}_0(\Omega)$. In this way, from the previous cases it follows that the compact embedding holds true for~$\overline{s}$, and consequently for~$s_{\#}$.

If instead~$\Omega$ is an arbitrary open bounded set, we may consider a bounded open set~$\Omega'\subset \R^N$ with smooth boundary and~$\Omega\subset \Omega'$. Then, by definition we have that~$\cX(\Omega)\subset \cX(\Omega')$ and thus the general case follows from the special case.
\end{proof}

Throughout the rest of this section, in light of Lemma~\ref{bfndi3o2r73lemma}, we assume that the parameter~$\gamma$ in assumption~\eqref{mu-assumption2} is sufficiently small, possibly in dependance of~$\mu$, $s_{\#}$, $\tilde{s}$ and~$\lambda$.

\begin{defi}
	A function~$u\in \cX(\Omega)$ is called a solution of~\eqref{prob-gen} if, for all~$v\in \cX(\Omega)$,
	\begin{equation*}
		\cE(u,v)=\int_{\Omega} f(x,u)v\, dx.
	\end{equation*}
\end{defi}

We emphasize that in this definition~$\Omega$ can be an arbitrary open subset of~$\R^N$ and~$f$ needs to be such that the integration on the right-hand side is finite for~$u$, $v\in \cX(\Omega)$.

We also stress that this definition is consistent with Proposition~\ref{prop:bilinear}.

\subsection{Mountain pass structure and proof of Theorem~\texorpdfstring{\ref{mountain pass solution}}{1.2}}\label{mp-solutions proof}

In this section we follow the framework introduced in~\cite{SV12} to find nontrivial solutions of~\eqref{prob-gen}.

Let~$J:\cX(\Omega)\to \R$ be the functional given by
$$
J(u):=\frac12 \cE(u,u)-\int_{\Omega}F(x,u(x))\,dx.
$$
A simple computation shows that~$J$ is a~$C^1$ functional and if~$u\in\cX(\Omega)$ is a critical point of~$J$ then, for all~$v\in \cX(\Omega)$,
$$
0=J'(u)v=\cE(u,v)-\int_{\Omega}f(x,u(x))v(x)\,dx.
$$
In particular, critical points of~$J$ are solutions of~\eqref{prob-gen}.

In light of this observation, we aim at employing the Mountain Pass Theorem to find a critical point of~$J$. To this end, in the forthcoming
Propositions~\ref{mp step1} and~\ref{mp step2} we check that~$J$ has  the mountain pass structure. 

\begin{prop}\label{mp step1}
There exist~$\rho$, $\beta>0$ such that for any~$u\in \cX(\Omega)$ with~$\cE(u,u)=\rho$ we have that~$J(u)\geq \beta$.
\end{prop}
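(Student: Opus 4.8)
The plan is to show that the functional $J$ stays strictly positive on a small sphere in $\cX(\Omega)$, which is a standard consequence of the growth and smallness hypotheses on $f$ in~\eqref{nonlinearprob-assumptions}, combined with the Sobolev-type embedding $\cX(\Omega)\hookrightarrow L^q(\Omega)$ from Lemma~\ref{bfndi3o2r73lemma}. First I would translate the hypotheses on $f$ into a pointwise bound on its primitive $F$: from $\lim_{|t|\to0}f(x,t)/|t|=0$ (uniformly in $x$) and the subcritical growth $|f(x,t)|\le a_1+a_2|t|^{q-1}$ with $q\in(2,2^\ast)$, one obtains, for every $\eps>0$, a constant $C_\eps>0$ such that $|F(x,t)|\le \eps |t|^2 + C_\eps |t|^q$ for a.e.\ $x\in\Omega$ and all $t\in\R$. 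Integrating over $\Omega$ and using the embeddings $\cX(\Omega)\hookrightarrow L^2(\Omega)$ and $\cX(\Omega)\hookrightarrow L^q(\Omega)$ (recall $q<2^\ast$, so~\eqref{bvn86801qsx5tyh9o} applies and in particular the embedding constants are finite) yields
\[
\int_\Omega F(x,u)\,dx \le \eps\, c_2\, \cE(u,u) + C_\eps\, c_q\, \cE(u,u)^{q/2},
\]
where $c_2,c_q>0$ come from~\eqref{embedding-mp2} and~\eqref{embedding-mp} (i.e.\ $\|u\|_{L^2(\Omega)}^2\le c_1^{-1}\cE(u,u)$ and $\|u\|_{L^{2^\ast}(\Omega)}^2\le c_1^{-1}\cE(u,u)$, interpolating down to $L^q$ when $N>2s_\#$, or using~\eqref{embedding-mp} directly when $N\le 2s_\#$ with $2^\ast=p>q$).

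Next I would combine this with the definition $J(u)=\tfrac12\cE(u,u)-\int_\Omega F(x,u)\,dx$ to get, for $u$ with $\cE(u,u)=\rho$,
\[
J(u)\ge \left(\tfrac12 - \eps\, c_2\right)\rho - C_\eps\, c_q\, \rho^{q/2}
= \rho\left(\tfrac12 - \eps\, c_2 - C_\eps\, c_q\, \rho^{(q-2)/2}\right).
\]
Now choose $\eps$ small enough that $\eps\, c_2 \le \tfrac18$, which fixes $C_\eps$; then, since $q>2$, the term $C_\eps\, c_q\,\rho^{(q-2)/2}\to 0$ as $\rho\to0^+$, so we may fix $\rho>0$ small enough that $C_\eps\, c_q\,\rho^{(q-2)/2}\le \tfrac18$. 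With these choices $J(u)\ge \tfrac14\rho =: \beta>0$ for all $u$ on the sphere $\{\cE(u,u)=\rho\}$, which is exactly the claim.

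The argument is essentially routine once the embeddings of Lemma~\ref{bfndi3o2r73lemma} are in hand; the only point requiring a little care is making sure the functional $J$ is well-defined and finite on $\cX(\Omega)$ (so that the inequalities above are not vacuous), which again follows from the subcritical growth of $F$ together with $\cX(\Omega)\hookrightarrow L^q(\Omega)$ and $\cX(\Omega)\hookrightarrow L^2(\Omega)$. The mild subtlety to flag is the split definition of $2^\ast$ in~\eqref{2 sharp}: when $N\le 2s_\#$ one should simply pick the arbitrary fixed exponent $p$ in~\eqref{2 sharp} to be larger than the given $q$, so that $q\in(2,2^\ast)$ holds and the compact embedding~\eqref{bvn86801qsx5tyh9o} is available; this is harmless since $p$ is at our disposal. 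No genuine obstacle arises here — the real work of the paper (the Poincaré-type and Sobolev-type inequalities, the compactness, and the reabsorption of $\cE_-$ into $\cE_+$) has already been done in the preceding sections.
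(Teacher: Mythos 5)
Your proposal is correct and follows essentially the same route as the paper's proof: both derive the pointwise bound $|F(x,t)|\le\eps|t|^2+C_\eps|t|^q$ from the hypotheses on $f$, integrate, use the Poincar\'e--Sobolev estimates of Lemma~\ref{bfndi3o2r73lemma} to dominate $\int_\Omega F(x,u)\,dx$ by $\cE(u,u)$ and $\cE(u,u)^{q/2}$, and then tune $\eps$ and $\rho$ to exploit $q>2$. The only cosmetic difference is that the paper first applies H\"older's inequality to pass everything through $L^{2^\ast}(\Omega)$ before invoking~\eqref{embedding-mp}, whereas you invoke the $L^2$ and $L^q$ embeddings directly; both are fine, and your remark about fixing the auxiliary exponent $p$ in~\eqref{2 sharp} larger than $q$ when $N\le 2s_\#$ is a legitimate (if routine) point of care.
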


\begin{proof}
Thanks to the assumptions on~$f$,
we can use~\cite[Lemma~3]{SV12} and find that
for any~$\epsilon>0$ there exists~$\delta>0$ such that, for a.e.~$x\in \Omega$ and~$t\in \R$,
\begin{equation*}
|F(x,t)|\leq \eps |t|^2+\delta|t|^q,
\end{equation*} 
where~$q\in(2,2^{\ast})$ is as in~\eqref{nonlinearprob-assumptions}.

As a consequence, for any~$u\in \cX(\Omega)$ and~$\eps>0$, we have that
\begin{align*}
J(u)&\geq \frac12\cE(u,u)-\eps\|u\|_{L^2(\Omega)}^2-\delta \|u\|_{L^q(\Omega)}^q\\
&\geq \frac12\cE(u,u)-\eps|\Omega|^{\frac{2^{\ast}-2}{2^{\ast}}}\|u\|_{L^{2^{\ast}}(\Omega)}^2-\delta |\Omega|^{\frac{2^{\ast}-q}{2^{\ast}}}\|u\|_{L^{2^{\ast}}(\Omega)}^q.
\end{align*}
Therefore, exploiting~\eqref{embedding-mp},
\begin{align*}
J(u)&\geq \left(\frac12-\eps c_1|\Omega|^{\frac{2^{\ast}-2}{2^{\ast}}}\right)\cE(u,u)-\delta |\Omega|^{\frac{2^{\ast}-q}{2^{\ast}}}c_1^{\frac{q}{2}}\cE(u,u)^{\frac{q}{2}}.
\end{align*}

Now we choose
$$\eps:= \frac{1}{4}\left(c_1|\Omega|^{\frac{2^{\ast}-2}{2^{\ast}}}\right)^{-1}$$ and we find that
$$
J(u)\geq \frac14 \cE(u,u)\Big(1-\kappa \cE(u,u)^{\frac{q-2}{2}}\Big),
$$
where~$\kappa>0$ is a constant independent of~$u$.

Accordingly,
$$
\inf_{\substack{u\in \cX(\Omega)\\ \cE(u,u)=\rho}}J(u)\geq \frac{1}{4}\rho\Big(1-\kappa \rho^{\frac{q-2}{2}}\Big)
$$
and this quantity is positive for~$\rho$ sufficiently small since~$q>2$. In particular, taking~$\rho:=(2\kappa)^{-\frac{2}{q-2}}$ we conclude that
\begin{equation*}
\inf_{\substack{u\in \cX(\Omega)\\ \cE(u,u)=\rho}}J(u)\geq \frac{1}{8}\rho=:\beta>0.\qedhere
\end{equation*}
\end{proof}

\begin{prop}\label{mp step2}
There exists a nonnegative function~$e\in \cX(\Omega)$ such that~$\cE(e,e)>\rho$ and~$J(e)<\beta$ with~$\rho$ and~$\beta$ given by \cref{mp step1}.
\end{prop}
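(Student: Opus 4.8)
The plan is to produce $e$ of the form $e=t_0u_0$, where $u_0\in\cX(\Omega)$ is a fixed nontrivial function and $t_0>0$ is taken large; the key point is that, under the Ambrosetti--Rabinowitz condition, $J$ is superquadratic along rays, so that $J(tu_0)\to-\infty$ as $t\to+\infty$.

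First, I would fix a nontrivial $u_0\in\cX(\Omega)$, which exists by~\eqref{nonempty xomega}, chosen with $u_0\ge0$ so that the resulting $e$ is nonnegative (the quantitative estimates below use only that $|tu_0|^\mu=t^\mu|u_0|^\mu$ for $t>0$, not the sign of $u_0$). By \cref{bfndi3o2r73lemma}, $\cE$ is a scalar product on $\cX(\Omega)$ with $\cE(u_0,u_0)\ge c_1\|u_0\|_{L^2(\Omega)}^2>0$ by~\eqref{embedding-mp2} and $u_0\not\equiv0$, and the embedding $\cX(\Omega)\hookrightarrow L^q(\Omega)$ is continuous for every $q\in[1,2^\ast)$; in particular $u_0\in L^q(\Omega)$ for all such $q$. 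Combining this with the first two conditions in~\eqref{nonlinearprob-assumptions}, exactly as in~\cite[Lemma~3]{SV12}, one gets $|F(x,t)|\le\eps|t|^2+\delta_\eps|t|^q$, so that $\int_\Omega F(x,tu_0)\,dx$ is finite for every $t>0$.

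The core step is the superquadratic lower bound for the primitive. From the last condition in~\eqref{nonlinearprob-assumptions} one integrates $\frac{d}{dt}\big(t^{-\mu}F(x,t)\big)\ge0$ on $[r,+\infty)$ (and symmetrically on $(-\infty,-r]$) and uses the bound on $F$ on $[-r,r]$ to obtain constants $a_3,a_4>0$ with
\begin{equation*}
F(x,t)\ge a_3|t|^\mu-a_4\qquad\text{for a.e. }x\in\Omega\text{ and every }t\in\R;
\end{equation*}
moreover, inserting $|f(x,t)|\le a_1+a_2|t|^{q-1}$ into $\mu F(x,t)\le tf(x,t)$ and letting $|t|\to+\infty$ forces $\mu\le q$, whence $\mu<2^\ast$ by~\eqref{nonlinearprob-assumptions} and thus $u_0\in L^\mu(\Omega)$ since $\Omega$ is bounded. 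Therefore, for $t>0$,
\begin{equation*}
J(tu_0)=\frac{t^2}{2}\cE(u_0,u_0)-\int_\Omega F(x,tu_0)\,dx\le\frac{t^2}{2}\cE(u_0,u_0)-a_3t^\mu\|u_0\|_{L^\mu(\Omega)}^\mu+a_4|\Omega|.
\end{equation*}
Since $\mu>2$ and $\|u_0\|_{L^\mu(\Omega)}>0$, the right-hand side tends to $-\infty$ as $t\to+\infty$. I would then pick $t_0>0$ so large that both $\cE(t_0u_0,t_0u_0)=t_0^2\cE(u_0,u_0)>\rho$ (possible since $\cE(u_0,u_0)>0$) and $J(t_0u_0)<0<\beta$, and set $e:=t_0u_0$, with $\rho,\beta$ as in \cref{mp step1}; this $e$ has all the required properties.

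The main obstacle is the uniform-in-$x$ superquadratic estimate on $F$: its derivation hinges on a lower bound for $F(\cdot,r)$ that is uniform on $\Omega$, extracted from the Ambrosetti--Rabinowitz inequality together with the boundedness of $\Omega$, which is precisely the delicate point already handled in~\cite{SV12}. A secondary point to verify is the existence of a \emph{nonnegative} nontrivial element of $\cX(\Omega)$ under~\eqref{nonempty xomega}; as observed above, the remainder of the argument does not depend on this.
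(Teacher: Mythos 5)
Your approach is the same as the paper's: pick a nontrivial $u_0\in\cX(\Omega)$ via~\eqref{nonempty xomega}, observe that the Ambrosetti--Rabinowitz condition makes $F$ grow superquadratically in $t$, and conclude $J(tu_0)\to-\infty$ as $t\to+\infty$. The final step, the limit as $t\to+\infty$, and the choice $e:=t_0u_0$ are all exactly what the paper does.

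However, there is a genuine flaw in your derivation of the superquadratic lower bound. You claim constants $a_3,a_4>0$, \emph{independent of $x$}, with $F(x,t)\ge a_3|t|^\mu-a_4$, and you say this comes from the A--R inequality together with ``the bound on $F$ on $[-r,r]$.'' That is not available: the growth condition $|f(x,t)|\le a_1+a_2|t|^{q-1}$ yields a uniform \emph{upper} bound on $|F|$ for $|t|\le r$, but the A--R condition only guarantees $F(x,r)>0$ and $F(x,-r)>0$ for a.e.\ $x$, with no uniform positive lower bound. Consequently the quantity $r^{-\mu}\min\{F(x,r),F(x,-r)\}$, which is what the differential inequality $\tfrac{d}{dt}\big(t^{-\mu}F(x,t)\big)\ge0$ actually produces as the coefficient of $|t|^\mu$, is a measurable function $m(x)>0$ a.e., not a constant. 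This is precisely what~\cite[Lemma~4]{SV12} states and what the paper uses: $F(x,t)\ge m(x)|t|^\mu-M(x)$ with $m,M$ integrable and $m>0$ a.e. Your argument then survives the correction, because $\int_\Omega m(x)|u_0(x)|^\mu\,dx>0$ (as $m>0$ a.e.\ and $u_0\not\equiv0$) and $\int_\Omega M<+\infty$, so $J(tu_0)\le\tfrac{t^2}{2}\cE(u_0,u_0)-t^\mu\int_\Omega m|u_0|^\mu+\int_\Omega M\to-\infty$; but the claimed uniform constant $a_3$ is unjustified and in general false, and it is a misreading of~\cite{SV12}, not something ``already handled'' there.

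Two smaller remarks. First, you rightly flag that the existence of a \emph{nonnegative} nontrivial $u_0\in\cX(\Omega)$ is not guaranteed by~\eqref{nonempty xomega}; indeed the paper itself notes that $u\mapsto u^\pm$ need not preserve the energy space for high order, and its own proof of this proposition does not verify the nonnegativity claimed in the statement, so this is an issue with the paper rather than with your argument. Second, your side argument that $\mu\le q$ and hence $u_0\in L^\mu(\Omega)$ is correct but unnecessary: even if $\int_\Omega m|u_0|^\mu=+\infty$, the resulting upper bound for $J(tu_0)$ is $-\infty$ for $t>0$, and the conclusion holds trivially.
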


\begin{proof}
We recall, by~\cite[Lemma~4]{SV12}, that
there exist integrable functions~$m$, $M:\Omega\to (0,+\infty)$ satisfying
$$
F(x,t)\geq m(x)|t|^{\mu}-M(x)\quad\text{for a.e.~$x\in \Omega$ and~$t\in \R$},
$$ where~$\mu>2$ is as in~\eqref{nonlinearprob-assumptions}.

Let~$u\in \cX(\Omega)$ be a function given by assumption~\eqref{nonempty xomega}. Then~$A:=\cE(u,u)<+\infty$ and thus, for any~$t>0$, $$
J(tu)=\frac{t^2}{2}A-\int_{\Omega}F(x,tu(x))\,dx\leq \frac{t^2}{2}A-t^{\mu}\int_{\Omega}m(x)|u(x)|^{\mu}\,dx+\int_{\Omega}M(x)\,dx.
$$
As a result, since~$\mu>2$,
we see that~$J(tu)\to -\infty$ as~$t\to+\infty$, and the claim follows by choosing~$e=:Tu$ with~$T$ sufficiently large.
\end{proof}

The following two propositions show that the Palais-Smale condition is satisfied.

\begin{prop}\label{mp step3}
Let~$c\in \R$ and let~$(u_j)_j\subset \cX(\Omega)$ be a sequence such that
\begin{eqnarray}
&&\lim_{j\to+\infty}J(u_j)= c \nonumber
\\{\mbox{and }}
\quad &&\label{ps2}\lim_{j\to+\infty}
\sup_{\substack{\phi\in \cX(\Omega)\\ \cE(\phi,\phi)=1}}|J'(u_j)\phi|= 0.
\end{eqnarray}

Then, $(u_j)_j$ is bounded in~$\cX(\Omega)$.
\end{prop}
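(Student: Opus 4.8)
The plan is to run the classical Ambrosetti--Rabinowitz boundedness argument, adapted to the scalar product~$\cE$ on~$\cX(\Omega)$ furnished by Lemma~\ref{bfndi3o2r73lemma}. Throughout, write~$\|u\|_\cE:=\sqrt{\cE(u,u)}$, which is a norm on~$\cX(\Omega)$ by Lemma~\ref{bfndi3o2r73lemma}.

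First I would record two consequences of the structural assumptions on~$f$. From the Ambrosetti--Rabinowitz inequality in~\eqref{nonlinearprob-assumptions} (the line~$0<\mu F(x,t)\le t f(x,t)$ for~$|t|\ge r$) together with the growth bound~$|f(x,t)|\le a_1+a_2|t|^{q-1}$, there is a constant~$c_0>0$, depending only on~$\mu$, $r$, $a_1$, $a_2$, $q$, such that
$$
\frac1\mu\,t f(x,t)-F(x,t)\ge -c_0\qquad\text{for a.e.~$x\in\Omega$ and all~$t\in\R$;}
$$
indeed the left-hand side is nonnegative on~$\{|t|\ge r\}$ by the Ambrosetti--Rabinowitz inequality, and on~$\{|t|<r\}$ it is bounded below by a constant depending on the above data only. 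Secondly, since~$\cE$ is a scalar product, for every~$u_j\ne0$ the choice~$\phi:=u_j/\|u_j\|_\cE$ in~\eqref{ps2} gives~$|J'(u_j)u_j|=\varepsilon_j\|u_j\|_\cE$ with~$\varepsilon_j\to0$ as~$j\to+\infty$ (for~$u_j=0$ there is nothing to bound).

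The main computation then reads, for~$j$ large enough that~$J(u_j)\le c+1$,
\begin{align*}
c+1+\frac1\mu\,\varepsilon_j\|u_j\|_\cE
&\ge J(u_j)-\frac1\mu J'(u_j)u_j\\
&=\Big(\frac12-\frac1\mu\Big)\cE(u_j,u_j)+\int_\Omega\Big(\frac1\mu\,u_j f(x,u_j)-F(x,u_j)\Big)\,dx\\
&\ge\Big(\frac12-\frac1\mu\Big)\|u_j\|_\cE^2-c_0\,|\Omega|.
\end{align*}
Since~$\mu>2$, the coefficient~$\frac12-\frac1\mu$ is strictly positive, so this is a quadratic inequality~$aX^2-b_jX-d\le0$ in~$X:=\|u_j\|_\cE$, with~$a>0$, $d=c+1+c_0|\Omega|$ fixed and~$b_j=\frac1\mu\varepsilon_j\to0$; it forces~$\limsup_{j\to+\infty}\|u_j\|_\cE<+\infty$.

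Finally I would translate the~$\cE$-bound back to the~$\cX(\Omega)$-norm. By Proposition~\ref{prop of x omega part3} and the smallness of~$\gamma$ one has~$(1-C\gamma)\,\cE_+(u,u)\le\cE(u,u)$, while~\eqref{embedding-mp2} gives~$\|u\|_{L^2(\Omega)}^2\le c_1^{-1}\cE(u,u)$; combined with the definition of~$\|\cdot\|_{\cX(\Omega)}$ in Definition~\ref{def:chiomega}, this yields
$$
\|u\|_{\cX(\Omega)}^2\le\Big(\tfrac1{c_1}+\tfrac1{1-C\gamma}\Big)\cE(u,u)\qquad\text{for all~$u\in\cX(\Omega)$,}
$$
so the bound on~$\|u_j\|_\cE$ shows that~$(u_j)_j$ is bounded in~$\cX(\Omega)$. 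There is no serious obstacle here; the only points requiring mild care are the uniform lower bound for~$\frac1\mu t f(x,t)-F(x,t)$ (ensuring the region~$\{|t|<r\}$ contributes a genuine constant, not a quantity depending on~$u_j$), matching the normalization~$\cE(\phi,\phi)=1$ in~\eqref{ps2} with the norm~$\|\cdot\|_\cE$, and invoking the reabsorption estimate of Proposition~\ref{prop of x omega part3} (hence the smallness of~$\gamma$) to pass from an~$\cE$-bound to an~$\cX(\Omega)$-bound.
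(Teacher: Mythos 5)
Your proof is correct and takes essentially the same approach as the paper: the paper simply refers the reader to~\cite[Proposition~11]{SV12}, and the argument there is precisely the classical Ambrosetti--Rabinowitz boundedness argument you carry out, testing with~$J(u_j)-\frac1\mu J'(u_j)u_j$ and using~$\mu>2$. The only part you needed to supply beyond~\cite{SV12} is the final translation from an~$\cE$-bound to a~$\|\cdot\|_{\cX(\Omega)}$-bound via Proposition~\ref{prop of x omega part3} and~\eqref{embedding-mp2}, and you did this correctly.
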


\begin{proof}
The proof is analogous to the one of~\cite[Proposition~11]{SV12}.
\end{proof}

\begin{lemma}\label{mp step4-pre}
Let~$(u_j)_j\subset \cX(\Omega)$ be a bounded sequence satisfying~\eqref{ps2}. 

Then, there exists~$u_{\infty}\in \cX(\Omega)$ such that, up to a subsequence, as~$j\to +\infty$,
\begin{equation}\label{0987iuygbfsrbq23wsedxcfv}\begin{split}
&u_j\weakto u_{\infty}\quad\text{in~$\cX(\Omega)$,}\\
&u_j\to u_{\infty}\quad\text{in~$L^p(\Omega)$ for all~$p\in[1,2^{\ast})$} \\ {\mbox{and }}\qquad
&u_j(x)\to u_{\infty}(x)\quad \text{for almost every~$x\in \Omega$.}
\end{split}\end{equation}
\end{lemma}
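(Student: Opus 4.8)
The plan is to obtain the three convergences in~\eqref{0987iuygbfsrbq23wsedxcfv} as a chain of successive subsequence extractions, relying only on the boundedness of~$(u_j)_j$ (the Palais--Smale information~\eqref{ps2} is not needed here; it will instead be used in the subsequent step to promote weak to strong convergence in~$\cX(\Omega)$). The three ingredients are: reflexivity of~$\cX(\Omega)$, which is a Hilbert space by Lemma~\ref{bfndi3o2r73lemma} (equivalently by Proposition~\ref{prop of x omega part2}, the two norms being equivalent when~$\gamma$ is small); the compact embedding~\eqref{bvn86801qsx5tyh9o} of~$\cX(\Omega)$ into~$L^q(\Omega)$ for every~$q\in[1,2^{\ast})$; and the standard fact that an~$L^q$-convergent sequence admits an almost everywhere convergent subsequence.

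First I would use that~$(u_j)_j$ is bounded in the Hilbert (hence reflexive) space~$\cX(\Omega)$ to extract a subsequence, not relabeled, with~$u_j\weakto u_\infty$ in~$\cX(\Omega)$ for some~$u_\infty\in\cX(\Omega)$. Next, fix a sequence~$p_k\in[1,2^{\ast})$ with~$p_k\uparrow 2^{\ast}$. By~\eqref{bvn86801qsx5tyh9o} the embedding~$\cX(\Omega)\hookrightarrow L^{p_k}(\Omega)$ is compact for each~$k$, so from the bounded sequence~$(u_j)_j$ one can extract, for each~$k$, a further subsequence converging strongly in~$L^{p_k}(\Omega)$; a diagonal argument then produces a single subsequence, still denoted~$(u_j)_j$, converging strongly in~$L^{p_k}(\Omega)$ for every~$k$. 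Since~$\Omega$ is bounded, for any~$p\in[1,2^{\ast})$ we pick~$k$ with~$p\le p_k$ and estimate~$\|u_j-u_\ell\|_{L^p(\Omega)}\le |\Omega|^{\frac1p-\frac1{p_k}}\|u_j-u_\ell\|_{L^{p_k}(\Omega)}$, so~$(u_j)_j$ is Cauchy, hence convergent, in~$L^p(\Omega)$ for all~$p\in[1,2^{\ast})$. It then remains to identify the limit: since the inclusion~$\cX(\Omega)\hookrightarrow L^{p}(\Omega)$ is continuous, $u_j\weakto u_\infty$ in~$\cX(\Omega)$ forces~$u_j\weakto u_\infty$ in~$L^{p}(\Omega)$; as~$u_j$ also converges strongly (hence weakly) in~$L^p(\Omega)$ to its strong limit, uniqueness of weak limits gives that this limit equals~$u_\infty$, establishing the second line of~\eqref{0987iuygbfsrbq23wsedxcfv}.

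Finally, from the strong convergence~$u_j\to u_\infty$ in, say,~$L^2(\Omega)$ one extracts one last subsequence along which~$u_j(x)\to u_\infty(x)$ for almost every~$x\in\Omega$, which is the third line of~\eqref{0987iuygbfsrbq23wsedxcfv}; this extraction affects neither the weak convergence in~$\cX(\Omega)$ nor the~$L^p$ convergence already secured. There is no serious obstacle in this argument: everything is a direct consequence of results already established, and the only point requiring a little care is the bookkeeping of the nested/diagonal subsequences needed to cover simultaneously all exponents~$p\in[1,2^{\ast})$, together with the identification of the weak~$\cX(\Omega)$-limit with the strong~$L^p$-limit.
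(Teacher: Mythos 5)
Your argument is correct and follows the same route as the paper's (terse) proof: weak compactness of the bounded sequence in the Hilbert space~$\cX(\Omega)$, the compact embedding~\eqref{bvn86801qsx5tyh9o} to upgrade to strong~$L^p$ convergence, and a final extraction for pointwise a.e.\ convergence. One small simplification: the diagonal argument over~$p_k\uparrow 2^*$ is not needed, since once~$u_j\weakto u_\infty$ in~$\cX(\Omega)$ the compactness of each embedding~$\cX(\Omega)\hookrightarrow L^p(\Omega)$ already forces~$u_j\to u_\infty$ strongly in~$L^p(\Omega)$ along that same subsequence (the standard weak-to-strong upgrade for compact operators), without any further extraction.
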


\begin{proof}
The weak convergence follows from the boundedness of the sequence~$u_j$ and the fact that~$\cX(\Omega)$ is a Hilbert space, see Proposition~\ref{prop of x omega part2}.

The strong convergence in~$L^p(\Omega)$ for all~$p\in[1,2^{\ast})$ (and therefore the a.e. pointwise convergence) is a consequence
of the weak convergence in~$\cX(\Omega)$ and the compact embedding in~\eqref{bvn86801qsx5tyh9o}.
\end{proof}

\begin{prop}\label{mp step4}
Let~$(u_j)_j\subset \cX(\Omega)$ be a bounded sequence satisfying~\eqref{ps2}. 

Then, there exists~$u_{\infty}\in \cX(\Omega)$ such that, up to a subsequence, 
$$\lim_{j\to+\infty}\cE(u_j-u_{\infty},u_j-u_{\infty})=0.$$
\end{prop}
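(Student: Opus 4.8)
The plan is the classical splitting argument for verifying the Palais--Smale condition. By Lemma~\ref{mp step4-pre} we may pass to a subsequence along which \eqref{0987iuygbfsrbq23wsedxcfv} holds. First I would record that, under the running smallness assumption on~$\gamma$, Lemma~\ref{bfndi3o2r73lemma} (together with Propositions~\ref{prop of x omega part2} and~\ref{prop of x omega part3}) guarantees that~$\cE$ is a scalar product on~$\cX(\Omega)$ whose associated norm is equivalent to~$\|\cdot\|_{\cX(\Omega)}$; hence~$(\cX(\Omega),\cE)$ is a Hilbert space, the weak convergence~$u_j\weakto u_\infty$ is weak convergence with respect to~$\cE$, and, since~$(u_j)_j$ is bounded, the quantity~$\cE(u_j-u_\infty,u_j-u_\infty)$ is bounded, say by a constant~$C_0^2$.

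Next I would write
\begin{equation*}
\cE(u_j-u_\infty,u_j-u_\infty)=\cE(u_j,u_j-u_\infty)-\cE(u_\infty,u_j-u_\infty),
\end{equation*}
and treat the two terms separately. The functional~$v\mapsto\cE(u_\infty,v)$ is linear and~$\cE$-continuous, so~$\cE(u_\infty,u_j-u_\infty)\to0$ because~$u_j-u_\infty\weakto0$. For the first term, I use that~$u_j$ is an approximate critical point: from~$J'(u)v=\cE(u,v)-\int_\Omega f(x,u)v\,dx$ we get
\begin{equation*}
\cE(u_j,u_j-u_\infty)=J'(u_j)(u_j-u_\infty)+\int_\Omega f(x,u_j(x))\big(u_j(x)-u_\infty(x)\big)\,dx.
\end{equation*}
When~$u_j=u_\infty$ the first summand vanishes; otherwise, applying~\eqref{ps2} to~$\phi:=(u_j-u_\infty)/\sqrt{\cE(u_j-u_\infty,u_j-u_\infty)}$ and using the bound~$C_0$ gives~$|J'(u_j)(u_j-u_\infty)|\le C_0\,\sup_{\cE(\phi,\phi)=1}|J'(u_j)\phi|\to0$.

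It remains to estimate the nonlinear term. Using the growth bound~$|f(x,t)|\le a_1+a_2|t|^{q-1}$ from~\eqref{nonlinearprob-assumptions} with~$q\in(2,2^\ast)$ and H\"older's inequality with exponents~$q':=q/(q-1)$ and~$q$ (note~$(q-1)q'=q$), we obtain
\begin{equation*}
\left|\int_\Omega f(x,u_j)(u_j-u_\infty)\,dx\right|\le \Big(a_1|\Omega|^{1/q'}+a_2\|u_j\|_{L^q(\Omega)}^{q-1}\Big)\,\|u_j-u_\infty\|_{L^q(\Omega)}.
\end{equation*}
By the compact embedding~\eqref{bvn86801qsx5tyh9o} (recall~$q<2^\ast$) we have~$u_j\to u_\infty$ in~$L^q(\Omega)$, so~$\|u_j\|_{L^q(\Omega)}$ is bounded and~$\|u_j-u_\infty\|_{L^q(\Omega)}\to0$; hence this term tends to~$0$. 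Combining the three estimates yields~$\cE(u_j-u_\infty,u_j-u_\infty)\to0$, which is the assertion. The only point requiring care is that~$\cE$ really induces the topology of~$\cX(\Omega)$ --- so that ``weak convergence'' is unambiguous and~$\cE(u_\infty,\cdot)$ is weakly continuous --- and this is precisely where the smallness of~$\gamma$ (via Proposition~\ref{prop of x omega part3}) enters; no genuinely delicate estimate is involved beyond the already established compactness~\eqref{bvn86801qsx5tyh9o}.
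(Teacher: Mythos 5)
Your argument is correct, and it is a genuinely different (and arguably more streamlined) route than the paper's. The paper's proof works through the Radon--Riesz mechanism: it first shows
$$\cE(u_\infty,u_\infty)=\int_\Omega f(x,u_\infty)u_\infty\,dx\qquad\text{and}\qquad \lim_{j\to+\infty}\cE(u_j,u_j)=\int_\Omega f(x,u_\infty)u_\infty\,dx,$$
deducing $\cE(u_j,u_j)\to\cE(u_\infty,u_\infty)$, and then combines norm convergence with weak convergence to conclude strong convergence. To pass to the limit in the nonlinear integrals, the paper invokes the existence of an $L^q$ dominating function (Brezis, Th.~IV.9) and the Dominated Convergence Theorem. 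You instead decompose $\cE(u_j-u_\infty,u_j-u_\infty)$ directly into $\cE(u_j,u_j-u_\infty)-\cE(u_\infty,u_j-u_\infty)$, kill the second term by weak convergence, and estimate the first via $J'(u_j)(u_j-u_\infty)$ plus the nonlinear remainder, which you control by H\"older together with the strong $L^q$-convergence from the compact embedding~\eqref{bvn86801qsx5tyh9o}. This avoids both the dominating function and any Dominated Convergence step: the only inputs are the growth bound in~\eqref{nonlinearprob-assumptions}, boundedness in $L^q$, and $\|u_j-u_\infty\|_{L^q(\Omega)}\to0$. The paper's route has the small advantage that the identity $\cE(u_\infty,u_\infty)=\int_\Omega f(x,u_\infty)u_\infty\,dx$ is useful downstream (it is essentially the statement that $u_\infty$ is a critical point), but as a proof of the Palais--Smale convergence per se, your version is shorter and requires fewer measure-theoretic tools. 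Your preliminary remark that, for $\gamma$ small, $\cE$ defines an equivalent Hilbertian norm on $\cX(\Omega)$ is exactly the right justification for interpreting weak convergence with respect to $\cE$ and for the normalization step in applying~\eqref{ps2}; the paper relies on the same fact implicitly.
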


\begin{proof}
{F}rom Lemma~\ref{mp step4-pre}
we deduce the existence of~$u_{\infty}\in\cX(\Omega)$ such that the convergences
in~\eqref{0987iuygbfsrbq23wsedxcfv} hold true.
Namely, we have that
\begin{equation}\label{vdshg34y5rhe5yury}
\lim_{j\to+\infty} \cE(u_j,\phi)=\cE(u_{\infty},\phi)\quad\text{for all~$\phi\in \cX(\Omega)$,}
\end{equation}
and~$u_j\to u_{\infty}$ in~$L^q(\R^N)$, with~$q\in(2,2^{\ast})$
as in~\eqref{nonlinearprob-assumptions},
and pointwisely a.e. in~$\R^N$ as~$j\to+\infty$.

Also, there exists~$v\in L^q(\R^N)$ such that 
$$
|u_j(x)|\leq v(x)\quad\text{for a.e.~$x\in \R^N$ and all~$j\in \N$},
$$ see~\cite[Theorem~IV.9]{MR697382}.

Gathering these pieces of information and using
the first assumption on the nonlinearity~$f$ in~\eqref{nonlinearprob-assumptions} (and its continuity with respect to the second variable), we obtain from the Dominated Convergence Theorem that
\begin{eqnarray}
\lim_{j\to+\infty}\int_{\Omega}f(x,u_j(x))u_j(x)\,dx&=&\int_{\Omega}f(x,u_\infty(x))u_\infty(x)\,dx\label{cnmzbvtyw4uiy48370} \\
\text{and }\quad \lim_{j\to+\infty}\int_{\Omega}f(x,u_j(x))u_\infty(x)\,dx&=&\int_{\Omega}f(x,u_\infty(x))u_\infty(x)\,dx.\label{cnmzbvtyw4uiy4837}
\end{eqnarray}
As a consequence, if~$u_{\infty}\neq 0$,
we use~\eqref{ps2} with~$\varphi$ replaced by (a renormalization of) $u_\infty$ and we obtain that
$$
\lim_{j\to+\infty}\cE(u_j,u_{\infty})-\int_{\Omega}f(x,u_j(x))u_{\infty}(x)\,dx=\lim_{j\to+\infty} J'(u_j)u_{\infty}= 0.$$
This, together with~\eqref{vdshg34y5rhe5yury} and~\eqref{cnmzbvtyw4uiy4837}, leads to
\begin{equation}\label{vdskjht4u3iy65t8uh}
\cE(u_{\infty},u_{\infty})=\int_{\Omega} f(x,u_{\infty}(x))u_{\infty}(x)\,dx.
\end{equation}

Next note that
\begin{eqnarray*}&&
\lim_{j\to+\infty}\left|\cE(u_j,u_j)-\int_{\Omega}f(x,u_j(x))u_j(x)\,dx\right|=\lim_{j\to+\infty}\left|
J'(u_j)u_j\right|
\leq \lim_{j\to+\infty}\sup_{\substack{\phi\in \cX(\Omega)\\\cE(\phi,\phi)=1}}|J'(u_j)\phi|= 0
.\end{eqnarray*}
{F}rom this and~\eqref{cnmzbvtyw4uiy48370} we conclude that
$$ \lim_{j\to+\infty}
\cE(u_j,u_j)= \int_{\Omega} f(x,u_{\infty}(x))u_{\infty}(x)\,dx.
$$
This and~\eqref{vdskjht4u3iy65t8uh} thus entail that
$$
\lim_{j\to+\infty}\cE(u_j,u_j)=\cE(u_{\infty},u_{\infty}).
$$

As a result, recalling also~\eqref{vdshg34y5rhe5yury},
\begin{eqnarray*}
&&\lim_{j\to+\infty}\cE(u_j-u_{\infty},u_j-u_{\infty})=
\lim_{j\to+\infty}
\cE(u_j,u_j)+\cE(u_{\infty},u_{\infty})-2\cE(u_j,u_\infty)
=0,\end{eqnarray*}
as desired.
\end{proof}

We can now complete the proof of Theorem~\ref{mountain pass solution}.

\begin{proof}[Proof of Theorem~\ref{mountain pass solution}]
By Propositions~\ref{mp step1}, \ref{mp step2}, \ref{mp step3}, and~\ref{mp step4}, we see that the functional~$J$ satisfies the assumptions of the Mountain Pass Theorem and therefore there exist a critical point~$u\in \cX(\Omega)$ of~$J$ with
$$
J(u)\geq\beta>0=J(0)
$$
so that~$u\not\equiv 0$.
\end{proof}

\subsection{Critical growth problem with jumping nonlinearities and proof of Theorem~\texorpdfstring{\ref{critical case}}{1.3}}

In this setting, we look for critical points of the energy functional~$E:\cX(\Omega)\to\R$ given by
\begin{equation}\label{defi:functional}
E(u):=\frac12\cE(u,u)-\frac12\int_{\Omega}\big(a(u^-)^2+b(u^+)^2\big)\,dx-\frac{1}{2^\ast}\int_{\Omega}|u|^{2^\ast}\,dx.
\end{equation}
Critical points of~$E$ are indeed solutions of~\eqref{nonlinearprob}.

The main step of to establish Theorem~\ref{critical case} is analogous to~\cite[Proposition~3.4]{DPSV23} and requires the use of~\cite[Theorem~4.1]{PS23}. We begin with the following:

\begin{prop}\label{linking ps1}
Let~$c>0$. 
Then, every Palais-Smale sequence at the level~$c$, denoted by~$(PS)_c$ sequence, of the functional~$E$ in~\eqref{defi:functional} has a subsequence that converges weakly to a nontrivial critical point of~$E$.
\end{prop}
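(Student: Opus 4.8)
The plan is to follow the now-standard concentration-compactness scheme adapted to the jumping nonlinearity, essentially mimicking~\cite[Proposition~3.4]{DPSV23} with the new functional-analytic ingredients developed in Section~\ref{sec:45}. First I would take a $(PS)_c$ sequence $(u_j)_j\subset\cX(\Omega)$, that is a sequence with $E(u_j)\to c$ and $E'(u_j)\to 0$ in the dual of $\cX(\Omega)$. The initial step is to show that $(u_j)_j$ is bounded in $\cX(\Omega)$. This is obtained by the usual trick of estimating $E(u_j)-\frac1{2^\ast}E'(u_j)u_j$: since $2^\ast>2$ one gets a bound of the form $\bigl(\frac12-\frac1{2^\ast}\bigr)\cE(u_j,u_j)\le c+o(1)+o(1)\|u_j\|_{\cX(\Omega)}$ up to the lower-order quadratic term $\int_\Omega(a(u_j^-)^2+b(u_j^+)^2)\,dx$, which is controlled by $\|u_j\|_{L^2(\Omega)}^2$ and hence, via the Poincaré-type inequality~\eqref{embedding-mp2} in Lemma~\ref{bfndi3o2r73lemma} (valid because $\gamma$ is small and~\eqref{final assumption} holds), reabsorbed into a fraction of $\cE(u_j,u_j)$; here one uses that $E'(u_j)u_j=\cE(u_j,u_j)-\int_\Omega(a(u_j^-)^2+b(u_j^+)^2)\,dx-\int_\Omega|u_j|^{2^\ast}\,dx$ and that $\cE$ is an equivalent scalar product on $\cX(\Omega)$.

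Next I would extract a weakly convergent subsequence $u_j\weakto u_\infty$ in $\cX(\Omega)$, which by the compact embedding~\eqref{bvn86801qsx5tyh9o} of Lemma~\ref{bfndi3o2r73lemma} converges strongly in $L^q(\Omega)$ for every $q\in[1,2^\ast)$ and pointwise a.e.\ in $\Omega$. From the strong $L^2$-convergence one deduces $\int_\Omega(a(u_j^-)^2+b(u_j^+)^2)\,dx\to\int_\Omega(a(u_\infty^-)^2+b(u_\infty^+)^2)\,dx$ and, using that $|u_j|^{2^\ast-2}u_j$ is bounded in $L^{(2^\ast)'}(\Omega)$ together with the a.e.\ convergence, the Brézis--Lieb-type weak convergence $|u_j|^{2^\ast-2}u_j\weakto|u_\infty|^{2^\ast-2}u_\infty$ in $L^{(2^\ast)'}(\Omega)$ (a standard lemma, e.g.\ via~\cite[Theorem~IV.9]{MR697382} and the continuity of $t\mapsto|t|^{2^\ast-2}t$). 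Passing to the limit in $E'(u_j)\varphi=o(1)$ for fixed $\varphi\in\cX(\Omega)$, and using the weak convergence $\cE(u_j,\varphi)\to\cE(u_\infty,\varphi)$ (legitimate since $\cE$ is the scalar product of the Hilbert space $\cX(\Omega)$), one obtains
\[
\cE(u_\infty,\varphi)=\int_\Omega\bigl(bu_\infty^+-au_\infty^-\bigr)\varphi\,dx+\int_\Omega|u_\infty|^{2^\ast-2}u_\infty\,\varphi\,dx\qquad\text{for all }\varphi\in\cX(\Omega),
\]
i.e.\ $u_\infty$ is a critical point of $E$, hence a (weak) solution of~\eqref{nonlinearprob}.

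The remaining, and main, difficulty is to show that the critical point $u_\infty$ obtained this way is \emph{nontrivial} at a positive level. The point is that weak convergence alone does not rule out $u_\infty\equiv0$, which would happen precisely if the whole energy escapes into a concentrating bubble at the critical Sobolev exponent. To exclude this one invokes~\cite[Theorem~4.1]{PS23}, which provides the threshold level below which the $(PS)_c$ condition holds for this type of jumping-nonlinearity problem in an abstract Hilbert-space framework; the statement of Proposition~\ref{linking ps1} is phrased so that $c$ is in the admissible range (recall that throughout this subsection $c>0$ and that, together with the Dancer--Fu\v{c}\'ik assumption~\eqref{D-F-ab} and the smallness of $\gamma$, this is exactly the regime in which the abstract theorem applies). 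Concretely, I would verify that the hypotheses of~\cite[Theorem~4.1]{PS23} are met: the quadratic form $\cE-\frac12\int_\Omega(a(\cdot^-)^2+b(\cdot^+)^2)$ has the required spectral structure relative to the eigenvalues $\lambda_{l-1}<\lambda_l<\lambda_{l+1}$ (guaranteed by the discreteness of the spectrum from Proposition~\ref{prop of x omega part2} and by $(a,b)\in Q_l$ with~\eqref{D-F-ab}), the nonlinearity has the critical growth and sign structure required there, and the compact embeddings of Lemma~\ref{bfndi3o2r73lemma} supply the needed compactness of lower-order terms. Granting that verification, the cited theorem yields that the weak limit $u_\infty$ is a nontrivial critical point of $E$, which is the assertion. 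I expect the bookkeeping to match~\cite[Proposition~3.4]{DPSV23} almost verbatim once the space $\cX(\Omega)$, its uniform convexity, its Poincaré inequality and its compact embedding — all established in Section~\ref{sec:45} — are in place; the genuinely delicate part is checking that the abstract hypotheses of~\cite[Theorem~4.1]{PS23} translate correctly to the present general superposition operator, in particular that the eigenvalue gaps and the ``non-resonance'' condition~\eqref{D-F-ab} are used consistently with the definition of the Dancer--Fu\v{c}\'ik curves $A_{l-1},B_l$ recalled in the introduction.
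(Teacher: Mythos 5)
Your proof has two genuine gaps, one in the boundedness step and one in the nontriviality step.

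For boundedness, you propose to estimate $E(u_j)-\frac{1}{2^\ast}E'(u_j)u_j$ and reabsorb the jumping term $\int_\Omega\big(a(u_j^-)^2+b(u_j^+)^2\big)\,dx$ into $\cE(u_j,u_j)$ via the Poincar\'e inequality \eqref{embedding-mp2}. But this reabsorption requires, in effect, $\max\{a,b\}<c_1$, where $c_1$ is the Poincar\'e constant; since $(a,b)\in Q_l=(\lambda_{l-1},\lambda_{l+1})^2$ with $l\geq 2$ we have $a,b>\lambda_{l-1}\geq\lambda_1\geq c_1$, so the quadratic jumping term is too large to be absorbed by the Poincar\'e inequality and this scheme fails. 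The paper instead exploits the exact algebraic identity $E'(u_j)u_j-2E(u_j)=\left(\tfrac{2}{2^\ast}-1\right)\int_\Omega|u_j|^{2^\ast}\,dx$, in which \emph{both} $\cE(u_j,u_j)$ and the jumping term cancel; this yields a direct bound on $\int_\Omega|u_j|^{2^\ast}\,dx$, from which the jumping term (by H\"older, comparing $L^2$ against $L^{2^\ast}$) and then $\cE(u_j,u_j)$ are bounded.

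For nontriviality, you invoke~\cite[Theorem~4.1]{PS23} to argue that the weak limit $u_\infty$ is nontrivial. This is circular: that theorem \emph{takes as a hypothesis} that for some $c^\ast>0$ every $(PS)_c$ sequence with $c\in(0,c^\ast)$ converges weakly (up to a subsequence) to a nontrivial critical point --- precisely the content of Proposition~\ref{linking ps1} --- and then concludes the existence of a nontrivial solution via a linking argument. It cannot be used to \emph{establish} the $(PS)_c$ statement, nor does it assert that the weak limit of a \emph{given} $(PS)_c$ sequence is nontrivial. The paper's actual argument is elementary and bypasses any concentration-compactness machinery: it first upgrades weak to strong convergence in $\cX(\Omega)$ (arguing as in Proposition~\ref{mp step4}), so that $E(u_\infty)=c$; then, since $E'(u_\infty)=0$, the same algebraic identity gives $0=E'(u_\infty)u_\infty=2E(u_\infty)+\left(\tfrac{2}{2^\ast}-1\right)\int_\Omega|u_\infty|^{2^\ast}\,dx=2c+\left(\tfrac{2}{2^\ast}-1\right)\int_\Omega|u_\infty|^{2^\ast}\,dx$, which, as $c>0$ and $2^\ast>2$, forces $\int_\Omega|u_\infty|^{2^\ast}\,dx>0$ and hence $u_\infty\not\equiv 0$. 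You never obtain strong convergence in your proposal, which is exactly the ingredient that makes this nontriviality argument work.
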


\begin{proof}
Let~$(u_j)_j\subset \cX(\Omega)$ be a~$(PS)_c$ sequence of~$E$, that is,
\begin{equation}\label{eq1:linking ps1}
\lim_{j\to+\infty}E(u_j)=c\qquad\text{and}\qquad \lim_{j\to+\infty} |E'(u_j)v|=0\quad\text{for all~$v\in \cX(\Omega)$.}
\end{equation}
Thus, choosing~$v:=u_j$, we obtain that 
\begin{align*}
&0=\lim_{j\to+\infty} E'(u_j)u_j=\lim_{j\to+\infty}
\cE(u_j,u_j)-\int_{\Omega} a(u_j^-)^2+b(u_j^+)^2\,dx-\int_{\Omega}|u_j|^{2^{\ast}}\,dx \\
&\qquad=\lim_{j\to+\infty}2E(u_j)+\left(\frac{2}{2^{\ast}}-1\right)\int_{\Omega}|u_j|^{2^{\ast}}\,dx
=2c+\left(\frac{2}{2^{\ast}}-1\right)\lim_{j\to+\infty}\int_{\Omega}|u_j|^{2^{\ast}}\,dx.
\end{align*}
This entails that there exists~$j_0\in \N$ such that
$$
\left(\frac{1}{2}-\frac{1}{2^{\ast}}\right)\int_{\Omega}|u_j|^{2^{\ast}}\,dx\leq c+1\qquad\text{for all~$j\geq j_0$.}
$$

From this and H\"older's inequality we have that, for all~$j\geq j_0$,
\begin{align*}&
\int_{\Omega} a(u_j^-)^2+b(u_j^+)^2\,dx\leq \max\{a,b\}\|u_j\|_{L^2(\Omega)}^2\leq \max\{a,b\}|\Omega|^{\frac{2^{\ast}-2}{2^{\ast}}}\|u_j\|_{L^{2^{\ast}}(\Omega)}^2\\
&\qquad\leq \left(\frac{2\cdot 2^{\ast}(c+1)}{2^{\ast}-2}\right)^{\frac{2}{2^{\ast}}}\max\{a,b\}|\Omega|^{\frac{2^{\ast}-2}{2^{\ast}}}.
\end{align*}
Thus, using also the first formula in~\eqref{eq1:linking ps1}, we can find some~$j_1\geq j_0$ such that, for every~$j\geq j_1$,
\begin{align*}
\cE(u_j,u_j)&=E(u_j)+\frac12\int_{\Omega} a(u_j^-)^2+b(u_j^+)^2\,dx+\frac{1}{2^{\ast}}\int_{\Omega}|u_j|^{2^{\ast}}\,dx\\
&\leq c+1+\frac12\left(\frac{2\cdot 2^{\ast}(c+1)}{2^{\ast}-2}\right)^{\frac{2}{2^{\ast}}}\max\{a,b\}|\Omega|^{\frac{2^{\ast}-2}{2^{\ast}}}+\frac{2(c+1)}{2^{\ast}-2}.
\end{align*}
In particular, $(u_j)_j$ is bounded in~$\cX(\Omega)$
(recall also~\eqref{embedding-mp2} in Lemma~\ref{bfndi3o2r73lemma}), and \cref{mp step4-pre} implies the existence of some~$u_{\infty}\in \cX(\Omega)$ such that, up to a subsequence, as~$j\to +\infty$,
\begin{eqnarray*}&&
u_j\weakto u_{\infty}\quad\text{in~$\cX(\Omega)$,}\\
&&u_j\to u_{\infty}\quad\text{in~$L^p(\Omega)$ for all~$p\in[1,2^{\ast})$,} \\
{\mbox{and }} &&
u_j(x)\to u_{\infty}(x)\quad \text{for almost every~$x\in \Omega$.}
\end{eqnarray*}
Arguing as done to obtain Proposition~\ref{mp step4}, we thus conclude that 
$$
\lim_{j\to+\infty} \cE(u_j-u_{\infty},u_j-u_{\infty})=0
$$
and~$u_j\to u_{\infty}$ in~$\cX(\Omega)$. 
This implies that~$u_{\infty}$ is a critical point of~$E$. 

To show that~$u_{\infty}$ is nontrivial, note that we have~$E(u_{\infty})=c$ and 
\begin{align*}
0=E'(u_{\infty})u_{\infty}=2E(u_{\infty})+\left(\frac{2}{2^{\ast}}-1\right)\int_{\Omega}|u_{\infty}|^{2^{\ast}}\,dx=2c+\left(\frac{2}{2^{\ast}}-1\right)\int_{\Omega}|u_{\infty}|^{2^{\ast}}\,dx,
\end{align*}
which implies that~$u_{\infty}\equiv 0$ is not possible. The claim follows.
\end{proof}

For the reader's convenience we state next~\cite[Theorem~4.1]{PS23}. To do so, let~$N_k$ be the direct sum of the eigenspaces associated with the eigenvalues~$\lambda_{1},\ldots,\lambda_k$.

\begin{thm}[Theorem 4.1, \cite{PS23}]\label{thm:ps}
Let~$F\in C^1(\R)$  and consider the functional~$E:\cX(\Omega)\to\R$ defined as
$$
E(u):=\cE(u,u)-b\int_{\Omega}(u^+)^2\,dx+a\int_{\Omega}(u^-)^2\,dx-\int_{\Omega}F(u)\,dx.
$$
Assume that~$F$ is nonnegative with~$F(0)=0$ and that
$$
\lim_{\|u\|_{X(\Omega)}\to0}\frac{\displaystyle \int_{\Omega}F(u)}{\cE(u,u)}=0.
$$
Assume further that there exists~$c^{\ast}>0$ such that for each~$c\in(0,c^{\ast})$ every~$(PS)_c$-sequence of~$E$ has a subsequence that converges weakly to a nontrivial critical point of~$E$. 

If~$l\in\N$, $l\geq 2$ is such that~$(a,b)\in Q_l$, $b<A_{l-1}(a)$ and that there exists~$e\in \cX(\Omega)\setminus N_{l-1}$ such that
\begin{equation}\label{thm:ps-to show}
\sup_{u\in Q} E(u)<c^{\ast}
\end{equation}
where~$Q=\{v+t e\;:\; v\in N_{l-1},\ t\geq 0\}$, then there exists a nontrivial solution~$u\in \cX(\Omega)$ to the problem
$$
Lu=bu^+-au^-+f(u)\quad\text{in~$\Omega$,}\quad u=0 \quad\text{in~$\R^N\setminus \Omega$,}
$$
where~$f(u)=F'(u)$.
\end{thm}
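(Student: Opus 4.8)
The assertion is precisely \cite[Theorem~4.1]{PS23}, so the plan is to apply that result directly, after checking that its abstract hypotheses are available here, and to recall why its proof goes through in the present generality. The framework of \cite{PS23} requires that the ambient space be a uniformly convex Hilbert space that embeds compactly into~$L^2(\Omega)$, on which~$\cE$ acts as a scalar product with discrete spectrum $0<\lambda_1<\lambda_2<\cdots\to+\infty$ of finite multiplicities; all of this is supplied by Proposition~\ref{prop of x omega part2} together with Lemma~\ref{bfndi3o2r73lemma} and the compactness Theorem~\ref{compact}. The Dancer--Fu\u{c}\'ik curves~$A_{l-1}$, $B_l$ and the spaces~$N_k$ are then constructed exactly as in Section~4 of \cite{PS13} and Section~2 of \cite{PS23}, so that \cite[Theorem~4.1]{PS23} applies verbatim with~$L$ in place of the operator considered there.

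For completeness I would recall the skeleton of the proof. The hypothesis $(a,b)\in Q_l$ with $b<A_{l-1}(a)$ is, by the min--max characterisation of the Fu\u{c}\'ik curves, exactly the condition producing a linking configuration: there exist a set~$S$ and a number~$\beta>0$ such that $\partial Q$ links~$S$ (with $Q=\{v+te:v\in N_{l-1},\ t\ge0\}$), one has $\sup_{\partial Q}E\le 0$, and $E\ge\beta$ on~$S$; here~$F\ge0$ gives the first bound and the subquadraticity $\int_\Omega F(u)\,dx=o(\cE(u,u))$ as $\|u\|_{\cX(\Omega)}\to0$ gives the second. Setting
\[
c:=\inf_{h\in\Gamma}\ \sup_{u\in Q}E\big(h(u)\big),\qquad \Gamma:=\{h\in C(Q,\cX(\Omega))\,:\,h|_{\partial Q}=\mathrm{id}\},
\]
the linking forces $c\ge\beta>0$, while the admissible choice $h=\mathrm{id}$ together with hypothesis \eqref{thm:ps-to show} gives $c\le\sup_Q E<c^\ast$, so $c\in(0,c^\ast)$. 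Since $E\in C^1(\cX(\Omega))$ and $\sup_{\partial Q}E<c$, the quantitative deformation lemma produces a Palais--Smale sequence for~$E$ at level~$c$. Finally, because $c\in(0,c^\ast)$, the standing hypothesis on $(PS)_c$ sequences yields a subsequence converging weakly to a nontrivial critical point~$u_\infty$ of~$E$; testing $E'(u_\infty)=0$ against arbitrary $v\in\cX(\Omega)$ shows that~$u_\infty$ solves $Lu=bu^+-au^-+f(u)$ in~$\Omega$ with $u=0$ in $\R^N\setminus\Omega$, which is the desired nontrivial solution.

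The genuinely delicate ingredient — and the reason I would quote rather than reprove the statement — is the construction of the curves~$A_{l-1}$, $B_l$ and the proof that $b<A_{l-1}(a)$ indeed encodes the linking inequalities used above: this is the technical core of \cite{PS13,PS23}, and it relies on the discreteness of the spectrum and on a careful min--max analysis over cones in~$\cX(\Omega)$. In the present setting the only point that requires verification is that these tools are at our disposal, and this is guaranteed by Proposition~\ref{prop of x omega part2}, Lemma~\ref{bfndi3o2r73lemma} and Theorem~\ref{compact}; the remainder of the argument is insensitive to the specific form of the superposition operator~$L$.
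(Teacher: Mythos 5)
The paper does not prove this statement: it is a verbatim restatement of \cite[Theorem~4.1]{PS23}, prefaced by ``For the reader's convenience we state next~[...]'', and is then applied in the proof of Theorem~\ref{critical case}. Your proposal correctly recognises this, and your reduction of the task to checking that the abstract framework of \cite{PS23} is available --- uniformly convex Hilbert space with scalar product $\cE$, compact embedding into $L^2(\Omega)$, discrete spectrum of finite multiplicities --- is exactly the right thing to do; those ingredients are indeed supplied by Proposition~\ref{prop of x omega part2}, Lemma~\ref{bfndi3o2r73lemma}, and Theorem~\ref{compact}. Your sketch of the linking argument (the cone $Q$ linking with a set $S$, $\sup_{\partial Q}E\le 0<\beta\le\inf_S E$, the min-max level trapped in $(0,c^{\ast})$, a deformation-lemma Palais--Smale sequence, and then the standing $(PS)_c$ hypothesis delivering the nontrivial critical point) is an accurate summary of how such theorems are proved, and your caveat that the delicate part --- constructing $A_{l-1}$, $B_l$ and showing $b<A_{l-1}(a)$ encodes the linking inequalities --- is the technical core of \cite{PS13,PS23} is fair. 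Since the paper offers no proof here, your approach (quote, verify hypotheses, sketch) is consistent with what the paper does and goes somewhat further; there is no gap.
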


\begin{proof}[Proof of Theorem~\ref{critical case}]
The claim of Theorem~\ref{critical case} will follow as an application
of Theorem~\ref{thm:ps}. The details are as follows.

Taking~$f(u):=|u|^{2^{\ast}-2}u$ and~$F(u):=\frac{1}{2^{\ast}}|u|^{2^{\ast}}$ we see that~$F$ satisfies all the assumptions of Theorem~\ref{thm:ps}. Moreover, by Proposition~\ref{linking ps1} we can choose any~$c_{\ast}>0$. 

It remains to find~$e\in \cX(\Omega)\setminus N_{l-1}$ such that~\eqref{thm:ps-to show} holds. For this let
$$
E_k:=\{u\in X(\Omega)\;:\; Lu=\lambda_ku\}\quad\text{and thus} \quad N_k=\bigoplus_{i=1}^k E_i.
$$
Note that we have
$$
\lambda_k\|u\|_{L^2(\Omega)}^2\leq \lambda_l\|u\|_{L^2(\Omega)}^2\quad \text{for all~$u\in E_k$, $k=1,\ldots,l$.}
$$
Observe further that, by H\"older's inequality,
$$
\|u\|_{L^2(\Omega)}^{2^{\ast}}\leq |\Omega|^{2^{\ast}-2}\|u\|_{L^{2^{\ast}}(\Omega)}^{2^{\ast}}.
$$
Consequently, we have for~$u\in E_k$, with~$k=1,\ldots,l$, that
\begin{align*}
E(u)&=\frac12\cE(u,u)-\frac12\int_{\Omega}a(u^-)^2+b(u^+)^2\,dx-\frac{1}{2^{\ast}}\int_{\Omega}|u|^{2^{\ast}}\,dx\\
&\leq \frac{\lambda_l-\min\{a,b\}}{2} \int_{\Omega}|u|^2\,dx-|\Omega|^{2-2^{\ast}}\frac{1}{2^{\ast}}\|u\|_{L^2(\Omega)}^{2^{\ast}}.
\end{align*}
As in the proof of~\cite[Theorem~1.1]{DPSV23} (see in particular from formula~(4.1) there till the end of the proof), we obtain that
$$
E(u)\leq \left(\frac12 - \frac{1}{2^{\ast}}\right)|\Omega|\Big(\lambda_l-\min\{a,b\}\Big)^{\frac{2^{\ast}}{2^{\ast}-2}}=:c_{\ast},
$$
as desired.
\end{proof}

\appendix

\section{Proof of formula~\texorpdfstring{\eqref{agaapa0}}{(21)}}\label{app:agaapa0}

We recall from~\eqref{defdelta} that
$$\delta_m f(0,t)=\sum_{k=-m}^m(-1)^k\binom{2m}{m-k}f(kt)
=\sum_{k=-m}^m(-1)^k\binom{2m}{m-k}e^{ikt}.$$
Now we change index~$h:=k+m$ and we obtain that
\begin{eqnarray*}&&
\delta_m f(0,t)=\sum_{h=0}^{2m}(-1)^{h-m}\binom{2m}{2m-h}e^{i(h-m)t}
=(-1)^m e^{-imt} \sum_{h=0}^{2m}(-1)^{h}\binom{2m}{2m-h}e^{iht}.
\end{eqnarray*}
Hence, using the Binomial Theorem we conclude that
\begin{eqnarray*}&&
\delta_m f(0,t)
=(-1)^m e^{-imt} (e^{it}-1)^{2m}
=(-1)^m e^{-imt} (e^{it}-1)^{m}(e^{it}-1)^{m}\\
&&\qquad =(1-e^{-it})^{m}(1-e^{it})^{m}
=\Big( (1-e^{-it})(1-e^{it})\Big)^{m}
=\big( 2-e^{it}-e^{-it}\big)^{m}\\
&&\qquad = 2^m (1- \cos t)^m,
\end{eqnarray*}
as desired.

\section*{Acknowledgements}
SD and EV are members of the Australian Mathematical Society (AustMS). 
This work has been supported by the Australian Future Fellowship
FT230100333 and by the Australian Laureate Fellowship FL190100081.

\bibliographystyle{alpha}
\newcommand{\etalchar}[1]{$^{#1}$}

\end{document}